\documentclass[11pt,leqno]{amsart}

\usepackage[all,cmtip]{xy}
\usepackage{a4,latexsym}
\usepackage[utf8]{inputenc}
\usepackage{amsfonts}
\usepackage[english]{babel}
\usepackage{amsxtra,amsmath,amsfonts,amscd, amssymb, mathrsfs, amsthm}
\usepackage{color, mathtools}
\usepackage{enumitem}
\usepackage{xcolor}
\usepackage{graphicx}
\usepackage{tikz-cd}
\usepackage{braket}
\usepackage[toc,page,title,titletoc,header]{appendix}

\definecolor{darkgreen}{rgb}{0,0.5,0}
\definecolor{darkblue}{rgb}{0,0,0.8}
\definecolor{darkred}{rgb}{0.8,0,0}
\definecolor{lightblue}{rgb}{0,0.6,0.8}
\usepackage[pdfencoding=auto,colorlinks,citecolor=darkgreen,linkcolor=darkblue,urlcolor=darkred]{hyperref}
\usepackage[capitalize]{cleveref}
\usepackage{array,longtable}

\numberwithin{paragraph}{section}
\newcommand{\splitcond}[1]{%
	\begin{tabular}[t]{@{}l@{}}#1\end{tabular}%
}

\setlist[enumerate]{label=\it{(\roman*)},
	ref=\it{(\roman*)}}

\newlist{enumerate1}{enumerate}{1}
\setlist[enumerate1]{resume,leftmargin=*,label=\it(\arabic*),ref=\it{(\arabic*)}}

\newlist{enumeratea}{enumerate}{1}
\setlist[enumeratea]{resume,leftmargin=*,label=\it(\alph*),ref=\it{(\alph*)}}

\newcommand{\C}{{\mathbb C}}

\renewcommand{\P}{{\mathbb P}}
\newcommand{\Q}{{\mathbb Q}}
\newcommand{\Z}{{\mathbb Z}}
\newcommand{\N}{{\mathbb N}}

\newcommand{\Span}{\mathrm{Span}}

\renewcommand{\Im}{\mathrm{Im}}

\newcommand{\GL}{\mathrm{GL}}
\newcommand{\Tr}{\mathrm{Tr}}
\newcommand{\Frac}{\mathrm{Frac}}
\newcommand{\Char}{\mathrm{Char}}

\newcommand{\Spec}{\mathrm{Spec}}

\newcommand{\Sp}{\mathrm{Sp}}
\newcommand{\Supp}{\mathrm{Supp}}

\newcommand{\Pic}{\mathrm{Pic}}

\newcommand{\OO}{\mathcal{O}}

\newcommand{\ord}{\mathrm{ord}}
\newcommand{\Gal}{\mathrm{Gal}}

\renewcommand{\mod}{\operatorname{mod}\nolimits}

\newcommand{\id}{\operatorname{id}\nolimits}

\renewcommand{\Im}{\operatorname{Im}\nolimits}
\newcommand{\Ker}{\operatorname{Ker}\nolimits}
\newcommand{\Coker}{\operatorname{Coker}\nolimits}
\renewcommand{\dim}{\operatorname{dim}\nolimits}

\newcommand{\rk}{\mathrm{rk}}

\newcommand{\red}{\mathrm{red}}

\newcommand{\an}{\mathrm{an}}

\newtheorem{theorem}{Theorem}[section]

\newtheorem{algorithm}[theorem]{Algorithm}

\newtheorem{corollary}[theorem]{Corollary}

\newtheorem{lemma}[theorem]{Lemma}
\newtheorem{proposition}[theorem]{Proposition}
\newtheorem{lemma*}{Lemma}

\newtheorem{theointro}{Theorem}

\theoremstyle{definition}
\newtheorem{definition}[theorem]{Definition}
\newtheorem{proposition&definition}[theorem]{Proposition\&Definition}
\newtheorem{lemma&definition}[theorem]{Lemma\&Definition}
\newtheorem{theorem&definition}[theorem]{Theorem\&Definition}
\newtheorem{example}[theorem]{Example}

\newtheorem{example*}{Example}
\newtheorem{remark}[theorem]{Remark}

\newtheorem{question*}{Question}

\newtheorem{art}[theorem]{}

\crefname{plemma}{Lemma}{lemmas}
\crefname{pdef}{Definition}{definitions}
\crefname{prmk}{Remark}{remarks}
\crefname{pprop}{Proposition}{propositions}
\crefname{palgm}{Algorithm}{algorithms}
\crefname{pcor}{Corollary}{corollaries}

\begin{document}
	
\title[Explicit Coleman integration over finite extensions of $\Q_{\lowercase{p}}$]
	{Explicit Coleman integration for curves\\ over finite extensions of $\Q_{\lowercase{p}}$}

\author{Yulin Cai}
\address{Yulin Cai, Hangzhou International Innovation Institute, Beihang University, Hangzhou 311115, China}
\email{ylcai5388339@gmail.com}

\author{Timo Keller}
\address{
Timo Keller,
IBM Deutschland Research \& Development, IBM Campus 1, 71139 Ehningen, Germany;
Institut für Mathematik, Universität Würzburg, Emil-Fischer-Strasse 30, 97074,
Würz\-burg, Germany; 
Rijksuniversiteit Groningen, Bernoulli Institute, Bernoulliborg, Nijenborgh 9, 9747 AG Groningen, The Netherlands; 
Leibniz Universität Hannover, Institut für Algebra, Zahlentheorie und Diskrete Mathematik, Welfengarten 1, 30167 Hannover, Germany}
\email{math@kellertimo.de}
\urladdr{\url{https://www.timo-keller.de}}
	
	
\begin{abstract}
	Coleman integration is an important computational ingredient of the Chabauty--Coleman method and its extensions. We extend the Balakrishnan--Tuitman algorithm to general smooth projective curves with good reduction over arbitrary finite extensions of $\Q_p$, allowing both ramification and nontrivial residue degree. Given a suitable plane model and integral bases, we provide explicit precision bounds and an accompanying
	{\sffamily Magma} implementation.
	As an application, a single-prime computation at $p=19$ over the unramified quadratic extension $\Q_{19^2}/\Q_{19}$ recovers all $35$ affine sporadic quadratic divisors on $X(s3,ns5)$ over $\Q(\sqrt5)$. Completeness is proved in an appendix using symmetric Chabauty and a Mordell--Weil sieve.
\end{abstract}
	
\keywords{Coleman integrals, $p$-adic integration, Chabauty--Coleman method, rational points on curves over number fields} 
\subjclass{11S80 (primary), 11Y35, 11Y50 (secondary)}

\maketitle

\setcounter{tocdepth}{1}

\tableofcontents


\section{Introduction}

\subsection*{Background}


Coleman integration is a theory of $p$-adic line integration on curves with good reduction. One of its principal applications is the Chabauty--Coleman method for studying rational points.
For a smooth projective curve $X/\Q$ of genus $g$ with Jacobian $J$, this method uses Coleman integrals to construct $p$-adic analytic functions vanishing at the rational points when $\operatorname{rank}J(\Q)<g$. Computing these functions explicitly requires algorithms for evaluating Coleman integrals.


Explicit Coleman integration has been developed in several settings.
Besser and de Jeu \cite{besser2008lip} gave an algorithm for
iterated integrals on $\P^1\setminus\{0,1,\infty\}$.
Balakrishnan, Bradshaw and Kedlaya \cite{balakrishnan2010explicit}
developed an algorithm for hyperelliptic curves.
Balakrishnan--Tuitman \cite{balakrishnan2020explicit} developed
a general algorithm for curves with good reduction over $\Q_p$,
with an accompanying {\sffamily Magma} implementation.
Best \cite[Chapter~4]{best2021tools} developed algorithms for
superelliptic curves over unramified extensions of $\Q_p$.

\subsection*{Motivation}

Two kinds of Diophantine computations naturally require Coleman
integration over local fields with nontrivial residue degree.
These applications motivate extending the available algorithms
to general curves admitting a suitable model over arbitrary
finite extensions of $\Q_p$.
\begin{itemize}
	\item \emph{Rational points over number fields.}
	Let $X/F$ be a smooth projective curve of genus $g$ over a number
	field $F$, and let $J$ be its Jacobian. Siksek's restriction-of-scalars
	variant of the Chabauty--Coleman method
	\cite[\S 4]{siksek2013explicit} can apply beyond the classical range
	$\operatorname{rank}J(F)<g$. It uses a rational prime $p$ unramified
	in $F$ and requires Coleman integrals over the completions $F_v$
	for all $v\mid p$.
	
	If $p$ splits completely in $F$, then
	\[
	F\otimes_\Q\Q_p\cong\Q_p^{[F:\Q]},
	\]
	so these integrals can be computed using an existing $\Q_p$
	implementation. However, completely split primes can be sparse:
	their density is
	\[
	\frac{1}{\#\Gal(F^{\Gal}/\Q)},
	\]
	where $F^{\Gal}$ is the Galois closure of $F$, see \cite[Exercise~I.9.4, Theorem~VII.13.4]{neukirch1999algebraic}.
	Large primes also increase the cost of treating residue classes.
	At a completely split prime of good reduction, the full residue
	space has cardinality
	\[
	\prod_{v\mid p}\#\widetilde X_v(\mathbb F_p)
	=p^{[F:\Q]}\bigl(1+O_{g,[F:\Q]}(p^{-1/2})\bigr),
	\]
	where $\widetilde X_v$ is the reduction at $v$ and the implicit
	constant depends only on $g$ and $[F:\Q]$.

For example, the remaining cases in Freitas--Stoll's study of
$x^2+y^3=z^{25}$ lead to five genus-$2$ curves
$Y^2=X^5+A_i$ over number fields $F_i$ of degree $12$, each a
quadratic extension of a sextic field; see
\cite[Theorem~1.1 and the following discussion]{stoll2025z25}.
An exhaustive treatment at a completely split prime involves
roughly $p^{12}$ residue tuples. Integration over extensions permits
the use of smaller primes that need not split completely.
The residue space still has size on the order of $p^{12}$, so
the benefit here is the greater choice of usable primes.
This superelliptic example illustrates the need for integration
over extensions, already addressed by Best's algorithms under
their hypotheses. The same need arises for curves outside
the scope of those algorithms.
	
	\item \emph{Points on symmetric powers.}
	Even for a curve $X/\Q$, studying effective divisors of degree $d\geq2$ can require integration over extensions of $\Q_p$.
	Write $X^{(d)}\coloneq\operatorname{Sym}^d(X)$ for the variety parametrizing these divisors, and let $\widetilde X/\mathbb F_p$ be the reduction at a prime of good reduction, and put $\widetilde X^{(d)}\coloneq\operatorname{Sym}^d(\widetilde X)$.
	
	Already for $d=2$, a divisor in
	$\widetilde X^{(2)}(\mathbb F_p)$ can consist of two
	$\mathbb F_p$-rational points or of two conjugate points defined
	over $\mathbb F_{p^2}$. Lifts of the latter points are naturally
	defined over the unramified quadratic extension $\Q_{p^2}$ of
	$\Q_p$. Direct Coleman-integral computations at such lifts therefore
	require integration over that extension.
	
	More generally, put $a_1=\#\widetilde X(\mathbb F_p)$. The number of effective degree-$d$ divisors supported entirely on $\mathbb F_p$-rational points is $\binom{a_1+d-1}{d}$.
	For fixed $g$ and $d$, as $p\to\infty$, the zeta-function enumeration of effective divisors gives
	\[\frac{\binom{a_1+d-1}{d}}{\#\widetilde X^{(d)}(\mathbb F_p)}=\frac{1}{d!}+O_{g,d}(p^{-1/2}),\]
	see \cite[\S 3, especially (3.1)--(3.2) and (3.8)]{Bruin2013}.
	Thus approximately half the quadratic residue classes, and a proportion tending to $1-1/d!$ in degree $d$, involve points whose residue fields are larger than $\mathbb F_p$.
	
	Box, Gajovi\'c and Goodman \cite[\S 2.3 and~3]{box2023cubic} address this difficulty by combining tiny integrals with a Mordell--Weil sieve. Their method uses Mordell--Weil information to restrict attention to residue classes containing known rational divisors. Integration over extensions also makes direct Coleman-integral computations possible in the other residue classes.
\end{itemize}

In both situations, the missing capability is integration over local fields with nontrivial residue-field extension.  Implementations over $\Q_p$, or even over totally ramified extensions of $\Q_p$, do not address this problem.

\subsection*{Main result}
We extend the explicit Coleman integration algorithm of Balakrishnan--Tuitman \cite{balakrishnan2020explicit} from $\Q_p$ to arbitrary finite extensions $K/\Q_p$, allowing both ramification and nontrivial residue degree.
We first describe the input. A polynomial $Q(x,y)$ gives a plane model of a smooth projective curve $X$ and a nonconstant map $x\colon X\to\P_K^1$. The matrices $W^0,W^\infty$ describe changes
from the power basis to integral function bases over $K[x]$ and
$K[1/x]$, respectively. The good-reduction conditions require
integrality of these changes of basis, compatibility of the bases
with reduction, and separation of the boundary points in reduction.
These explicit hypotheses on the chosen equation, map, and bases
are stated in \cref{def:triplehasgoodreduction}.

A differential is of the \emph{second kind} if its residue at every
pole is zero. The computational result is as follows.

\begin{theointro}[Coleman integration over finite extensions]
	\label{thm:main-integration}
	Let $F$ be a number field, let $\mathfrak P$ be a nonzero prime ideal
	of $\OO_F$, and put $K=F_{\mathfrak P}$. Normalize $v_K$ by
	$v_K(\pi)=1$ for a uniformizer $\pi$ of $K$.
	Let $X/F$ be a smooth projective geometrically irreducible curve
	of genus $g$, equipped with a triple $(Q,W^0,W^\infty)$ having
	good reduction over $K$ in the sense of
	\cref{def:triplehasgoodreduction}.
	Let $P,R\in X(K)$, and let $\omega_1,\dots,\omega_{2g}$ be
	differentials of the second kind whose classes form a basis of
	$H^1_{\mathrm{dR}}(X_K)$ and which are regular at $P$ and $R$.
	
	For every integer $N\geq1$, there is an explicit algorithm computing
	$I_{i,N}\in K$, for $1\leq i\leq2g$, such that
	\[
	v_K\left(I_{i,N}-\int_P^R\omega_i\right)\geq N.
	\]
\end{theointro}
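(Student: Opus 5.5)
The plan is to follow the Balakrishnan--Tuitman strategy, replacing the $\Q_p$-specific ingredients with ones valid over $K$ with residue degree $f$ and ramification index $e$. Since Frobenius on $\OO_K$ is only semilinear when $f>1$, I will work with the $q$-power lift ($q=p^f$), which is $K$-linear and acts on $H^1_{\mathrm{dR}}(X_K)$; alternatively one can use the $p$-power lift $\phi$ together with the Frobenius automorphism $\sigma$ of $K_0=W(\mathbb F_q)[1/p]$, and set $\Phi=\phi^f$. The algorithm has four steps.
\begin{enumerate}
\item Using the good-reduction triple of \cref{def:triplehasgoodreduction}, construct a Frobenius lift on a wide open (overconvergent) affine subspace $U$ of $X$ avoiding the very bad points, as in Tuitman's algorithm. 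Then compute, via Lauder-style reduction in the integral bases given by $W^0$ and $W^\infty$, the matrix $M\in M_{2g}(K)$ and overconvergent functions $f_i$ on $U$ such that
\[
\Phi^*\omega_i=df_i+\sum_j M_{ij}\omega_j .
\]
\item Reduce the integral to the Frobenius-equivariant one. By Coleman's change of variables,
\[
\int_{\Phi P}^{\Phi R}\omega_i=\int_P^R\Phi^*\omega_i .
\]
Combining this with additivity gives the linear system
\[
(M-I)\Big(\int_P^R\omega_j\Big)_j=\Big(f_i(P)-f_i(R)-\int_P^{\Phi P}\omega_i+\int_R^{\Phi R}\omega_i\Big)_i .
\]
\item Evaluate the tiny integrals $\int_P^{\Phi P}\omega_i$ and $\int_R^{\Phi R}\omega_i$. $P$ and $\Phi P$ lie in the same residue disc, so these integrals come from local expansions in a uniformizing parameter followed by formal integration. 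If $P$ or $R$ is a very bad point, or a Weierstrass-type point where $\Phi$ is not defined on $U$, first move to a nearby good point in the same disc and add a tiny integral for the move.
\item Invert $M-I$. This is possible because the eigenvalues of $\Phi$ have absolute value $q^{1/2}$ by the Weil conjectures, so none equals $1$.
\end{enumerate}

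For precision, one works modulo $\pi^{N'}$ for an $N'$ exceeding $N$ by explicit amounts, which must be bounded separately for each step.
\begin{itemize}
\item In the reduction of $\Phi^*\omega_i$, track the denominators introduced by the reduction steps. Over $K$, the poles of order $k$ contribute losses of the order of $e\log_p k$, replacing the $\log_p k$ bound of Tuitman.
\item In the tiny integrals, integrating $t^{k}\,dt$ loses $v_K(k+1)=e\,v_p(k+1)$. The points $\Phi P$ and $P$ are congruent modulo $\pi$, so the local parameter satisfies $v_K(t)\ge1$. Hence the number of terms needed grows like $N+e\log_p N$, and the tail of the series is bounded by $\min_k\bigl(k+1-e\log_p(k+1)\bigr)$-type estimates.
\item Inverting $M-I$ loses at most $v_K(\det(M-I))$. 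This is bounded by $e\cdot v_p\bigl(\prod_j(1-\alpha_j)\bigr)$, which is explicit from $\#\widetilde X(\mathbb F_q)$-type data, or can simply be computed once $M$ is known.
\end{itemize}

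I expect the main obstacle to be the precision analysis in Step~1 in the ramified case. The integrality of the bases and the reduction-algorithm bounds of Tuitman are stated with respect to $p$. Over ramified $K$, the cohomology lattice from the integral bases must be compared with the $\OO_K$-lattice, and the denominators in the reductions are powers of $\pi$ with exponents scaled by $e$. My first attempt would be to redo Tuitman's lattice argument, namely his bound on the valuation of the matrices in the reduction step, with $v_K$ in place of $v_p$. The goal is a loss of the form $c\,e\lfloor\log_p(\text{pole order})\rfloor$ plus a constant depending on $W^0$ and $W^\infty$. Choosing $N'$ to absorb this loss together with the losses from Steps~3 and~4 gives the bound $v_K\bigl(I_{i,N}-\int_P^R\omega_i\bigr)\ge N$.
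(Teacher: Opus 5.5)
Your overall architecture matches the paper's: a Frobenius lift, reduction of $\Phi^*\omega_i$ to $df_i+\sum_j M_{ij}\omega_j$, tiny integrals, and inversion of $M-I$ via the Weil bound. Working directly with the $K$-linear $q$-power lift is legitimate. (Your alternative with a $p$-power lift needs an automorphism of $K$ lifting $\sigma$ whose $f$-th power is the identity. Such an automorphism need not exist when $K/\Q_p$ is ramified, which is why the paper fixes $\tau\in\operatorname{Aut}(K/\Q_p)$ with $\tau^{n_\tau}=\mathrm{id}$.)

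The genuine gap is the treatment of endpoints in bad residue discs. The theorem cannot avoid this case, since $P,R\in X(K)$ are arbitrary.

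\begin{itemize}
\item There is no ``nearby good point in the same disc'': being good is a property of the residue disc, so a bad disc contains no good points.
\item The obstruction is also not concentrated at very bad or Weierstrass-type points. The lift and the $f_i$ lie in the dagger algebra $\mathcal R^\dagger$, so a priori they are defined only on a strict neighbourhood of the complement of all bad discs.
\item In a finitely bad disc, the available estimates give convergence only in the thin annulus $0<v_K(r(x(S)))<1/q$ near the boundary. No point of $X(K)$ lies there, because for such points $v_K(r(x(S)))$ is a positive integer or $+\infty$.
\end{itemize}

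The missing idea is to evaluate the local expansions at the very bad point at $t=\pi^{1/e}$ with $e>qe_{\widetilde P}$. This produces $S\in X(K(\pi^{1/e}))$ with $v_K(r(x(S)))=e_{\widetilde P}/e<1/q$ (\cref{lem:suitable-bad-point-criterion}). You then link $S$ to the original endpoint by a tiny integral, solve the Frobenius system over $K(\pi^{1/e})$, and prove that the lift and the $f_i$ converge at $S$ (\cref{rem:evaluation-of-beta-at-suitable-bad-points,prop:convergentdomainoffinitereduction}). In infinite discs, any point that is not very infinite already works.

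That convergence proof rests on a quantitative overconvergence rate, which your precision plan also uses without stating it. The coefficient of $1/r^j$ in $\Phi^*\omega_i$, and hence in $f_i$, must have valuation at least about $j/q-O(\log j)$. This rate is also what justifies truncation:
\begin{itemize}
\item working modulo $\pi^{N'}$ leaves an error in $\pi^{N'}\mathcal R^\dagger$ of unbounded pole order along $r=0$;
\item reducing a pole of order $j$ costs about $e\log_p j$ digits;
\item so without a linear lower bound on valuation in terms of pole order, you can neither decide how many $1/r$-terms to keep nor bound the loss from the discarded tail.
\end{itemize}

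The paper obtains this bound by extending the Kedlaya--Tuitman effective convergence bounds to $K$ and to $q_\tau$-power lifts (\cref{thm:degreeoffrobeniusstructuresatapoint}, feeding into \cref{prop:approximationofFrobenius}). That extension is the substantial technical input. The reduction-step losses you singled out as the main obstacle only require replacing $\log_p$ by $e_K\lfloor\log_p(\cdot)\rfloor$.

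Finally, once endpoints live over $K(\pi^{1/e})$, your tiny-integral estimate must use $v_K(t)\ge 1/e$ rather than $v_K(t)\ge 1$.
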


The cohomology representatives can be chosen using \cref{algthm:cohomologybasis}; the integration algorithms
are given in
\cref{algthm:tinyintegral,algthm:semilinear-and-linear-frobenius-matrices,algthm:frobeniusofapoint,algthm:integralonbasis}, and their precision bounds are proved in \cref{precision bounds}.

As an application, we recover all $35$ affine sporadic quadratic divisors of the modular curve $C=X(s3,ns5)$ over $F=\Q(\sqrt5)$ using a single prime above $19$.
Our choice of $C$ and $F$ is motivated by Box's work on modularity of elliptic curves over totally real quartic fields: he proves modularity for fields not containing $\sqrt5$, while the remaining cases lead to quadratic points over $F$ on modular curves, some of which admit $C$ as a quotient after forgetting the level-$7$ structure \cite[Section~7.1]{box2022elliptic}.
Since $19$ splits in $F$, we compute over the unramified quadratic extension $\Q_{19^2}$ of $F_v\cong\Q_{19}$, recovering $12$ divisors in split residue classes and $23$ in Frobenius-conjugate classes; the latter require this extension for their constituent points.
The divisors are recognized over $F$ and verified exactly on $C$. The completeness is established separately by symmetric Chabauty and the Mordell--Weil sieve, see \cref{ex:app-singleprime,appendix:a sc and mw}.

\subsection*{Algorithm and implementation}

The algorithm combines local integration with Frobenius on cohomology. First, we compute the action of the $q$-power Frobenius on a cohomology basis, where $q$ is the cardinality of the residue field, and retain the exact-differential correction functions.
Next, we evaluate these functions at the endpoints and compute \emph{tiny integrals}, whose endpoints lie in the same residue disc and which can be evaluated using local power series.
Frobenius functoriality then gives a finite linear system for the desired integrals. The convergence and precision estimates control each step, including computations at bad endpoints.

Our accompanying {\sffamily Magma} package \cite{ColemanIntegralsLocalFields}, available at \[\text{\url{https://github.com/TimoKellerMath/ColemanIntegralsLocalFields},}\] implements these constructions for curves over completions of number fields.
We record three features of the algorithm and implementation.
\begin{enumerate}
	\item The implementation takes a number field $F$ and a prime ideal $\mathfrak P$ as input. To improve the running time, the main algebraic computations at working precision $N'$ use the exact quotient $\OO_F/\mathfrak P^{N'}$, rather than in a finite-precision local field.
	\item A direct $q$-power Frobenius lift is available over every $K$. When $K/\Q_p$ is unramified, we may instead use a $p$-power lift and iterate it $f_K$ times, where $f_K$ is the residue degree.
	This can reduce the computational cost.
	\item The implementation returns a certified output precision. With the curve and auxiliary endpoint choices fixed, this precision tends to infinity as the working precision $N_{\max}$ increases. Thus any prescribed output precision can be attained by increasing $N_{\max}$.
\end{enumerate}

The algorithm has two practical limitations. First, endpoints on the boundary of the chosen affine open are treated using tiny integrals and suitable auxiliary endpoints in the same residue discs. Evaluating the correction functions at these auxiliary endpoints can lose a large fraction of the working precision, making such computations expensive. Second, as in \cite{balakrishnan2020explicit}, the algorithm requires a supplied triple $(Q,W^0,W^\infty)$ having good reduction; constructing such a triple is outside its scope.

\subsection*{Organization of the paper}

\cref{section:Effective convergence bounds} extends the effective convergence bounds of \cite{kedlaya2012effective} to finite extensions of $\Q_p$.
\cref{section:good models} introduces good models and studies their properties.
\cref{section:computing a frobenius lift} constructs a Frobenius lift. 
\cref{section:computing a basis of cohomology} computes de Rham cohomology of the relevant affine open of the curve and describes the reduction of differential forms.
These three sections follow the framework of \cite[Sections~2--3]{tuitman2017counting}, which treats unramified extensions of $\Q_p$ of arbitrary residue degree; we formulate the constructions over general finite $K/\Q_p$ and retain the proofs to justify their use in the ramified case.
\cref{computing Coleman integrals} gives the algorithms for Coleman integration, and 
\cref{precision bounds} establishes their precision bounds. 
\cref{section: implementation} presents the implementation and examples.
\cref{appendix:a sc and mw} determines $X(s3,ns5)^{(2)}(\Q(\sqrt5))$ using symmetric Chabauty and a Mordell--Weil sieve.


\subsection*{Notation}

Throughout the paper, $K$ is a finite extension of $\Q_p$.
We fix a uniformizer $\pi$ and normalize the valuation $v_K$ by
$v_K(\pi)=1$. We denote its ring of integers, maximal ideal,
and residue field by $K^\circ$, $K^{\circ\circ}$, and
$\widetilde K$, respectively. Write
\[
e_K=v_K(p),\qquad f_K=[\widetilde K:\mathbb F_p],\qquad
q=q_K=\#\widetilde K=p^{f_K}.
\]
Thus $[K:\Q_p]=e_Kf_K$.
We fix an algebraic closure $\overline K$ and extend $v_K$ to it.
The absolute value is $|a|=p^{-v_K(a)/e_K}$, with $v_K(0)=+\infty$.
We use $F$ for a number field and $F_v$ for its completion at a
finite place $v$.

We fix an automorphism $\tau\in\operatorname{Aut}(K/\Q_p)$.
Let $1\leq\ell_\tau\leq f_K$ be determined by
$\widetilde\tau(a)=a^{p^{\ell_\tau}}$ for $a\in\widetilde K$.
We assume $\ell_\tau\mid f_K$ and set
\begin{equation*}\label{eq:frobenius-exponents}
	q_\tau=p^{\ell_\tau},\qquad
	n_\tau=\frac{f_K}{\ell_\tau},\qquad q_\tau^{n_\tau}=q.
\end{equation*}
We require
\begin{equation*}\label{eq:residue-frobenius-compatibility}
	\tau^{n_\tau}=\id_K.
\end{equation*}
In particular, $n_\tau=\operatorname{ord}(\tau)$.
For a polynomial, rational function, series, or matrix $h$, we
write $h^\tau$ for coefficientwise application of $\tau$,
leaving the variables fixed. We write
\[
\sigma_\tau\colon K(t)\longrightarrow K(t),\qquad
h(t)\longmapsto h^\tau(t^{q_\tau}),
\]
and denote its associated scheme morphism by
$\sigma_{\tau,\flat}\colon\P_K^1\to\P_K^1$, lying over the
automorphism of $\Spec(K)$ induced by $\tau$.
The map $\sigma_\tau$ is a ring homomorphism and is $K$-linear
if and only if $\tau=\id_K$.

By an algebraic variety over a field $k$, we mean a geometrically integral, separated scheme of finite type over $k$. A curve over $k$ is an algebraic variety of dimension one.
For an algebraic variety $X/K$, we denote by $\Omega^1_{X/K}$ its sheaf of relative differentials and by $X^\an$ its Berkovich analytification by $X^\an$, see \cite[Theorem 3.4.1]{berkovich1990spectral}.

For a smooth curve $X/K$ and $P\in X(\overline K)$, let $K(P)$ denote the smallest extension of $K$ over which $P$ is defined.
For $f\in K(X)$, we write $\ord_P(f)$ for its order of vanishing at $P$, with negative values indicating poles. If $f$ is regular at $P$, we write $f(P)\in K(P)$ for its value at $P$. We use the same evaluation notation for $K$-analytic functions defined on a neighborhood of $P$.

We write $D(a,r)$ and $\overline D(a,r)$ for the open and closed discs of radius $r>0$ centered at $a\in K$ in $(\mathbb A_K^1)^\an$. The Tate algebra $K\langle t\rangle$ consists of power series converging on $\overline D(0,1)$.
We write
\[
K\{t\}=\left\{\sum_{n\geq0}a_nt^n\in K[[t]]
\;\middle|\; |a_n|r^n\to0\text{ for every }0<r<1\right\}
\]
for the ring of analytic functions on $D(0,1)$.
For $0<\alpha<\beta$, the notation $K\langle\alpha/t,t/\beta\}$ denotes the ring of Laurent series
converging on the half-open annulus $\alpha\leq|t|<\beta$.

For a ring $R$, we write $M_{a\times b}(R)$ for the set of
$a\times b$ matrices over $R$, and $M_d(R)=M_{d\times d}(R)$.
Basis tuples such as $\boldsymbol{b}^0$ and
$\boldsymbol{b}^\infty$ are row vectors; their coefficient vectors
are columns. For $w\in R^{\oplus d}$, the components $w_i$ are
indexed by $0\leq i\leq d-1$, whereas matrix indices start at $1$.
For a nonzero polynomial $h$, we denote its leading coefficient by $\operatorname{lc}(h)$ and use $\deg(0)=-\infty$.

For a polynomial over $K$, $v_K$ denotes the minimum valuation of its coefficients. We extend this valuation to $K(t)$ by $v_K(a/b)=v_K(a)-v_K(b)$ for nonzero polynomials $a,b$.
For a convergent Laurent expansion $f=\sum_{i\in\Z}a_it^i$, we set
\[v_K(f)=\inf_{i\in\Z}v_K(a_i).\]
For a matrix $A=(A_{ij})$, $v_K(A)$ is the minimum of the valuations of its entries. Similarly, for a matrix of rational functions, $\ord_z(A)$ is the minimum of the orders of its entries at $z$.

\section{Effective convergence bounds}
\label{section:Effective convergence bounds}
In this section, we follow the idea of \cite{kedlaya2012effective} and extend their main result to the one over any finite extension of $\Q_p$.

\begin{art}
	Let $X$ be an algebraic variety over $K$, and $(\Omega_{X/K}^1,d\colon \OO_X\to \Omega_{X/K}^1)$ the relative differential sheaf of $X$ over $K$. Let $\mathcal{E}$ be an $\OO_X$-module. A \emph{connection} on $\mathcal{E}$ is a $K$-linear map
	\[\nabla: \mathcal{E}\rightarrow \Omega_{X/K}^1\otimes_{\OO_X}\mathcal{E}\]
	satisfying the Leibniz rule: for any local sections $f$ and $s$ of $\OO_X$ and $\mathcal{E}$, respectively, we have
	\[\nabla(fs)=f\nabla(s)+ df\otimes s.\] 
\end{art}

In this section, we fix an open subset $U\subset \P_K^1$ such that $Z\coloneq\P_K^1\setminus U$ is nonempty. We also fix a connection $(\mathcal{E},\nabla)$ on $U$ with $\mathcal{E}$ a free sheaf. Let $v_1,\dots, v_n$ be a basis of $\mathcal{E}$. 
Recall that $\sigma_{\tau,\flat}$ is induced by
$f(t)\mapsto f^\tau(t^{q_\tau})$. We denote by $V\subset U^\an$
the complement of the union of open discs of radius $1$ around the points
of $Z$. We assume that $\infty\not\in V$. Hence
$V\subset \overline{D}(0,1)\subset (\mathbb{A}^1_K)^\an$ and
$\Omega_{V/K}^1 = \OO_Vdt$.

\begin{art}
A \emph{strict neighborhood} $W$ of $V$ is an analytic domain of $\P^{1,\an}_K$ such that $\{W, \P^{1,\an}_K\setminus V\}$ is an admissible covering of $\P^{1,\an}_K$ in the sense of \cite[\S 1.6]{berkovich1993etale}. We set $\OO^\dagger(U)$ the ring of functions that converge on some strict neighborhood of $V$.
\end{art}   

\begin{art}\label{q-frobenius structure def}
A \emph{$q_\tau$-Frobenius structure} on
$(\mathcal{E},\nabla)$ is an isomorphism
$F\colon\sigma_{\tau,\flat}^*\mathcal{E}\rightarrow\mathcal{E}$
of free sheaves with connection defined on some strict neighborhood of $V$.
\end{art}

The following lemma is from definition, see also \cite[Observation~5.1.3]{kedlaya2012effective1} and \cite[Section~2]{kedlaya2012effective}.

\begin{lemma}\label[plemma]{lemma:matricesofbasischange}
	Assume that $\infty\not\in V$, i.e. $V\subset (\mathbb{A}^1_K)^\an$. Let $(\mathcal{E},\nabla,F)$ be a $q_\tau$-Frobenius structure on $V$ with respect to $\sigma_{\tau,\flat}$. 
	We set $N=(N_{ij})\in M_n(\OO_U(U))$, $\Phi=(\Phi_{ij})\in M_n(\OO^\dagger(U))$ such that \[\nabla(v_j)=\sum_{i=1}^nN_{ij}v_i\otimes dt \ \ (j=1,\dots, n),\]
	\[F(v_j) = \sum_{i=1}^n\Phi_{ij}v_i \ \ (j=1,\dots, n).\]
	\begin{enumerate1}
		\item \label{matrix of connection by base change} If $w_1,\dots, w_n$ is another local basis of $\mathcal{E}$, i.e. there is a matrix $W=(W_{ij})\in \GL_n(K(t))$ such that $w_j=\sum_{i=1}^nW_{ij}v_i$, then the matrix of $\nabla$ with respect to $w_1,\dots, w_n$ is 
		\[W^{-1}NW+W^{-1}\dfrac{d}{dt}W,\]
		and the matrix of $F$ with respect to $w_1,\dots, w_n$ is
		\[W^{-1}\Phi\sigma_\tau(W).\]
		\item \label{matrix of Frobenius by base change} The matrices $N$ and $\Phi$ satisfy the following compatibility equation:
		\begin{align}
			N\Phi+\frac{d}{dt}(\Phi)
			=q_\tau t^{q_\tau-1}\Phi\sigma_\tau(N).
			\label{equationoffrobeniusstructure}
		\end{align}
	\end{enumerate1}
\end{lemma}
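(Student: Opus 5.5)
The plan is a direct computation from the definitions: the Leibniz rule for $\nabla$, and the fact that $F$ is $\OO$-linear as a map $\sigma_{\tau,\flat}^*\mathcal{E}\to\mathcal{E}$. The latter means $F(h\,\sigma^*v)=h\,F(\sigma^*v)$, and a section $g\,v$ of $\mathcal{E}$ pulls back to $\sigma_\tau(g)\,\sigma^*v$. We will also use that $\sigma_\tau^*(dt)=d(t^{q_\tau})=q_\tau t^{q_\tau-1}dt$. Here $\tau$ acts only on constants, so $\frac{d}{dt}\sigma_\tau(h)=q_\tau t^{q_\tau-1}\sigma_\tau(h')$, since $d/dt$ commutes with coefficientwise $\tau$.

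For part \ref{matrix of connection by base change}, write $\boldsymbol v=(v_1,\dots,v_n)$ as a row vector, so $\boldsymbol w=\boldsymbol v W$.
\begin{itemize}
\item \emph{Connection.} By Leibniz,
\[
\nabla(\boldsymbol w)=\nabla(\boldsymbol v)W+\boldsymbol v\,\tfrac{dW}{dt}\,dt=\boldsymbol v\,(NW+W')\,dt=\boldsymbol w\,W^{-1}(NW+W')\,dt .
\]
This gives $W^{-1}NW+W^{-1}W'$.
\item \emph{Frobenius.} The pulled-back basis satisfies $\sigma^*\boldsymbol w=\sigma^*\boldsymbol v\,\sigma_\tau(W)$. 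By linearity,
\[
F(\sigma^*\boldsymbol w)=F(\sigma^*\boldsymbol v)\,\sigma_\tau(W)=\boldsymbol v\,\Phi\,\sigma_\tau(W)=\boldsymbol w\,W^{-1}\Phi\,\sigma_\tau(W).
\]
\end{itemize}
This presumes $W$ is invertible over the relevant ring of functions on the strict neighborhood, which holds for $W\in\GL_n(K(t))$ away from finitely many points.

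For part \ref{matrix of Frobenius by base change}, use that $F$ is horizontal: $\nabla\circ F=(\id\otimes F)\circ\sigma^*\nabla$. The pullback connection on $\sigma^*\mathcal{E}$ satisfies
\[
\sigma^*\nabla(\sigma^*v_j)=\sum_i\sigma_\tau(N_{ij})\,\sigma^*v_i\otimes q_\tau t^{q_\tau-1}dt .
\]
Applying $F$ to this gives $\boldsymbol v\,\Phi\,\sigma_\tau(N)\,q_\tau t^{q_\tau-1}dt$. On the other side, Leibniz gives
\[
\nabla(F(\sigma^*\boldsymbol v))=\nabla(\boldsymbol v\Phi)=\boldsymbol v\,(N\Phi+\Phi')\,dt .
\]
Comparing the coefficients of $\boldsymbol v\,dt$ yields \eqref{equationoffrobeniusstructure}.

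The main obstacle is bookkeeping rather than substance. One must check that the pullback of a connection along the semilinear morphism $\sigma_{\tau,\flat}$ is well defined and given by the formula above. The point to verify is that the derivation $d/dt$ is $K$-linear while $\sigma_\tau$ is only $\tau$-semilinear. The Leibniz rule and the chain rule $d(\sigma_\tau h)=\sigma_\tau(h')\,d(t^{q_\tau})$ still hold because $\tau$ fixes $\Q_p$ and $d$ kills constants. Once this is checked, both identities follow from the computations above.
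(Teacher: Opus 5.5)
Your proposal is correct and is the direct verification from the definitions that the paper intends; the paper gives no proof beyond ``from definition'' and a reference to Kedlaya. The same three ingredients appear in your argument. The Leibniz rule gives the connection matrix. The $\OO$-linearity of $F$ on $\sigma_{\tau,\flat}^*\mathcal{E}$, with $\sigma^*(g\,v)=\sigma_\tau(g)\,\sigma^*v$, gives the Frobenius matrix. Horizontality of $F$, together with $d(\sigma_\tau h)=q_\tau t^{q_\tau-1}\sigma_\tau(h')\,dt$, gives the compatibility equation.
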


\begin{definition} 	\label[pdef]{def:connectionregularatz}
	Let $z\in \overline{K}$, and $N$ the matrix of $\nabla$ with respect to the basis $v_1,\dots, v_n$ as in \cref{lemma:matricesofbasischange}. We say that $v_1,\dots, v_n$ on $U$ is \emph{regular at $z$} (or \emph{Fuchsian at $z$}) if the matrix $(t-z)N$ is holomorphic in a neighborhood of $z$. We say that $\nabla$ is \emph{regular at $z$} if it admits a regular basis on some neighborhood of $z$.
\end{definition}

\begin{definition} 	\label[pdef]{def:exponentofmatrix}	
	Keep the notation in \cref{def:connectionregularatz}. Suppose that the basis $v_1,\dots, v_n$ is regular at $z$. The \emph{residue matrix at $z$} of this basis is $((t-z)N)|_{t=z}$ (notice that if $z\in U$ then $(t-z)N|_{t=z}=0$). The \emph{exponents of $\nabla$} at $z$ with respect to $v_1,\dots, v_n$ are the eigenvalues of the residue matrix $((t-z)N)|_{t=z}$.
\end{definition}

\begin{remark}
\cref{def:exponentofmatrix} and \cref{def:connectionregularatz} are invariant under automorphisms of $\P^1_K$, so they can be defined at $\infty\in \P_K^1$: let $s=t^{-1}$, then $N$ is regular at $z=\infty$ if $-s^{-1}N(s^{-1})$ is holomorphic in a neighborhood of $s=0$. Similarly, $-tN|_{t=\infty}$ is the residue matrix at $z=\infty$.

		To show our claim above, let $t=\gamma(s)\in K(s)$ be a M\"obius transform over $K$ such that $w\coloneq\gamma^{-1}(z)\in \overline{K}$. The matrix of $\nabla$ becomes $N(\gamma(s))\gamma'(s)$. Notice that $\frac{s-w}{\gamma(s)-\gamma(w)}\gamma'(s)|_{s=w} = 1$ and $t=\gamma(s)$ is an automorphism, so $\gamma'(w)\not=0$, and $(s-w)N(\gamma(s))\gamma'(s)$ is holomorphic if and only if $(t-z)N$ is so. Moreover, 
		\[(s-w)N(\gamma(s))\gamma'(s)|_{s=w} = (\gamma(s)-\gamma(w))N(\gamma(s))|_{s=w}=(t-z)N(t)|_{t=z}.\]
		In our case, we take $\gamma(s)=s^{-1}$ and $w=0$.
\end{remark}

The main result of this section is the following theorem.

\begin{theorem} 	\label{thm:degreeoffrobeniusstructuresatapoint}
	Let $z\in Z(\overline{K})$. Let $F$ be a Frobenius structure on $(\mathcal{E},\nabla)$ with respect to $\sigma_{\tau,\flat}$, and $\Phi=(\Phi_{ij})\in M_n(\OO^\dagger(U))$ be the matrix of $F$ with respect to the basis $v_1,\dots, v_n$. Assume that 
	\begin{itemize}
		\item $Z$ does not contain any other point with the same reduction as $z$ modulo $K^{\circ\circ}$ (the model of $\P_K^1$ is $\P_{K^\circ}^1$);
		\item $v_1,\dots, v_n$ is regular at $z$, and the exponents $\{\lambda_1,\dots, \lambda_n\}$ of $\nabla$ at $z$ are contained in $\Q\cap K^\circ$.
	\end{itemize}  For $i\in \N$, set 
\begin{equation*}
	f(i)\coloneq \begin{cases} 
	
		0 & \text{ if $i=0$};\\[4ex]
		\!\begin{aligned}[t]
			\max&\{(v_K(\Phi)+v_K(\Phi^{-1}))(\lfloor\log_{q_\tau}i\rfloor+1), \\&(n-1)\min\{0,v_K(N)\}-e_K(n-1)\left(\lfloor\log_pi\rfloor-\ell_\tau\lfloor\log_{q_\tau}i\rfloor\right)\\
			&\quad+(v_K(\Phi)+v_K(\Phi^{-1}))\lfloor\log_{q_\tau}i\rfloor\}
		\end{aligned} & \text{ otherwise,}
	\end{cases} 
\end{equation*}
	and define
	\[c\coloneq\begin{cases}
		0& \text{ if $e_K\leq p-1$ and $v_K(N)\geq 0$,}\\
		\min\{0, i+f(i)\mid i\in \N\}& \text{ otherwise}.
	\end{cases}\]
For $m\in \N$, put
\[\varphi(m)\coloneq \max\{i\in\N\mid i+v_K(\Phi)+c+ f(i)<m\},\]
with the convention that this maximum is $0$ when the set is empty, and define
	\begin{align*}
		\alpha_1\coloneq&\lfloor -q_\tau\min_i\{\lambda_i\}+\max_i\{\lambda_i\}\rfloor,\\
		\alpha_2\coloneq&\begin{cases}
			0 & \text{ if $N$ does not have a pole at $z$,}\\
			0& \text{ if $z\in \{0,\infty\}$,}\\
			\varphi(m)& \text{ otherwise.}
		\end{cases}
	\end{align*}
	Then for any $1\leq i,j\leq n$, there is a rational function $h(t)/g(t)\in K(t)$ with no pole other than $z$ having the same reduction as $z$, of order $\geq -(\alpha_1+q_\tau\alpha_2)$ at $z$, such that $\Phi_{ij}-h(t)/g(t)\in (K^{\circ\circ})^m[[ t-z, (t-z)^{-1}]]$.
\end{theorem}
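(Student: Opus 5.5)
We follow the strategy of Kedlaya--Tuitman \cite{kedlaya2012effective}, adapted to the semilinear setting and to ramified $K$. After a M\"obius transformation over $K^\circ$ (allowed by the preceding remark), we may assume $z$ is either $0$, $\infty$, or a point of $D(0,1)$ whose residue disc contains no other point of $Z$. We work on the annulus near $z$ where $\Phi$ converges, and write $\Phi$ as a Laurent series in $s=t-z$. In this coordinate the connection and Frobenius data have explicit local expansions. Frobenius now sends $s$ to $\sigma_\tau(t)-\sigma_\tau(z)$, a unit multiple of $s^{q_\tau}$ only when $z$ is $\tau$-fixed and the lift is adapted to $z$ (e.g.\ $z\in\{0,\infty\}$). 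This is why $\alpha_2$ vanishes in those cases and in the case that $N$ has no pole.

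\textbf{Step 1 (local solution matrix).} Since the basis is regular at $z$ with exponents in $\Q\cap K^\circ$, we construct a formal fundamental solution
\[
Y=G(s)\,s^{-A}, \qquad G\in M_n(K[[s]]),\quad G(0)=I,
\]
with $A$ the residue matrix. The exponents do not differ by nonzero integers after shearing; if they do, we first apply shearing transformations, which are absorbed into the constant $\alpha_1$. The recursion for the coefficients of $G$ involves dividing by $i+\lambda_j-\lambda_k$. Bounding these denominators $p$-adically gives $v_K(G_i)\geq -e_K(n-1)\lfloor\log_p i\rfloor+(n-1)\min\{0,v_K(N)\}$, the Dwork--Christol type bound. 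This is where the second expression in $f(i)$ originates.

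\textbf{Step 2 (Frobenius in terms of solutions).} The compatibility equation \eqref{equationoffrobeniusstructure} of \cref{lemma:matricesofbasischange} shows that
\[
\Phi=Y\,C\,\sigma_\tau(Y)^{-1}
\]
for a constant matrix $C$. The terms $s^{-A}$ and $s^{q_\tau A}$ contribute the fractional powers. Combined with the integrality of $\Phi$ near $z$, this forces the polar part to have order at most $\alpha_1=\lfloor -q_\tau\min\lambda_i+\max\lambda_i\rfloor$.

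\textbf{Step 3 (improving the bound by iterating Frobenius).} Bounding $\sigma_\tau(Y)^{-1}$ directly gives too weak an estimate. Instead we use the Frobenius structure itself: since $\tau^{n_\tau}=\id$, iterating
\[
\Phi\,\sigma_\tau(\Phi)\cdots\sigma_\tau^{k-1}(\Phi)
\]
relates $G$ on the disc of radius $|p|^{1/(p-1)}$ to $G$ on larger radii. Each iterate costs $v_K(\Phi)+v_K(\Phi^{-1})$, and the number of iterates needed to reach index $i$ is $\lfloor\log_{q_\tau}i\rfloor+1$. This yields the first term of $f(i)$. Taking the better of the two bounds, and making the $p$- versus $q_\tau$-log mismatch explicit, gives exactly $f(i)$. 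The coefficient of $s^{i}$ in the regular part of $\Phi$ then has valuation at least $i+v_K(\Phi)+c+f(i)$. The shift $c$ normalizes the estimate in the case $e_K>p-1$ or $v_K(N)<0$, where the radius $|p|^{1/(p-1)}$ no longer dominates.

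\textbf{Step 4 (truncation).} Terms with exponent below $-(\alpha_1+q_\tau\alpha_2)$ have valuation at least $m$ by Step 3 and the definition of $\varphi(m)$. In the general case the extra factor $q_\tau\alpha_2$ arises because $\sigma_\tau(s)$ is not a power of $s$: the polar part in $\sigma_\tau(t)-\sigma_\tau(z)$ expands with up to $q_\tau\varphi(m)$ additional negative powers. Truncating gives $h/g$ with $g$ a power of $(t-z)$, or of its minimal polynomial if $z\notin K$, which is Galois-stable. The difference from $\Phi_{ij}$ lies in $(K^{\circ\circ})^m[[t-z,(t-z)^{-1}]]$.

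\textbf{Main obstacle.} The hardest step is Step 3 in the ramified and semilinear setting. One must check that Dwork's transfer argument with $\sigma_\tau$ (coefficientwise $\tau$) preserves the valuation estimates, and that the radius-of-convergence comparison uses $e_K$ correctly. I would first redo Kedlaya--Tuitman's Lemma on $v_K(G_i)$ with $v_K(p)=e_K$, then handle the iterate count by passing to the $q$-power Frobenius.
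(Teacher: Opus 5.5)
Your proposal has a genuine gap. It never sets up the mechanism that produces $\alpha_2$, $\varphi(m)$ and $c$: a comparison with an auxiliary Frobenius lift adapted to $z$.

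\textbf{What the paper does.} It introduces $\sigma'$ with $\sigma'(t-z)=(t-z)^{q_\tau}$ and the corresponding Frobenius structure $F'$. By \cref{lemma:orderoffrobeniusstructureat0}, the matrix $\Phi'$ of $F'$ has order $\ge-\alpha_1$ at $z$ and no polar tail. The transfer formula of \cref{lemma:changeoffrobeniusstructures} then gives
\[
\Phi=\sum_{i\ge0}(\pi u)^i\,\Phi'\,\sigma'(\Delta^{(i)}),\qquad \pi u=(t^{q_\tau}-\tau(z))-(t-z)^{q_\tau},
\]
where $\Delta^{(i)}$ is the matrix of $D^i/i!$. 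The $i$-th term has order $\ge-\alpha_1-q_\tau i$ at $z$ and valuation $\ge i+v_K(\Phi')+f(i)$. So $\varphi(m)$ is simply the cutoff in $i$. The constant $c$ is the loss in $v_K(\Phi')\ge v_K(\Phi)+c$ from the inverse transfer, whose terms carry $\pi^i/i!$; these are integral when $e_K\le p-1$ and $v_K(N)\ge0$.

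\textbf{How Steps 2--4 fail without this.}
\begin{itemize}
\item Step~2 writes $s^{q_\tau A}$, i.e.\ it assumes $\sigma(s)=s^{q_\tau}$. It is therefore valid only for such an adapted lift.
\item For $\sigma_\tau$ itself, with $z\notin\{0,\infty\}$ and $N$ singular at $z$, $\Phi$ in general has an infinite polar tail. Your claim that the polar part has order at most $\alpha_1$ is then false.
\item Step~3 bounds ``the coefficient of $s^i$ in the regular part of $\Phi$.'' This is not the relevant quantity: the theorem concerns the negative-power coefficients, and regular-part bounds say nothing about them. So Step~4 does not follow from Step~3.
\item Your Step~4 intuition (extra poles because $\sigma_\tau(s)$ is not a power of $s$) is right. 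It becomes a proof only after Taylor-expanding in $\pi u$ as above, and that requires bounds on $\Delta^{(i)}$.
\item A general M\"obius change of coordinate does not preserve $t\mapsto t^{q_\tau}$. Your opening normalization therefore changes the statement, whose case $z\in\{0,\infty\}$ refers to this specific lift.
\end{itemize}

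\textbf{What $f(i)$ actually bounds.} $f(i)$ bounds $v_K(\Delta^{(i)})$ (\cref{lemma:valuationboundofdifferentials}). The bound is obtained at the generic point $\eta$ of $\overline D(z,1)$, where the connection is \emph{nonsingular}. There the horizontal Taylor solution is controlled by the Frobenius structure (Kedlaya's Theorem~18.3.3 and Remark~18.3.4). Both branches of $f$ come from this argument; the second branch itself contains the Frobenius term $(v_K(\Phi)+v_K(\Phi^{-1}))\lfloor\log_{q_\tau}i\rfloor$ and the $\ell_\tau$ correction. Neither branch is a bound on a formal solution at the singular point $z$.

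\textbf{The problem with Step~1.} You claim that ``bounding these denominators'' yields a logarithmic bound on $G_i$. That method cannot work. Each recursion step divides by some $i+\lambda_j-\lambda_k$ and multiplies by $N$, so the naive estimate loses linearly in $i$: about $e_Ki/(p-1)$ from the denominators alone, exactly as for $\exp(t)$. Logarithmic growth, and even convergence of $G$ on the open unit disc, requires solvability, i.e.\ the Frobenius structure. Step~1 never uses it.

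\textbf{Descent when $z\notin K$.} You also need that $K(z)/K$ is unramified. This follows from the reduction hypothesis via the inertia group. It is what lets $\tau$ extend to $K(z)$ with residue action $a\mapsto a^{q_\tau}$ and keeps $\pi$ a uniformizer.
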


\begin{remark}\label{remark:degreeoffrobeniusstructuresatapoint}
 Keep the notation in \cref{thm:degreeoffrobeniusstructuresatapoint}. In this paper, only the case where $v_K(\Phi), v_K(\Phi^{-1})\geq 0$ (which forces $v_K(\Phi)=v_K(\Phi^{-1})= 0$ since $v_K(\Phi)+v_K(\Phi^{-1})\leq 0$) will be considered. In this case,  we have $f(i)\geq 0$ and $c=0$, so $\varphi(m)\leq m-1$ for $m\geq1$ and 
		$-(\alpha_1+q_\tau\alpha_2)\geq -(\alpha_1+q_\tau(m-1))$. If moreover $\lambda_1,\dots, \lambda_n\in[k,k+1)$ for $k\in\Z$, then $$-(q_\tau-1)(k+1)< -q_\tau\min_i\{\lambda_i\}+\max_i\{\lambda_i\}< (1-q_\tau)k+1,$$ so $-(q_\tau-1)(k+1)\leq \alpha_1\leq (1-q_\tau)k$ and 
		\[-(\alpha_1+q_\tau\alpha_2)\geq-q_\tau(m-k-1)-k.\]
\end{remark}

\begin{lemma}	\label{lemma:orderoffrobeniusstructureat0}
	Let $N=\sum_{i=-1}^\infty N_it^i\in M_n(K[[ t,t^{-1}]])$ such that $tN\in M_n(K\{ t\})$ and the eigenvalues $\lambda_1,\dots, \lambda_n$ of $N_{-1}$ are rational numbers with denominator coprime to $p=\Char(\widetilde{K})$. Let $\Phi = \sum_{i=-\infty}^\infty\Phi_it^i\in M_n(K\langle\alpha_0/t, t\})$ for some $0<\alpha_0<1$. Suppose that $N, \Phi$ satisfy \eqref{equationoffrobeniusstructure}. Then $\Phi_i=0$ wherever
	\[i<q_\tau\min_j\{\lambda_j\} - \max_j\{\lambda_j\}.\]
\end{lemma}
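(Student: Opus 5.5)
Write $\sigma_\tau(N)=N^\tau(t^{q_\tau})$. The plan is to expand the compatibility equation \eqref{equationoffrobeniusstructure} into Laurent coefficients and compare them. The coefficient equation at the minimal possible index should force vanishing, because the residues act on $\Phi_i$ through a Sylvester-type operator whose spectrum avoids $0$ in the relevant range.

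First, substitute the expansions. Since $tN\in M_n(K\{t\})$, we have
\[
t\sigma_\tau(N)=t\sum_{k\ge -1}N_k^\tau t^{q_\tau k}=t^{1-q_\tau}\sum_{k\geq -1}N_k^\tau t^{q_\tau(k+1)}.
\]
Hence $q_\tau t^{q_\tau}\sigma_\tau(N)=q_\tau\sum_{k\ge-1}N_k^\tau t^{q_\tau(k+1)}$, whose constant term is $q_\tau N_{-1}^\tau$. Multiply \eqref{equationoffrobeniusstructure} by $t$ to obtain
\[
tN\Phi+t\tfrac{d}{dt}\Phi=q_\tau t^{q_\tau}\Phi\sigma_\tau(N).
\]
Comparing the coefficient of $t^i$ gives
\[
N_{-1}\Phi_i+i\Phi_i-q_\tau\Phi_iN_{-1}^\tau
=-\sum_{k\ge0}N_k\Phi_{i-1-k}+q_\tau\sum_{k\ge0}\Phi_{i-q_\tau(k+1)}N_k^\tau .
\]
Every term on the right involves some $\Phi_j$ with $j<i$. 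These products converge because $\Phi$ converges on an annulus $\alpha_0\le|t|<1$ and $tN$ converges on the open unit disc, so both products are genuine Laurent series on that annulus. The eigenvalues of $N_{-1}^\tau$ are the $\tau$-images of the $\lambda_j$, and these are again the $\lambda_j$ since they are rational. Therefore the linear operator $L_i(X)=N_{-1}X+iX-q_\tau XN_{-1}^\tau$ has eigenvalues $\lambda_a+i-q_\tau\lambda_b$. It is invertible whenever $i\neq q_\tau\lambda_b-\lambda_a$ for all $a,b$, and in particular whenever $i<q_\tau\min\lambda-\max\lambda$.

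The difficulty is that $\Phi$ is a two-sided Laurent series, so there is no smallest index from which to start an induction. To get around this, I would use growth estimates. Suppose some $\Phi_i\neq0$ with $i$ below the bound. Invertibility of $L_i$ together with the recursion expresses $\Phi_i$ through the $\Phi_j$ with $j<i$. Running this relation downward shows that $\Phi_j$ cannot decay too quickly as $j\to-\infty$: some sequence of indices $j\to-\infty$ has $|\Phi_j|\ge C\rho^{j}$ for a radius $\rho$ tending to $1$. This contradicts convergence on $|t|\ge\alpha_0$, which requires $|\Phi_j|\alpha_0^{j}\to0$.

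The cleaner route, which I would try first, is to use the condition that the denominators of the $\lambda$'s are prime to $p$. This lets us pass to a cover $t=s^{d}$ and apply a shearing gauge transformation (\cref{lemma:matricesofbasischange}) to reduce to nilpotent residue, hence all exponents $0$. In that setting the solution matrix of $\nabla$ has the form $Y=G(t)t^{N_{-1}}$ with $G$ holomorphic and invertible near $0$, by regularity and the fact that no two exponents differ by a nonzero integer. This is the standard Frobenius method, and $p$-adic convergence of $G$ on a small disc is enough here. Functoriality then gives $\Phi\,\sigma_\tau(Y)=Y\,C$ for a constant matrix $C$. So
\[
\Phi=G\,t^{N_{-1}}C\,t^{-q_\tau N_{-1}^\tau}\sigma_\tau(G)^{-1},
\]
a product of a meromorphic function at $0$ and log-power terms. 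Single-valuedness kills the logarithms, and reading off the $t$-powers gives the lower bound $q_\tau\min\lambda-\max\lambda$ on the exponents. Undoing the shearing and the cover recovers the general case. Making this comparison rigorous on the annulus, where $G$ need not converge, is the main obstacle, and I would handle it by a uniqueness argument for Laurent solutions of the recursion above.
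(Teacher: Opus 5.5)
Your reductions (the cover $t=s^k$ with $k$ prime to $p$, shearing to exponents $0$, and reading off the bound from $\Phi=W\Phi'\sigma_\tau(W^{-1})$) are the same as the paper's. Your coefficient recursion and the invertibility of $L_i$ below the bound are also correct. The gap is the exponent-zero case. It carries all the analytic content, and the paper takes it from Kedlaya's Proposition~17.5.1.

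Your claim that convergence of $G$ on a small disc is enough is where the argument fails. At step $i$, the recursion for $G$ inverts an operator all of whose eigenvalues equal $i$, so the inverse has norm at least about $|i|^{-1}$. These factors compound like $|m!|^{-1}$, so $G$ may converge only on $|t|<\epsilon$ with $\epsilon\leq\alpha_0$. Already for $n=1$, $N=-1$, the formal solution is $\exp(t)$, of radius $|p|^{1/(p-1)}$. In that case $Y$, $\sigma_\tau(Y)$ and $\Phi$ have no common domain, so $\Phi\,\sigma_\tau(Y)=YC$ has no meaning. You also cannot continue $Y$ outward as $\Phi\,\sigma_\tau(Y)C^{-1}$, because that requires $\Phi$ near $0$.

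Your proposed remedy, uniqueness of Laurent solutions of the recursion, is exactly the two-sided-series obstruction you identified yourself. The growth-estimate sketch does not overcome it. Along a descending chain, the factors $|L_j^{-1}|$ and the constants $C_\rho$ in $|N_k|\leq C_\rho\rho^{-k}$ multiply up exponentially. So you obtain no lower bound on $|\Phi_j|$ that contradicts $|\Phi_j|\alpha_0^{j}\to0$ as $j\to-\infty$.

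What is missing is the step that uses the Frobenius structure itself, namely that $\Phi$ is an isomorphism and not merely a solution of \eqref{equationoffrobeniusstructure}. By Dwork's trick, this makes $\nabla$ solvable as $|t|\to1$. A transfer theorem for a regular singularity with nilpotent residue then gives $G\in\GL_n(K\{t\})$ with $G(0)=I$, convergent on the whole open disc, such that $G^{-1}NG+G^{-1}\frac{dG}{dt}=N_{-1}/t$.

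Once you have this, your own recursion finishes the argument. By \cref{lemma:matricesofbasischange}, $\Phi''=G^{-1}\Phi\,\sigma_\tau(G)$ satisfies $N_{-1}\Phi''+t\frac{d}{dt}\Phi''=q_\tau\Phi''N_{-1}^\tau$. The right-hand side of your recursion therefore vanishes, and $L_i(\Phi''_i)=0$ for every $i$. Since $N_{-1}$ is nilpotent, every eigenvalue of $L_i$ equals $i$. Hence $\Phi''_i=0$ for $i\neq0$, and $\Phi=G\Phi''_0\,\sigma_\tau(G)^{-1}\in M_n(K\{t\})$, with no logarithms to discuss. As written, however, the proposal does not establish the case $\lambda_1=\dots=\lambda_n=0$.
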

\begin{proof}
	The proof for $K$ unramified over $\Q_p$ and $q_\tau=p$ (that is, over $\Q_q$ with the semilinear $p$-power Frobenius lift) is given in \cite{kedlaya2012effective}, but it can be easily extended to general $K$ and $q_\tau$.
	
	If $\lambda_1=\dots=\lambda_n=0$, this is from \cite[Proposition~17.5.1]{kedlaya2010p-adic}. Indeed, $tN$ defines a differential operator $D$ on $K\{ t\}^{\oplus n}$ with respect to $td/dt$: for $v\in K\{ t\}^{\oplus n}$, 
	\[D(v)\coloneq  tNv+t\frac{d}{dt}(v),\]
	then the equality
	\[tN\Phi+t\dfrac{d}{dt}(\Phi)=\frac{t}{\sigma_\tau(t)}\dfrac{d\sigma_\tau(t)}{dt}\Phi\sigma_\tau(tN),\]
	shows that $D, \Phi$ define a differential operator and a compatible Frobenius structure on $K\langle\alpha_0/t, t\}^{\oplus n}$. By \cite[Proposition~17.5.1]{kedlaya2010p-adic}, we know that $\Phi\in M_n(K\{ t\})$, in particular, $\Phi_i=0$ for $i<0$.
	
In general, we first reduce to the case $\lambda_1,\dots,\lambda_n\in\Z$. For $k\in\N_{\geq1}$ coprime to $p$, consider the homomorphism $\alpha\colon K[[ t,t^{-1}]]\to K[[ s,s^{-1}]]$, $t\mapsto s^k$. It satisfies $(s\frac{d}{ds})\circ\alpha=k\cdot\alpha\circ(t\frac{d}{dt})$.

    After the base change $\alpha^*$, matrices $N, \Phi$ are replaced by 
	\[ks^{-1}\alpha(tN) = \sum_{i=0}^\infty kN_{i-1}s^{ik-1}, \ \ \alpha(\Phi)=\sum_{i=-\infty}^\infty\Phi_is^{ki}\]
	respectively.
	Notice that the eigenvalues of $kN_{-1}$ are $k\lambda_1,\dots, k\lambda_n$; hence, if the statement holds for $ks^{k-1}\alpha(N), \alpha(\Phi)$, then it also holds for $N, \Phi$. So we can assume that $\lambda_1,\dots, \lambda_n\in \Z$.
	
	Set $a\coloneq\min_{j}\{\lambda_j\}, b\coloneq\max_{j}\{\lambda_j\}$. If $\lambda_1,\dots, \lambda_n\in \Z$, by shearing transformations, i.e. \cite[Lemma~5.1.6]{kedlaya2012effective1}, we can find a matrix $W=(W_{ij})\in \GL_n(K(t))$ such that $t^bW$ and $t^{-a}W^{-1}$ are regular at $0$, and the basis $w_1, \dots, w_n$ of $\mathcal{E}$ defined by $w_j= \sum_{i=1}^nW_{ij}v_i$ is regular at $0$ with all exponents equal to $0$. Notice that the matrix of $F$ with respect to $w_1,\dots, w_n$ is $\Phi'=\sum_{i=-\infty}^\infty\Phi'_it^i\coloneq W^{-1}\Phi\sigma_\tau(W)$ by \cref{lemma:matricesofbasischange}~\ref{matrix of connection by base change}, and $\Phi_i'=0$ for $i<0$ from our discussion above. Since $\Phi = W\Phi'\sigma_\tau(W^{-1})$, we have that $\Phi_i=0$ for all $i< q_\tau a-b$.
\end{proof}

In the following, let $D:\mathcal{E}\rightarrow \mathcal{E}$ denote the differential operator on $\mathcal{E}$ defined by $\nabla v=Dv\otimes dt$. 

\begin{lemma}
	\label[plemma]{lemma:changeoffrobeniusstructures}
Let $\sigma_{1,\flat}, \sigma_{2,\flat}\colon \P_K^1\rightarrow \P_K^1$ be morphisms induced by $q_\tau$-power Frobenius lifts $\sigma_1, \sigma_2\colon K(t)\to K(t)$, respectively, both restricting to the same automorphism $\tau$ on $K$. Suppose that $\mathcal{E}$ admits a Frobenius structure $F_1\colon \sigma_{1,\flat}^*\mathcal{E}\overset{\sim}{\rightarrow}\mathcal{E}$ with respect to $\sigma_{1,\flat}$. Then $\mathcal{E}$ also admits a Frobenius structure $F_2\colon \sigma_{2,\flat}^*\mathcal{E}\overset{\sim}{\rightarrow}\mathcal{E}$ with respect to $\sigma_{2,\flat}$ defined by 
\[F_2(v)=\sum_{i=0}^\infty(\sigma_2(t)-\sigma_1(t))^iF_1\left(\frac{D^i}{i!}(v)\right).\]
Notice that $F_1(v)=\sum_{i=0}^\infty(\sigma_1(t)-\sigma_2(t))^iF_2\left(\frac{D^i}{i!}(v)\right)$.
\end{lemma}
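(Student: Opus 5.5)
The plan is to verify directly that $F_2$, defined by the displayed Taylor series, is a well-defined isomorphism of sheaves with connection on some strict neighborhood of $V$, in the same way as in \cite[Proposition~17.3.1]{kedlaya2010p-adic} and \cite[Section~2]{kedlaya2012effective}. Throughout we work in the fixed basis $v_1,\dots,v_n$. Write $\delta\coloneq\sigma_2(t)-\sigma_1(t)$. Both $\sigma_1,\sigma_2$ are $q_\tau$-power Frobenius lifts restricting to $\tau$, so they agree modulo $K^{\circ\circ}$; hence $\delta$ has coefficients in $K^{\circ\circ}$, i.e.\ $|\delta|<1$ on a strict neighborhood of $V$. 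Formally, the series is the Taylor expansion $F_2(v)(t)=\sum_i \frac{(\sigma_2(t)-\sigma_1(t))^i}{i!}\,(\text{parallel transport of }F_1)$, i.e.\ $F_2=\Theta\circ F_1$, where $\Theta$ identifies $\sigma_{1,\flat}^*\mathcal E$ with $\sigma_{2,\flat}^*\mathcal E$ by horizontal transport along the ``path'' from $\sigma_1(t)$ to $\sigma_2(t)$.

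The main obstacle, and the first step, is convergence. In matrix terms $D^i v_j=\sum_k (N_i)_{kj}v_k$, where $N_0=I$ and $N_{i+1}=NN_i+\frac{d}{dt}N_i$. We need $\bigl|\delta^i N_i/i!\bigr|\to0$ uniformly on a strict neighborhood of $V$ (after pulling back via $\sigma_1$). For this we use the existence of the Frobenius structure $F_1$. By Dwork's trick, a connection with Frobenius structure on $V$ has all its horizontal sections converging on open unit discs (the generic radius of convergence equals $1$; see \cite[Corollary~17.2.4]{kedlaya2010p-adic}). Equivalently, $\limsup_i |N_i/i!|^{1/i}\le 1$ on a strict neighborhood, with respect to the sup norm on closed subannuli. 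Combining this with $|\delta|\le|\pi|<1$ gives geometric convergence, after possibly shrinking the strict neighborhood to one where $|\delta|\le\rho<1$. This is precisely the point where $\sigma_1\equiv\sigma_2\pmod{K^{\circ\circ}}$ is used.

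Next we check horizontality, namely $\nabla\circ F_2=(\sigma_{2,\flat}^*\nabla)\circ F_2$. Since $F_1$ is horizontal for $\sigma_{1,\flat}^*\nabla$, we differentiate the series term by term with respect to $t$. Using $\frac{d}{dt}\delta^i=i\delta^{i-1}(\sigma_2'-\sigma_1')$ and the chain rules $D\circ F_1=\sigma_1'(t)\,F_1\circ D$ and $\frac{d}{dt}(h\circ\sigma_k)=\sigma_k'\,(h'\circ\sigma_k)$, the series telescopes to $\sigma_2'(t)\,F_2(Dv)$. This is the standard Taylor-series identity, and it is routine. Semilinearity in $\tau$ is automatic because both lifts restrict to $\tau$.

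Finally we show that $F_2$ is an isomorphism. Exchanging the roles of $\sigma_1$ and $\sigma_2$ gives the inverse formula noted in the statement. Substituting one series into the other and using the binomial identity $\sum_{i+j=k}\frac{\delta^i(-\delta)^j}{i!\,j!}=0$ for $k>0$ shows that the composite of the two transports is the identity. Therefore $F_1$ is recovered from $F_2$, and $F_2$ is invertible with inverse $F_1^{-1}\circ\Theta^{-1}$, so $F_2$ is a $q_\tau$-Frobenius structure in the sense of \ref{q-frobenius structure def}.
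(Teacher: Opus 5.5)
Your proposal is correct and is essentially the argument of \cite[Proposition~17.3.1]{kedlaya2010p-adic}, which is all the paper's proof consists of. It has the same three parts: convergence from the Robba condition supplied by Dwork's trick together with $|\sigma_2(t)-\sigma_1(t)|<1$, horizontality by termwise differentiation of the Taylor series, and invertibility from the reversed series via the binomial identity.

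Two small slips to fix. First, the transport goes $\Theta\colon\sigma_{2,\flat}^*\mathcal E\to\sigma_{1,\flat}^*\mathcal E$, so $F_2=F_1\circ\Theta$ with inverse $\Theta^{-1}\circ F_1^{-1}$. Second, on the thin annuli of radius $\rho<1$ about a point of $Z$, the Robba bound is $\limsup_i|N_i/i!|_\rho^{1/i}\le 1/\rho$ rather than $\le 1$; this is harmless, since $|\delta|$ stays close to $|\pi|$ there and the shrinking of the strict neighborhood you already perform absorbs it.
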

\begin{proof}
See \cite[Proposition~17.3.1]{kedlaya2010p-adic}.
\end{proof}

\begin{lemma}	\label{lemma:valuationboundofdifferentials}
	Let $\Delta^{(i)}\in M_n(K[[ t-z,(t-z)^{-1}]])$ be the matrix of the differential operator $\dfrac{D^i}{i!}$ with respect to the basis $v_1, \dots, v_n$, i.e. 
	$\left(\dfrac{D^i}{i!}\right)v_k=\sum_{j=1}^n\Delta_{jk}^{(i)}v_j.$ Then we have
	\[v_K(\Delta^{(i)})\geq f(i),\]
	where $f(i)$ is defined as in \cref{thm:degreeoffrobeniusstructuresatapoint}.
\end{lemma}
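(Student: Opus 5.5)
The plan is to prove the two lower bounds in the maximum defining $f(i)$ separately. Both come from the \emph{generic Taylor solution}. For $i=0$ there is nothing to prove, since $\Delta^{(0)}$ is the identity. In general, put
\[
Y(t,u)\coloneq\sum_{i\geq0}\Delta^{(i)}(u)\,(t-u)^i ,
\]
the fundamental solution matrix of $\nabla$ near $u$, viewed as a formal series in $t-u$ with coefficients in $K[[u-z,(u-z)^{-1}]]$. The Frobenius compatibility \eqref{equationoffrobeniusstructure} from \cref{lemma:matricesofbasischange} should give the functional equation
\[
Y(t,u)=\Phi(t)\,Y^{\tau}\bigl(t^{q_\tau},u^{q_\tau}\bigr)\,\Phi(u)^{-1}.
\]
Here $Y^\tau$ means $\tau$ applied to the coefficients. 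Both sides are solutions in $t$ with the same initial value $1$ at $t=u$, so uniqueness of formal solutions gives the identity.

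\textbf{First bound (Frobenius iteration).} Put $s=t-u$ and expand
\[
t^{q_\tau}-u^{q_\tau}=\sum_{k\geq1}\binom{q_\tau}{k}u^{q_\tau-k}s^k .
\]
This is an integral polynomial in $s$ with $s$-adic order exactly $1$. Substituting it into $Y^\tau(t^{q_\tau},u^{q_\tau})$, the coefficient of $s^i$ involves only $\Delta^{(j)\tau}(u^{q_\tau})$ with $j\leq i$. It is multiplied by integral coefficients, and then by $\Phi(t)$ expanded in $s$; the divided derivatives $\frac{1}{k!}\frac{d^k}{du^k}$ do not decrease $v_K$. This alone gives no improvement, so I would refine the expansion using $t^{q_\tau}-u^{q_\tau}\equiv s^{q_\tau}\bmod \pi$. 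I would try the following inductive claim. Let $B(j)$ be the bound already established for all $j<q_\tau^{k}$. Then for $q_\tau^k\leq i<q_\tau^{k+1}$ one has $v_K(\Delta^{(i)})\geq (v_K(\Phi)+v_K(\Phi^{-1}))+\min_{j\leq \lfloor i/q_\tau\rfloor}B(j)$. Terms with $j>\lfloor i/q_\tau\rfloor$ would be handled by re-expanding with the functional equation itself, feeding $Y$ back into itself. Iterating this $\lfloor\log_{q_\tau}i\rfloor+1$ times down to $j=0$ gives the first term $(v_K(\Phi)+v_K(\Phi^{-1}))(\lfloor\log_{q_\tau}i\rfloor+1)$.

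\textbf{Second bound.} Here I would use Frobenius only $\lfloor\log_{q_\tau}i\rfloor$ times. This reduces to indices $j<q_\tau$, at the cost of $(v_K(\Phi)+v_K(\Phi^{-1}))\lfloor\log_{q_\tau}i\rfloor$. For such small indices I would bound the divided powers directly from $N$ by a Dwork--Christol type estimate. Passing to a cyclic vector, an $n\times n$ system reduces to a scalar operator of order $n$, and the cofactor and cyclic-vector change of basis costs at most $n-1$ factors of $N$. This gives the term $(n-1)\min\{0,v_K(N)\}$. The factorial denominators contribute at most $(n-1)v_K(j!)$-type losses. Because of the $\tau$-twisted $p^{\ell_\tau}$ structure, only the $p$-adic digits of $i$ not absorbed by $q_\tau$-powers should contribute. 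This yields $e_K(n-1)(\lfloor\log_p i\rfloor-\ell_\tau\lfloor\log_{q_\tau}i\rfloor)$, with $e_K=v_K(p)$ converting $p$-adic to $\pi$-adic valuation.

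The hardest step is the bookkeeping in the first bound. One must show that applying the functional equation loses exactly one factor $v_K(\Phi)+v_K(\Phi^{-1})$ per power of $q_\tau$ while the index drops from $i$ to about $i/q_\tau$. The difficulty is the cross terms with $\lfloor i/q_\tau\rfloor<j\leq i$, which come from $t^{q_\tau}-u^{q_\tau}\neq s^{q_\tau}$. My first attempt would be to follow Kedlaya--Tuitman: compare $\sigma_\tau$ with the Frobenius lift centred at $u$ via \cref{lemma:changeoffrobeniusstructures}, after which $t^{q_\tau}-u^{q_\tau}$ is replaced by an exact $q_\tau$-th power of $s$. The resulting change-of-Frobenius factor is integral, since $\sigma_1(t)-\sigma_2(t)\in\pi K^\circ[[\,\cdot\,]]$ compensates the divided powers $D^i/i!$. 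If this works, the clean recursion above follows.
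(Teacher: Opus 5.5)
Your outline has the right shape: a generic Taylor solution, Frobenius iteration for the first term, and Frobenius iteration plus a Dwork--Robba estimate for the second. That is exactly the content of Kedlaya's Theorem~18.3.3 and Remark~18.3.4 (in his book on $p$-adic differential equations). The paper cites these for the horizontal matrix at the generic point $\eta$ of $\overline D(z,1)$. Its own work is only the triangular induction from those Taylor coefficients to $\Delta^{(i)}(\eta)$, together with $v_K(\Delta^{(i)})=v_K(\Delta^{(i)}(\eta))$.

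In your re-derivation, the step carrying all the content is the one-step recursion
\[
\min_{j\le i}v_K(\Delta^{(j)})\ \ge\ v_K(\Phi)+v_K(\Phi^{-1})+\min_{j\le\lfloor i/q_\tau\rfloor}v_K(\Delta^{(j)}),
\]
and it is not proved. Because $t^{q_\tau}-u^{q_\tau}$ has $s$-order one, the $s^i$-coefficient of $Y^\tau(t^{q_\tau},u^{q_\tau})$ involves every $\Delta^{(j)}$ with $j\le i$, including $j=i$. ``Feeding $Y$ back into itself'' does not control these terms.

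Your proposed repair fails. By \cref{lemma:changeoffrobeniusstructures}, passing to a lift $\sigma_u$ centred at $u$ multiplies $\Phi$ on the right by $\sum_{i\ge0}(\sigma_u(t)-\sigma_\tau(t))^i\sigma_\tau(\Delta^{(i)})$. Showing this factor is integral requires bounds of the form $v_K(\Delta^{(i)})\ge -i\,v_K(\sigma_u(t)-\sigma_\tau(t))$. Those are a priori lower bounds on exactly the quantities you are estimating, so the argument is circular.

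The issue is not the denominators $i!$. Already $\Delta^{(1)}=N$, and $v_K(N)$ can be arbitrarily negative. For example, change the basis of a trivial rank-two connection by $\bigl(\begin{smallmatrix}1&0\\ \pi^{-k}t&1\end{smallmatrix}\bigr)$ with $k>e_K$. Then the factor is $I+(\sigma_u(t)-\sigma_\tau(t))\sigma_\tau(N)$, which is not integral.

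The paper's logic in fact runs the other way. In the proof of \cref{thm:degreeoffrobeniusstructuresatapoint} it compares $\Phi$ with the matrix $\Phi'$ for the lift centred at $z$. It only obtains $v_K(\Phi')\ge v_K(\Phi)+c$ with $c\le0$, and $c$ is computed from $f$, i.e.\ from the present lemma.

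The second bound inherits this gap, since it applies the same recursion $\lfloor\log_{q_\tau}i\rfloor$ times. Its small-index part is also only gestured at. Counting factorial denominators gives losses of order $(n-1)v_K(j!)$, which is linear in $j$, whereas the required $e_K(n-1)\lfloor\log_p j\rfloor$ is logarithmic. Getting the logarithmic bound is the Dwork--Robba theorem. It needs the generic Taylor series to converge on the whole open unit disc about the generic point. That convergence follows from the Frobenius structure (Dwork's trick), but your proposal never establishes it.

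To close the argument, you can cite Kedlaya's Theorem~18.3.3 and Remark~18.3.4 as the paper does, using the iteration count $\lfloor\log_{q_\tau}i\rfloor+1$ from his Lemma~18.3.2. Alternatively, prove the one-step recursion without presupposing any bound on the $\Delta^{(j)}$.

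Two smaller points. With your normalization $Y(t,u)=\sum_i\Delta^{(i)}(u)(t-u)^i$ one has $\partial_tY=Y\,N(t)$, so the identity is $Y(t,u)=\Phi(u)\,Y^\tau(t^{q_\tau},u^{q_\tau})\,\Phi(t)^{-1}$; your version swaps $t$ and $u$. On the other hand, this normalization is a real convenience. Its coefficients are exactly the $\Delta^{(i)}(u)$, which would make the paper's triangular induction unnecessary.
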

\begin{proof}
Let $\eta$ be the generic point of $\overline{D}(z,1)$, i.e. there is a complete field extension $L/K$ such that $\eta\in L$ with $|\eta-z|=1$ and $|w-z-\eta|\geq 1$ for any $w\in \overline{K}\subset \overline{L}$. We have an isometric map $K\langle t-z\rangle\hookrightarrow L\langle t-\eta\rangle, \ \ t-z\mapsto (\eta-z)+(t-\eta)$. 

The Taylor series 
\[T(-t+\eta,v_j)\coloneq\sum_{i=0}^\infty(-t+\eta)^i\frac{D^i(v_j)}{i!}=\sum_{i=0}^\infty(-t+\eta)^i\sum_{k=1}^n\Delta_{kj}^{(i)}v_k\]
converges around $\eta$, so $T(-t+\eta,v_1),\dots, T(-t+\eta,v_n)$ form a horizontal basis of $\mathcal{E}$ around $\eta$. Let $M=\sum_{i=0}^\infty M_i(t-\eta)^i\coloneq(\sum_{i=0}^\infty(-t+\eta)^i\Delta_{jk}^{(i)})_{j,k}\in M_n(L[[ t-\eta]])$ be the matrix such that 
\[\begin{bmatrix}
T(-t+\eta,v_1)&\dots& T(-t+\eta,v_n)
\end{bmatrix}=\begin{bmatrix}
v_1&\dots& v_n
\end{bmatrix}M.\]
Notice that $M_0=I_n$ is the identity matrix.
Since the open disc of radius $1$ around $\eta$ contains
no point of $Z(\overline K)$, the connection has no
singularity in this disc. Hence
$\Delta^{(i)}\in M_n(L[[ t-\eta]])$.

By the argument of
\cite[Theorem~18.3.3]{kedlaya2010p-adic},
with the corrected iteration count in
\cite[Lemma~18.3.2]{kedlaya2010p-adic},%
\footnote{The recurrence in Lemma~18.3.2 replaces an index
	$j$ by $\lfloor j/q_\tau\rfloor$. Starting from $i\geq1$,
	it takes $\lfloor\log_{q_\tau}i\rfloor+1$ iterations to
	reach $0$, rather than the printed
	$\lceil\log_{q_\tau}i\rceil$.}
for every $i\in\N_{\geq1}$, we obtain
\[
\min\{v_K(M_0),\dots,v_K(M_i)\}
\geq
\bigl(v_K(\Phi)+v_K(\Phi^{-1})\bigr)
\bigl(\lfloor\log_{q_\tau}i\rfloor+1\bigr),
\]
where $v_K$ denotes also its extension to $L$.
By \cite[Remark~18.3.4]{kedlaya2010p-adic}, we also have
\begin{align*}
\min\{v_K(M_0),\dots, v_K(M_i)\}\geq& (n-1)\min\{0,v_K(N)\}-e_K(n-1)\left(\lfloor\log_pi\rfloor-\ell_\tau\lfloor\log_{q_\tau}i\rfloor\right)+\\
&(v_K(\Phi)+v_K(\Phi^{-1}))\lfloor\log_{q_\tau}i\rfloor.
\end{align*}
These two inequalities show that
\[\min\{v_K(M_0),\dots, v_K(M_i)\}\geq f(i).\]
Notice that $v_K(\Delta^{(i)}) = v_K(\Delta^{(i)}(\eta))$ (see argument in \cite[Remark~9.7.2]{kedlaya2010p-adic}), and 
\[M_i= (-1)^i\Delta^{(i)}(\eta)+(\text{terms coming from $\Delta^{(j)}$ with $j<i$}).\]
We can prove that $v_K(\Delta^{(j)}(\eta))\geq f(i)$ for $j\leq i$ by induction on $j$, this implies the lemma. We fix an $i\in\N$. For $j=0$, we have that $\Delta^{(0)}=I_n=M_0$, so $v_K(\Delta^{(0)}(\eta))=v_K(M_0)\geq f(i)$. For $1\leq j\leq i$, assume that $v_K(\Delta^{(k)}(\eta))\geq f(i)$ (in particular $v_K(\Delta^{(k)})\geq f(i)$) for $0\leq k< j$. If $v_K(\Delta^{(j)}(\eta))<f(i)$, then 
\[v_K(M_j)=v_K(\Delta^{(j)}(\eta))<f(i),\]
contradiction. Hence $v_K(\Delta^{(j)}(\eta))\geq f(i)$. This completes the proof of the lemma.
\end{proof}

\begin{proof}[Proof of \cref{thm:degreeoffrobeniusstructuresatapoint}]
Let \(I_K\subseteq\Gal(\overline K/K)\) be the inertia group. Its fixed field \(\overline K^{I_K}=K^{\mathrm{ur}}\) is the maximal unramified extension of \(K\). For every \(\rho\in I_K\), the points \(\rho(z)\) and \(z\) have the same reduction. Since \(Z\) is defined over \(K\), we have \(\rho(z)\in Z(\overline K)\). The hypothesis on distinct reductions therefore implies \(\rho(z)=z\). Hence \(K(z)\subseteq K^{\mathrm{ur}}\), so \(K(z)/K\) is unramified.
Since \(K(z)/K\) is finite unramified, by \cite[Tag~04GK]{stacks-project}, we can extend the fixed automorphism \(\tau\) to \(K(z)\) with residue action \(a\mapsto a^{q_\tau}\). The valuations used in the local bounds are unchanged under this extension.

For the descent, suppose $z\notin K$, put $L=K(z)$, and let $P_z$ be the monic minimal polynomial of $z$.
Write $b=\alpha_1+q_\tau\alpha_2$ and $f=P_z^b\Phi_{ij}$.
An approximant has order at least $-b$ at $z$ precisely when multiplying it by $P_z^b$ makes it regular there.

The local calculation below shows that every negative-power coefficient in the Laurent expansion of $f$ at $z$ is divisible by $\pi^m$.
The same holds at each conjugate of $z$.
Using the Mittag--Leffler decomposition \cite[Proposition~2.2.6]{fresnel2004rigid}, subtract these negative-power series at all the conjugates.
The resulting function $f_0$ is regular across their residue discs and still satisfies $f_0\equiv f\pmod{\pi^m}$.
It is defined over $K$, since Galois permutes the subtracted series and their sum is therefore invariant.

Let $V'=V\cup\bigcup_{\gamma\in\Gal(L/K)}D(\gamma(z),1)$. Then $f_0$ is an analytic function on $V'$.
Rational approximation on affinoid subsets of \(\mathbf P_K^1\) gives a rational function \(r\in K(t)\), with no poles on \(V'\), such that
\[\sup_{n\ge0}|a_n|\le\|r-f_0\|_{V'}\le|\pi|^m,\]
where $r-f_0=\sum_{n\geq0}a_n(t-z)^n$ is the Taylor expansion at $z$, and $\|\cdot\|_{V'}$ is the supremum norm on $V'$.
Then $r/P_z^b\in K(t)$ is the desired approximant: its order at $z$ is at least $-b$, because $P_z$ has a simple zero there, and its precision is unchanged, because $|P_z|=1$ on $V$.
Thus we may assume $z\in K$ in the local calculation.
 
When $z=0$ or $z=\infty$, this is from \cref{lemma:orderoffrobeniusstructureat0}.

Suppose that $N$ has no pole at $z$, so $\alpha_1=\alpha_2=0$. We consider the other $q_\tau$-power Frobenius lift $\sigma'$ with $\sigma'(t-z)=(t-z)^{q_\tau}$. Let $F'\colon \sigma_\flat'^*\mathcal{E}\simeq \mathcal{E}$ be the corresponding Frobenius structure with respect to $\sigma_\flat'\colon \P_K^1\to\P_K^1$ (which is induced by $\sigma'$) in \cref{lemma:changeoffrobeniusstructures} (with $\sigma_1=\sigma', \sigma_2=\sigma_\tau$), and $\Phi'$ the matrix of $F'$. By \cref{lemma:orderoffrobeniusstructureat0}, we know that $\Phi'$ is holomorphic at $z$. Then \cref{lemma:changeoffrobeniusstructures} implies that $\Phi$ is also holomorphic at $z$. This proves \cref{thm:degreeoffrobeniusstructuresatapoint} in this case.

Suppose that $N$ has a pole at $z$. For $\sigma', F', \Phi'$ as above, $\Phi'$ has order $\geq-\alpha_1$ at $z$ by \cref{lemma:orderoffrobeniusstructureat0}. As before, we use \cref{lemma:changeoffrobeniusstructures} to have the equality
\[\Phi=\sum_{i=0}^\infty \pi^iu^i\Phi'\sigma'(\Delta^{(i)}),\]
where $\pi u=(t^{q_\tau}-\tau(z))-(t-z)^{q_\tau}$ with $v_K(u)\geq0$, and $\Delta^{(i)}$ is  the matrix of the differential operator $\dfrac{D^i}{i!}$ with respect to the basis $v_1, \dots, v_n$. Notice that the $i$-th term in the summation has order $\geq -\alpha_1-q_\tau i$ at $z$ and no other pole with the same reduction as $z$, and valuation $\geq i+v_K(\Phi')+f(i)$ by \cref{lemma:valuationboundofdifferentials}. The theorem follows from the inequality $v_K(\Phi')\geq v_K(\Phi)+c$. In fact, by \cref{lemma:changeoffrobeniusstructures}, we have
\[\Phi'=\sum_{i=0}^\infty\frac{\pi^i}{i!}(-u)^i\Phi\sigma_\tau(i!\Delta^{(i)}).\]
If $e_K\leq p-1$ and $v_K(N)\geq 0$, notice that $v_K(\pi^i/i!)=i-e_Kv_p(i!)\geq i-\frac{ie_K}{p-1}$ and $v_K(i!\Delta^{(i)})$ are both nonnegative, so $v_K(\Phi')\geq v_K(\Phi)+c$ with $c=0$; otherwise, we directly use \cref{lemma:valuationboundofdifferentials}.

\end{proof}

\begin{corollary}
	\label[pcor]{cor:degreeoffrobeniusstructureatnonregularpoints}
	Keep the notation and hypotheses of \cref{thm:degreeoffrobeniusstructuresatapoint}. Let $w_1,\dots, w_n$ be another basis of $\mathcal{E}$ such that $v_j=\sum_{i=1}^nW_{ij}w_i$ with $W=(W_{ij})\in \GL_n(K(t))$, and $\Phi'=(\Phi_{ij}')\in M_n(\OO^\dagger(U))$ the matrix of $F$ with respect to $w_1,\dots, w_n$. Then for any $1\leq i,j\leq n$ and $m\in \N$, there is a rational function $f(t)/g(t)\in K(t)$ with 
	\[\ord_z(f(t)/g(t))\geq -(\alpha_1+q_\tau\alpha_2)+\ord_z(W)+\ord_z\bigl(\sigma_\tau(W^{-1})\bigr)\]
	such that $\Phi_{ij}'-f(t)/g(t)\in (K^{\circ\circ})^{m+v_K(W)+v_K(W^{-1})}((t-z))$. For $z\in\{0,\infty\}$, we have $$\ord_z\bigl(\sigma_\tau(W^{-1})\bigr)=q_\tau\cdot\ord_z(W^{-1}).$$
\end{corollary}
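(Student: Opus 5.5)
The plan is to reduce the corollary to \cref{thm:degreeoffrobeniusstructuresatapoint} through the base-change formula of \cref{lemma:matricesofbasischange}~\ref{matrix of connection by base change}. Since $v_j=\sum_i W_{ij}w_i$, the basis $w$ is obtained from $v$ via the matrix $W^{-1}$. Hence the matrix of $F$ with respect to $w_1,\dots,w_n$ is
\[
\Phi'=W\,\Phi\,\sigma_\tau(W^{-1}).
\]
I would first check this orientation carefully, because the statement's bound involves $\ord_z(W)$ and $\ord_z(\sigma_\tau(W^{-1}))$, which matches exactly this formula.

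Next, apply \cref{thm:degreeoffrobeniusstructuresatapoint} with precision $m'\coloneq m+v_K(W)+v_K(W^{-1})$. Here $m'$ differs from $m$ by a shift in the right direction, since $v_K(W)+v_K(W^{-1})\le 0$; if $m'\le 0$ the claim is vacuous, so we may take $m'\ge 1$. The theorem gives a matrix $R$ of rational functions with $\ord_z(R)\ge-(\alpha_1+q_\tau\alpha_2)$ and no other pole reducing to $z$, such that $\Phi-R$ has Laurent coefficients in $(K^{\circ\circ})^{m'}$. A cleaner way to arrange the precision is to apply the theorem with precision $m-v_K(W)-v_K(\sigma_\tau(W^{-1}))$ and then use $v_K(\sigma_\tau(W^{-1}))=v_K(W^{-1})$, which holds because $\tau$ is an isometry. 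The approximant for $\Phi'$ is then $W R\,\sigma_\tau(W^{-1})$, whose entries are rational functions. Its order at $z$ is at least $\ord_z(W)+\ord_z(R)+\ord_z(\sigma_\tau(W^{-1}))$ by the ultrametric property of $\ord_z$ on sums of products, which gives the stated order bound. For the error,
\[
\Phi'-WR\,\sigma_\tau(W^{-1})=W(\Phi-R)\sigma_\tau(W^{-1}),
\]
and its valuation is at least $v_K(W)+v_K(\Phi-R)+v_K(W^{-1})$, using the Gauss norm on Laurent expansions at $z$, which is multiplicative.

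The main obstacle is making the valuations compatible. The quantities $v_K(W)$ are defined via numerator and denominator content, while the error is measured on expansions in $(t-z)$, and multiplicativity of the Gauss norm must be justified on a common annulus near $|t-z|=1$. To handle this, I would work on the annulus boundary (the generic point of $\overline D(z,1)$, as in the proof of \cref{lemma:valuationboundofdifferentials}). There the Gauss norm of a rational function equals $v_K$ of numerator minus denominator, provided $z$ is integral. I would then interpret ``$\in(K^{\circ\circ})^{\cdot}((t-z))$'' as a statement about this norm.

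Finally, for $z\in\{0,\infty\}$, write $W^{-1}=t^{k}U$ with $U$ a unit at $0$ entrywise, where $k=\ord_0$. Then $\sigma_\tau(W^{-1})=t^{q_\tau k}U^\tau(t^{q_\tau})$, and $U^\tau(t^{q_\tau})$ is still regular and nonvanishing at $0$ entrywise, so orders scale by $q_\tau$. The argument for $\infty$ is the same after substituting $s=1/t$, which is compatible since $\sigma_\tau(1/t)=1/t^{q_\tau}$.
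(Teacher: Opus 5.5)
Your route is the paper's. Its proof is just the identity $\Phi'=W\Phi\,\sigma_\tau(W)^{-1}$ combined with \cref{thm:degreeoffrobeniusstructuresatapoint}; this identity agrees with your $W\Phi\,\sigma_\tau(W^{-1})$ because $\sigma_\tau$ is a ring homomorphism. Your order count at $z$, the isometry $v_K(\sigma_\tau(W^{-1}))=v_K(W^{-1})$, and the Gauss-norm justification supply details the paper leaves implicit.

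For $z\in\{0,\infty\}$, argue entrywise rather than via $W^{-1}=t^kU$ with $U$ an entrywise unit. The entries of $t^{-k}W^{-1}$ are only regular at $0$, and some may vanish there. Instead, each nonzero entry $h=t^{k_h}u_h$ with $u_h(0)\neq0$ gives $\sigma_\tau(h)=t^{q_\tau k_h}u_h^\tau(t^{q_\tau})$ with $u_h^\tau(0)=\tau(u_h(0))\neq0$; then take the minimum over entries.

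What you must fix is the precision at which you invoke the theorem: it has to be exactly $m$. The order bound in the statement contains $\alpha_2=\varphi(m)$ for the same $m$ that appears in the error exponent. Your own estimate $v_K\bigl(W(\Phi-R)\sigma_\tau(W^{-1})\bigr)\geq v_K(W)+v_K(\Phi-R)+v_K(W^{-1})$, with $v_K(\Phi-R)\geq m$, already gives exactly the exponent $m+v_K(W)+v_K(W^{-1})$.

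Invoking the theorem at $m'=m+v_K(W)+v_K(W^{-1})$ yields only the exponent $m+2\bigl(v_K(W)+v_K(W^{-1})\bigr)$. This is weaker than claimed whenever $v_K(W)+v_K(W^{-1})<0$. Your alternative precision $m-v_K(W)-v_K(W^{-1})$ improves the error, but it replaces $\varphi(m)$ by $\varphi\bigl(m-v_K(W)-v_K(W^{-1})\bigr)\geq\varphi(m)$, since $\varphi$ is nondecreasing. That weakens the order bound at $z$ whenever $\alpha_2$ is given by $\varphi$.

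With precision $m$, no case split is needed. Note also that the claim is not vacuous when the exponent is $\leq 0$, since it still bounds the coefficient valuations from below.
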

\begin{proof}
	This is from \cref{thm:degreeoffrobeniusstructuresatapoint} and the fact that
	\[\Phi'=W\Phi\sigma_\tau(W)^{-1}. \qedhere\]
\end{proof}

\section{Good models}

\label{section:good models}

We use the integral-basis framework of \cite[Section~2, Assumption~1]{tuitman2017counting}, stating it over $K^\circ$. 

\subsection{Plane models $(Q,W^0, W^\infty)$}

\label{subsection:plane models}

We fix a smooth projective geometrically irreducible curve $X$ of genus $g$ over $K$. We also fix a plane model $Q(x,y)=0$ of $X$ satisfying the following properties:
\begin{itemize}
	\item  $Q(x,y)\in K^\circ[x,y]$ is a polynomial that is irreducible and monic in $y$;
	\item the reduction $\widetilde{Q}(x,y)\in \widetilde{K}[x,y]$ is irreducible (not necessarily smooth).
\end{itemize} 
Set $d_x\coloneq\deg_yQ$ and $d_y\coloneq\deg_xQ$.
We denote by $\Delta(x)\in K^\circ[x]$ the discriminant of $Q$ with respect to $y$, and put
\[
r(x)\coloneq
\frac{\Delta(x)}{\gcd(\Delta(x),\Delta'(x))},
\]
where the gcd is taken to be monic. Thus $r$ is the squarefree polynomial with the same zeros and leading coefficient as $\Delta$. We also put $r_{\mathrm{mon}}\coloneq r/\operatorname{lc}(r),$ the monic polynomial with the same zeros as $r$. 
For further discussion, let $m\in \N$ be the least positive integer such that there is a polynomial $g(x)\in {K}[x]$ that satisfies $r(x)^m=g(x)\Delta(x)$. 

We write $\boldsymbol{b}^0=\begin{bmatrix}
	b_0^0&
	b_1^0&
	\dots&
	b_{d_x-1}^0
\end{bmatrix}$ and $\boldsymbol{b}^\infty=\begin{bmatrix}
b_0^\infty&
b_1^\infty&
\dots&
b_{d_x-1}^\infty
\end{bmatrix}$ as row vectors.

Furthermore, we fix an integral basis $(b_0^0, \dots, b_{d_x-1}^0)$ of $K(X)$ over $K[x]$, and an integral basis $(b_0^\infty, \dots, b_{d_x-1}^\infty)$ of $K(X)$ over $K[x^{-1}]$. Set $W^0 = (W_{i,j}^0)_{1\leq i,j\leq d_x}\in \GL_{d_x}(K[x,1/r])$ and $W^\infty = (W_{i,j}^\infty)_{1\leq i,j\leq d_x}\in \GL_{d_x}(K[x,x^{-1},1/r])$ such that
\[b_j^0=\sum_{i=0}^{d_x-1}W_{i+1,j+1}^0y^i \ \text{ and  } \  b_j^\infty=\sum_{i=0}^{d_x-1}W_{i+1,j+1}^\infty y^i\]
for all $0 \leq j\leq d_x-1$, i.e.
\[\begin{bmatrix}
	b_0^0&
	b_1^0&
	\dots&
	b_{d_x-1}^0
\end{bmatrix} =\begin{bmatrix}
	1&
	y&
	\dots&
	y^{d_x-1}
\end{bmatrix} W^0\]
and \[\begin{bmatrix}
	b_0^\infty&
	b_1^\infty&
	\dots&
	b_{d_x-1}^\infty
\end{bmatrix} = \begin{bmatrix}
	1&
	y&
	\dots&
	y^{d_x-1}
\end{bmatrix}W^\infty.\] 
We also set $W= (W^0)^{-1}W^\infty\in \GL_{d_x}(K[x,x^{-1},1/r])$ (Here notice that $K[x,1/r,y]/(Q(x,y))$ is smooth over $K[x,1/r]$, so it is the integral closure of $K[x,1/r]$ in $K(X)$). Notice that $b^0_i$ only has poles above $x=\infty$ and $b^\infty_j$ only has poles above $x=0$, so $\ord_0(W)<0$ and $\ord_\infty(W^{-1})<0$. 

We have a covering $\P_K^1=\Spec(K[x])\cup \Spec(K[x^{-1}])$. Via the morphism $x\colon X\rightarrow \P_K^1$, we have $x\colon X^0\rightarrow \Spec(K[x])$, $x\colon X^\infty\rightarrow \Spec(K[x^{-1}])$, where $X^0 = \Spec(A^0)$, $X^\infty=\Spec(A^\infty)$ and $A^0= 
\bigoplus_{i=0}^{d_x-1}K[x]b_i^{0}$, $A^\infty= \bigoplus_{i=0}^{d_x-1}K[x^{-1}]b_i^{\infty}$. 

For any $P\in X$, we denote by $e_P$ the ramification index of $P$ via $x: X\rightarrow \P_K^1$.

In our setting, only $X$, $Q$, and integral bases (or matrices $W^0, W^\infty$) are fixed, the others are determined by them.

\begin{lemma}
	\label[plemma]{lemma:derivativeatyisinvertible}
We have $r,g\in K^\circ[x]$, and the quotient
	\[
	\Delta/\frac{\partial Q}{\partial y}
	\]
	exists in $K^\circ[x,y]/(Q)$.
	In particular, $\Delta$ is a unit in
	$K^\circ[x,1/r]$, and $\partial Q/\partial y$ is
	a unit in $K^\circ[x,1/r,y]/(Q)$.
\end{lemma}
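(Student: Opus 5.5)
Since $Q$ is monic in $y$ with coefficients in $K^\circ[x]$, the discriminant is $\Delta=\operatorname{Res}_y(Q,\partial Q/\partial y)$ up to sign. I will derive the quotient statement from the resultant (Sylvester) identity, and derive $r,g\in K^\circ[x]$ from Gauss's lemma.

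\emph{Integrality of $r$ and $g$.} Write $\Delta=\operatorname{lc}(\Delta)\prod_k P_k^{e_k}$, where the $P_k\in K[x]$ are monic, irreducible and pairwise distinct. Then $r=\operatorname{lc}(\Delta)\prod_k P_k$ and $r_{\mathrm{mon}}=\prod_k P_k$. Each monic $P_k$ divides $\Delta/\operatorname{lc}(\Delta)$, so all its roots in $\overline K$ are roots of $\Delta$. I would first like $\Delta/\operatorname{lc}(\Delta)$ to be integral, but this is not automatic: its roots need not be integral. The robust route goes through Gauss's lemma instead. Write $\Delta=c\,\Delta_0$ with $\Delta_0\in K^\circ[x]$ primitive and $c\in K^\circ$. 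Factor $\Delta_0$ over $K^\circ$ into primitive irreducible factors $\tilde P_k$; up to scalars these are the $P_k$. Put $r_0=\prod_k\tilde P_k$, which is primitive and integral. Then $r$ is a scalar multiple $\lambda r_0$, and $\lambda$ is fixed by $\operatorname{lc}(r)=\operatorname{lc}(\Delta)$. Comparing with $\Delta=c\prod\tilde P_k^{e_k}$ gives $\lambda=c\prod_k \operatorname{lc}(\tilde P_k)^{e_k-1}$, which lies in $K^\circ$. So $r\in K^\circ[x]$.

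For $g$: the minimal $m$ is $\max_k e_k$, and $g=r^m/\Delta=\lambda^m c^{-1}\prod_k\tilde P_k^{m-e_k}$. Its content is $\lambda^m/c$ up to a unit. This is integral because $m\ge1$ and $c\mid\lambda$ in $K^\circ$: indeed $\lambda/c=\prod_k\operatorname{lc}(\tilde P_k)^{e_k-1}$. Hence $g\in K^\circ[x]$.

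\emph{The quotient.} Because $Q$ is monic in $y$, the resultant identity holds over the ring $K^\circ[x]$ itself. Concretely, there are $A,B\in K^\circ[x][y]$ with
\[
AQ+B\,\frac{\partial Q}{\partial y}=\Delta .
\]
These come from the adjugate of the Sylvester matrix, whose entries lie in $K^\circ[x]$; the leading coefficient of $Q$ is $1$, so nothing gets inverted. Reducing modulo $Q$ shows that $B$ represents $\Delta/(\partial Q/\partial y)$ in $K^\circ[x,y]/(Q)$.

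\emph{Consequences.} From $r^m=g\Delta$ we get that $\Delta$ divides $r^m$ in $K^\circ[x]$, so $\Delta$ is a unit in $K^\circ[x,1/r]$. Then $\partial Q/\partial y\cdot B=\Delta$ shows that $\partial Q/\partial y$ is a unit in $K^\circ[x,1/r,y]/(Q)$, with inverse $B\,g/r^m$.

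The main obstacle is the content bookkeeping in the first step. One must check that normalizing $r$ by $\operatorname{lc}(r)=\operatorname{lc}(\Delta)$, rather than making it monic, keeps both $r$ and $g=r^m/\Delta$ integral. This works exactly because the scalar $\lambda$ absorbs the full content $c$ of $\Delta$.
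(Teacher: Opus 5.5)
Your proof is correct and is essentially the paper's argument. You get integrality of $r$ and $g$ from a Gauss-lemma content computation on the factorization of $\Delta$, and the quotient from the resultant identity $AQ+B\,\partial Q/\partial y=\Delta$ over $K^\circ[x]$, which gives the same inverse $Bg/r^m$. The only difference is bookkeeping: the paper factors $\Delta$ into linear factors over $\overline K$ and bounds $v_K(r)$ and $v_K(g)$ using $\mu_i=\min\{0,v_K(a_i)\}\leq 0$, whereas you factor in the UFD $K^\circ[x]$ and obtain the exact contents $\lambda=c\prod_k\operatorname{lc}(\tilde P_k)^{e_k-1}$ and $\lambda^m/c$ (the degenerate case of constant $\Delta$, where $m=1$ and $g=1$, is trivial).
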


\begin{proof}
	If $\Delta$ is constant, then $r=\Delta$, $m=1$ and $g=1$, so $r,g\in K^\circ[x]$.
	Otherwise, write
	\[\Delta=c\prod_i(x-a_i)^{e_i}, \qquad c=\operatorname{lc}(\Delta), \]
	with $e_i\in\N_{\geq 1}$ and distinct $a_i\in\overline K$. Since the gcd is monic, we have
	\[
	r=c\prod_i(x-a_i),\qquad
	m=\max_i e_i\geq 1,\qquad
	g=c^{m-1}\prod_i(x-a_i)^{m-e_i}.
	\]
	Extend $v_K$ to $\overline K[x]$ by taking the minimum of the valuations of the coefficients. This valuation is additive by Gauss's lemma. Put
	\[
	\mu_i=v_K(x-a_i)=\min\{0,v_K(a_i)\}\leq0.
	\]
	Since $\Delta\in K^\circ[x]$, we have
	\[
	v_K(c)+\sum_i e_i\mu_i=v_K(\Delta)\geq0.
	\]
	Consequently,
	\[
	v_K(r)=v_K(c)+\sum_i\mu_i
	\geq v_K(\Delta)\geq0,
	\]
	\begin{align*}
		v_K(g)
		&=(m-1)v_K(c)+\sum_i(m-e_i)\mu_i\geq m\sum_i(1-e_i)\mu_i \geq0.
	\end{align*}
	Thus $r,g\in K^\circ[x]$.
	
	Since $Q$ is monic in $y$, the resultant identity
	gives $a,b\in K^\circ[x,y]$ such that
	\[
	aQ+b\frac{\partial Q}{\partial y}=\Delta.
	\]
	Hence $b\, \frac{\partial Q}{\partial y}=\Delta$ in
	$K^\circ[x,y]/(Q)$, proving the quotient assertion.
	
	Finally, $\Delta g=r^m$ gives $\Delta^{-1}=\frac{g}{r^m}\in K^\circ[x,1/r],$
	and the same resultant identity gives
	\[
	\left(\frac{\partial Q}{\partial y}\right)^{-1}
	=\frac{bg}{r^m}
	\in K^\circ[x,1/r,y]/(Q).\]
\end{proof}
\begin{remark}
	In general, given two polynomials $f, g \in R[x]$ over a ring $R$, the resultant $\mathrm{Res}(f,g)\in (f,g)\subset R[x]$.
\end{remark}
\begin{lemma}
	The morphism
	\[
	x\colon
	\mathcal U\coloneq\Spec(K^\circ[x,1/r,y]/(Q))
	\longrightarrow
	\mathcal V\coloneq\Spec(K^\circ[x,1/r])
	\]
	is \'etale, and
	$\mathcal U_K\subset X$,
	$\mathcal V_K\subset\P_K^1$.
\end{lemma}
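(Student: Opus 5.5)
The plan is to use the Jacobian criterion for étaleness of a standard étale algebra and then check the two generic-fibre inclusions separately. Write $B=K^\circ[x,1/r]$ and $C=B[y]/(Q)$. Since $Q$ is monic in $y$, $C$ is a finite free $B$-module with basis $1,y,\dots,y^{d_x-1}$; in particular $C$ is flat and of finite presentation over $B$. By \cref{lemma:derivativeatyisinvertible}, $\partial Q/\partial y$ is a unit in $C$. Hence $C$ is a standard étale $B$-algebra in the sense of \cite[Tag~00UB]{stacks-project}, i.e.\ of the form $B[y]_{h}/(Q)$ with $Q$ monic and $Q'$ invertible, with $h=1$ here. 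So $x\colon\mathcal U\to\mathcal V$ is étale.

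For the same conclusion without citing the standard-étale statement, use the module of differentials. We have $\Omega_{C/B}=C\,dy/(Q'dy)=0$ because $Q'$ is a unit, so the map is unramified. It is flat, being finite free. Therefore it is étale.

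The inclusion $\mathcal V_K=\Spec(K[x,1/r])\subset\P^1_K$ is immediate: it is the principal open $D(r)$ of $\Spec K[x]\subset\P^1_K$.

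For $\mathcal U_K=\Spec(K[x,1/r,y]/(Q))$, the key point is already noted in the paper. By the first part (after base change to $K$), $K[x,1/r,y]/(Q)$ is étale over the regular ring $K[x,1/r]$, hence regular, i.e.\ normal. It is a domain because $Q$ is irreducible, so its fraction field is $K(x)[y]/(Q)=K(X)$. Being finite over $K[x,1/r]$, it is contained in the integral closure of $K[x,1/r]$ in $K(X)$, and normality gives equality. That integral closure is $A^0[1/r]$, since localization commutes with integral closure and $A^0=\bigoplus K[x]b^0_i$ is the integral closure of $K[x]$. Thus $\mathcal U_K=\Spec(A^0[1/r])$ is the principal open $D(r)$ of $X^0=\Spec A^0$, which is an open subscheme of $X$, namely the preimage of $\Spec K[x]$ under $x\colon X\to\P^1_K$.

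The only mildly delicate step is this identification of $K[x,1/r,y]/(Q)$ with the integral closure. The things to verify are that étale over a normal (indeed smooth) base gives normality, and that $X^0$ is the affine open of the smooth projective model $X$ lying over $\mathbb A^1$. Both are standard facts.
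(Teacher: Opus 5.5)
Your proposal is correct and follows essentially the same route as the paper: finite freeness from monicity of $Q$, étaleness from invertibility of $\partial Q/\partial y$ (the paper phrases this as the Jacobian criterion giving smoothness, then finiteness giving étale), and normality plus finiteness to identify $\mathcal U_K$ with the normalization of $\mathcal V_K$ in $K(X)$, i.e.\ $x^{-1}(\mathcal V_K)\subset X$. Your explicit description of $\mathcal U_K$ as the principal open $D(r)$ of $X^0=\Spec A^0$ is just a more concrete form of that last step.
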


\begin{proof}
	We claim that $K^\circ[x,y]/(Q)$ is finite free	over $K^\circ[x]$ of rank $d_x$. Indeed, by assumption, $y$ is integral over $K^\circ[x]$ of degree $d_x$ and $Q$ is the minimal polynomial, then $1, y, \dots, y^{d_x-1}$ form a basis of $K^\circ[x,y]/(Q)$.
	Thus $\mathcal U\to\mathcal V$ is finite and flat.
	
	By \cref{lemma:derivativeatyisinvertible}, $\partial Q/\partial y$ is a unit in $K^\circ[x,1/r,y]/(Q)$.
	The Jacobian criterion therefore shows that $\mathcal U\to\mathcal V$ is smooth. Since it is finite, it is \'etale.
	
	Finally, $\mathcal V_K$ is an open subset of $\P_K^1$. Since $\mathcal U_K$ is normal and finite over $\mathcal V_K$, with function field $K(X)$, it identifies with the normalization of $\mathcal V_K$ in $K(X)$, namely $x^{-1}(\mathcal V_K)\subset X$.
\end{proof}

\begin{example}
	Let $X$ be a hyperelliptic curve, then we can take $Q(x,y)=y^2-f(x)\in K^\circ[x,y]$ for $X$ such that $f$ has no multiple roots, then $\Delta(x)=4f(x)$, and $r_{\mathrm{mon}}(x)=f(x)/\operatorname{lc}(f)$.
\end{example}

\subsection{Good reduction of plane models}

\begin{definition}[Good-reduction assumptions] 	\label[pdef]{def:triplehasgoodreduction}
	We say that the triple $(Q, W^0, W^\infty)$ has \emph{good reduction} if the conditions below are satisfied.
	\begin{enumerate1}
		\item \label{good reduction: roots of r} The leading coefficient and discriminant of $r(x)$ are contained in $(K^\circ)^\times$.

		\item \label{good reduction: basis of reduction} Let $\widetilde{K}(x,y)$ be the field of fractions of $\widetilde{K}[x,y]/(\widetilde{Q})$, then:
		\begin{enumeratea}
			\item  The reduction modulo $K^{\circ\circ}$ of $(b_0^0,\dots, b_{d_x-1}^0)$ in $\widetilde{K}(x,y)$ is an integral basis of $\widetilde{K}(x,y)$ over $\widetilde{K}[x]$.
			\item The reduction modulo $K^{\circ\circ}$ of $(b_0^\infty,\dots, b_{d_x-1}^\infty)$ in $\widetilde{K}(x,y)$ is an integral basis of $\widetilde{K}(x,y)$ over $\widetilde{K}[x^{-1}]$.
		\end{enumeratea}
		\item \label{good reduction: W are integral} $W^0 \in \GL_{d_x}(K^\circ[x,1/r])$ and $W^\infty\in \GL_{d_x}(K^\circ[x,x^{-1},1/r])$.
		\item \label{good reduction: b are basis of model} Denote
		\begin{align*}
			\mathcal{R}^0&\coloneq K^\circ[x]b_0^0+\cdots+K^\circ[x]b_{d_x-1}^0\subset \sum_{i=0}^{d_x-1}K[x]b_i^0,\\
			\mathcal{R}^\infty&\coloneq K^\circ[x^{-1}]b_0^\infty+\cdots+K^\circ[x^{-1}]b_{d_x-1}^\infty\subset \sum_{i=0}^{d_x-1}K[x^{-1}]b_i^\infty.
		\end{align*}
		Then $\mathcal{R}^0, \mathcal{R}^\infty$ are rings and the discriminants of the finite $K^\circ$-algebras $(\mathcal{R}^0/(r(x)))_{\mathrm{red}}$ and $(\mathcal{R}^\infty/(x^{-1}))_{\mathrm{red}}$ are contained in $(K^\circ)^\times$.
	\end{enumerate1}
\end{definition}
\begin{remark} 
	We have the following observation:
	\begin{itemize}
		\item The property~\ref{good reduction: roots of r} is equivalent to the condition that $\deg\widetilde r=\deg r$ and the image $\widetilde{r}(x)$ of $r(x)$ in $\widetilde{K}[x]$ has distinct roots. This fact implies that the roots of $r(x)$ live in some unramified extension of $K$.
		\item The property~\ref{good reduction: W are integral} implies that $b^0_i \in K^\circ[x, 1/r, y]$, $b^\infty_i\in K^\circ[x,x^{-1},1/r, y]$, and \ref{good reduction: b are basis of model} implies that $b_i^0$ (resp. $b_i^\infty$) is integral over $K^{\circ}[x]$ (resp. $K^\circ[x^{-1}]$).
	\end{itemize}
\end{remark}
\begin{remark}
Note that if $X$ is over a number field $F$, then for any triple $(Q,W^0, W^\infty)$ of $X$ over $F$, we can always  find a prime $\mathfrak{p}$ such that $(Q,W^0, W^\infty)$ has good reduction over $F_{\mathfrak{p}}$. Indeed, each condition holds outside finitely many primes.
\end{remark}

Geometrically, the fact that $(Q, W^0, W^\infty)$ has good reduction implies that the \'etale morphism $x: \Spec(K^\circ[x,1/r,y]/(Q))\rightarrow \Spec(K^\circ[x,1/r])$ has a good compactification. 

\begin{proposition}	\label[pprop]{prop:smoothmodelassociatedtotriple}
	Let $(Q, W^0, W^\infty)$ be a triple with good reduction for $X$. Set $\mathcal{U}\coloneq \Spec(K^\circ[x,1/r, y]/(Q))$, $\mathcal{V}\coloneq \Spec(K^\circ[x,1/r])$. Then the following statements hold:
	\begin{enumerate1}
		\item \label{sm model of P^1} There is a smooth relative (Weil) divisor $\mathcal{Z}_{\P^1}$ on $\P_{K^\circ}^1$ such that $\mathcal{V}=\P^1_{K^\circ}\setminus\mathcal{Z}_{\P^1}$.
		\item \label{sm model of X} There is a smooth proper curve $\mathcal{X}$ over $K^\circ$ and a smooth relative (Weil) divisor $\mathcal{Z}_{\mathcal{X}}$ on $\mathcal{X}$ such that $\mathcal{U}=\mathcal{X}\setminus\mathcal{Z}_{\mathcal{X}}$.
	\end{enumerate1}
\end{proposition}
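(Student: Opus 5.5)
For \ref{sm model of P^1}, I take $\mathcal{Z}_{\P^1}$ to be the closure in $\P^1_{K^\circ}$ of the zero locus of $r$ together with the section $x=\infty$. By \cref{def:triplehasgoodreduction}~\ref{good reduction: roots of r}, $\operatorname{lc}(r)$ is a unit, so $r_{\mathrm{mon}}$ is monic and $\Spec K^\circ[x]/(r)$ is finite flat over $K^\circ$. Its discriminant is a unit, so it is finite étale, hence smooth of relative dimension $0$. It avoids the section at infinity because $\deg\widetilde r=\deg r$. Thus $\mathcal{Z}_{\P^1}=V(r)\sqcup\{\infty\}$ is a disjoint union of two finite étale relative divisors, and $\P^1_{K^\circ}\setminus\mathcal{Z}_{\P^1}=\Spec K^\circ[x,1/r]=\mathcal{V}$.

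For \ref{sm model of X}, I glue the two affine schemes $\Spec\mathcal{R}^0$ and $\Spec\mathcal{R}^\infty$, which are rings by \ref{good reduction: b are basis of model}, along their common open over $\Spec K^\circ[x,x^{-1}]$. This requires $\mathcal{R}^0[1/x]=\mathcal{R}^\infty[x]$. I would deduce this from \ref{good reduction: W are integral}: since $W^0$ and $W^\infty$ are integral, both rings lie in $K^\circ[x,x^{-1},1/r,y]/(Q)$ after inverting $r$. For the equality without inverting $r$, I would check it on generic and special fibres. On $K$, both rings are integral closures, hence equal. On $\widetilde K$, the equality follows from \ref{good reduction: basis of reduction}. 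Both rings are finite free over the respective bases, so the equality can be deduced using Nakayama and flatness. Call the result $\mathcal{X}$; it is finite and flat over $\P^1_{K^\circ}$, hence proper over $K^\circ$. Its generic fibre is the normalization of $\P^1_K$ in $K(X)$, namely $X$. Its special fibre is the normalization of $\P^1_{\widetilde K}$ in $\widetilde K(x,y)$ by \ref{good reduction: basis of reduction}, hence smooth. Since $\mathcal{X}$ is flat with smooth fibres, it is smooth over $K^\circ$. Over $\mathcal V$, $\mathcal{R}^0[1/r]=K^\circ[x,1/r,y]/(Q)$: the inclusion $\supseteq$ holds because $y$ is integral, and $\subseteq$ holds by \ref{good reduction: W are integral}. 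Hence $\mathcal{U}=\mathcal{X}\setminus\mathcal{Z}_{\mathcal{X}}$ with $\mathcal{Z}_{\mathcal{X}}\coloneq x^{-1}(\mathcal{Z}_{\P^1})$.

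It remains to show $\mathcal{Z}_{\mathcal{X}}$ is a smooth relative divisor. Its reduced structure is $\Spec(\mathcal{R}^0/(r))_{\red}\sqcup\Spec(\mathcal{R}^\infty/(x^{-1}))_{\red}$. The unit-discriminant hypothesis in \ref{good reduction: b are basis of model} makes each piece finite étale over $K^\circ$, so the reduced divisor is smooth over $K^\circ$ and gives a relative Cartier divisor on the smooth curve $\mathcal{X}$. I would also check it is flat: it has no $\pi$-torsion because the reduced fibre degree is constant, again from the unit discriminant.

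The main obstacle I expect is the gluing compatibility and the identification of the special fibre with the normalization. I plan to handle it by working with $K^\circ$-lattices: a finite free $K^\circ[x]$-algebra whose generic fibre and special fibre are both normal is itself normal by Serre's criterion. Then $\mathcal{R}^0$ equals the integral closure of $K^\circ[x]$ in $K(X)$, and the same holds for $\mathcal{R}^\infty$. With that identification, compatibility on the overlap is automatic.
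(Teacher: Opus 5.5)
Your proposal is correct and follows essentially the paper's route. You show the charts $\Spec\mathcal R^0$ and $\Spec\mathcal R^\infty$ are normal (via Serre's criterion, where the paper uses flat with smooth fibres $\Rightarrow$ smooth $\Rightarrow$ normal), so they are the integral closures and glue to the normalization of $\P^1_{K^\circ}$ in $K(X)$, and you treat the boundary by the same finite \'etale argument.

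One step needs reordering: the flatness of $(\mathcal R^0/(r))_{\mathrm{red}}$ and $(\mathcal R^\infty/(x^{-1}))_{\mathrm{red}}$ must be established before invoking the unit-discriminant hypothesis, since that discriminant is only defined for free algebras, rather than deduced from it. The unreduced quotients are $K^\circ$-free because $\operatorname{lc}(r)$ is a unit, and if $\pi a$ is nilpotent then $\pi^n a^n=0$ forces $a^n=0$, so the reduced quotients are torsion-free, hence free.
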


\begin{proof}
	We adapt the proof of \cite[Proposition 2.3]{tuitman2017counting} to the complete discrete valuation ring $K^\circ$.

	Fix the notation in \cref{def:triplehasgoodreduction}.
	We take $\mathcal X$ to be the normalization of $\P^1_{K^\circ}$ in $K(X)$.
	
	Since $(b_0^0,\dots,b_{d_x-1}^0)$ is linearly independent over $K[x]$, the ring $\mathcal R^0$ is finite free over $K^\circ[x]$, hence flat over $K^\circ$.
	By our choice of integral basis and
	\cref{def:triplehasgoodreduction} \ref{good reduction: basis of reduction}, the generic and special fibers of $\Spec(\mathcal R^0)\to\Spec(K^\circ)$ are normal curves, hence regular.
	Since $K$ and $\widetilde K$ are perfect, these fibers are smooth. Thus $\Spec(\mathcal R^0)$ is smooth over
	$K^\circ$. The same holds for $\Spec(\mathcal R^\infty)$.
	These smooth schemes are normal, so their coordinate rings, being finite over $K^\circ[x]$ and
	$K^\circ[x^{-1}]$ with fraction field $K(X)$, are the corresponding integral closures.
	Hence they are the two affine charts of $\mathcal X$.
	Consequently, $\mathcal X$ is smooth over $K^\circ$ and finite over $\P^1_{K^\circ}$, hence proper.
	
	The complement of $\mathcal V$ in $\P^1_{K^\circ}$ is the union of the zero locus of $r(x)$ and $\infty$.
	By \cref{def:triplehasgoodreduction} \ref{good reduction: roots of r}, these are disjoint and finite \'etale over $K^\circ$.
	Their union is therefore a smooth relative divisor $\mathcal Z_{\P^1}$. This proves \ref{sm model of P^1}.
	
	By \cref{def:triplehasgoodreduction} \ref{good reduction: W are integral}, $\mathcal R^0[1/r]=K^\circ[x,1/r,y]/(Q)$.
	Thus the reduced complement of $\mathcal U$ in $\mathcal X$ is the disjoint union of the reduced zero locus of $r(x)$ and the reduced fiber above $\infty$. Their coordinate rings are
	\[(\mathcal R^0/(r(x)))_{\mathrm{red}} \quad\text{and}\quad (\mathcal R^\infty/(x^{-1}))_{\mathrm{red}}.\]
	These are finite free over $K^\circ$: the unreduced algebras are finite free since the leading coefficient of $r$ is a unit, and their reductions remain torsion-free. Their discriminants are units by \cref{def:triplehasgoodreduction} \ref{good reduction: b are basis of model}, so they are finite \'etale over $K^\circ$.
	Their union is therefore a smooth relative Weil divisor $\mathcal Z_{\mathcal X}$ with $\mathcal U=\mathcal X\setminus\mathcal Z_{\mathcal X}$. This proves \ref{sm model of X}.
\end{proof}

\begin{definition} \label[pdef]{def:smooth model of X}
	Let $(Q, W^0, W^\infty)$ be a triple with good reduction for $X$. We call the model $\mathcal{X}\rightarrow \Spec(K^\circ)$ of $X$ in \cref{prop:smoothmodelassociatedtotriple}~\ref{sm model of X} the \emph{smooth model associated to $(Q,W^0, W^\infty)$}. 
\end{definition}

In the rest of this paper, we fix a triple $(Q, W^0, W^\infty)$ with good reduction for $X$, and denote by $\mathcal{X}$ the smooth model associated to $(Q,W^0, W^\infty)$. Via the model $\mathcal{X}$, we have a reduction map
\[\Sp\colon X^\an\rightarrow \mathcal{X}_s,\]
and residue discs in $X^\an$. By a residue disc, we mean a subset of $X^\an$ of the form $\Sp^{-1}(\overline{x})$ for some point $\overline{x}\in \mathcal{X}_s$ which is not necessarily $\widetilde{K}$-rational. We also denote $\mathcal{U}\coloneq\Spec(K^\circ[x,1/r, y]/(Q))$, $\mathcal{Z}\coloneq  \mathcal{X}\setminus \mathcal{U}$, $U\coloneq\mathcal{U}_K$ and $Z\coloneq\mathcal{Z}_K$. 

\subsection{Bad and good points}

\begin{definition}
	Let $(Q, W^0, W^\infty)$ be a triple with good reduction for $X$. A residue disc  $D\subset X^\an$
	\begin{itemize}
		\item is said to be \emph{infinite} if it is the preimage of an infinite point in $\mathcal{X}_s$;
		\item is said to be \emph{finitely bad} if it is the preimage of a point in $\mathcal{X}_s$ with $x$-coordinate a zero of $\widetilde{r}\in \widetilde{K}[x]$;
		\item is said to be \emph{bad} if it is infinite or finitely bad;
		\item is said to be \emph{good} if it is not bad;
		\item is said to be \emph{$K$-rational} if it contains a $K$-rational point.
	\end{itemize}
\end{definition}

\begin{definition}\label{classification of points def}
	Let $(Q, W^0, W^\infty)$ be a triple with good reduction for $X$. A rigid point $P\in X^\an$ (i.e. the residue field of $P$ is finite over $K$) 
	\begin{itemize}
		\item is said to be \emph{very infinite} if  
		its $x$-coordinate is $\infty$;
		\item is said to be \emph{bad} (resp. \emph{finitely bad}, resp. \emph{infinite}) if it is in a bad disc (resp. a finitely bad disc , resp. an infinite disc); 
		\item is said to be \emph{very bad} if it is very infinite or its $x$-coordinate is a zero of $r(x)$;
\item is said to be \emph{suitable bad} if either $P$ is finitely bad and
\[0<v_K(r(x(P)))<\frac1q,\]
or $P$ is infinite but not very infinite;
		\item is said to be \emph{good} if it is in a good disc.
	\end{itemize} 
\end{definition}


\begin{remark} \label{remark:defofbadresiduedisc}
 If a rigid point $P$ is very bad, then one of three cases holds:
		\begin{itemize}
			\item $x(P) = \infty$;
			\item the fiber of $X^\an$ above $x(P)$ contains a ramification point (the point satisfies $Q(P)=0$ and $r=0$);
			\item the fiber of $X^\an$ above $x(P)$ contains a point mapping to a singularity of the plane model $Q(x,y)=0$.
		\end{itemize}
\end{remark}
\begin{remark}\label{rem:unique-very-bad-point}
Since $(Q, W^0,W^\infty)$ has good reduction, by the Hensel lemma, we have that a bad residue disc contains only one very bad point and this point is defined over an unramified extension of $K$. This implies that the very bad point corresponding to a $K$-rational, bad residue disc is always $K$-rational. 

To show the implication, let $D$ be a bad residue disc containing a $K$-rational point $P$ and a very bad point $P'$. If $P'$ is not $K$-rational, we assume that $P' \in X(K')$ for some finite Galois extension $K'/K$. Then there is $\alpha\in \Gal(K'/K)$ such that $\alpha(P')\not=P'$. Notice that $\alpha(P')$ is also a very bad point, $\alpha(D)=\Sp^{-1}(\Sp(\alpha(P')))$ and $P\in \alpha(D)\cap D$. On the other hand, $\alpha(D)\cap D=\emptyset$ since every bad residue disc contains only one very bad point, hence we have a contradiction. 
\end{remark}

\section{Computing a Frobenius lift}
\label{section:computing a frobenius lift}
We retain the notation fixed above and construct a $\tau$-semilinear
lift $F_\tau^*$ with $F_\tau^*(x)=x^{q_\tau}$ and $F_\tau^*|_K=\tau$. Its $n_\tau$-fold iterate will be denoted by $F_q^*$; it is $K$-linear and sends $x$ to $x^q$.
We also fix a smooth projective geometrically irreducible curve $X$ of genus $g$ over $K$ with a triple $(Q, W^0, W^\infty)$ having good reduction.  The notations $\Delta(x), r(x), g(x), m, b^0_i, b^\infty_i$ are given in \cref{subsection:plane models}. We denote $\mathcal{U}= \Spec(K^\circ[x,1/r, y]/(Q))$, $\mathcal{V}= \Spec(K^\circ[x,1/r])$ and $U=\mathcal{U}_K, V=\mathcal{V}_K$ as \cref{prop:smoothmodelassociatedtotriple}.
We also set
\begin{align*}
	\mathcal{S}&=K^\circ\langle x, 1/r\rangle, & & \mathcal{S}^\dagger=K^\circ\langle x, 1/r\rangle^\dagger,\\
	\mathcal{R}&=K^\circ\langle x, 1/r,y\rangle/(Q),&& \mathcal{R}^\dagger=K^\circ\langle x, 1/r,y\rangle^\dagger/(Q).
\end{align*}
Here $\mathcal S$ is the $\pi$-adic completion of $K^\circ[x,1/r]$, and $\mathcal S^\dagger$ is its weak completion in the sense of \cite[Definition 1.1]{monsky1968formal}. Similarly, $\mathcal R$ and $\mathcal R^\dagger$ are the $\pi$-adic and weak completions of $K^\circ[x,1/r,y]/(Q)$.

\begin{art}
Modulo $K^{\circ\circ}$, let
$\widetilde{\mathcal R}\coloneq \widetilde K[x,1/\widetilde r,y]/(\widetilde Q)$.
For $a\in\widetilde K$, the rule
\[
 a\longmapsto a^{q_\tau},\qquad
 x\longmapsto x^{q_\tau},\qquad
 y\longmapsto y^{q_\tau}
\]
defines a ring endomorphism $\varphi\colon\widetilde{\mathcal R}\to
\widetilde{\mathcal R}$, the absolute $q_\tau$-power Frobenius.
Indeed, $\varphi(\widetilde Q)=\widetilde Q^{q_\tau}=0$ in
$\widetilde{\mathcal R}$.  This map is $\widetilde\tau$-semilinear:
$\varphi(af)=\widetilde\tau(a)\varphi(f)$.  Unless
$\widetilde\tau=\mathrm{id}_{\widetilde K}$, it is not a
$\widetilde K$-algebra homomorphism.  

Since \(\widetilde{\mathcal R}\) is smooth over \(\widetilde K\), the classical lifting result for
Monsky--Washnitzer dagger algebras gives a \(\tau\)-semilinear ring
endomorphism
\[
F_\tau^*:\mathcal R^\dagger\longrightarrow\mathcal R^\dagger
\]
whose reduction is the absolute \(q_\tau\)-power Frobenius $\varphi\colon\widetilde{\mathcal R}\to
\widetilde{\mathcal R}$, see \cite[Definition~3.2.3]{kedlaya2006finiteness}.
For the computations below, we construct explicitly such a lift
satisfying \(F_\tau^*(x)=x^{q_\tau}\), using a Newton--Hensel iteration, compare \cite[Theorem~2.6]{tuitman2017counting}.
\end{art}

Recall the following form of Hensel's lemma, which also applies to weak completions.

\begin{lemma}
	\label[plemma]{lemma:explicitHensellemma}
	Let $A$ be a ring and $I\subset A$ an ideal such that $A$ is complete (hence separated) for the $I$-adic topology.
	Denote reduction modulo $I$ by a bar.
	Let $f(x)\in A[x]$ and $\alpha\in A$ satisfy $\overline f(\overline\alpha)=0, \overline f'(\overline\alpha)\in(A/I)^\times.$
	Set $\alpha_0=\alpha$ and, for $n\geq0$, choose
	$t_n\in A$ such that
	\[
	t_n\equiv f'(\alpha_n)^{-1}\pmod{I^{2^n}},
	\qquad
	\alpha_{n+1}=\alpha_n-f(\alpha_n)t_n.
	\]
	Then
	\[
	f(\alpha_n)\in I^{2^n},
	\qquad
	\alpha_{n+1}-\alpha_n\in I^{2^n}.
	\]
	The limit $\widetilde\alpha=\lim_{n\to\infty}\alpha_n$
	exists in $A$ and is the unique root of $f$
	congruent to $\alpha$ modulo $I$.
	
	The same conclusion holds if $A$ is instead the weak
	completion of a finitely generated $K^\circ$-algebra
	and $I=\pi A$: the limit taken in the $\pi$-adic
	completion of $A$ belongs to $A$.
\end{lemma}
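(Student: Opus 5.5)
We follow the classical Newton--Hensel argument, organized to keep track of the error ideals and to handle the weak-completion case at the end.

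\emph{Step 1: quadratic convergence.} Write the Taylor expansion in $A[x,h]$:
\[
f(\alpha+h)=f(\alpha)+f'(\alpha)h+h^2G(\alpha,h),\qquad G\in A[x,h].
\]
We prove by induction on $n$ that
\[
f(\alpha_n)\in I^{2^n},\qquad f'(\alpha_n)\in A^\times .
\]
For the unit claim: $\alpha_n\equiv\alpha\pmod I$, so $\overline{f'(\alpha_n)}=\overline f'(\overline\alpha)$ is a unit in $A/I$. Since $A$ is $I$-adically complete, $I$ lies in the Jacobson radical, and therefore $f'(\alpha_n)$ is a unit in $A$. The case $n=0$ is the hypothesis. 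For the inductive step, put $h=-f(\alpha_n)t_n\in I^{2^n}$. Then
\[
f(\alpha_{n+1})=f(\alpha_n)\bigl(1-f'(\alpha_n)t_n\bigr)+h^2G .
\]
The first term lies in $I^{2^n}\cdot I^{2^n}$, because $t_n\equiv f'(\alpha_n)^{-1}\pmod{I^{2^n}}$. The second lies in $I^{2^{n+1}}$. This gives $f(\alpha_{n+1})\in I^{2^{n+1}}$, and also $\alpha_{n+1}-\alpha_n=h\in I^{2^n}$.

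\emph{Step 2: limit and uniqueness.} The increments lie in $I^{2^n}$, so $(\alpha_n)$ is Cauchy and converges to some $\widetilde\alpha$ in the complete space $A$. Each $\alpha_n$ agrees with $\widetilde\alpha$ modulo $I^{2^n}$, so $f(\widetilde\alpha)\equiv f(\alpha_n)\pmod{I^{2^n}}$. Hence $f(\widetilde\alpha)\in\bigcap_n I^{2^n}=0$, using separatedness.

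For uniqueness, let $\beta\equiv\alpha\pmod I$ be another root and set $\delta=\beta-\widetilde\alpha\in I$. Then
\[
0=f(\beta)-f(\widetilde\alpha)=\delta\bigl(f'(\widetilde\alpha)+\delta G'\bigr)
\]
for some $G'\in A$. The factor $f'(\widetilde\alpha)+\delta G'$ is a unit, since it reduces to the unit $\overline f'(\overline\alpha)$ and $I$ is in the Jacobson radical. Therefore $\delta=0$.

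\emph{Step 3: weak completions.} This is the main point. Let $A$ be the weak completion of a finitely generated $K^\circ$-algebra, with $I=\pi A$, and let $\widehat A$ be its $\pi$-adic completion. Steps 1 and 2 in $\widehat A$ produce a root $\widetilde\alpha\in\widehat A$. Note that $\widehat A$ is the $\pi$-adic completion of $A$, and $A\to\widehat A$ is injective.

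It remains to show $\widetilde\alpha\in A$. Since $A$ is weakly complete, it is henselian with respect to $\pi A$; this is a standard property of Monsky--Washnitzer weakly complete algebras, see \cite[Definition 1.1]{monsky1968formal} and the theory there. So there is a root $\beta\in A$ of $f$ with $\beta\equiv\alpha\pmod{\pi A}$. Its image in $\widehat A$ is a root congruent to $\alpha$, so $\beta=\widetilde\alpha$ by the uniqueness of Step 2 in $\widehat A$.

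If we want to avoid citing the henselian property, we argue directly instead, and this is where the real work lies. The iterates $\alpha_n$ lie in $A$, and the task is to show they satisfy uniform overconvergence bounds. One way to see this: the Newton map is $\beta\mapsto\beta-f(\beta)u(\beta)$, where $u$ is an overconvergent inverse of $f'$. The limit is then the solution of the fixed-point equation $\delta=-\bigl(f(\alpha)+\delta^2G(\alpha,\delta)\bigr)u$ for the correction $\delta=\widetilde\alpha-\alpha\in\pi\widehat A$. This equation can be solved inside $A$ by the Monsky--Washnitzer implicit-function argument, i.e.\ by bounding degree growth linearly in the $\pi$-adic valuation. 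I expect this estimate to be the main obstacle, so I would invoke the henselian property from \cite{monsky1968formal} rather than reprove it.
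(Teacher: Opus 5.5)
Your proposal is correct and follows the same route as the paper. The congruences come from the Taylor expansion by induction, and the limit and uniqueness come from completeness and separatedness. In the weak-completion case, you use that $(A,\pi A)$ is a henselian pair and then use uniqueness in the $\pi$-adic completion to identify the henselian root with the Newton limit, exactly as the paper does.

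The only difference is the source for henselianity. The paper cites \cite[Theorem~3]{etesse2002relevement}, which is more precise than \cite[Definition~1.1]{monsky1968formal}, since the latter only defines the weak completion.
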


\begin{proof}
	The congruences follow by induction using the Taylor
	expansion of $f$. In the complete case, convergence
	and uniqueness follow from Hensel's lemma.
	
	In the weak-completion case, $(A,\pi A)$ is a
	henselian pair by
	\cite[Theorem~3]{etesse2002relevement}.
	Hence the simple root modulo $\pi$ lifts uniquely
	to a root in $A$. Its image in the $\pi$-adic
	completion coincides, by uniqueness, with the limit
	of the above iteration.
\end{proof}

\begin{proposition}\label{proposition:expressionoffrobeniuslift}
	There is a $\tau$-semilinear Frobenius lift
	$F_\tau^*\colon \mathcal{R}^\dagger\rightarrow \mathcal{R}^\dagger$ for which 
	\begin{align*}
		F_\tau^*(x)& = x^{q_\tau},\\
		F_\tau^*\left(\frac{1}{r}\right)& =
		\frac{1}{r^{\tau}(x^{q_\tau})},\\
		F_\tau^*(y)& = \beta,
	\end{align*}
	where $\beta\in \mathcal{R}^\dagger$ is the root, congruent to
	$y^{q_\tau}$ modulo $K^{\circ\circ}$, of
	$Q^\tau(x^{q_\tau},\beta)=0$.
\end{proposition}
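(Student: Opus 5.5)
We construct $F_\tau^*$ in three stages: first on the base dagger algebra $\mathcal S^\dagger$, then on $\mathcal R^\dagger$ via a Hensel lift of $y$, and finally we check that it reduces to $\varphi$.

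\emph{Base.} On $K^\circ[x]$ we take the $\tau$-semilinear map $h(x)\mapsto h^\tau(x^{q_\tau})$. To extend it to $K^\circ[x,1/r]$ we must show that $r^\tau(x^{q_\tau})$ is a unit in $\mathcal S^\dagger$. Modulo $\pi$ we have
\[
r^\tau(x^{q_\tau})\equiv \widetilde r^{\,\widetilde\tau}(x^{q_\tau})=\widetilde r(x)^{q_\tau}.
\]
This holds because $\widetilde\tau(a)=a^{q_\tau}$ on $\widetilde K$ and Frobenius is additive. So $r^\tau(x^{q_\tau})=r^{q_\tau}(1+\pi h)$ with $h\in K^\circ[x,1/r]$. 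Hence
\[
\frac{1}{r^\tau(x^{q_\tau})}=r^{-q_\tau}\sum_{k\ge0}(-\pi h)^k.
\]
The degrees of the terms $h^k$ in $x$ and $1/r$ grow linearly in $k$, so this series lies in the weak completion $\mathcal S^\dagger$. The substitution is continuous for the $\pi$-adic and overconvergent topologies, so it extends to a $\tau$-semilinear endomorphism of $\mathcal S^\dagger$. By construction it reduces to $\varphi$ on $\widetilde K[x,1/\widetilde r]$.

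\emph{Lifting $y$.} Let $A=\mathcal R^\dagger$, which is the weak completion of the finitely generated algebra $K^\circ[x,1/r,y]/(Q)$, and put $I=\pi A$. Consider the polynomial
\[
f(T)=Q^\tau(x^{q_\tau},T)\in A[T],
\]
with approximate root $\alpha=y^{q_\tau}$. Modulo $\pi$ we get $\overline f(\overline\alpha)=\widetilde Q(x,y)^{q_\tau}=0$, by the same Frobenius computation as above. For the derivative, $\overline f'(\overline\alpha)=\bigl((\partial\widetilde Q/\partial y)(x,y)\bigr)^{q_\tau}$. By \cref{lemma:derivativeatyisinvertible}, $\partial Q/\partial y$ is a unit in $K^\circ[x,1/r,y]/(Q)$, so this is a unit in $\widetilde{\mathcal R}$. \cref{lemma:explicitHensellemma}, in its weak-completion form, then gives a unique root $\beta\in\mathcal R^\dagger$ with $\beta\equiv y^{q_\tau}\pmod{\pi}$.

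\emph{Extending to $\mathcal R^\dagger$.} Since $\mathcal R^\dagger=\mathcal S^\dagger[y]/(Q)$ is finite free over $\mathcal S^\dagger$, a semilinear map $\mathcal S^\dagger\to\mathcal R^\dagger$ extends uniquely by $y\mapsto\beta$, because $Q^\tau(x^{q_\tau},\beta)=0$. Concretely we send $\sum_i s_iy^i\mapsto\sum_i F_\tau^*(s_i)\beta^i$. This is a ring endomorphism. Its reduction agrees with $\varphi$ on $x$, on $y$ and on $\widetilde K$, hence equals $\varphi$.

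The main obstacle is the overconvergence: showing that the inverse of $r^\tau(x^{q_\tau})$ lies in $\mathcal S^\dagger$, and that the Hensel limit $\beta$ lies in $\mathcal R^\dagger$ rather than only in $\mathcal R$. The first needs the explicit linear-growth estimate on the geometric series above. The second I would handle by citing the henselian-pair property already packaged in \cref{lemma:explicitHensellemma}, rather than tracking degree growth through the Newton iterates.
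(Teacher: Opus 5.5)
Your proposal is correct and follows essentially the paper's route: both obtain $F_\tau^*(1/r)$ and $\beta$ from the mod-$\pi$ Frobenius congruences, the invertibility of $\partial Q/\partial y$ from \cref{lemma:derivativeatyisinvertible}, and the weak-completion case of \cref{lemma:explicitHensellemma}. The only difference is in how explicit the construction is. The paper runs a simultaneous Newton iteration, $\alpha_n=\alpha_{n-1}(2-r^\tau(x^{q_\tau})\alpha_{n-1})$ and $\beta_n=\beta_{n-1}-Q^\tau(x^{q_\tau},\beta_{n-1})s^\tau(x^{q_\tau},\beta_{n-1})g^\tau(x^{q_\tau})\alpha_{n-1}^m$, taking the approximate inverse of the derivative from the resultant identity and proving the $\pi^{2^n}$-congruences by induction; these iterates are what \cref{algthm:matrixoffrobeniusonfunctions} later uses. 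Your geometric series and direct appeal to the henselian property give existence and uniqueness, but not those explicit iterates.
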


\begin{proof}
	This is the Newton construction of \cite[Theorem~2.6]{tuitman2017counting}, with coefficient action $\tau$, Frobenius exponent $q_\tau$, and congruences measured in powers of $\pi$. Throughout this proof, we view $\mathcal S^\dagger$ as a subring of $\mathcal R^\dagger$. Unless another ring is specified, equalities are interpreted in $\mathcal R^\dagger$. For congruences, the modulus $\pi^j\mathcal S^\dagger$ or $\pi^j\mathcal R^\dagger$ specifies the ambient ring. Polynomial identities in $K^\circ[x,y]$ will be indicated explicitly.
	
	We will construct the solutions $\alpha, \beta\in \mathcal{R}^\dagger$ of 
	\begin{align*}
		r^\tau(x^{q_\tau})\cdot\alpha-1=0,\\ 
		Q^\tau(x^{q_\tau},\beta)=0,
	\end{align*} 
    respectively. This allows us to define a $\tau$-semilinear ring homomorphism $F_\tau^*\colon \mathcal{R}^\dagger\rightarrow \mathcal{R}^\dagger$  by
	\[F_\tau^*(x) = x^{q_\tau}, \ \ F_\tau^*(1/r) = \alpha, \ \ F_\tau^*(y) = \beta\]
	which is a Frobenius lift. In this proof, the equalities are in $\mathcal R^\dagger$.
    To construct $\alpha$ and $\beta$, we fix a lift $s(x,y)\in K^\circ[x,y]$ of $\Delta/\frac{\partial Q}{\partial y}\in K^\circ[x,y]/(Q)$ which exists by \cref{lemma:derivativeatyisinvertible}, and set 
\[\begin{cases}
    \alpha_0 = 1/r(x)^{q_\tau},\\
    \beta_0=y^{q_\tau},
\end{cases}\]
and 
\[\begin{cases}
    \alpha_{n} = \alpha_{n-1}-(r^\tau(x^{q_\tau})\alpha_{n-1}-1)\alpha_{n-1} = \alpha_{n-1}(2-r^\tau(x^{q_\tau})\alpha_{n-1}),\\
    \beta_{n}=\beta_{n-1}-Q^\tau(x^{q_\tau},\beta_{n-1})s^\tau(x^{q_\tau},\beta_{n-1})g^\tau(x^{q_\tau})\alpha_{n-1}^m.
\end{cases}\]
for $n\in\N_{\geq 1}$.
We will show the following congruences:
\begin{equation}\label{eq:frobenius-hensel-congruences}
	\left\{
	\begin{array}{@{}l@{}}
		Q^\tau(x^{q_\tau},\beta_n)
		\equiv 0 \pmod{\pi^{2^n}\mathcal R^\dagger},\\[3pt]
		\alpha_n r^\tau(x^{q_\tau})
		\equiv 1 \pmod{\pi^{2^n}\mathcal S^\dagger},\\[3pt]
		s^\tau(x^{q_\tau},\beta_n)g^\tau(x^{q_\tau})\alpha_n^m
		\dfrac{\partial Q^\tau}{\partial y}(x^{q_\tau},\beta_n)
		\equiv 1 \pmod{\pi^{2^n}\mathcal R^\dagger}
	\end{array}
	\right.
\end{equation}
by induction on $n$, where $m\in \N$ such that $r(x)^m=g(x)\Delta(x)$, see \cref{subsection:plane models}.
\begin{itemize}
    \item When $n=0$, we have that 
    \[Q^\tau(x^{q_\tau},\beta_{0})=Q^\tau(x^{q_\tau},y^{q_\tau}) \equiv Q(x,y)^{q_\tau} \equiv 0 \ \ \mod \pi\mathcal{R}^\dagger,\]
    \[\alpha_{0}r^{\tau}(x^{q_\tau}) = \frac{1}{r(x)^{q_\tau}}\cdot r^{\tau}(x^{q_\tau}) \equiv \frac{1}{r(x)^{q_\tau}}\cdot r(x)^{q_\tau}
	\equiv 1 \ \ \mod \pi\mathcal{S}^\dagger,\]
    \begin{align*}
     s^\tau(x^{q_\tau},\beta_{0})g^\tau(x^{q_\tau})\alpha_{0}^m\cdot \frac{\partial Q^\tau}{\partial y}(x^{q_\tau},\beta_{0}) & =s^\tau(x^{q_\tau},y^{q_\tau})g^\tau(x^{q_\tau})\alpha_0^m\cdot \frac{\partial Q^\tau}{\partial y}(x^{q_\tau},y^{q_\tau})\\
     & \equiv \Delta^\tau(x^{q_\tau})g^\tau(x^{q_\tau})\alpha_0^m\\
&=(r^\tau(x^{q_\tau}))^m\alpha_0^m\\
&\equiv 1 \ \ \mod \pi\mathcal{R}^\dagger.  
    \end{align*}
    So the congruences hold for $n=0$.
    \item When $n\geq 1$, we assume that \eqref{eq:frobenius-hensel-congruences} holds for $n-1$. Then 
    \[\beta_n-\beta_{n-1}\in \pi^{2^{n-1}}\mathcal R^\dagger\] 
    and there are $a\in \mathcal{S}^\dagger, b, b'\in \mathcal{R}^\dagger$ such that $$Q^\tau(x^{q_\tau},\beta_{n-1}) = \pi^{2^{n-1}}b\in \pi^{2^{n-1}}\mathcal{R}^\dagger,$$ $$\alpha_{n-1}r^\tau(x^{q_\tau})=1+\pi^{2^{n-1}}a,$$ $$s^\tau(x^{q_\tau},\beta_{n-1})g^\tau(x^{q_\tau})\alpha_{n-1}^m\cdot \frac{\partial Q^\tau}{\partial y}(x^{q_\tau},\beta_{n-1})=1+\pi^{2^{n-1}}b'.$$ By our choice of $s(x,y)$, we write $s(x,y)\frac{\partial Q}{\partial y}(x,y)g(x) = r(x)^m+Q(x,y)h(x,y)$ for some $h\in K^\circ[x,y]$. Then
    \begin{align*}
	Q^\tau(x^{q_\tau},\beta_{n}) = & Q^\tau(x^{q_\tau},\beta_{n-1})+\frac{\partial Q^\tau}{\partial y}(x^{q_\tau},\beta_{n-1})(\beta_n-\beta_{n-1}) + \sum_{i=2}^{d_x}\frac{1}{i!}\frac{\partial^i Q^\tau}{\partial y^i}(x^{q_\tau},\beta_{n-1})(\beta_n-\beta_{n-1})^i\\
    = & Q^\tau(x^{q_\tau},\beta_{n-1})-\frac{\partial Q^\tau}{\partial y}(x^{q_\tau},\beta_{n-1})Q^\tau(x^{q_\tau},\beta_{n-1})s^\tau(x^{q_\tau},\beta_{n-1})g^\tau(x^{q_\tau})\alpha_{n-1}^m\ + \pi^{2^{n}}c\\
    = & Q^\tau(x^{q_\tau},\beta_{n-1})-Q^\tau(x^{q_\tau},\beta_{n-1})(1+\pi^{2^{n-1}}b')+ \pi^{2^{n}}c\\
    =& -\pi^{2^n}bb'+\pi^{2^n}c\\
	\equiv& 0 \ \  \mod \pi^{2^{n}}\mathcal{R}^\dagger,
\end{align*}
where $c$ is some element in $\mathcal{R}^\dagger$ (notice that $\frac{1}{i!}\frac{\partial^i Q^\tau}{\partial y^i}(x^{q_\tau},\beta_{n-1})\in \mathcal{R}^\dagger$). We also have
\begin{align*}
	\alpha_{n}r^{\tau}(x^{q_\tau})= & r^\tau(x^{q_\tau})\alpha_{n-1}(2-r^\tau(x^{q_\tau})\alpha_{n-1})\\
	= & (1+\pi^{2^{n-1}}a)(1-\pi^{2^{n-1}}a)\\
	= & 1-\pi^{2^{n}}a^2\\
	\equiv & 1 \ \ \mod \pi^{2^{n}}\mathcal{S}^\dagger,
\end{align*}
\begin{align*}
s^\tau(x^{q_\tau},\beta_{n})g^\tau(x^{q_\tau})\alpha_{n}^m\cdot \frac{\partial Q^\tau}{\partial y}(x^{q_\tau},\beta_{n})= & (r^\tau(x^{q_\tau})\alpha_{n})^m + Q^\tau(x^{q_\tau},\beta_{n})h^{\tau}(x^{q_\tau},\beta_{n})\alpha_{n}^m\\
\equiv & 1 \ \ \mod \pi^{2^{n}}\mathcal{R}^\dagger. 
\end{align*}
Hence the equalities hold for $n$.

\end{itemize}
Hence the congruences hold for arbitrary $n\in\N$. By \cref{lemma:explicitHensellemma}, we can take 
\[\alpha\coloneq \lim_{n\to\infty}\alpha_n\in\mathcal S^\dagger, \qquad \beta \coloneq \lim_{n\to\infty}\beta_n \in \mathcal R^\dagger\]
($t_n$ for $\alpha$ (resp. $\beta$) is $\alpha_{n}$ (resp. $s^{\tau}(x^{q_\tau},\beta_n)g^\tau(x^{q_\tau})\alpha_n^m$)). This completes the proof of the proposition. 
\end{proof}
\begin{remark}
Also note that $F_\tau^*(1/r)- \alpha_n\in \pi^{2^n}\mathcal{S}^\dagger$ and $F_\tau^*(y)-\beta_n\in \pi^{2^n}\mathcal{R}^\dagger$.
The iterate $F_q^*\coloneq(F_\tau^*)^{n_\tau}$ agrees with the direct $q$-power lift with coefficient action $\id_K$: both send $x$ to $x^q$ and $y$ to a root of $Q(x^q,Y)$ congruent to $y^q$ modulo $\pi$. 
This root is unique: by \cref{lemma:derivativeatyisinvertible} and $\Delta(x)g(x)=r(x)^m$ (see \cref{subsection:plane models}), there are $s(x,y), h(x,y)\in K^\circ[x,y]$ such that
\[s(x,y)\frac{\partial Q}{\partial y}(x,y)g(x)=r(x)^m+Q(x,y)h(x,y).\]
Reducing modulo \(\pi\) and then passing to $\widetilde{\mathcal R}=\mathcal R^\dagger/\pi\mathcal R^\dagger$ gives 
$$\widetilde s(x,y) \frac{\partial\widetilde Q}{\partial y}(x,y) \widetilde{g}(x) = \widetilde{r}(x)^m$$
which implies that $\frac{\partial\widetilde Q}{\partial y}(x,y)\in\widetilde{\mathcal R}^{\times}$ since $\widetilde{r}(x)\in \widetilde{\mathcal R}^{\times}$.
Then
\[
\frac{\partial\widetilde Q}{\partial y}(x^q,y^q)
=
\left(\frac{\partial\widetilde Q}{\partial y}(x,y)\right)^q
\in\widetilde{\mathcal R}^{\times}.
\]
Hence Hensel's lemma gives a unique lift of $y^q$ as a root
of $Q(x^q,Y)$ in the $\pi$-adic completion, and therefore at
most one such root in $\mathcal R^\dagger$.
\end{remark}

We can apply \cref{thm:degreeoffrobeniusstructuresatapoint} to estimate the Frobenius lift $F_\tau^*$ given in \cref{proposition:expressionoffrobeniuslift}. We need more information about the differential map $d\colon \OO_X\to \Omega_{X/K}^1$ on $X$.

The sheaf of meromorphic (or fractional) differential $1$-forms on $X$ is denoted by $\Omega_{K(X)/K}^1$.  Notice that $\Omega_{K(X)/K}^1(U)$ is independent of the choice of the open subset $U\subset X$, and
\[\Omega^1_{X/K}(U) = \{\omega\in \Omega_{K(X)/K}^1(U)\mid \ord_{z}(\omega)\geq0 \text{ for any $z\in U$}\}.\]
The sheaf $\Omega^1_{X/K}$ is invertible on $X$ since $X$ is smooth.

\begin{lemma} 	\label[plemma]{lemma:differentialformonopenisfree}
	We have $\Omega^1_{\mathcal{U}/K^\circ}= \OO_{\mathcal{U}}dx$.
\end{lemma}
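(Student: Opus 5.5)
Write $\mathcal U=\Spec(A)$ with $A=K^\circ[x,1/r,y]/(Q)$ and $B=K^\circ[x,1/r]$. The plan is to compute $\Omega^1_{A/K^\circ}$ directly from this presentation. We have
\[
\Omega^1_{B/K^\circ}=B\,dx,
\]
since $B$ is a localization of $K^\circ[x]$ and Kähler differentials commute with localization. Applying the conormal sequence to $A=B[y]/(Q)$ then gives
\[
\Omega^1_{A/K^\circ}\;=\;\bigl(A\,dx\oplus A\,dy\bigr)\big/\Bigl(A\Bigl(\tfrac{\partial Q}{\partial x}\,dx+\tfrac{\partial Q}{\partial y}\,dy\Bigr)\Bigr).
\]

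The key input is \cref{lemma:derivativeatyisinvertible}: $\partial Q/\partial y$ is a unit in $A$. Using the relation, we can therefore eliminate $dy$ via
\[
dy=-\Bigl(\tfrac{\partial Q}{\partial y}\Bigr)^{-1}\tfrac{\partial Q}{\partial x}\,dx .
\]
Consequently $\Omega^1_{A/K^\circ}$ is generated by $dx$, and the natural map $A\to\Omega^1_{A/K^\circ}$, $a\mapsto a\,dx$, is surjective.

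For injectivity (freeness), the quotient above is the cokernel of the map $A\to A^2$, $1\mapsto(\partial Q/\partial x,\partial Q/\partial y)$. Since the second coordinate is a unit, this map is a split injection. Projecting onto the $dx$-coordinate along the relation identifies the cokernel with $A\,dx$, so $\Omega^1_{A/K^\circ}\cong A$ is free on $dx$. An equivalent route is to note that $\mathcal U\to\mathcal V$ is étale (the preceding lemma), so $x^*\Omega^1_{\mathcal V/K^\circ}\to\Omega^1_{\mathcal U/K^\circ}$ is an isomorphism and $\Omega^1_{\mathcal V/K^\circ}=\OO_{\mathcal V}dx$.

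Sheafifying gives $\Omega^1_{\mathcal U/K^\circ}=\OO_{\mathcal U}dx$. No step should be a real obstacle. The only point needing care is that invertibility of $\partial Q/\partial y$ holds integrally over $K^\circ$, not merely over $K$; this is exactly what \cref{lemma:derivativeatyisinvertible} provides via $\Delta^{-1}=g/r^m\in K^\circ[x,1/r]$.
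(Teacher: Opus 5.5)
Your proof is correct and follows the paper's argument. Both use \cref{lemma:derivativeatyisinvertible} to invert $\partial Q/\partial y$ and eliminate $dy$; the paper works with the presentation $K^\circ[x,y,z]/(Q,rz-1)$ and also eliminates $dz=-z^2r'\,dx$, whereas you absorb the localization into $\Omega^1_{B/K^\circ}=B\,dx$, and your explicit splitting argument for the freeness of $dx$ supplies a step the paper leaves implicit.
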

\begin{proof}
	By \cref{lemma:derivativeatyisinvertible}, we know that in $K^\circ[x,1/r,y]/(Q)=K^\circ[x,y,z]/(Q,rz-1), \frac{\partial Q}{\partial y}$ is invertible since $\Delta$ is invertible. So $dy = -(\frac{\partial Q}{\partial x}/\frac{\partial Q}{\partial y})dx$, $dz = -(zr'/r)dx$, and this implies that $\Omega^1_{\mathcal{U}/K^\circ}(\mathcal{U}) = K^\circ[x,y,z]/(Q,rz-1) dx$. This completes the proof.
\end{proof}
\begin{remark}\label[prmk]{rmk:differential form of Rdagger}
    We denote by $\Omega^1_{\mathcal{R}^\dagger/K^\circ}$ the \emph{module of continuous differentials} of $\mathcal{R}^\dagger$, see \cite[\S 2.3]{kedlaya2006fourier}. As before, we have $\Omega^1_{\mathcal{R}^\dagger/K^\circ}=\mathcal{R}^\dagger dx$.
\end{remark}
The morphism $x\colon U\rightarrow V$ is smooth, so it induces \emph{Gauss-Manin connections} $\nabla$ on $x_*\OO_U$ over $V$. Moreover, the Frobenius lift $F_\tau^*\colon\mathcal{R}^\dagger\rightarrow \mathcal{R}^\dagger$ induces a $q_\tau$-power Frobenius structure on $(x_*\OO_U,\nabla)$ around a strict neighborhood of good disks, see \cref{q-frobenius structure def} for definition. In fact, the connection on $x_*\OO_U$ is induced by the differential map $$d\colon\OO_X(U)\rightarrow \Omega_{U/K}^1(U),$$
or
$$\frac{d}{dx}\colon \OO_X(U)\to \OO_X(U)$$ (by \cref{lemma:differentialformonopenisfree}) as modules over $\OO_{\P_K^1}(V) = K[x,1/r]$.

Recall the regularity of a connection $\nabla$ at a point in \cref{def:connectionregularatz}, and the residue matrix and the exponents defined in \cref{def:exponentofmatrix}. Next we show that the connection $(x_*\OO_U, \nabla)$ (without the Frobenius structure) satisfies the assumptions in \cref{thm:degreeoffrobeniusstructuresatapoint}.

\begin{proposition} 	\label[pprop]{prop:matrixofdifferential}
	Let $G^0\in M_{d_x}(K^\circ[x,1/r])$ and $G^\infty\in M_{d_x}(K^\circ[x,x^{-1}, 1/r])$ be the matrices such that
	\[db_j^0=\sum_{i=0}^{d_x-1} G^0_{i+1,j+1}b_i^0 dx, \ \ \ \  db_j^\infty=\sum_{i=0}^{d_x-1} G^\infty_{i+1,j+1}b_i^\infty dx,\]
	for all $0\leq j\leq d_x-1$. Let $x_0\in \P_K^1(\overline{K})$. The following statements hold:
	\begin{enumerate1}
		\item \label{regularity of G} The connection matrix $G^0dx$ (resp. $G^\infty dx$ at $x_0=\infty$) has at most a simple pole at $x_0$. 
		\item \label{exponents of G} For any $P\in X(\overline{K})$ with $x(P)=x_0$, set $e_P$ the ramification index of $x$ at $P$. Then the exponents of $G^0$ (resp. $G^\infty$ if $x_0=\infty$) at $x_0$ are 
		\[0, 1/e_P,  \dots, (e_P-1)/e_P\]
		for $P$ going through $x^{-1}(x_0)$. In particular, the exponents of $G^0$ (resp. $G^\infty$) at $x_0$ are elements of $\Q\cap K^\circ$ and are contained in $[0,1)$.
	\end{enumerate1}
\end{proposition}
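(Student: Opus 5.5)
We will work locally at $x_0$ and compare the basis $\boldsymbol b^0$ with a basis in which the connection is visibly diagonal. Suppose first that $x_0\neq\infty$. Let $K'/K$ be a finite extension containing $x_0$ and the coordinates of all points $P\in x^{-1}(x_0)$, and put $t=x-x_0$.

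\textbf{Step 1 (local normal form).} The completed integral closure $\widehat{A}\coloneq A^0\otimes_{K[x]}K'[[t]]$ splits as $\prod_{P\mapsto x_0}\widehat{\OO}_{X_{K'},P}$. Here we use that $A^0$ is the integral closure of $K[x]$ in $K(X)$, and that normalization commutes with completion because these rings are excellent. After enlarging $K'$ by $e_P$-th roots of units, each factor is $K'[[t_P]]$ with $t_P^{e_P}=t$; this uses characteristic $0$ and the fact that $t$ is a uniformizer times a unit admitting an $e_P$-th root.

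On $\widehat A$ we have two $K'[[t]]$-bases. One is $\boldsymbol b^0$, because it is an integral basis and hence a basis of $A^0$ over $K[x]$. The other is $\{t_P^k\}_{P,\,0\le k<e_P}$, supported on the idempotents of the product. Let $T\in\GL_{d_x}(K'[[t]])$ be the change of basis between them. In the second basis,
\[
\frac{d}{dx}t_P^k=\frac{k}{e_P}\,t^{-1}t_P^k ,
\]
so the connection matrix is $t^{-1}\operatorname{diag}(k/e_P)$.

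\textbf{Step 2 (transport back).} By \cref{lemma:matricesofbasischange}~\ref{matrix of connection by base change}, we have $G^0=T\,t^{-1}\operatorname{diag}(k/e_P)\,T^{-1}+T\frac{d}{dt}T^{-1}$, up to the orientation convention for $T$. Since $T$ and $T^{-1}$ are holomorphic at $t=0$, this matrix has at most a simple pole, which proves \ref{regularity of G}. Its residue is $T(0)\operatorname{diag}(k/e_P)T(0)^{-1}$, so the exponents are exactly $0,1/e_P,\dots,(e_P-1)/e_P$ for $P$ ranging over $x^{-1}(x_0)$. These lie in $[0,1)\cap\Q$.

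For $x_0=\infty$ we run the same argument with $s=1/x$ and the basis $\boldsymbol b^\infty$, which is integral over $K[1/x]$. By the remark following \cref{def:exponentofmatrix}, the residue matrix $-xG^\infty|_{x=\infty}$ is the residue of the connection written in the coordinate $s$, so the same computation applies verbatim.

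\textbf{Step 3 (the exponents lie in $K^\circ$).} This is the main obstacle. We must show that $p\nmid e_P$, i.e.\ that every ramification point of $x$ is tame. The first thing to try is a specialization argument for the different.

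By \cref{prop:smoothmodelassociatedtotriple}, $x\colon\mathcal X\to\P^1_{K^\circ}$ is a finite flat map of smooth proper curves. Its generic and special fibers have the same degree $d_x$ and the same genus $g$. Riemann--Hurwitz on each fiber then shows that the different divisors have the same degree $2g-2+2d_x$.

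On the special fiber, the degree of the different is at least $\sum(\widetilde e_{\bar P}-1)$, with equality iff all ramification is tame. Next we compare the two fibers pointwise. The ramification points lie in $\mathcal Z_{\mathcal X}$, which is finite étale over $K^\circ$, so distinct points above $x_0$ do not merge under reduction. The same holds for $\mathcal Z_{\P^1}$, so distinct $x_0$ do not collide. Consequently each $\bar P$ is the reduction of a unique $P$, and $\widetilde e_{\bar P}=e_P$.

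Comparing degrees, the special-fiber different has degree exactly $\sum(e_P-1)$, which forces tameness. Hence $p\nmid e_P$, and so $k/e_P\in\Q\cap K^\circ$.

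If this degree bookkeeping turns out to be delicate, the fallback is to argue directly on the finite étale algebra $(\mathcal R^0/(r))_{\mathrm{red}}$ of \cref{def:triplehasgoodreduction}~\ref{good reduction: b are basis of model}. Near each point of $\mathcal Z_{\mathcal X}$ the model is then $K^\circ[[t_P]]$ over $K^\circ[[t]]$ with $t=u\,t_P^{e_P}$ for a unit $u$, and wildness would contradict the smoothness of the relative divisor $\mathcal Z_{\mathcal X}$ combined with the flatness of the discriminant.
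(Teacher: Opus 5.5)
Your proof is correct, but it takes a different route from the paper, mainly in Step~3.

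\textbf{Steps 1--2.} For (1) and for the values of the exponents, the paper works with orders of vanishing rather than a formal normal form. From $\ord_P(dt/t)=-1$ and $\ord_P(db_j^0)\geq0$ it gets $\ord_P(t\,db_j^0/dt)\geq1$ at every $P$ over $x_0$, so $tG^0$ is regular. It then identifies the residue matrix with $(x-x_0)\frac{d}{dx}$ acting on $\OO_X(X^0)/(x-x_0)\simeq\bigoplus_P\OO_{X,P}/(t_P^{e_P})$. In the basis $1,t_P,\dots,t_P^{e_P-1}$ this operator is triangular with diagonal entries $n/e_P$. Your diagonalization over $K'[[t]]$ followed by a gauge transformation reaches the same conclusion and is equally valid.

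\textbf{Step 3.} Here is the real difference. You treat tameness as the main obstacle and prove it globally, by Riemann--Hurwitz on both fibers of $\mathcal X\to\P^1_{K^\circ}$. The paper gets it directly from the integrality hypothesis already built into the statement. By (1), and since $r$ has unit leading coefficient, $G^0=M^0/r$ with $M^0\in M_{d_x}(K^\circ[x])$. So the residue at a root $x_0$ of $r$ is $M^0(x_0)/r'(x_0)$, which is integral because $r'(x_0)$ is a unit (the discriminant of $r$ is a unit). Hence its eigenvalues, in particular $1/e_P$, are integral, and $p\nmid e_P$.

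\textbf{Comparison.} Your route needs considerably more input: geometric connectedness and genus of the special fiber, separability of $\widetilde x$, and Riemann--Hurwitz in characteristic $p$. The step ``consequently $\widetilde e_{\bar P}=e_P$'' also needs more than non-merging of points. For example, split the finite flat $K^\circ$-algebra $\mathcal R^0/(x-x_0)$ into local factors, one for each $P$; each factor is free, of rank $e_P$ on the generic fiber and $\widetilde e_{\bar P}$ on the special fiber. Alternatively, argue as in \cref{rem:ramification-at-good-very-bad}. In return your argument gives more: tameness of $\widetilde x$ and $\widetilde e_{\bar P}=e_P$ at every boundary point. For the proposition itself, though, the short integrality argument suffices, and your vague fallback is unnecessary.
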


\begin{proof}
	The regularity and exponent arguments follow \cite[Propositions~2.8 and~2.10]{tuitman2017counting}, we include the local calculation over $K$.

	We assume that $x_0\in \P^1_K(\overline{K})\setminus\{\infty\}$, the proof for $x_0=\infty$ is similar.
	
	\ref{regularity of G} By \cref{lemma:differentialformonopenisfree}, we know that $G^0$ is uniquely determined. Let $t=x-x_0$ and $H(t)dt=G^0(x)dx$. For any point $P\in X(\overline{K})$ lying over $x_0$, notice that $\ord_P(dt/t)=-1$ and $\ord_P(db_i^0)\geq 0$ for all $0\leq i\leq d_x-1$, then 
	\[\ord_P\left(\sum_{i=0}^{d_x-1}tH_{i+1,j+1}(t)b_i^0\right) = \ord_P(tdb_j^0)-\ord_P(dt)\geq 1.\]
	This implies that $\sum_{i=0}^{d_x-1}tH_{i+1,j+1}(t)b_i^0\in A^0_{(t)}$, the localization of $A^0=\OO_X(X^0)$ at $(t)\subset K[x]$ (Notice that $A^0_{(t)}=\bigcap_{x(P)=x_0}\OO_{X,P}$, where $\OO_{X,P}$ is the local ring at $P$, since $A^0$ is the integral closure of $K[x]$ in $K(X)$). Since $b_0^0, \dots, b_{d_x-1}^0$ is a basis of $(K[x,y]/Q)_{(t)}$ (resp. $\Frac(K[x,y]/Q)$) over $K[x]_{(t)}$ (resp. $K(x)$), we have that $tH_{i,j}\in K[x]_{(t)}$ for any $1\leq i,j\leq d_x$, i.e. $H(t)$ has at most simple pole at $x_0$. This proved \ref{regularity of G}.

		\ref{exponents of G}
		We work over $\overline K$ for the local calculation.
		The residue matrix $G_{-1}^0(x_0)$ represents the endomorphism
		\[D\coloneq(x-x_0)\frac{d}{dx}\colon \OO_X(X^0)/(x-x_0)\to \OO_X(X^0)/(x-x_0)\]
		with respect to the basis $b_0^0,\dots,b_{d_x-1}^0$. We have
$$\OO_X(X^0)/(x-x_0)\simeq \bigoplus_{P\in X^0, x(P)=x_0}\OO_{X,P}/(t_P^{e_P})$$
where $t_P$ is a uniformizer of $\OO_{X,P}$. Write $x-x_0=ut_P^{e_P}$ with $u\in\OO_{X,P}^\times$. Then 
\[D(t_P)=(x-x_0)\frac{dt_P}{dx} = \frac{t_P}{e_P} - \frac{(x-x_0)t_P}{e_Pu}\frac{du}{dx}.\]
Since $u$ is regular at $P$ and $\ord_P=e_P$, we have that $$\ord_P\left(\frac{du}{dt_P}\right)\geq 0 \qquad \ord_P\left(\frac{dx}{dt_P}\right) = e_P-1,$$
and consequently
\[
\ord_P\left(\frac{du}{dx}\right)
=
\ord_P\left(\frac{du/dt_P}{dx/dt_P}\right)
\geq0-(e_P-1)=1-e_P.
\]
Hence,	for every $n\geq0$,
		\begin{align*}
			D(t_P^n)&=nt_P^{n-1}D(t_P)\\
			&=\frac{n}{e_P}t_P^n-\frac{n}{e_Pu}t_P^n(x-x_0)\frac{du}{dx}\\
			&\equiv\frac{n}{e_P}t_P^n\pmod{t_P^{n+1}}.
		\end{align*}
		Thus, on each summand, the matrix of $D$ in the basis
		$1,t_P,\dots,t_P^{e_P-1}$ is lower triangular, with
		diagonal entries
		\[
		0,\frac1{e_P},\dots,\frac{e_P-1}{e_P}.
		\]
		These are therefore precisely the eigenvalues of
		$G_{-1}^0(x_0)$, counted with multiplicity.
		
		It remains to prove that they are $p$-adically integral.
		If $e_P>1$, then $x_0$ is a root of $r$, since
		$x\colon U\to V$ is \'etale.
		By \ref{regularity of G}, write $G^0=M^0/r$ with
		$M^0\in M_{d_x}(K^\circ[x])$. Then
		\[
		G_{-1}^0(x_0)=\frac{M^0(x_0)}{r'(x_0)}
		\]
		has integral entries, because $x_0$ is integral and $r'(x_0)$ is a $p$-adic unit by the good-reduction assumptions
		on $r$. Its eigenvalues are therefore $p$-adically integral. This completed the proof of \ref{exponents of G}.
\end{proof}
\begin{remark}
	Notice that if $x_0\in \overline{K}$ is not a root of $r(x)$, then $G^0$ does not have a simple pole at $x_0$, i.e. residue matrix of $G^0$ at $x_0$ is $0$. So for the residue matrix of $G^0$, it suffices to consider that at the roots of $r(x)$. Write $G^0=M^0/r$ with $M^0\in M_{d_x}(K^\circ[x])$ and let $x_0$ be a root of $r(x)$, then the residue of $G^0dx$ at $x_0$ is $M^0(x_0)/r'(x_0)$.
\end{remark}

\begin{corollary}\label[pcor]{cor:regular basis of x}
	Let $x_0$ be a root of $r(x)$ or $x_0=\infty$. Then for every fixed $k\in\N$, the basis \[b_0^0/r^k,\dots,b_{d_x-1}^0/r^k \qquad \text{(resp. $b_0^\infty/r^k,\dots,b_{d_x-1}^\infty/r^k$)}\] of $(x_*\OO_U,\nabla)$ is regular at every root $x_0$ of $r(x)$ (resp. at $x_0=\infty$) in the sense of \cref{def:connectionregularatz} with corresponding exponents of $\nabla$ contained in \[\Q\cap K^\circ\cap[-k,-k+1) \qquad \text{(resp. $\Q\cap K^\circ\cap
	[k\deg(r),k\deg(r)+1)$)}.\]
\end{corollary}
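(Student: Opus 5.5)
The plan is to deduce the corollary from \cref{prop:matrixofdifferential} by a diagonal change of basis, using \cref{lemma:matricesofbasischange}~\ref{matrix of connection by base change}. Put $w_j\coloneq b_j^0/r^k$, i.e.\ $W=r^{-k}I_{d_x}$. The connection matrix in the new basis is
\[
W^{-1}G^0W+W^{-1}\frac{d}{dx}W=G^0-k\frac{r'}{r}I_{d_x}.
\]
Let $x_0$ be a root of $r$. By \cref{def:triplehasgoodreduction}~\ref{good reduction: roots of r}, $r$ is squarefree, so $x_0$ is a simple root. Hence $r'/r$ has a simple pole at $x_0$ with residue $1$. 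By \cref{prop:matrixofdifferential}~\ref{regularity of G}, $G^0$ has at most a simple pole at $x_0$. Therefore the new matrix has at most a simple pole, so the basis is regular at $x_0$, and its residue matrix equals $G^0_{-1}(x_0)-kI_{d_x}$.

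The exponents are the eigenvalues of this residue matrix, namely $\lambda-k$ with $\lambda$ running over the exponents of $G^0$. By \cref{prop:matrixofdifferential}~\ref{exponents of G}, each such $\lambda$ lies in $\Q\cap K^\circ\cap[0,1)$. Subtracting the integer $k$ preserves membership in $\Q\cap K^\circ$ and moves the interval to $[-k,-k+1)$.

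At $x_0=\infty$ I would pass to the coordinate $s=1/x$, as in the remark after \cref{def:exponentofmatrix}. In this coordinate the residue matrix is $-xN|_{x=\infty}$. The basis change now contributes $-k\,x\,r'/r$ evaluated at $x=\infty$. Because the leading coefficient of $r$ is a unit, $\deg r$ equals the true degree, and so $x\,r'/r\to\deg r$ as $x\to\infty$. The new residue matrix is therefore the residue matrix of $G^\infty$ at $\infty$ plus $k\deg(r) I_{d_x}$. Combining this with \cref{prop:matrixofdifferential} applied to $G^\infty$ at $\infty$ gives regularity there and exponents in $\Q\cap K^\circ\cap[k\deg(r),k\deg(r)+1)$.

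The main point to check is the sign convention at $\infty$: that the correction term really enters as $+k\deg r$ rather than $-k\deg r$. This comes down to tracking the factor $-s^{-1}$ in the definition of the residue at $\infty$. The remaining steps are routine.
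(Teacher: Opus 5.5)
Your proposal is correct and follows the paper's argument: the scalar change of basis $W=r^{-k}I_{d_x}$ in \cref{lemma:matricesofbasischange}~\ref{matrix of connection by base change} shifts each exponent from \cref{prop:matrixofdifferential} by $\ord_{x_0}(r^{-k})$, which is $-k$ at a root of $r$ and $k\deg(r)$ at $\infty$, and you make this explicit through the residue of $-k\,r'/r$. On the sign you flag: the change adds $-k\,r'/r$ to $N$, so it adds $-x\cdot(-k\,r'/r)=k\,x\,r'/r\to k\deg(r)$ to the residue $-xN|_{x=\infty}$. This is not $-k\,x\,r'/r$ as you wrote, and it confirms your final shift of $+k\deg(r)$.
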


\begin{proof}
	\cref{prop:matrixofdifferential} gives regularity and exponents in $\Q\cap K^\circ\cap[0,1)$
	for the integral bases $b_0^0,\dots,b_{d_x-1}^0$ and $b_0^\infty,\dots,b_{d_x-1}^\infty$.
By \cref{lemma:matricesofbasischange}~\ref{matrix of connection by base change}, multiplying a basis by $r^{-k}$ adds $\ord_{x_0}(r^{-k})$ to each exponent.
	Since $r$ is squarefree,
\[\ord_{x_0}(r^{-k})=-k\quad\text{if }r(x_0)=0,\qquad\ord_\infty(r^{-k})=k\deg(r).\]
This proves the assertion.
\end{proof}

The matrix $\Phi$ on the basis $1,y,\dots,y^{d_x-1}$ represents the
$\mathcal S^\dagger$-linear map
\[
 \Theta_\tau\colon
 \mathcal R^\dagger\otimes_{\mathcal S^\dagger,\sigma_\tau}
 \mathcal S^\dagger\longrightarrow\mathcal R^\dagger,
 \qquad h\otimes a\longmapsto aF_\tau^*(h).
\]
Here $\sigma_\tau=F_\tau^*|_{\mathcal S^\dagger}$.

\begin{lemma}\label{Phi-1 integral}
We have $\Phi^{-1}\in M_{d_x}(\mathcal S^\dagger)$.
In particular, $v_K(\Phi^{-1})\geq0$.
\end{lemma}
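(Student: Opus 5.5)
The plan is to show that $\det\Phi$ is a unit in $\mathcal S^\dagger$; then $\Phi^{-1}=\operatorname{adj}(\Phi)/\det\Phi$ has entries in $\mathcal S^\dagger$.

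\textbf{Setup.} Since $Q$ is monic in $y$, the ring $\mathcal R^\dagger$ is free over $\mathcal S^\dagger$ with basis $1,y,\dots,y^{d_x-1}$. For each $j$, the element $F_\tau^*(y^j)=\beta^j$ lies in $\mathcal R^\dagger$. Expanding it in this basis shows $\Phi\in M_{d_x}(\mathcal S^\dagger)$, so $\det\Phi\in\mathcal S^\dagger$.

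\textbf{Step 1: reduce modulo $\pi$.} Reducing modulo $\pi$, the matrix $\widetilde\Phi$ is the matrix, in the basis $1,y,\dots,y^{d_x-1}$, of the $\widetilde{\mathcal S}$-linear map
\[
\widetilde\Theta\colon\widetilde{\mathcal R}\otimes_{\widetilde{\mathcal S},\varphi}\widetilde{\mathcal S}\to\widetilde{\mathcal R},
\]
where $\varphi$ is the absolute $q_\tau$-power Frobenius and $\widetilde{\mathcal S}=\widetilde K[x,1/\widetilde r]$. The remark after \cref{proposition:expressionoffrobeniuslift} shows that $\partial\widetilde Q/\partial y$ is a unit in $\widetilde{\mathcal R}$. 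Hence $\widetilde{\mathcal R}$ is finite étale over $\widetilde{\mathcal S}$. For a finite étale algebra, the relative Frobenius (the linearization of absolute Frobenius) is an isomorphism, by the standard fact that étale morphisms commute with Frobenius. So $\widetilde\Theta$ is bijective and $\det\widetilde\Phi\in\widetilde{\mathcal S}^\times$.

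If I want to avoid citing that fact, I would argue with discriminants instead. The map $\widetilde\Theta$ is a homomorphism of $\widetilde{\mathcal S}$-algebras, both free of rank $d_x$. The trace form of $\widetilde{\mathcal R}\otimes_\varphi\widetilde{\mathcal S}$ in the basis $y^i\otimes1$ has determinant $\varphi(\widetilde\Delta)=\widetilde\Delta^\tau(x^{q_\tau})$. This is a unit, because $\widetilde\Delta$ divides $\widetilde r^m$. An algebra map between étale algebras of the same rank with unit discriminant must be an isomorphism: it is injective because the trace form is nondegenerate, and then surjective by a rank count fibrewise. Comparing discriminants then gives
\[
(\det\widetilde\Phi)^2\,\widetilde\Delta=\varphi(\widetilde\Delta),
\]
so $\det\widetilde\Phi$ is a unit.

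\textbf{Step 2: lift the unit.} Choose $a\in\mathcal S^\dagger$ whose reduction is $(\det\widetilde\Phi)^{-1}$. Then $a\det\Phi=1-\pi b$ for some $b\in\mathcal S^\dagger$. The remaining point is that $1-\pi b$ is invertible in $\mathcal S^\dagger$ itself, not merely in the $\pi$-adic completion $\mathcal S$. I would use that $(\mathcal S^\dagger,\pi\mathcal S^\dagger)$ is a henselian pair, as quoted in the proof of \cref{lemma:explicitHensellemma}. Concretely, apply that lemma to $f(T)=(1-\pi b)T-1$ with approximate root $1$: it yields a root in $\mathcal S^\dagger$, namely the inverse. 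Therefore $\det\Phi=a^{-1}(1-\pi b)$ is a unit of $\mathcal S^\dagger$, and $\Phi^{-1}\in M_{d_x}(\mathcal S^\dagger)$.

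\textbf{Conclusion and main obstacle.} Since elements of $\mathcal S^\dagger$ have nonnegative valuation, this gives $v_K(\Phi^{-1})\geq0$. I expect the main obstacle to be Step 1, the invertibility of $\widetilde\Phi$ modulo $\pi$. In particular, the semilinearity of $\varphi$ (when $\tau\neq\id_K$) has to be handled via the linearization $\widetilde\Theta$ rather than via $\varphi$ directly.
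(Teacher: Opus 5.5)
Your main argument is essentially the paper's proof. You reduce modulo $\pi$, use that the relative $q_\tau$-power Frobenius of the finite \'etale map $\widetilde U\to\widetilde V$ is an isomorphism to get $\det\widetilde\Phi\in\widetilde{\mathcal S}^\times$, then lift the inverse and invert $1-\pi b$ inside $\mathcal S^\dagger$; the paper does this last step by summing $\sum_{n\geq0}(\pi h)^n$ using weak completeness, while you use the henselian property of $(\mathcal S^\dagger,\pi\mathcal S^\dagger)$, which amounts to the same thing.

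One caution about your optional discriminant variant: the general claim that an algebra map between \'etale algebras of equal rank with unit discriminant is an isomorphism is false (e.g.\ $(a,b)\mapsto(a,a)$ on $k\times k$). What actually makes that variant work is the identity $\Tr(h^{q_\tau})=\Tr(h)^{q_\tau}$ in characteristic $p$, which makes $\widetilde\Theta$ trace-compatible. Then $\widetilde\Phi^{\mathsf T}G\widetilde\Phi=\varphi(G)$ for the trace-form Gram matrix $G$ of the power basis, and your identity $(\det\widetilde\Phi)^2\widetilde\Delta=\varphi(\widetilde\Delta)$ follows directly, with no injectivity argument needed.
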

\begin{proof}
Modulo \(\pi\), the map \(\Theta_\tau\) is the relative \(q_\tau\)-power Frobenius of \(\widetilde{U}/\widetilde{V}\).
It is an isomorphism: relative \(p\)-power Frobenius is an isomorphism for an étale morphism \cite[Tag~0EBS]{stacks-project}, and relative \(q_\tau=p^{\ell_\tau}\)-power Frobenius is its \(\ell_\tau\)-fold iteration \cite[Tag~0CCG]{stacks-project}. Hence \(d=\det(\Phi)\) is a unit modulo \(\pi\). Choose \(e\in\mathcal S^\dagger\) lifting the inverse of \(\bar d\). Then \(de=1-\pi h\) for some \(h\in\mathcal S^\dagger\). By weak completeness, \(\sum_{n\geq0}(\pi h)^n\in\mathcal S^\dagger\); thus \(d^{-1}=e\sum_{n\geq0}(\pi h)^n\). The adjugate formula now gives \(\Phi^{-1}\in M_{d_x}(\mathcal S^\dagger)\).
\end{proof}
\begin{definition}
	For a geometric point $x_0\in\P_{\overline{K}}^1(\overline K)$, let $\ord_{x_0}$ denote the discrete valuation
	on $\overline K(x)$ corresponding to $x_0$, normalized by
	\[\ord_{x_0}(x-x_0)=1 \quad\text{if }x_0\ne\infty, \qquad \ord_\infty(1/x)=1.\]
	For $f\in\overline K(x)$, define
	\[\ord_{\ne\infty}(f)\coloneq\min_{x_0\in\P_{\overline{K}}^1(\overline K)\setminus\{\infty\}}\{\ord_{x_0}(f)\}.\]
	We extend these definitions to matrices over $\overline K(x)$ by taking the minimum over their entries.
\end{definition}
\begin{proposition}\label[pprop]{prop:approximationofFrobenius}
	Let $F_\tau^*\colon\mathcal R^\dagger
	\rightarrow\mathcal R^\dagger$ be the Frobenius
	lift given in
	\cref{proposition:expressionoffrobeniuslift},
	and let $N\in\N_{\geq1}$.
	\begin{enumerate1}
		\item \label{frobenius of r}
		In $\mathcal S^\dagger$, we have
		\[
		F_\tau^*\left(\frac1r\right)
		=\frac1{r^\tau(x^{q_\tau})}
		\equiv
		\sum_{i=q_\tau}^{q_\tau N}
		\frac{\rho_i}{r^i}
		\pmod{\pi^N\mathcal S^\dagger},
		\]
		where $\rho_i\in K^\circ[x]$ with
		$\deg(\rho_i)<\deg(r)$ for
		$q_\tau\leq i\leq q_\tau N$.
		
		\item \label{frobenius of y}
		In $\mathcal R^\dagger$, for every
		$0\leq i\leq d_x-1$, we have
		\[
		F_\tau^*(y^i)
		\equiv
		\sum_{j=0}^{d_x-1}\phi_{i,j}y^j
		\pmod{\pi^N\mathcal R^\dagger},
		\]
		where
		\[\phi_{i,j}=\sum_{k=0}^{q_\tau(N-1)-\ord_{\ne\infty}(W^0)}\frac{\phi_{i,j,k}}{r^k}\]
		and $\phi_{i,j,k}\in K^\circ[x]$ with
		\begin{align*}
			\deg(\phi_{i,j,0})
			&\leq
			-\ord_\infty(W^\infty)
			-q_\tau\ord_\infty((W^\infty)^{-1}),\\
			\deg(\phi_{i,j,k})
			&<\deg(r),
		\end{align*}
		for every $0\leq j\leq d_x-1$ and
		\(1\leq k\leq q_\tau(N-1)-\ord_{\ne\infty}(W^0)-q_\tau\ord_{\ne\infty}((W^0)^{-1}).\)
		
		\item \label{frobenius of b}
		In $\mathcal R^\dagger$, for every
		$0\leq i\leq d_x-1$, we have
		\[
		F_\tau^*\left(\frac{b_i^0}{r}\right)
		\equiv
		\sum_{j=0}^{d_x-1}
		\psi_{i,j}\frac{b_j^0}{r}
		\pmod{\pi^N\mathcal R^\dagger},
		\]
		where
		\[
		\psi_{i,j}
		=\sum_{k=0}^{q_\tau N-1}
		\frac{\psi_{i,j,k}}{r^k}
		\]
		and $\psi_{i,j,k}\in K^\circ[x]$ with
		\begin{align*}
			\deg(\psi_{i,j,0})
			&\leq
			-\ord_\infty(W)
			-q_\tau\ord_\infty(W^{-1})
			-(q_\tau-1)\deg(r),\\
			\deg(\psi_{i,j,k})
			&<\deg(r),
		\end{align*}
		for every $0\leq j\leq d_x-1$ and
		$1\leq k\leq q_\tau N-1$.
	\end{enumerate1}
\end{proposition}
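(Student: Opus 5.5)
The plan follows Tuitman's proof of \cite[Proposition~2.12]{tuitman2017counting}. The three parts are handled in order, each combining an explicit congruence computation with the pole-order bounds of \cref{cor:degreeoffrobeniusstructureatnonregularpoints}.

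For \ref{frobenius of r}, work directly in $\mathcal S^\dagger$. Since $r^\tau(x^{q_\tau})\equiv r(x)^{q_\tau}\pmod{\pi}$, write $r^\tau(x^{q_\tau})=r^{q_\tau}(1+\pi\epsilon)$ with $\epsilon=\pi^{-1}(r^\tau(x^{q_\tau})-r^{q_\tau})/r^{q_\tau}$. Then $\epsilon\in K^\circ[x,1/r]$, and it is a polynomial in $1/r$ of degree at most $q_\tau$ with coefficients of degree $<\deg r$; here we use that $\deg(r^\tau(x^{q_\tau})-r^{q_\tau})\le q_\tau\deg r$ and that the leading coefficient of $r$ is a unit. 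Expanding the geometric series
\[
\frac1{r^\tau(x^{q_\tau})}=r^{-q_\tau}\sum_{j\ge0}(-\pi\epsilon)^j,
\]
the terms with $j\ge N$ vanish modulo $\pi^N$. The term with index $j$ has pole order at most $q_\tau+jq_\tau$ in $1/r$. Reducing each numerator $r$-adically, i.e.\ writing a polynomial in base $r$ so that every digit has degree $<\deg r$, gives the stated shape with $q_\tau\le i\le q_\tau N$. Because $\operatorname{lc}(r)$ is a unit, this reduction preserves integrality. The expansion has no polynomial part, since $\deg(\text{numerator})<\deg(\text{denominator power})$ at every stage.

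For \ref{frobenius of b}, which is the case where \cref{thm:degreeoffrobeniusstructuresatapoint} is really needed, take the basis $b_j^0/r$ of $(x_*\OO_U,\nabla)$. Let $\Psi=(\psi_{i,j})$ be the matrix of the Frobenius structure induced by $F_\tau^*$ in this basis; its entries lie in $\mathcal S^\dagger$ and are integral by \cref{Phi-1 integral} together with \ref{good reduction: W are integral}. Each $\psi_{i,j}$ is an overconvergent function on the complement of the residue discs of the roots of $r$ and of $\infty$. The argument then has three steps.
\begin{enumerate}
\item Apply a Mittag--Leffler decomposition to $\psi_{i,j}$ modulo $\pi^N$, splitting it into polar parts at each bad disc. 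Since $\widetilde r$ is separable, a part supported at the roots of $r$ can be written as $\sum_{k\ge1}\psi_{i,j,k}/r^k$ with $\deg\psi_{i,j,k}<\deg r$.
\item At a root $x_0$ of $r$, \cref{cor:regular basis of x} with $k=1$ gives exponents in $[-1,0)$. \cref{remark:degreeoffrobeniusstructuresatapoint} with $k=-1$ and $m=N$ then bounds the order modulo $\pi^N$ below by $-q_\tau(N-1-(-1))-(-1)=-q_\tau N+1$. Hence $k\le q_\tau N-1$.
\item At $\infty$, pass to the basis $b^\infty_j/r$ through $W$. \cref{cor:degreeoffrobeniusstructureatnonregularpoints} with $z=\infty$ supplies the additional terms $\ord_\infty(W)+q_\tau\ord_\infty(W^{-1})$. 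The exponents lie in $[\deg r-1,\deg r)$, by \cref{cor:regular basis of x} shifted by $-1$. Since $\alpha_2=0$ at $z=\infty$, we get $\alpha_1\le -(q_\tau-1)(\deg r-1)$. This controls the polynomial part $\psi_{i,j,0}$, yielding the stated degree bound after accounting for the factor $r^{q_\tau-1}$ coming from $F_\tau^*(1/r)$ versus $1/r$.
\end{enumerate}

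Part \ref{frobenius of y} is analogous, using the basis $1,y,\dots,y^{d_x-1}=\boldsymbol b^0(W^0)^{-1}$. Its Frobenius matrix is $W^0\,\Psi'\,\sigma_\tau(W^0)^{-1}$, where $\Psi'$ is the Frobenius matrix in the integral basis $b^0_j$ (exponents in $[0,1)$, i.e.\ $k=0$), so the finite poles are bounded by $q_\tau(N-1)$ plus the contributions $-\ord_{\ne\infty}(W^0)-q_\tau\ord_{\ne\infty}((W^0)^{-1})$. The behaviour at $\infty$ is handled through $W^\infty$ in the same way as in \ref{frobenius of b}. The precision loss $v_K(W)+v_K(W^{-1})$ appearing in \cref{cor:degreeoffrobeniusstructureatnonregularpoints} vanishes by \ref{good reduction: W are integral}.

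The main obstacle is the bookkeeping in the second step of \ref{frobenius of b} and in its analogue for \ref{frobenius of y}. There one must check that the local approximants at different roots of $r$, which may be Galois-conjugate and defined over an unramified extension, glue into a single expression $\psi_{i,j,k}/r^k$ with coefficients in $K^\circ[x]$, and that the global polynomial part is controlled purely by the order at $\infty$. My first attempt would reduce to $\mathcal S^\dagger$ modulo $\pi^N$, that is, to rational functions with poles only at the roots of $r$ and at $\infty$, and then use partial fractions over $K^\circ$, which is valid because $\operatorname{disc}(r)$ is a unit. In this way the theorem's local order bounds translate directly into the degree bounds stated.
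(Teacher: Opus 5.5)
Your route is the paper's: the geometric series in $(r^{q_\tau}-r^\tau(x^{q_\tau}))/r^{q_\tau}$ for part~(1). For parts~(2) and~(3) you use the local pole bounds of \cref{thm:degreeoffrobeniusstructuresatapoint} and \cref{remark:degreeoffrobeniusstructuresatapoint} at the roots of $r$ and at $\infty$, transported by $W^0$, $W^\infty$, $W$ via \cref{cor:degreeoffrobeniusstructureatnonregularpoints}. Your finite-pole bounds are right: $q_\tau N-1$ for $\boldsymbol b^0/r$, and $q_\tau(N-1)-\ord_{\ne\infty}(W^0)$ for the power basis, where $\sigma_\tau((W^0)^{-1})$ adds nothing since $(W^0)^{-1}$ has polynomial entries. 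Your gluing of the local approximants by an integral partial-fraction decomposition, using that $\operatorname{disc}(r)$ is a unit, is more explicit than the paper's ``repeated Euclidean division''.

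The step that fails is (iii) of part~(3). By \cref{cor:regular basis of x} with $k=1$, the exponents of $\boldsymbol b^\infty/r$ at $\infty$ lie in $[\deg r,\deg r+1)$, not $[\deg r-1,\deg r)$. Multiplying a basis by $1/r$ adds $\ord_\infty(1/r)=\deg r$ to each exponent. For the trivial connection, $\nabla(1/r)=N\cdot(1/r)\,dx$ with $N=-r'/r$, and the residue at $\infty$ is $-xN|_{x=\infty}=\deg r$.

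With your range, \cref{remark:degreeoffrobeniusstructuresatapoint} only gives $\alpha_1\le-(q_\tau-1)(\deg r-1)$. After $\Psi=W\Psi^\infty\sigma_\tau(W)^{-1}$ this yields $\deg\psi_{i,j,0}\le-\ord_\infty(W)-q_\tau\ord_\infty(W^{-1})-(q_\tau-1)(\deg r-1)$, weaker than the claim by $q_\tau-1$.

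Appealing to ``the factor $r^{q_\tau-1}$'' does not close this. The Frobenius matrices in the bases $\boldsymbol b^\infty$ and $\boldsymbol b^\infty/r$ differ by the scalar $r/r^\tau(x^{q_\tau})$, of order $(q_\tau-1)\deg r$ at $\infty$. That is exactly what the exponent shift by $\deg r$ already records, so adding it again double counts; it is also not a correction of size $q_\tau-1$.

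The fix is to take $k=\deg r$ in \cref{remark:degreeoffrobeniusstructuresatapoint}. Since $\alpha_2=0$ at $\infty$, this gives $\alpha_1\le-(q_\tau-1)\deg r$, so the approximant has order at least $(q_\tau-1)\deg r$ there. The change of basis, with $\ord_\infty(\sigma_\tau(W^{-1}))=q_\tau\ord_\infty(W^{-1})$, then gives exactly the stated bound with no further adjustment. Equivalently, work in $\boldsymbol b^\infty$ (exponents in $[0,1)$, order $\ge0$) and multiply by $r/r^\tau(x^{q_\tau})$, but do only one of the two.

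Likewise, in part~(2) use exponents in $[0,1)$ for $\boldsymbol b^\infty$, not a shifted range. That gives order $\ge0$ and hence $\deg\phi_{i,j,0}\le-\ord_\infty(W^\infty)-q_\tau\ord_\infty((W^\infty)^{-1})$.
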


\begin{proof}
	We follow \cite[Proposition~2.12]{tuitman2017counting}, using \cref{thm:degreeoffrobeniusstructuresatapoint} for the $q_\tau$-power bounds over $K$.

	\ref{frobenius of r}
	Notice that
	$r^\tau(x^{q_\tau})\equiv r^{q_\tau}\pmod\pi$,
	so we have the expansion
	\[
	\frac1{r^\tau(x^{q_\tau})}
	=
	\frac1{r^{q_\tau}}
	\sum_{n=0}^\infty
	\left(
	\frac{r^{q_\tau}-r^\tau(x^{q_\tau})}
	{r^{q_\tau}}
	\right)^n.
	\]
	The terms with $n\geq N$ belong to
	$\pi^N\mathcal S^\dagger$.
	Since
	\[
	\deg\bigl(r^{q_\tau}-r^\tau(x^{q_\tau})\bigr)
	\leq q_\tau\deg(r),
	\]
	repeated Euclidean division by $r$ expresses
	the $n$-th summand, including the factor
	$r^{-q_\tau}$, as
	\[
	\frac1{r^{q_\tau}}
	\left(
	\frac{r^{q_\tau}-r^\tau(x^{q_\tau})}
	{r^{q_\tau}}
	\right)^n
	=
	\sum_{i=q_\tau}^{q_\tau(n+1)}
	\frac{\rho_{n,i}}{r^i},
	\]
	where $\rho_{n,i}\in K^\circ[x]$ and
	$\deg(\rho_{n,i})<\deg(r)$.
	Collecting the terms for $0\leq n<N$
	proves the assertion.
	
		\ref{frobenius of y}
Notice that $F_\tau^*$ induces a
$q_\tau$-th power Frobenius structure on
$x_*\OO_U$.
Let $\Phi=(\Phi_{i,j})\in M_{d_x}(\mathcal S^\dagger)$
be its matrix with respect to the basis
$1,y,\dots,y^{d_x-1}$, so $v_K(\Phi)\geq0$.
Moreover, by \cref{Phi-1 integral},
$v_K(\Phi^{-1})\geq0$.

Let $\Phi_0$ be the matrix with respect to the integral basis
$b_0^0,\dots,b_{d_x-1}^0$.
Since $W^0$ and $(W^0)^{-1}$ have entries in
$\mathcal S^\dagger$, we also have
$v_K(\Phi_0),v_K(\Phi_0^{-1})\geq0$.
By \cref{cor:regular basis of x},
\cref{thm:degreeoffrobeniusstructuresatapoint} and
\cref{remark:degreeoffrobeniusstructuresatapoint},
modulo $\pi^N$, the entries of $\Phi_0$ admit
approximations of order at least $-q_\tau(N-1)$ at
every root $x_0$ of $r(x)$. 

Since the powers of $y$ lie in $\sum_iK[x]b_i^0$,
the matrix $(W^0)^{-1}$ has polynomial entries.
Hence $\sigma_\tau((W^0)^{-1})$ also has polynomial
entries, and
\[
\ord_{x_0}\bigl(\sigma_\tau((W^0)^{-1})\bigr)\geq0.
\]
Changing to the power basis by
\cref{cor:degreeoffrobeniusstructureatnonregularpoints},
the entries of $\Phi$ therefore admit approximations
modulo $\pi^N$ whose orders at $x_0$ are at least
\begin{align*}
	-q_\tau(N-1)+\ord_{x_0}(W^0)\geq
	-q_\tau(N-1)+\ord_{\ne\infty}(W^0).
\end{align*}
Thus, since $r$ is squarefree, we can write
\[
\Phi_{j+1,i+1}
\equiv
\sum_{k=0}^{q_\tau(N-1)-\ord_{\ne\infty}(W^0)}
\frac{\phi_{i,j,k}}{r^k}
\pmod{\pi^N\mathcal S^\dagger},
\]
with $\phi_{i,j,k}\in K^\circ[x]$ and
$\deg(\phi_{i,j,k})<\deg(r)$ for $k\geq1$,
by repeated Euclidean division.

As for $\deg(\phi_{i,j,0})$, we apply
\cref{cor:degreeoffrobeniusstructureatnonregularpoints}
at $\infty$, using the integral basis
$b_0^\infty,\dots,b_{d_x-1}^\infty$.
Its exponents lie in $[0,1)$ by \cref{prop:matrixofdifferential} \ref{exponents of G}, so its Frobenius matrix admits an approximation of order at least zero by \cref{thm:degreeoffrobeniusstructuresatapoint}.
Changing to the basis $1, y, \dots, y^{d_x-1}$ gives the order bound
\[
\ord_\infty(W^\infty)
+q_\tau\ord_\infty((W^\infty)^{-1}).
\]
Since the terms $\phi_{i,j,k}/r^k$ with $k\geq1$ vanish at $\infty$, the polynomial parts can therefore be chosen to satisfy
\[-\deg(\phi_{i,j,0})=\ord_\infty(\phi_{i,j,0})\geq \ord_\infty(W^\infty)+q_\tau\ord_\infty((W^\infty)^{-1}).\]

\ref{frobenius of b}
This is analogous to \ref{frobenius of y}, using the bases $\boldsymbol{b}^0/r$ and $\boldsymbol{b}^\infty/r$.
By \cref{cor:regular basis of x}, their exponents at the corresponding points lie in $[-1,0)$ and $[\deg(r),\deg(r)+1)$, respectively.
Thus \cref{remark:degreeoffrobeniusstructuresatapoint} gives the order bounds $-(q_\tau N-1)$ at
the roots of $r$ and $(q_\tau-1)\deg(r)$ at $\infty$.
Changing basis at $\infty$ using $\boldsymbol{b}^\infty/r=(\boldsymbol{b}^0/r)W$ gives the asserted polynomial-degree bound.
\end{proof}

\begin{remark}\label{rem:evaluation-of-beta-at-suitable-bad-points}
	The condition $\beta=F_\tau^*(y)\in\mathcal R^\dagger$ ensures that $\beta$ can be evaluated at good points, this fact can be deduced easily from 		\cref{prop:approximationofFrobenius}~\ref{frobenius of y}. However, this is not the case at every bad point. We claim that $\beta$ can be evaluated at suitable bad points. Let $P$ be suitable bad in the sense of \cref{classification of points def}.
	\begin{itemize}
		\item 	If $P$ is finitely bad, then $0<v_K(r(x(P)))<1/q\leq 1/q_\tau$.
		The pole-order bounds for $F_\tau^*(y)$ in \cref{prop:approximationofFrobenius}~\ref{frobenius of y} therefore imply that the expansion of $\beta$ converges at $P$.
\item If $P$ is infinite, then $x(P)\ne\infty$ and
$v_K(x(P))<0$. Put $d=\deg r$. The expansions in
\cref{prop:approximationofFrobenius}~\ref{frobenius of y}
have the form
\[
\beta=\sum_{j=0}^{d_x-1}
\left(
\phi_{1,j,0}(x)
+\sum_{k\geq1}\frac{\phi_{1,j,k}(x)}{r(x)^k}
\right)y^j,
\]
where the degrees of the polynomial parts $\phi_{1,j,0}$
are uniformly bounded, while
$\phi_{1,j,k}\in K^\circ[x]$ and
$\deg\phi_{1,j,k}<d$ for $k\geq1$.
Thus the polynomial parts can be evaluated at $x(P)$.
Moreover, the unit leading coefficient of $r$ gives
$v_K(r(x(P)))=d\,v_K(x(P))<0$, and
\[
v_K\!\left(
\frac{\phi_{1,j,k}(x(P))}{r(x(P))^k}
\right)
\geq (d-1-kd)v_K(x(P))
\longrightarrow+\infty.
\]
Hence the $1/r$-tails converge at $P$, so $\beta(P)$
is well defined.
	\end{itemize}
\end{remark}

\begin{algorithm}[Compute matrix of Frobenius on functions] \label[palgm]{algthm:matrixoffrobeniusonfunctions}
	\
	
	\textbf{Input}: The precision $N$ and the basis $b_0^0/r,\dots, b_{d_x-1}^0/r$, i.e. the matrix $W^0$, together with $\tau$ and $\ell_\tau$.
	
	\textbf{Output}: The matrix $M\in M_{d_x}(K^\circ[x,1/r])$ up to the precision $N$ such that 
	\[F_\tau^*\begin{bmatrix}
		b_0^0/r&\cdots& b_{d_x-1}^0/r
	\end{bmatrix} \equiv \begin{bmatrix}
	b_0^0/r&\cdots& b_{d_x-1}^0/r
\end{bmatrix}M \ \ (\mod \pi^N).\]
	\begin{enumerate1}
		\item Compute $F_\tau^*(1/r)$ and $F_\tau^*(y)$, this is from the proof of \cref{proposition:expressionoffrobeniuslift}: take $n_0$ such that $N\leq 2^{n_0}$, then calculate $\alpha_n$, $\beta_n$ for $n=1,\dots, n_0$, we have $F_\tau^*(1/r)\equiv \alpha_{n_0}, F_\tau^*(y)\equiv \beta_{n_0} \ \ (\mod \pi^N)$.

		\item Compute the matrix with respect to the basis $1/r,y/r,\dots, y^{d_x-1}/r$, i.e. represent $F_\tau^*(y^i/r)$ in terms of $1/r,\dots, y^{d_x-1}/r$: we use the equality $F_\tau^*(y^i/r) = F_\tau^*(y)\cdot F_\tau^*(y^{i-1}/r)$ for $i=1,\dots, d_x-1$ (e.g. $F_\tau^*(y/r)\equiv \beta_{n_0}\alpha_{n_0} \ \ (\mod \pi^N)$), then get the matrix $\widetilde{M}$ from the coefficients of $rF_\tau^*(y^i/r)$ with respect to $1,\dots, y^{d_x-1}$.

		\item Compute the matrix $M$ with respect to the basis $b_0^0/r,\dots, b_{d_x-1}^0/r$: we have 
        \[M= (W^0)^{-1}\widetilde{M}(W^0)^\tau(x^{q_\tau}).\]
	\end{enumerate1}
\end{algorithm}

This leads to an algorithm to compute the $q_\tau$-power Frobenius of a given differential form $\left(\sum_{i=0}^{d_x-1}w_ib_i^0\right)\frac{dx}{r}$ with $w\in K[x]^{\oplus d_x}$ up to a given precision $N$, using the following formula:
\begin{align}
F_\tau^*\left(\left(\sum_{i=0}^{d_x-1}w_ib_i^0\right)\frac{dx}{r}\right)& = F_\tau^*\begin{bmatrix}
	b_0^0/r&\cdots& b_{d_x-1}^0/r
\end{bmatrix}\begin{bmatrix}
	q_\tau w_0^\tau(x^{q_\tau})x^{q_\tau-1}\\\vdots\\q_\tau w_{d_x-1}^\tau(x^{q_\tau})x^{q_\tau-1}
\end{bmatrix}dx \nonumber\\ 
&\equiv\begin{bmatrix}
	b_0^0/r&\cdots& b_{d_x-1}^0/r
\end{bmatrix}M\begin{bmatrix}
	q_\tau w_0^\tau(x^{q_\tau})x^{q_\tau-1}\\\vdots\\q_\tau w_{d_x-1}^\tau(x^{q_\tau})x^{q_\tau-1}
\end{bmatrix}dx \ \ (\mod \pi^N),
\label{frobeniusofadifferentialform}
\end{align}
where $M$ is given by \cref{algthm:matrixoffrobeniusonfunctions}. The factors $w_i^\tau$ reflect the $\tau$-semilinearity of $F_\tau^*$.

\section[Computing a basis of H^1_dR(U)]%
      {Computing a basis of \texorpdfstring{$H^1_{\mathrm{dR}}(U)$}{H¹\_dR(U)}}
\label{section:computing a basis of cohomology}

As in \cref{section:computing a frobenius lift}, throughout this section, we fix a smooth projective geometrically irreducible curve $X$ of genus $g$ over $K$ with a triple $(Q, W^0, W^\infty)$ having good reduction. Let $\mathcal{X}$ be the corresponding model of $X$ over $K^\circ$, and $\Sp\colon X^\an \to \mathcal{X}_s$ the reduction map.  The notations $\Delta(x), r(x), b^0_i, b^\infty_i$ and $X^0$, $X^\infty$ are given as in \S~\ref{subsection:plane models}. We denote $\mathcal{U}= \Spec(K^\circ[x,1/r, y]/(Q))$, $\mathcal{V}= \Spec(K^\circ[x,1/r])$ as in \cref{prop:smoothmodelassociatedtotriple}, and write $U=\mathcal{U}_K, V=\mathcal{V}_K$, $Z=X\setminus U$, $\widetilde{U} = \mathcal{U}_{\widetilde{K}}=\Spec(\widetilde{K}[x,1/r, y]/(Q))$. Recall $\mathcal{R}^\dagger=K^\circ\langle x, 1/r,y\rangle^\dagger/(Q)$.

The reduction procedures follow \cite[Propositions~3.3--3.4, Remark~3.5]{tuitman2017counting}. The residue criteria below follow the approach of \cite[Propositions~3.12--3.13]{tuitman2017counting}. We retain the proofs over $K$ and use these procedures to compute a cohomology basis.

\subsection{Reduction of differential forms}

\begin{proposition} \label[pprop]{lemma:reductionprocedureforcohomologyclass}
	\begin{enumerate1}
		\item \label{reduction at finite points} For any $l\in \N_{\geq 1}$ and $w\in K[x]^{\oplus d_x}$, there exist vectors $u, v\in K[x]^{\oplus d_x}$ with $\deg(v)<\deg(r)$ such that in $\Omega_{X/K}^1(U)$, we have
		\[\frac{\sum_{i=0}^{d_x-1}w_ib_i^0}{r^l}\dfrac{dx}{r} = d\left(\frac{\sum_{i=0}^{d_x-1}v_ib_i^0}{r^l}\right)+\frac{\sum_{i=0}^{d_x-1}u_ib_i^0}{r^{l-1}}\dfrac{dx}{r}.\]
		\item \label{reduction at infty} For any $w\in K[x,x^{-1}]^{\oplus d_x}$ with $\ord_\infty(w) \leq -\deg(r)$, there exist vectors $u, v\in K[x,x^{-1}]^{\oplus d_x}$ with $\ord_\infty(u)>\ord_\infty(w)$ such that in $\Omega_{X/K}^1(U\cap X^\infty)$, we have
		\begin{align}\label{equalityforrepresentationatinfty}
			\left(\sum_{i=0}^{d_x-1}w_ib^\infty_i\right)\dfrac{dx}{r} = d\left(\sum_{i=0}^{d_x-1}v_ib_i^\infty\right)+\left(\sum_{i=0}^{d_x-1}u_ib_i^\infty\right)\dfrac{dx}{r}.
		\end{align}
		We can take $v\in K[x]^{\oplus d_x}$ of the form $\overline{v}x^\mu$ with $\overline{v}\in K^{\oplus d_x}$ and $\mu=-\ord_\infty(w)-\deg(r)+1\geq 1$.
		
		Moreover, if $\ord_\infty(w)\leq \ord_0(W)-\deg(r)+1$, we can take $v=\overline{v}x^\mu$ with $\overline{v}\in K^{\oplus d_x}$ such that $\sum_{i=0}^{d_x-1}v_ib_i^\infty\in \OO_X(X^0)$, i.e. only has poles lying above $\infty$.
	\end{enumerate1}
\end{proposition}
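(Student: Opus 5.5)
The plan for both parts is to compute $d$ of a trial function explicitly using the connection matrices $G^0,G^\infty$ of \cref{prop:matrixofdifferential}. The leading obstruction term is then governed by a residue matrix, and it can be cancelled because the exponents lie in $[0,1)$ while the shift we need is a positive integer.

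\emph{Part \ref{reduction at finite points}.} By \cref{prop:matrixofdifferential}~\ref{regularity of G} and $r$ squarefree, write $G^0=M/r$ with $M\in M_{d_x}(K[x])$. For $v\in K[x]^{\oplus d_x}$, the Leibniz rule and $d\boldsymbol b^0=\boldsymbol b^0G^0dx$ give
\[
d\left(\frac{\boldsymbol b^0v}{r^l}\right)=\frac{\boldsymbol b^0\bigl(r\,\tfrac{dv}{dx}+(M-l\,r'I)v\bigr)}{r^l}\,\frac{dx}{r}.
\]
It therefore suffices to find $v$ with $\deg v<\deg r$ and $(M-lr'I)v\equiv w\pmod r$. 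Then we set
\[
u=\frac{w-(M-lr'I)v}{r}-\frac{dv}{dx}\in K[x]^{\oplus d_x},
\]
which gives the claimed identity. To solve the congruence, use the Chinese remainder theorem over the distinct roots $x_0$ of $r$; these are defined over an unramified extension, but the solution is unique and hence descends to $K$. At each root,
\[
M(x_0)-l\,r'(x_0)I=r'(x_0)\bigl(\operatorname{Res}_{x_0}-lI\bigr),
\]
where $\operatorname{Res}_{x_0}=M(x_0)/r'(x_0)$ is the residue matrix. Its eigenvalues lie in $[0,1)$ by \cref{prop:matrixofdifferential}~\ref{exponents of G}, so $l\geq1$ is not an eigenvalue. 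Together with $r'(x_0)\neq0$, this shows that $M-lr'I$ is invertible in $K[x]/(r)$, so $v$ exists.

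\emph{Part \ref{reduction at infty}.} First, $b^\infty_i$ is integral over $K[x^{-1}]$, so $\boldsymbol b^\infty$ is a local integral basis at the roots of $r$ as well as at $\infty$. Hence $G^\infty$ has at most simple poles at the roots of $r$ and at $\infty$, and is otherwise regular away from $0$. Thus $rG^\infty\in M_{d_x}(K[x,x^{-1}])$, and near $\infty$ we have $xG^\infty=-R_\infty+O(x^{-1})$, where $R_\infty$ is the residue matrix at $\infty$ (recall $G^\infty dx=-x^2G^\infty\,d(1/x)$). For $v=\overline v x^\mu$ we get
\[
d(\boldsymbol b^\infty v)=\boldsymbol b^\infty\, r\Bigl(\mu x^{\mu-1}\overline v+G^\infty x^\mu\overline v\Bigr)dx\cdot\frac1r
=\boldsymbol b^\infty\Bigl(\operatorname{lc}(r)\,x^{\deg r+\mu-1}(\mu I-R_\infty)\overline v+\text{lower}\Bigr)\frac{dx}{r}.
\]
Choose $\mu=-\ord_\infty(w)-\deg r+1\geq1$ so that the degrees match. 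Let $w_{\mathrm{top}}$ be the leading coefficient vector of $w$, and take
\[
\overline v=\operatorname{lc}(r)^{-1}(\mu I-R_\infty)^{-1}w_{\mathrm{top}}.
\]
The inverse exists because the eigenvalues of $R_\infty$ lie in $[0,1)$ and $\mu\geq1$. Then $u\coloneq w-r(dv/dx+G^\infty v)\in K[x,x^{-1}]^{\oplus d_x}$ satisfies $\ord_\infty(u)>\ord_\infty(w)$, which is \eqref{equalityforrepresentationatinfty}.

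For the final claim, write $\boldsymbol b^\infty v=\boldsymbol b^0\,(W x^\mu\overline v)$. We have $\ord_0(x^\mu W)\geq\mu+\ord_0(W)\geq0$ exactly when
\[
-\ord_\infty(w)-\deg r+1\geq-\ord_0(W),
\]
which is the stated hypothesis. Under this hypothesis the function $\boldsymbol b^\infty v$ is integral above $0$. It is also integral above the roots of $r$, since the $b_i^\infty$ are. Hence its only poles lie above $\infty$, i.e. $\boldsymbol b^\infty v\in\OO_X(X^0)$.

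The main obstacle is the bookkeeping at $\infty$ in part \ref{reduction at infty}. One must pin down the sign and normalization of $R_\infty$, and verify that the lower-order terms of $r(dv/dx+G^\infty v)$ really have strictly larger $\ord_\infty$ and remain Laurent polynomials. The latter relies on $G^\infty$ having at most simple poles at the roots of $r$, which is why I check local integrality of $\boldsymbol b^\infty$ there first.
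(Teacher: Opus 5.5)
Your proposal is correct and follows the paper's proof essentially step by step. In both parts you cancel the leading term using the residue matrix ($M^0(x_0)/r'(x_0)$ at the roots of $r$, and $G^\infty_{-1}$ at $\infty$), whose eigenvalues lie in $[0,1)$ and so never equal the positive integer $l$ or $\mu$, and you obtain the final claim from $\boldsymbol b^\infty v=\boldsymbol b^0(Wx^\mu\overline v)$ exactly as the paper does. Your additional check that $rG^\infty$ has Laurent-polynomial entries (only simple poles at the nonzero roots of $r$) fills in a point the paper leaves implicit when it asserts $u\in K[x,x^{-1}]^{\oplus d_x}$.
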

\begin{proof}
	\ref{reduction at finite points} Since $b_j^0\in K^\circ[x,1/r, y]$, by \cref{prop:matrixofdifferential}, there is $G^0\in M_{d_x}(K^\circ[x,1/r])$ such that
	\[db_j^0 = \sum_{i=0}^{d_x-1}G_{i+1,j+1}^0b_i^0dx,\]
	Then we can write $G^0=M^0/r$ with $M^0\in M_{d_x}({K}^\circ[x])$.
	We need to find $u,v\in K[x]^{\oplus d_x}$ satisfying
	\[\sum_{i=0}^{d_x-1}w_ib_i^0dx = r\sum_{i=0}^{d_x-1}(v_i'b_i^0dx+v_idb_i^0)-l\sum_{i=0}^{d_x-1}v_ib_i^0r'dx+r\sum_{i=0}^{d_x-1}u_ib_i^0dx,\]
	i.e.
	\[w=r\frac{dv}{dx}+M^0v-lr'v+ru.\]
	This implies that
	\[u=\frac{w-(M^0-lr'I)v}{r}-\frac{dv}{dx}.\]
	Then it suffices to find $v\in K[x]^{\oplus d_x}$ such that
	\[\left(\frac{M^0}{r'}-lI\right)v\equiv \frac{w}{r'} \ \ (\mod r)\]
	over $K[x]/(r)$. By  \cref{prop:matrixofdifferential}~\ref{exponents of G}, we know that $\det(M^0/r'-lI)$ is invertible in $K[x]/(r)$, so such $v$ exists. This completes the proof of \ref{reduction at finite points}. 
	
	\ref{reduction at infty} Denote $t=x^{-1}$, notice that $dx = -t^{-2}{dt}$. Keep the notation in \cref{prop:matrixofdifferential}. Around $x=\infty$, we can write
	\[G^\infty dx = \left(\frac{G_{-1}^\infty}{t}+G_0^\infty+\cdots\right)dt,\]
	i.e.
	\[G^\infty = -(G^\infty_{-1}t+G^\infty_0t^2+\cdots)\]
	for some $G_i^\infty\in M_{d_x}(K)$. Set $\mu= -\ord_\infty(w)-\deg(r)+1\geq 1$ and write
	\[w\frac{dx}{r} = \sum_{j=-(\mu+1)}^\infty w_j't^{j}dt\]
	for some $w_j'\in K^{\oplus d_x}$. Write 
	\[d\begin{bmatrix}
		b_0^\infty& \cdots & b_{d_x-1}^\infty
	\end{bmatrix} = \begin{bmatrix}
		b_0^\infty& \cdots & b_{d_x-1}^\infty
	\end{bmatrix}G^\infty dx \] 
	and compare both sides of \eqref{equalityforrepresentationatinfty}, i.e.
	\[\begin{bmatrix}
		b_0^\infty& \cdots & b_{d_x-1}^\infty
	\end{bmatrix}w\frac{dx}{r}  = \begin{bmatrix}
		b_0^\infty& \cdots & b_{d_x-1}^\infty
	\end{bmatrix}dv + d\begin{bmatrix}
		b_0^\infty& \cdots & b_{d_x-1}^\infty
	\end{bmatrix}v+ \begin{bmatrix}
		b_0^\infty& \cdots & b_{d_x-1}^\infty
	\end{bmatrix}u\frac{dx}{r},\]
	we have 
	\[u=w-r\left(G^\infty v+\frac{dv}{dx}\right).\]
	To have $\ord_\infty(u)>\ord_\infty(w)$, it suffices to choose $v\in K[x,x^{-1}]^{\oplus d_x}$ such that
	\[\ord_\infty\left(\frac{w}{r}\right)= -\mu+1 = \ord_\infty\left(G^\infty v+\frac{dv}{dx}\right)\]
	and the coefficient vector of $G^\infty v+\frac{dv}{dx}$ at $t^{-\mu+1}$ must equal $-w'_{-(\mu+1)}=c/\operatorname{lc}(r)$ (notice that $dx=-t^{-2}dt$), where $c$ is the coefficient vector of $w$ at $x^{\deg(w)}$. If $v=\overline v x^\mu$ with $\overline v\in K^{\oplus d_x}$, then the coefficient of $G^\infty v+\frac{dv}{dx}$ at $x^{\mu-1}$ is $(\mu I-G^\infty_{-1})\overline v$.
	Note that $\mu\geq 1$, by \cref{prop:matrixofdifferential}~\ref{exponents of G}, we know that $\mu I-G^\infty_{-1}\in M_{d_x}(K)$ is invertible, so the linear system 
	\[w_{-(\mu+1)}' = (G^\infty_{-1}- \mu I)\overline{v}\]
	has a unique solution $\overline{v}\in K^{\oplus d_x}$. This proves the first part of \ref{reduction at infty}.
	
	If $\ord_\infty(w)\leq \ord_0(W)-\deg(r)+1$, from our construction of $v$, we have \[\ord_0(v)=\mu = -\ord_\infty(w)-\deg(r)+1\geq-\ord_0(W).\]
	Then for any point $P$ lying above $x=0$ with ramification index $e_P$, we have
	\begin{align*}
		\ord_P\left(\sum_{j=0}^{d_x-1}v_jb_j^\infty\right)&=\ord_P\left(\sum_{i,j=0}^{d_x-1}v_jW_{i+1,j+1}b_i^0\right)\\
		&\geq \inf\{\ord_P(v_jW_{i+1,j+1})\}\\
		&\geq e_P(\ord_0(v)+\ord_0(W))\\
		&\geq0,
	\end{align*}
	finishes the proof.
\end{proof}

\subsection{Computing a cohomology basis}

\begin{art}
	Recall that given an $n$-dimensional algebraic variety $T$ over $K$, we have the \emph{de Rham complex} for $T/K$:
	\[\Omega_{T/K}^\bullet\colon \xymatrix{0\ar[r]& \OO_T\ar[r]^d & \Omega_{T/K}^1\ar[r]^d&\cdots\ar[r]^d&\Omega_{T/K}^n\ar[r]&0}.\]
	The de Rham cohomology of $T$ is defined as the hypercohomology $H^i_{\mathrm{dR}}(T)\coloneq \mathbb{H}^i(\Omega^\bullet_{T/K})$. In particular,
	\[H_{\mathrm{dR}}^1(U)=\Coker(d\colon K[x,1/r,y]/(Q)\to K[x,1/r,y]/(Q) dx).\]
	
	On the other hand, we define the \emph{first rigid cohomology} (or \emph{first Monsky--Washnitzer cohomology}) of $\widetilde{U}$ as \[H_{\mathrm{rig}}^1(\widetilde{U})
	\coloneq\Coker(d\colon \mathcal{R}^\dagger_K\rightarrow \Omega^1_{\mathcal R^\dagger_K/K}).\]
	A classical result says that 
	\[H^1_{\mathrm{rig}}(\widetilde{U})\simeq H^1_{\mathrm{dR}}(U).\]
	From the theory of rigid cohomology, a Frobenius lift $F_\tau^*\colon \mathcal{R}^\dagger\rightarrow  \mathcal{R}^\dagger$ will induce a $\tau$-semilinear automorphism of $H^1_{\mathrm{rig}}(\widetilde{U})$, hence a $\tau$-semilinear automorphism of $H^1_{\mathrm{dR}}(U)$.
\end{art}

\begin{theorem} \label{thm: computing de rham via E B}
	Consider the following $K$-vector spaces:
	\begin{align*}
		E_0:&=\Set{\left(\sum_{i=0}^{d_x-1}u_i(x)b^0_i\right)\frac{dx}{r} |  u\in K[x]^{\oplus d_x}}\subset \Omega_{K(X)/K}^1(U^0),\\
		E_\infty:&=\Set{\left(\sum_{i=0}^{d_x-1}v_i(x,x^{-1})b_i^\infty\right)\frac{dx}{r}| v\in K[x,x^{-1}]^{\oplus d_x}, \ord_\infty(v)\geq \ord_0(W) - \deg(r)+2}\\
		\subset& \Omega_{K(X)/K}^1(U^\infty),\\
		B_0:&=\Set{\sum_{i=0}^{d_x-1}u_i(x)b^0_i | u\in K[x]^{\oplus d_x}}\\
		B_\infty:&=\Set{\sum_{i=0}^{d_x-1}v_i(x,x^{-1})b_i^\infty| v\in K[x,x^{-1}]^{\oplus d_x}, \ord_\infty(v)\geq \ord_0(W)+1},\\
	\end{align*}
	Then $E_0\cap E_\infty$ and $d(B_0\cap B_\infty)$ are finite dimensional $K$-vector spaces and 
	\[H_{\mathrm{dR}}^1(U)\simeq (E_0\cap E_\infty)/((E_0\cap E_\infty)\cap d(B_0\cap B_\infty)).\]
\end{theorem}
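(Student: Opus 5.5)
The plan is to follow the strategy of \cite[Theorem~3.6 / Proposition~3.7]{tuitman2017counting}: define the natural map
\[
\Psi\colon E_0\cap E_\infty\longrightarrow H^1_{\mathrm{dR}}(U),
\]
sending a form to its class. We then show that $\Psi$ is surjective, that its kernel is $(E_0\cap E_\infty)\cap d(B_0\cap B_\infty)$, and that the spaces involved are finite dimensional.

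\emph{Well-definedness and finite dimensionality.} An element $\omega\in E_0$ is regular on $U\cap X^0$, since $b^0_i\in K[x,1/r,y]/(Q)$. Elements of $E_0\cap E_\infty$ therefore lie in $K[x,1/r,y]/(Q)\,dx$ and define classes in $H^1_{\mathrm{dR}}(U)$. For finite dimensionality, note that membership in $E_0$ bounds the pole orders at the points above the roots of $r$, and also at points above $0$ after rewriting $\boldsymbol b^\infty=\boldsymbol b^0W$. Membership in $E_\infty$ bounds the pole orders above $\infty$, because the condition is $\ord_\infty(v)\geq \ord_0(W)-\deg(r)+2$. An element of $E_0\cap E_\infty$ is thus a meromorphic differential on $X$ with a bounded polar divisor, supported on $Z$ together with the points above $x=0$. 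Since $E_0$ has no poles above $0$, it lies in $H^0(X,\Omega^1(D))$ for a fixed divisor $D$, which is finite dimensional by Riemann--Roch. The same argument places $B_0\cap B_\infty$ inside a finite-dimensional Riemann--Roch space $H^0(X,\OO(D'))$.

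\emph{Surjectivity.} Every class in $H^1_{\mathrm{dR}}(U)$ is represented by a form $\bigl(\sum_i w_ib^0_i\bigr)\frac{dx}{r^{l+1}}$ with $w\in K[x]^{\oplus d_x}$, because $K[x,1/r,y]/(Q)=\bigoplus_i K[x,1/r]b^0_i$. We apply \cref{lemma:reductionprocedureforcohomologyclass}~\ref{reduction at finite points} repeatedly to lower $l$ to $0$. This changes the form only by exact terms $d\bigl(\sum v_ib^0_i/r^l\bigr)$, which are exact on $U$, and gives a representative in $E_0$. Next we rewrite this form in the basis $\boldsymbol b^\infty$ via $W^{-1}$, so that it is expressed with $w\in K[x,x^{-1}]^{\oplus d_x}$. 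While $\ord_\infty(w)\leq \ord_0(W)-\deg(r)+1$, we apply \cref{lemma:reductionprocedureforcohomologyclass}~\ref{reduction at infty} in its refined form. This yields $v=\overline v x^\mu$ with $\sum v_ib^\infty_i\in\OO_X(X^0)$, so the subtracted exact form is $d$ of a function regular on $U$ and the result stays in $E_0$. Each step strictly increases $\ord_\infty$, so the process terminates with a form in $E_0\cap E_\infty$.

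\emph{Kernel.} The inclusion $(E_0\cap E_\infty)\cap d(B_0\cap B_\infty)\subseteq\ker\Psi$ is clear, since $B_0\subset K[x,1/r,y]/(Q)$. Conversely, suppose $\omega\in E_0\cap E_\infty$ satisfies $\omega=df$ with $f$ regular on $U$. We must show $f\in B_0\cap B_\infty$, possibly after adding a constant, and I expect this to be the main obstacle. The tool is the local orders of differentials. At a point $P$ above a root $x_0$ of $r$, a pole of $f$ of order $k\geq1$ gives $df$ a pole of order $k+1$, while $\omega\in E_0$ has pole order at most that of $dx/r$, namely $e_P-(e_P-1)=1$. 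Hence $f$ is regular at $P$, and in fact $f\in B_0$, since $\OO_X(X^0)=B_0$. Above $\infty$ the argument is the same: the bound $\ord_\infty(v)\geq\ord_0(W)-\deg(r)+2$ for $\omega$ translates, using $\ord_P(dx)=-e_P-1$, $\ord_P(1/r)=e_P\deg r$ and $\ord_P(df)=\ord_P(f)-1$, into a lower bound on $\ord_P(f)$. The same computation is needed at the points above $0$, where $b^\infty$ can have poles controlled by $\ord_0(W)$. This bound should match exactly $\ord_\infty(v_f)\geq\ord_0(W)+1$. The delicate part is to carry out this comparison in terms of the $\boldsymbol b^\infty$-coordinates rather than pointwise orders. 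For this I would use that $\boldsymbol b^\infty$ is an integral basis over $K[x^{-1}]$, so that $\ord_\infty$ of the coordinate vector is read off from $\min_P\lfloor\ord_P(\cdot)/e_P\rfloor$ above $\infty$. Combined with \cref{prop:matrixofdifferential}, where exponents in $[0,1)$ avoid the resonance $\mu I-G^\infty_{-1}$ being singular, this pins down the coordinate bound and closes the argument.
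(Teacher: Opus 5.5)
Your proposal is correct and follows the paper's proof essentially step for step. Finite dimensionality comes from bounded polar divisors. Surjectivity comes from finite reduction followed by the refined infinite reduction; the subtracted primitive lies in $B_0=\OO_X(X^0)$, so the remainder stays in $E_0$. The kernel comes from the pointwise bounds $\ord_P(f)\geq0$ away from $\infty$ and $\ord_P(f)\geq e_P(\ord_0(W)+1)$ above $\infty$.

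Your closing appeal to the exponents of $G^\infty$ and the non-resonance of $\mu I-G^\infty_{-1}$ is not needed there. The paper simply notes that these bounds give $f\in\OO_X(X^0)$ and $x^{\ord_0(W)+1}f\in\OO_X(X^\infty)$. That is exactly your $\min_P\lfloor\ord_P(f)/e_P\rfloor$ observation, and it also shows that the $\boldsymbol b^\infty$-coordinates of $f$ are Laurent polynomials.
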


\begin{proof}
The proof of \cite[Theorem~3.6]{tuitman2017counting} applies over $K$.
We include it for the reader's convenience.

	Notice that the elements in $E_0$ and $B_0$ have bounded poles everywhere but points lying above $x=\infty$, and elements in  $E_\infty$ and $B_\infty$ have bounded poles everywhere but points lying $x=0$. So elements in $E_0\cap E_\infty$ and $d(B_0\cap B_\infty)$ have bounded poles everywhere on $X$. Hence they are contained in the space of global sections of  some line bundles on $X$ and are of finite dimension.
	
	Next, we show that every class in $H^1_{\mathrm{dR}}(U)$
	can be represented by a $1$-form in $E_0\cap E_\infty$.
	By repeatedly applying
	\cref{lemma:reductionprocedureforcohomologyclass}~\ref{reduction at finite points},
	every class has a representative $\omega\in E_0$.
	Write
	$\omega=\left(\sum_{i=0}^{d_x-1}w_ib_i^\infty\right)\frac{dx}{r}$
	with $w\in K[x,x^{-1}]^{\oplus d_x}$.
	We repeatedly apply
	\cref{lemma:reductionprocedureforcohomologyclass}~\ref{reduction at infty}
	while $\ord_\infty(w)\leq\ord_0(W)-\deg(r)+1$.
	By its last assertion, each subtracted differential belongs
	to $dB_0\subseteq E_0$, so the remainder stays in $E_0$.
	Since $\ord_\infty(w)$ strictly increases at each step,
	the process terminates with a representative in $E_0\cap E_\infty$.
	
	Next, we show that every class in $H^1_{\mathrm{dR}}(U)$ can be represented by a $1$-form in $E_0\cap E_\infty$. This is divided into two steps. By \cref{lemma:reductionprocedureforcohomologyclass} \ref{reduction at finite points}, every class can be represented by a $1$-form $\omega$ in $E_0$. Write $\omega=\left(\sum_{i=0}^{d_x-1}w_ib_i^\infty\right)\frac{dx}{r}$ with $w\in K[x,x^{-1}]^{\oplus d_x}$. If $\ord_\infty(w)\leq\ord_0(W)-\deg(r)+1$, by \cref{lemma:reductionprocedureforcohomologyclass} \ref{reduction at infty}, we have 
	\begin{align*}
		\left(\sum_{i=0}^{d_x-1}w_ib^\infty_i\right)\dfrac{dx}{r} = d\left(\sum_{i=0}^{d_x-1}v_ib_i^\infty\right)+\left(\sum_{i=0}^{d_x-1}u_ib_i^\infty\right)\dfrac{dx}{r}
	\end{align*} 
for some $u, v\in K[x,x^{-1}]^{\oplus d_x}$ with  $\ord_\infty(u)>\ord_\infty(w)$. By the last assertion of the same proposition, we may take $\sum_{i=0}^{d_x-1} v_i b_i^\infty\in B_0$, so the remainder lies in
$E_0$ since $dB_0\subseteq E_0$.
Repeating this step finitely many times gives
$\ord_\infty(w)>\ord_0(W)-\deg(r)+1$.
Hence, every cohomology class can be represented by a $1$-form in $E_0\cap E_\infty$.
	
	Finally, we show that if $\omega\in E_0\cap E_\infty$ is exact, then $\omega\in d(B_0\cap B_\infty)$. Let $\omega=\left(\sum_{i=0}^{d_x-1}u_i(x)b^0_i\right)\frac{dx}{r}=\left(\sum_{j=0}^{d_x-1}v_j(x,x^{-1})b_j^\infty\right)\frac{dx}{r}\in E_0\cap E_\infty$ be an exact $1$-form. For any point $P\in X(\overline{K})$ not lying over $x=\infty$, we have
	\begin{align*}
		\ord_P(\omega) = \ord_P\left(\left(\sum_{i=0}^{d_x-1}u_i(x)b^0_i\right)\frac{dx}{r}\right)\geq -1.
	\end{align*}
	For any point $P\in X(\overline{K})$ lying over $x=\infty$, notice that $dx=-x\frac{d(x^{-1})}{x^{-1}}$, we have
		\begin{align*}
		\ord_P(\omega) &= \ord_P\left(\left(\sum_{j=0}^{d_x-1}v_j(x,x^{-1})b_j^\infty\right)\frac{dx}{r}\right)\\
		&\geq e_P(\ord_\infty(v)+\deg(r)+\ord_\infty(x))-1,\\
		&\geq e_P(\ord_0(W)+1)-1.
		\end{align*}
	If $\omega=df$ for some $f\in \OO_X(U)$, then by our discussion above, we have that $\ord_P(f)\geq0$ at all points $P$ not lying over $x=\infty$ and that $\ord_P(f) \geq e_P(\ord_0(W)+1)$ at any point $P$ lying over $x=\infty$. Hence $f\in \OO_X(X^0)$ and $x^{\ord_0(W)+1}f\in\OO_X(X^\infty)$, i.e. $f\in B_0\cap B_\infty$.
\end{proof}
\begin{remark}\label[prmk]{remark:E0Einf}
For an element $\left(\sum_{i=0}^{d_x-1}u_i(x)b^0_i\right)\frac{dx}{r} =\left(\sum_{j=0}^{d_x-1}v_j(x,x^{-1})b_j^\infty\right)\frac{dx}{r} \in E_0\cap E_\infty$, we have that
			\[\begin{bmatrix}
				u_0\\
				u_1\\
				\vdots\\
				u_{d_x-1}
			\end{bmatrix} = W\begin{bmatrix}
				v_0\\
				v_1\\
				\cdots\\
				v_{d_x-1}
			\end{bmatrix}\] 
		and	$\deg(u) = -\ord_\infty(u)$, so 
			\[\ord_\infty(u)  \geq \ord_\infty(W)+\ord_\infty(v)\geq  \ord_\infty(W)+\ord_0(W) - \deg(r)+2,\]
			and 
			\[\ord_\infty(v)\geq \ord_\infty(W^{-1})+\ord_\infty(u)\geq\ord_\infty(W^{-1}) + (\ord_\infty(W)+\ord_0(W) - \deg(r)+2),\]
			\[\ord_0(v)\geq \ord_0(W^{-1})+\ord_0(u)\geq \ord_0(W^{-1}).\]
So we can shrink $E_0$ such that $\deg(u)$ is bounded from above as above. The base changing will induce the following map
		\begin{align*}
			E_0\rightarrow \Set{v\in K[x,x^{-1}]^{\oplus d_x}\mid\parbox[c]{.65\linewidth}{ $\ord_0(v)\geq\ord_0(W^{-1}),\\ \ord_\infty(v)\geq \ord_\infty(W^{-1}) + \ord_\infty(W)+\ord_0(W) - \deg(r)+2$}}
		\end{align*}
	which is injective, and notice that entries of the elements in the codomain are of the form $\sum_{i=\ord_0(W^{-1})}^{-(\ord_\infty(W^{-1}) + \ord_\infty(W)+\ord_0(W) - \deg(r)+2)}a_ix^i$.
	    Taking the coefficient vector in the $\boldsymbol{b}^\infty$ basis and projecting onto the excluded Laurent monomials, $E_0\cap E_\infty$ is the kernel of the following linear map
	    \[E_0\rightarrow \Set{v\in K[x,x^{-1}]^{\oplus d_x}\mid\parbox[c]{.65\linewidth}{ $\ord_0(v)\geq-(\ord_0(W)-\deg(r)+1),\\ \ord_\infty(v)\geq  \ord_\infty(W^{-1}) + \ord_\infty(W)+\ord_0(W) - \deg(r)+2$}}.\]
		
		Similarly, for an element $\left(\sum_{i=0}^{d_x-1}u_i(x)b^0_i\right)\in B_0\cap B_\infty$, we have
		$\deg(u)\leq -\ord_0(W)-\ord_\infty(W)-1$. We can shrink $B_0$ such that $\deg(u)$ is bounded from above by $-\ord_0(W)-\ord_\infty(W)-1$, and $B_0\cap B_\infty$ is the kernel of the following linear map, obtained by the same coefficient projection
		\[B_0\rightarrow \Set{v\in K[x,x^{-1}]^{\oplus d_x}\mid\parbox[c]{.55\linewidth}{ $\ord_0(v)\geq -\ord_0(W),\\
				\ord_\infty(v)\geq \ord_\infty(W^{-1})+\ord_0(W)+\ord_\infty(W)+1$}}.\]
		Using $G^0$, it is not hard to get the image $d(B_0\cap B_\infty)$ in $E_0\cap E_\infty$.
\end{remark}

Given a form $\omega\in \Omega_{X/K}^1(U)$, the proofs of \cref{lemma:reductionprocedureforcohomologyclass} and \cref{thm: computing de rham via E B} give an algorithm to represent a cohomology class by a $1$-form in $E_0\cap E_\infty$.

\begin{definition}
	A meromorphic differential $1$-form on $X$ over $K$ is said to be of 
	\begin{itemize}
		\item \emph{the first kind} if it is holomorphic, i.e. an element in $H^0(X,\Omega_{X/K}^1)$;
		\item \emph{the second kind} if it has residue $0$ at every point;
		\item \emph{the third kind} if all its poles are simple with residues in $\Z$.
	\end{itemize}
\end{definition}

Recall that the differential forms of the second kind on $X$ modulo exact differential forms, i.e. the differentials of rational functions, is isomorphic to $H^1_{\mathrm{dR}}(X)$ as $K$-vector spaces. They are of dimension $2g$ since we have the short exact sequence
\begin{align}
\xymatrix{0\ar[r]&H^0(X,\Omega_{X/K}^1)\ar[r]& H_{\mathrm{dR}}^1(X)\ar[r]& H^1(X,\OO_X)\ar[r]&0},
\label{exactsequence:cohomology1}
\end{align}
here $H^0(X,\Omega_{X/K}^1)$ is the space of differential forms of the first kind, and $\dim_KH^0(X,\Omega_{X/K}^1) = \dim_KH^1(X,\OO_X)=g$.

\begin{definition}
For any $\omega\in\Omega_{X/K}^1(U)$ and $P\in X\setminus U$, let $\mathrm{res}_P(\omega)\in \OO_{X,P}/(t_P)\simeq \kappa(P)$ be $a_{-1}$ in the Laurent series expansion
\[\omega=(a_{-k}t_P^{-k}+\cdots+a_{-1}t_P^{-1}+\cdots)dt_P,\]
where $t_P\in K(X)$ is a uniformizer for $P$.
Moreover, we denote
\[\mathrm{res}_0=\bigoplus_{P\in X\setminus U: x(P)\not=\infty}\mathrm{res}_P, \ \ \mathrm{res}_\infty=\bigoplus_{P\in X\setminus U: x(P)=\infty}\mathrm{res}_P,\]
\end{definition}
\begin{remark}\label{remark: exact sequence for residue map}
	We have an exact sequence
	\begin{align}\label{exactsequence:cohomology2}
		\xymatrix{0\ar[r]& H_{\mathrm{dR}}^1(X)\ar[r]& H_{\mathrm{dR}}^1(U)\ar[rr]^-{\mathrm{res}_0\oplus\mathrm{res}_\infty}&& \bigoplus_{P\in X\setminus U}\OO_{X,P}/(t_P)}.
	\end{align}
Write $\mathrm{res}=\mathrm{res}_0\oplus\mathrm{res}_\infty$. Under the
comparison with rigid cohomology, functoriality of the residue sequence gives
\[
 \mathrm{res}(F_\tau^*\omega)
 =q_\tau F_{Z,\tau}^*(\mathrm{res}(\omega)),
\]
where $F_{Z,\tau}^*$ is the induced $\tau$-semilinear Frobenius on the
boundary residue space. Thus the coefficients are twisted and geometric
boundary points are transported accordingly. In particular, the kernel
of $\mathrm{res}$, namely $H_{\mathrm{dR}}^1(X)$, is preserved by $F_\tau^*$.
\end{remark}

\begin{proposition}
	Let $\omega=\sum_{i=0}^{d_x-1}w_i(x)b_i^0\frac{dx}{r}\in\Omega_{X/K}^1(U)$ with $w\in K[x]^{\oplus d_x}$. For any $x_0\in\overline K$ with $r(x_0)=0$, let $v_{x_0}\coloneq w|_{x=x_0}\in \overline{K}^{\oplus d_x}$, $G_{-1}^{0}(x_0)\coloneq (x-x_0)G^0|_{x=x_0}\in M_{d_x}(\overline{K})$, and $\mathcal{E}^{x_0}_\lambda$ the eigenspace of $G_{-1}^{0}(x_0)$ with eigenvalue $\lambda$. Notice that $\overline{K}^{\oplus{d_x}}=\bigoplus_{\lambda}\mathcal{E}_\lambda^{x_0}$. Then $\mathrm{res}_0(\omega)=0$ $\Longleftrightarrow$ the projection of $v_{x_0}$ onto $\mathcal{E}_0^{x_0}$ vanishes for all $x_0\in\overline K$ with $r(x_0)=0$.
\end{proposition}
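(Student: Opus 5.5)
The plan is to work locally at a fixed root $x_0$ of $r$, over $\overline K$. Put $t=x-x_0$; since $r$ is squarefree, $r=t\,u(x)$ with $u(x_0)=r'(x_0)\neq0$.

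\textbf{Step 1: rewrite the residue condition.} The points $P\in X\setminus U$ with $x(P)=x_0$ are exactly the points of $x^{-1}(x_0)$. Summing their residues with arbitrary weights amounts to computing residues of $f\omega$ for regular functions $f$ near the fibre. More precisely, vanishing of every $\mathrm{res}_P(\omega)$ with $x(P)=x_0$ is equivalent to
\[
\mathrm{res}_{x_0}\bigl(\Tr(f\omega)\bigr)=0 \quad\text{for all } f\in A^0_{(t)}.
\]
Here $\Tr$ is the trace to the base, and it suffices to take $f$ to be idempotents of $A^0_{(t)}/(t)$ lifted, or simply to treat each $P$ separately. Since $\omega$ has at most a simple pole above $x_0$ (as $\omega\in E_0$), each residue depends only on $\omega$ modulo forms regular above $x_0$. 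So only $v_{x_0}=w(x_0)$ matters: writing
\[
\omega=\frac{\sum_i w_i(x_0)b_i^0}{u(x_0)}\,\frac{dt}{t}+(\text{regular}),
\]
the residue at $P$ equals $e_P/u(x_0)$ times the value at $P$ of $h\coloneq\sum_i w_i(x_0)b_i^0$. Indeed $\mathrm{res}_P(h\,dt/t)=e_P\,h(P)$, because $\ord_P(t)=e_P$.

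\textbf{Step 2: identify the map $h\mapsto(h(P))_P$ with the projection onto $\mathcal E^{x_0}_0$.} Use the decomposition from the proof of \cref{prop:matrixofdifferential}~\ref{exponents of G}:
\[
\OO_X(X^0)/(t)\simeq\bigoplus_P\OO_{X,P}/(t_P^{e_P}),
\]
on which $D=(x-x_0)\,d/dx$ acts with matrix $G^0_{-1}(x_0)$ in the basis $\boldsymbol b^0$. On each summand, $D$ is triangular in $1,t_P,\dots,t_P^{e_P-1}$ with diagonal entries $n/e_P$. These are distinct, and they are nonzero except for the constant term. Hence the generalized eigenspace for $0$ is spanned by one eigenvector per summand, lying in $\bigoplus_P\OO_{X,P}/(t_P^{e_P})$. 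This eigenvector has the form $1+(\text{higher } t_P\text{-terms})$ in the $P$-summand and $0$ elsewhere.

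The exponents at $x_0$ are $0,1/e_P,\dots,(e_P-1)/e_P$. Two of these can coincide across different points $P$, but this causes no problem. The $0$-eigenspace has dimension $\#x^{-1}(x_0)$ and is the common kernel of all nonzero-eigenvalue parts. Evaluation $h\mapsto h(P)$ is the projection to the constant coefficient in the $P$-summand. It kills every $t_P^n$-eigenvector with $n\geq1$, since those lie in $(t_P)$. It is nonzero on the $0$-eigenvector of that summand.

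Here $G^0_{-1}(x_0)$ is diagonalizable: the exponents are distinct within each summand, and the summands are $D$-stable. Therefore $h(P)=0$ for all $P$ over $x_0$ if and only if the $\mathcal E^{x_0}_0$-component of $v_{x_0}$ vanishes. Together with Step 1, this gives the claim for each $x_0$. Running over all roots of $r$ gives $\mathrm{res}_0(\omega)=0$.

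\textbf{Main obstacle.} The delicate point is Step 2's claim that each $t_P^n$-eigenvector with $n\geq1$ lies in the maximal ideal at $P$, so that evaluation kills it. The triangular form only gives eigenvectors of the shape $t_P^n+\text{higher}$, which indeed vanish at $P$. The $0$-eigenvector is $1+\text{higher}$, so its evaluation at $P$ is $1$. I would verify this carefully via the lower-triangular structure of $D$ from the cited proof. I would also check that ``projection'' refers to the eigen-decomposition $\overline K^{\oplus d_x}=\bigoplus_\lambda\mathcal E_\lambda^{x_0}$, which the statement asserts.
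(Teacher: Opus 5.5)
Your proposal is correct and follows the paper's proof. You first reduce $\mathrm{res}_P(\omega)=0$ to the vanishing of $h=\sum_i w_i b_i^0$ at each $P$ over $x_0$; you obtain the explicit factor $e_P/r'(x_0)$, where the paper only uses $\ord_P(dx/r)=-1$. You then use the decomposition $\OO_X(X^0)/(x-x_0)\simeq\bigoplus_P\OO_{X,P}/(t_P^{e_P})$ and the triangular action of $(x-x_0)\,d/dx$ to show that the nonzero eigenspaces span exactly $\bigoplus_P (t_P)/(t_P^{e_P})$, which is the kernel of evaluation at the points over $x_0$, exactly as in the paper. The trace/idempotent detour in Step 1 is unnecessary, and your diagonalizability and dimension remarks just make explicit what the paper leaves implicit.
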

\begin{proof}
	Let $P \in X\setminus U$ with $x(P)\not=\infty$. Then $\ord_P\left(\frac{dx}{r}\right)=-1$ and $\ord_P(\omega)\geq-1$. Hence $\mathrm{res}_P(\omega)=0$ if and only if $\ord_P\left(\sum_{i=0}^{d_x-1}w_i(x)b_i^0\right)\geq 1$. We fix a root $x_0$ of $r$ and extend the ground field to $\overline K$ for the following local calculation. As argued in the proof of \cref{prop:matrixofdifferential}~\ref{exponents of G}, $G_{-1}^0(x_0)$ is the matrix of the map
	\[(x-x_0)\frac{d}{dx}\colon \OO_X(X^0)/(x-x_0)\to \OO_X(X^0)/(x-x_0)\]
	with respect to the basis $b_0^0, \dots, b_{d_x-1}^0$. Moreover, via the isomorphisms of vector spaces $$\OO_X(X^0)/(x-x_0)\simeq \bigoplus_{P\in X^0, x(P)=x_0}\OO_{X,P}/(t_P^{e_P})\simeq \bigoplus_{P\in X^0, x(P)=x_0}\bigoplus_{n=1}^{e_P}(t^{n-1}_P)/(t^n_P),$$
	where $t_P$ is a uniformizer of $\OO_{X,P}$, the $0$-eigenspace corresponds to $\bigoplus_{P\in X^0, x(P)=x_0}\OO_{X,P}/(t_P)\simeq \bigoplus_{P\in X^0, x(P)=x_0}K$. Hence $\ord_P\left(\sum_{i=0}^{d_x-1}w_i(x)b_i^0\right)\geq 1$ for each $P\in X\setminus U$ with $x(P)=x_0$ if and only if its projection in $0$-eigenspace is $0$.	
\end{proof}

Similarly, the following proposition holds.

\begin{proposition}\label{prop:eigenspace at infty}
Let $\omega=\sum_{i=0}^{d_x-1}w_i(x,x^{-1})b_i^\infty\frac{dx}{r}\in\Omega^1_{X/K}(U)$ with $w\in K[x,x^{-1}]^{\oplus d_x}$ satisfying $\ord_\infty(w)>-\deg(r)$. Let $v\coloneq x^{1-\deg(r)}w|_{x=\infty}\in K^{\oplus d_x}$, $G_{-1}^{\infty}\coloneq -xG^\infty|_{x=\infty}\in M_{d_x}(K)$, and $\mathcal{E}^{\infty}_\lambda$ the eigenspace of $G_{-1}^{\infty}$ with eigenvalue $\lambda$. Notice that $K^{\oplus{d_x}}=\bigoplus_{\lambda}\mathcal{E}_\lambda^\infty$. Then $\mathrm{res}_\infty(\omega)=0$ $\Longleftrightarrow$ the projection of $v$ onto $\mathcal{E}_0^{\infty}$ vanishes.
\end{proposition}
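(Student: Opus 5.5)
The plan is to mirror the proof of the preceding proposition at finite points, now working at $x=\infty$ with the coordinate $t=x^{-1}$ and the integral basis $\boldsymbol{b}^\infty$.

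\textbf{Step 1: reduce to a vanishing condition.} Let $P\in X\setminus U$ with $x(P)=\infty$. I would first compute the order of $\omega$ at $P$. Since $r$ has unit leading coefficient and degree $\deg(r)$, we have $\ord_\infty(r)=-\deg(r)$. Also $dx=-t^{-2}dt$. Hence
\[
\frac{dx}{r}=-\frac{t^{\deg(r)-2}}{\operatorname{lc}(r)+O(t)}\,dt .
\]
Write $w=x^{\deg(r)-1}\bigl(v+O(t)\bigr)$, where $v=x^{1-\deg(r)}w|_{x=\infty}$. This is legitimate because $\ord_\infty(w)\geq1-\deg(r)$. Then
\[
\omega=-\Bigl(\sum_i(v_i+O(t))\,b_i^\infty\Bigr)\frac{1}{\operatorname{lc}(r)+O(t)}\,\frac{dt}{t}.
\]
Each $b_i^\infty$ is regular at every $P$ above $\infty$, and $\ord_P(dt/t)=-1$. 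Therefore $\ord_P(\omega)\geq-1$. Up to the nonzero constant $-1/\operatorname{lc}(r)$, $\mathrm{res}_P(\omega)$ equals the value at $P$ of $\sum_i v_ib_i^\infty$, which lies in $\OO_{X,P}/(t_P)$. So $\mathrm{res}_\infty(\omega)=0$ if and only if $f_v\coloneq\sum_iv_ib_i^\infty$ vanishes at every $P$ above $\infty$. The $O(t)$ corrections lie in $t\cdot\OO_X(X^\infty)$, so they do not affect these values.

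\textbf{Step 2: identify the residue matrix.} I would identify $G_{-1}^\infty$ with the matrix of $t\frac{d}{dt}$ acting on $\OO_X(X^\infty)/(t)$ in the basis $b_0^\infty,\dots,b_{d_x-1}^\infty$. We have $t\frac{d}{dt}=-x\frac{d}{dx}$, and $d\boldsymbol{b}^\infty=\boldsymbol{b}^\infty G^\infty dx$. So this operator has matrix $-xG^\infty$. This matrix is regular at $\infty$ by \cref{prop:matrixofdifferential}~\ref{regularity of G}, and its value there is $G_{-1}^\infty$.

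\textbf{Step 3: compute the eigenspaces.} As in the proof of \cref{prop:matrixofdifferential}~\ref{exponents of G}, I would work over $\overline K$ and use the decomposition
\[
\OO_X(X^\infty)/(t)\simeq\bigoplus_{x(P)=\infty}\OO_{X,P}/(t_P^{e_P}).
\]
On the $P$-summand the operator is triangular with respect to the filtration $(t_P^n)$, and it acts on $(t_P^n)/(t_P^{n+1})$ by $n/e_P$. The eigenvalues $n/e_P$ for $0\leq n<e_P$ are pairwise distinct. Consequently the generalized eigenspace for $0$, which is the summand $\mathcal{E}_0^\infty$ in the stated decomposition, maps isomorphically onto $\bigoplus_P\OO_{X,P}/(t_P)$. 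Correspondingly, the sum of the other eigenspaces equals $\bigoplus_P (t_P)/(t_P^{e_P})$. That second identification is the key point.

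To justify it, I would observe that the $\lambda\neq0$ parts are $t\frac{d}{dt}$-stable, and that $t\frac{d}{dt}$ is invertible on the image of $(t_P)$. That image is stable, and all its eigenvalues there are nonzero. The direct sum of the $\lambda\neq0$ eigenspaces therefore coincides with $\bigoplus_P(t_P)/(t_P^{e_P})$, by a dimension count on each side.

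\textbf{Conclusion.} Since $v\in K^{\oplus d_x}$ and $G_{-1}^\infty$ is defined over $K$, the eigenspace projection is defined over $K$. Hence $f_v$ vanishes at all $P$ above $\infty$ exactly when the class of $v$ lies in $\bigoplus_P(t_P)/(t_P^{e_P})$, that is, exactly when its projection onto $\mathcal{E}_0^\infty$ is zero.

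The main obstacle is this last bookkeeping: making precise that "vanishing at all $P$" is the same as "lying in the nonzero-eigenvalue part". If non-semisimplicity causes trouble, I would interpret $\mathcal{E}_\lambda^\infty$ as generalized eigenspaces, exactly as the finite case does.
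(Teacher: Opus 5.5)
Your proposal is correct and is essentially the paper's argument. The paper proves the finite-point criterion by letting the residue matrix act on $\OO_X(X^0)/(x-x_0)\simeq\bigoplus_P\OO_{X,P}/(t_P^{e_P})$ and says this statement holds ``similarly''; you carry out exactly that analogue with $t=1/x$ and $t\frac{d}{dt}=-x\frac{d}{dx}$, and your dimension count identifying the nonzero-eigenvalue part with $\bigoplus_P(t_P)/(t_P^{e_P})$ makes precise the paper's looser claim that the $0$-eigenspace ``corresponds to'' $\bigoplus_P\OO_{X,P}/(t_P)$.

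Two harmless corrections. First, the constant in Step~1 is $-e_P/\operatorname{lc}(r)$ rather than $-1/\operatorname{lc}(r)$, since $\mathrm{res}_P(dt/t)=e_P$. Second, each summand $\OO_{X,P}/(t_P^{e_P})$ is stable under $t\frac{d}{dt}$ and carries the pairwise distinct eigenvalues $0,1/e_P,\dots,(e_P-1)/e_P$, so $G_{-1}^\infty$ is semisimple and you never need generalized eigenspaces.
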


We can use the following algorithm to compute a basis $\omega_1,\dots, \omega_{m}$ of $E_0\cap E_\infty$ including an integral cohomology basis.

\begin{algorithm}[Computing cohomology basis]
\label[palgm]{algthm:cohomologybasis}
		\
	
	\textbf{Input}: A good model $(Q, W^0, W^\infty)$.
	
	\textbf{Output}: A basis of integral $1$-forms $\omega_1,\dots, \omega_{n}$ of $E_0\cap E_\infty$, and functions $g_{m+1},\dots, g_n\in B_0\cap B_\infty$ whose coefficient vectors in both $\boldsymbol{b}^0$ and $\boldsymbol{b}^\infty$ are integral such that via the decomposition $$E_0\cap E_\infty\simeq H^0(X,\Omega^1_{X/K})\oplus \left(H_{\mathrm{dR}}^1(X)/H^0(X,\Omega^1_{X/K})\right)\oplus (H_{\mathrm{dR}}^1(U)/H_{\mathrm{dR}}^1(X)) \oplus d(B_0\cap B_\infty),$$ 
	we have that
	\begin{itemize}
		\item $\omega_1,\dots,\omega_g$ form a basis of $H^0(X,\Omega^1_{X/K})$;
		\item $\omega_1,\dots, \omega_{2g}$ represent a basis of $H^1_{\mathrm{dR}}(X)$;
		\item $\omega_1,\dots, \omega_{m}$ represent a basis of $H^1_{\mathrm{dR}}(U)$;
		\item $\omega_{m+1}=dg_{m+1}, \dots, \omega_{n}=dg_{n}$ form a basis of $d(B_0\cap B_\infty)$, in particular, $g_{m+1},\dots, g_n$ are linearly independent in $B_0\cap B_\infty$.
	\end{itemize}
	\begin{enumerate1}
		\item\label{algthm:cohomologybasis1} ($E_0$) We shrink $E_0$ as \cref{remark:E0Einf}, and via the basis of $E_0$ \[\left\{x^jb_i^0\cdot\frac{dx}{r}\mid 0\leq j\leq \deg(r)-\ord_0(W)-\ord_\infty(W)-2, \ \  0\leq i\leq d_x-1\right\},\]
		we write $E_0\simeq K^{\oplus d_x(\deg(r)-\ord_0(W)-\ord_\infty(W)-1)}$.
		\item \label{algthm:cohomologybasisE0capEinf} ($E_0\cap E_\infty$) Construct a map by the projection via the base change from $E_0$ to the $K$-vector space generated by 
		\[\Set{x^jb_i^\infty\cdot\frac{dx}{r}\mid \parbox[c]{.65\linewidth}{$-\ord_0(W)+\deg(r)-1\leq  j\leq -\ord_\infty(W)-\ord_\infty(W^{-1})-\ord_0(W)+\deg(r)-2, \ \  0\leq i\leq d_x-1$}},\]
		then we can obtain the kernel $E_0\cap E_\infty$.

\item\label{algthm:cohomologybasis2} (The second kind)
At finite boundary points, compute the residue maps using the $0$-eigenspaces of $G_{-1}^0(x_0)$.
At infinity, first subtract exact differentials to obtain a form satisfying the condition $\ord_\infty(w)>-\deg(r)$ in \cref{prop:eigenspace at infty}, then apply the preceding $0$-eigenspace criterion.
Intersecting the kernels of these maps with $E_0\cap E_\infty$, we obtain the subspace $V$ of $1$-forms of the second kind. 
		\item (The first kind) Construct the natural projection via the base change from $E_0$ to the $K$-vector space generated by  
		\[\Set{x^jb_i^\infty\cdot\frac{dx}{r}\mid \parbox[c]{.727\linewidth}{$\deg(r)\leq  j\leq -\ord_\infty(W)-\ord_\infty(W^{-1})-\ord_0(W)+\deg(r)-2,$  $0\leq i\leq d_x-1$}},\] 
		whose kernel $V'$ is the subspace of $E_0$ consisting of $1$-forms with order $\geq -1$ at infinity, then we obtain the subspace $H^0(X,\Omega^1_{X/K})$ of $E_0\cap E_\infty$ consisting of holomorphic $1$-forms (or $1$-forms of the first kind) which is $V'\cap V$.
		\item ($B_0\cap B_\infty$) Compute $B_0\cap B_\infty$ analogously to steps~\ref{algthm:cohomologybasis1}--\ref{algthm:cohomologybasisE0capEinf} (see the end of \cref{remark:E0Einf}), and compute $d(B_0\cap B_\infty)$ as a subspace of $E_0\cap E_\infty$.
		\item (Taking a basis) We can take a basis $g_m=1, g_{m+1},\dots, g_n$ of $B_0\cap B_\infty$, then $dg_{m+1},\dots, dg_n$ is a basis of $d(B_0\cap B_\infty)$. On the other hand, we have 
		\begin{align*}
H^1_{\mathrm{dR}}(X)/H^0(X,\Omega_{X/K}^1)& \simeq V/(H^0(X,\Omega_{X/K}^1)+d(B_0\cap B_\infty)),\\
H^1_{\mathrm{dR}}(U)/H_{\mathrm{dR}}^1(X)&\simeq(E_0\cap E_\infty)/V.
		\end{align*}
It is not hard to take an integral basis of these spaces. 
	\end{enumerate1}

\end{algorithm}

Given a differential form of the second kind $\omega$, we can apply the following algorithm to represent its image in $H_{\mathrm{dR}}^1(X)$ with respect to the basis $\omega_1,\dots, \omega_{2g}$.

\begin{algorithm}[Reduction procedure] \label[palgm]{algthm:reductionprocedure} 
	\
	
	\textbf{Input}: The integral $1$-forms $\omega_1,\dots, \omega_m\in E_0\cap E_\infty$, and rational functions $g_{m+1},\dots, g_n$ given by \cref{algthm:cohomologybasis}, and an integral differential form $\omega=\sum_{l\leq l_0}\frac{\sum_{i=0}^{d_x-1}w_{l,i}b_i^0}{r^l}\frac{dx}{r}\in \Omega_{X/K}^1(U)$ with $w_l=(w_{l,0},\dots, w_{l,d_x-1})\in K^\circ[x]^{\oplus d_x}$ and $\deg(w_{l,i})<\deg(r)$. The matrix $M^0=rG^0\in M_{d_x}(K[x])$. 
	
	\textbf{Output}: Functions $f_0, f_\infty, f_{end}=\sum_{j=m+1}^na_jg_j\in \OO_X(U)=\sum_{i=0}^{d_x-1}K[x,1/r]b_i^0$ and $\omega'=\sum_{i=1}^ma_i\omega_i$ such that
	\[\omega=df_0+df_\infty+df_{end}+\omega'.\]
	\begin{enumerate1}
		\item \label{reduction:finite} (Finite reduction) We run the finite reduction from $l=l_0$ to $l=1$. 
		\begin{enumerate}[leftmargin=*,label=\it(1.\arabic*),ref=\it{(1.\arabic*)}]
			\item \label{finite reduction 1.1} For each $l$ fixed, calculate $u,v\in K[x]^{\oplus d_x}$ in \cref{lemma:reductionprocedureforcohomologyclass}~(1). 
            Compute a matrix $\widetilde{M}\in M_{d_x}(K[x])$ such that
			\[\widetilde{M}\equiv \left(\frac{M^0}{r'}-lI\right)^{-1}/r' \ \ (\mod r),\]
			where $r'=dr/dx$.
			Then set \[v_l\coloneq (\widetilde{M}w_l)\bmod r, \ \  u_l\coloneq\frac{w_l-(M^0-lr'I)v_l}{r}-\frac{dv_l}{dx}.\]
			We replace $w_{l-1}$ by $w_{l-1}+u_l$.
			\item \label{finite reduction 1.2} Repeat Step~\ref{finite reduction 1.1} $l_0$-times, we have $\omega= df_0 + \widetilde{\omega}$ with $\widetilde{\omega} = \left(\sum_{i=0}^{d_x-1}\widetilde{u}_i(x)b^0_i\right)\frac{dx}{r}\in E_0$.
		\end{enumerate}
		\item \label{reduction:infinite} (Infinite reduction) 
		\begin{enumerate}[leftmargin=*,label=\it(2.\arabic*),ref=\it{(2.\arabic*)}] 
			 \item \label{infinite reduction 2.1} Write $\widetilde{\omega}=\sum_{i=0}^{d_x-1}\widetilde{w}_ib_i^\infty\frac{dx}{r}$ by 
			 \[\begin{bmatrix}
			 	\widetilde{w}_0\\
			 	\vdots\\
			 	\widetilde{w}_{d_x-1}
			 \end{bmatrix}=W^{-1}\begin{bmatrix}
			 \widetilde{u}_0\\
			 \vdots\\
			 \widetilde{u}_{d_x-1}
		 \end{bmatrix}.\]
\item \label{infinite reduction 2.2} If $\ord_\infty(\widetilde{w})\leq \ord_0(W)-\deg(r)+1$, we then calculate $u,v\in K[x,x^{-1}]^{\oplus d_x}$ in \cref{lemma:reductionprocedureforcohomologyclass}~\ref{reduction at infty}.
Let $\mu=-\ord_\infty(\widetilde{w})-\deg(r)+1$. Assume that $\mu\geq 1$, otherwise we are done, and move to \ref{reduction:final}. Let $c$ be the coefficient vector of $\widetilde{w}$ at $x^{\deg(\widetilde{w})}$, and set
\[
c_{-(\mu+1)}\coloneq-\frac{c}{\operatorname{lc}(r)}.
\]
Since $t=1/x$ and $dx=-t^{-2}dt$, this is the coefficient of $t^{-(\mu+1)}dt$ in $\widetilde{w}\,dx/r$. Now set
\[
\overline{v}\coloneq (G^\infty_{-1}-\mu I)^{-1}c_{-(\mu+1)}, \ \ v\coloneq\overline{v}x^\mu, \ \ u\coloneq \widetilde{w}-r\left(G^\infty v+\frac{dv}{dx}\right).
\]
Then we replace $\widetilde{w}$ by $u$.
			\item \label{infinite reduction 2.3} Repeat Step~\ref{infinite reduction 2.2} until $\ord_\infty(\widetilde{w})> \ord_0(W)-\deg(r)+1$, we have that $\widetilde{\omega}= df_\infty + \widetilde{\widetilde{\omega}}$ for some $\widetilde{\widetilde{\omega}} = \left(\sum_{i=0}^{d_x-1}v_i(x)b^0_i\right)\frac{dx}{r}\in E_0\cap E_\infty$.
		\end{enumerate}
	    \item \label{reduction:final} (Final reduction) Since $\omega-df_0-df_\infty\in E_0\cap E_\infty$, we can find $a_1,\dots, a_n\in K$ such that \[\omega-df_0-df_\infty=\sum_{j=1}^na_j\omega_j.\]
	    Set $f_{end}=\sum_{j=m+1}^na_jg_j$ and $\omega' = \omega-df_0-df_{\infty}-df_{end}$.
	\end{enumerate1}
\end{algorithm}
\begin{remark}
	In infinite reduction \ref{infinite reduction 2.2}, $$\mu\coloneq -\ord_\infty(\widetilde w)-\deg(r)+1\geq -\ord_0(W).$$
	Notice that $\widetilde u=(\widetilde{u}_0, \dots, \widetilde{u}_{d_x-1})$ is given by $u_l$ and $\sum_{l\leq 0}\frac{\sum_{i=0}^{d_x-1}w_{l,i}b_i^0}{r^l}\frac{dx}{r}$, so we cannot bound $\mu$ from above by $-\ord_\infty(W^{-1})-1$ by \cref{prop:valuationoffinitereduction} below.
\end{remark}
\begin{remark}
	In the implementation, the finite and infinite reduction matrices are precomputed once at each required level and reused when reducing the \(2g\) Frobenius pullbacks.
\end{remark}

\subsection{Precision bounds for reduction}

We conclude this section by estimating the loss of precision of the reduction procedure. The proofs follow the ideas of \cite[Lemma~4.3.4]{bas2007point} and \cite[Proposition~3.7, Proposition~3.8]{tuitman2016counting}.

For arbitrary integral bases, compare also \cite[Propositions~3.7--3.8]{tuitman2017counting}. With our normalization, integer denominators contribute $v_K(a)=e_Kv_p(a)$ to the precision loss.

\begin{proposition}
	\label[pprop]{prop:valuationoffinitereduction}

	Let $\omega = \frac{\sum_{i=0}^{d_x-1}w_ib_i^0}{r^l}\frac{dx}{r}\in \Omega^1_{\mathcal{U}/K^\circ}(\mathcal{U})$, where $l\in \N_{\geq 1}$ and $w\in K^\circ[x]^{\oplus d_x}$ satisfies $\deg(w)<\deg(r)$. We set
	\[e_0\coloneq \max\{e_P\mid P\in {X}\setminus {U}, x(P)\not=\infty\}.\]
	By \cref{lemma:reductionprocedureforcohomologyclass}~\ref{reduction at finite points}, there are  $u\in K[x]^{\oplus d_x}$ and $f=\sum_{j=1}^l\frac{\sum_{i=0}^{d_x-1}(v_j)_ib_i^0}{r^j}$, $v_j\in K[x]^{\oplus d_x}$ satisfying $\deg((v_j)_{i})<\deg(r)$ for all $1\leq j\leq l$, $0\leq i\leq d_x-1$, such that
		\[\omega=df+\left(\sum_{i=0}^{d_x-1}u_ib_i^0\right)\frac{dx}{r}\]
Then the following statements hold.
	\begin{enumerate1}
		\item \label{prop:valuationoffinitereduction1} We have \[p^{\lfloor\log_p(le_0)\rfloor}u\in K^\circ[x]^{\oplus d_x},  \ \ p^{\lfloor\log_p(le_0)\rfloor}v_j\in K^\circ[x]^{\oplus d_x}.\]
		\item \label{prop:valuationoffinitereduction2} If we write $\left(\sum_{i=0}^{d_x-1}u_ib_i^0\right)\frac{dx}{r}=\left(\sum_{i=0}^{d_x-1}u'_ib_i^\infty\right)\frac{dx}{r}$ by basis change $u' = W^{-1}u$, then
		\[\ord_\infty(u')\geq \ord_\infty(W^{-1}) -\deg(r)+2.\]
	\end{enumerate1}
	
\end{proposition}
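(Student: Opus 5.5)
Part \ref{prop:valuationoffinitereduction1} will be proved by a local pole-order argument in the style of \cite[Lemma~4.3.4]{bas2007point} and \cite[Proposition~3.7]{tuitman2016counting}, rather than by tracking the linear algebra of \cref{lemma:reductionprocedureforcohomologyclass}~\ref{reduction at finite points}. Throughout I assume $l\geq1$.

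\emph{Step 1: a local pole bound for $f$.}
Fix a root $x_0$ of $r$ and a point $P$ above it, with ramification index $e_P\leq e_0$ and uniformizer $t_P$. Extend scalars to the unramified extension $K(x_0)$; the point $P$ is also defined over an unramified extension, by \cref{rem:unique-very-bad-point}. Since $\ord_P(dx/r)=-1$ and $b_i^0$ is regular at $P$, the form $\omega$ has pole order at most $le_P+1$ at $P$, and $f$ has pole order at most $le_P$. In the local expansion
\[
f=\sum_{k\geq -le_P}c_kt_P^k,
\]
termwise differentiation gives $df=\sum_k kc_kt_P^{k-1}dt_P$.
The remainder $\bigl(\sum_iu_ib_i^0\bigr)dx/r$ has at most a simple pole at $P$. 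So for $k\leq -1$ the coefficient $kc_k$ equals the corresponding coefficient of $\omega$. Assuming (as I will check) that $\omega$ is integral in the local coordinates, this gives
\[
v_K(c_k)\geq -v_K(k)\geq -e_K\lfloor\log_p(le_0)\rfloor,
\]
since $|k|\leq le_0$.

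\emph{Step 2: returning to the global coefficients $v_j$.}
The polar part of $f$ along all points above roots of $r$ determines the $v_j$. This is because $\deg(v_j)<\deg(r)$, and $b_0^0,\dots,b_{d_x-1}^0$ reduce to an integral basis modulo $\pi$ by \cref{def:triplehasgoodreduction}~\ref{good reduction: basis of reduction}. So the map sending a coefficient tuple $(v_j)_j$ to these polar parts is defined over $K^\circ$ and injective modulo $\pi$. I would justify this using the smoothness of $\mathcal X$ and the étaleness of $\mathcal Z_{\mathcal X}$ from \cref{prop:smoothmodelassociatedtotriple}: the Laurent expansion at $P$ commutes with reduction. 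Granting this, $p^{\lfloor\log_p(le_0)\rfloor}v_j$ is integral.

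\emph{Step 3: the bound for $u$.}
The relation
\[
u=\frac{w-(M^0-lr'I)v}{r}-\frac{dv}{dx}
\]
holds at each stage of the iteration in \cref{lemma:reductionprocedureforcohomologyclass}~\ref{reduction at finite points}, where $M^0=rG^0$ is integral and $r$ is monic up to a unit. Hence the same factor $p^{\lfloor\log_p(le_0)\rfloor}$ also makes $u$ integral; the division by $r$ is exact and preserves integrality by Gauss's lemma.

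\emph{Part \ref{prop:valuationoffinitereduction2}.}
Each $v_j$ has degree less than $\deg(r)$, so $\ord_\infty(v_j/r^j)\geq 1$ and $f$ has no pole above $\infty$ beyond those of the $b_i^0$. Concretely, $\omega-df$ has $\ord_\infty(u)\geq -\deg(r)+2$ in the $\boldsymbol b^0$ coordinates. Indeed $u$ is polynomial, and the orders at $\infty$ of $w/r^l$ and of the derivative terms give $\deg(u)\leq\deg(r)-2$. The change of basis $u'=W^{-1}u$ then adds $\ord_\infty(W^{-1})$.

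\emph{Main obstacle.}
I expect Step 2 to be the hardest: showing that integrality of the local Laurent polar coefficients at all points above the roots of $r$ forces integrality of the global coefficient vectors $v_j$, without extra denominators from $r'(x_0)$ or from the basis change. My first attempt would use that $r'(x_0)$ is a unit, by \cref{def:triplehasgoodreduction}~\ref{good reduction: roots of r}, together with the fact that $\mathcal R^0/(r)$ is étale over $K^\circ$ after reduction.
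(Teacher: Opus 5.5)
Your argument for part (1) follows the paper. The local Laurent expansion at the points above the roots of $r$ bounds the polar coefficients of $f$. Your injectivity of the polar-part map modulo $\pi$ is what the paper proves by contradiction: it takes the maximal $j_0$ with $v_K(\pi^cn_0v_{j_0})=0$ and notes that the reduction of $\pi^cn_0f$ is regular at every $\Sp(P)$. Hence $\widetilde{r}^{j_0}$ times it vanishes in $\bigoplus_i(\widetilde{K}[x]/\widetilde{r})\,\widetilde{b}_i^0$, which contradicts $\deg(v_{j_0})<\deg(r)$. The compatibility of expansions with reduction that you leave open comes from $r=\epsilon\,t_P^{e_P}$ with $\epsilon$ a unit of $\widehat{\OO}_{\mathcal X,\widetilde{P}}$, cf.\ \cref{rem:ramification-at-good-very-bad}. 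The bound for $u$ then follows as you say.

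Part (2), however, rests on a false intermediate claim. In the $\boldsymbol b^0$ basis,
\[
u=\frac{w}{r^l}-\sum_{j=1}^{l}\frac{M^0v_j+r\,v_j'-j\,r'v_j}{r^j},\qquad M^0=rG^0 .
\]
The terms $w/r^l$, $rv_j'/r^j$ and $jr'v_j/r^j$ do have order at least $2-\deg(r)$ at $\infty$. The term $M^0v_j/r^j=G^0v_j/r^{j-1}$, coming from $d\boldsymbol b^0=\boldsymbol b^0G^0\,dx$, need not. Your bound $\deg(u)\leq\deg(r)-2$ tacitly assumes $\ord_\infty(G^0)\geq1$, but regularity at $\infty$ (\cref{prop:matrixofdifferential}) is only available for $G^\infty$. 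Since $\boldsymbol b^0$ is not integral above $\infty$, $G^0=WG^\infty W^{-1}+W\frac{d}{dx}(W^{-1})$ only satisfies $\ord_\infty(G^0)\geq\ord_\infty(W)+\ord_\infty(W^{-1})+1$, which can be very negative.

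For example, take $y^2=h(x)$ with $h$ monic cubic at a prime of good reduction, and the equally admissible basis $\boldsymbol b^0=(1,y+x^5)$. Then $G^0_{1,2}=5x^4-x^5h'/(2h)$ has order $-4$ at $\infty$. For $l=1$ and $w$ chosen so that $\deg((v_1)_1)=2$, the polynomial $u_0$ has degree $6>\deg(r)-2=1$. Using the weaker bound on $G^0$ and then converting by $W^{-1}$ does not rescue the argument, since that loses $\ord_\infty(W)+\ord_\infty(W^{-1})\leq0$.

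The missing idea is to work in $\boldsymbol b^\infty$ from the start, as the paper does. Write $f=\boldsymbol b^\infty z$ with $z=\sum_jW^{-1}v_j/r^j$; then $\ord_\infty(z)\geq\ord_\infty(W^{-1})+1$ because $\deg(v_j)<\deg(r)$. Next, $\ord_\infty(G^\infty)\geq1$ gives $\ord_\infty\bigl(\frac{dz}{dx}+G^\infty z\bigr)\geq\ord_\infty(W^{-1})+2$. Therefore
\[
u'=\frac{W^{-1}w}{r^l}-r\Bigl(\frac{dz}{dx}+G^\infty z\Bigr)
\]
has order at least $\ord_\infty(W^{-1})-\deg(r)+2$, which is the claimed bound.
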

\begin{proof}

\ref{prop:valuationoffinitereduction1} 
 We set $n_0\coloneq p^{\lfloor\log_p(le_0)\rfloor}$, and claim that $n_0v_j\in K^\circ[x]^{\oplus d_x}$ for any $j$. Otherwise, there is $c\in \N_{\geq 1}$ such that $\min_{1\leq j\leq l}\{v_K(\pi^cn_0v_j)\}=0$. Let $1\leq j_0\leq l$ be the maximal one such that $v_K(\pi^cn_0v_{j_0})=0$. 
 Let $h$ be the reduction of $\pi^cn_0f$ modulo $\pi$, and $h'$ the reduction of $\sum_{i=0}^{d_x-1}\pi^cn_0(v_{j_0})_ib_i^0$ modulo $\pi$. 
 
 For any $P\in {X}\setminus {U}$ with $x(P)\not=\infty$, the extension $K(P)/K$ is unramified by the smoothness of $\mathcal Z_{\mathcal X}$ (see \cref{prop:smoothmodelassociatedtotriple} \ref{sm model of X}), hence $\pi$ is a uniformizer of $K(P)$, and $v_K$ extends to $K(P)$ with $v_K(\pi)=1$. We choose a uniformizer $t_P$ of $P$ on $\mathcal{X}$ which is also a uniformizer of $P$ on $X$ (resp. $\Sp(P)$ on $\mathcal{X}_s$) after restriction, see \cref{prop:uniformizeratapoint} in the next section. Then around such $P$, we have $\ord_P(dx/r)=-1$, so $$\text{$\ord_P(\omega), \ord_P(df)\geq -le_P-1$ \ \  and \ \ $\ord_P\left(\left(\sum_{i=0}^{d_x-1}u_ib_i^0\right)\frac{dx}{r}\right) \geq -1$.}$$
 Hence we can write
	\[df=\sum_{k\geq -le_P-1}a_{k}t_P^kdt_P, \ \ \ \ f=\lambda_P+\sum_{\substack{k\geq -le_P-1\\k\ne-1}}\frac{a_k}{k+1}t_P^{k+1},\]
where $a_k,\lambda_P\in K(P)$.
Since $\ord_P(dx/r)=-1$, the remainder $\omega-df$ has at most a simple pole at $P$. As $\omega$ is integral, we have $a_k\in K(P)^\circ$ for $-le_P-1\leq k\leq-2$.
For these indices, $v_p(k+1)\leq\lfloor\log_p(le_0)\rfloor$, so
\[v_K(\pi^cn_0a_k/(k+1))\geq c\geq1.\]
This implies that $\ord_{\Sp(P)}(h)\geq0$ and $\ord_{\Sp(P)}(r^{j_0}h)\geq j_0e_P\geq e_P$. Also note that \[\OO_{\mathcal{X}}(\mathcal{X}-x^{-1}(\infty))/(r)\simeq \bigoplus_{P\in X\setminus U, x(P)\not=\infty}\OO_{\mathcal{X},P}/(t_P^{e_P}),\]
	 so $r^{j_0}h=0$ in $\OO_{\mathcal{X}_s}(\mathcal{X}_s-x^{-1}(\infty))/(r) = \bigoplus_{i=0}^{d_x-1}(\widetilde{K}[x]/r)b_i^0$ which implies that $h'=0$. Hence the reduction of $\pi^cn_0v_{j_0}$ is $0$ in $(\widetilde{K}[x]/r)^{\oplus d_x}$. This is impossible since $\deg(v_{j_0})<\deg(r)$. So our claim holds. This implies that $p^{\lfloor\log_p(le_0)\rfloor}u\in K^\circ[x]^{\oplus d_x}$.

\ref{prop:valuationoffinitereduction2} Put $D=\deg(r)$ and $a=\ord_\infty(W^{-1})$. Write
\[
f=\boldsymbol{b}^\infty z,\qquad z=\sum_{j=1}^l\frac{W^{-1}v_j}{r^j}.
\]
Since $\deg(v_j)<D$, we have
\[
\ord_\infty(z)\geq a+1.
\]
By \cref{prop:matrixofdifferential} \ref{regularity of G}, $\ord_\infty(G^\infty dx)\geq -1$, so $\ord_\infty(G^\infty)\geq1$. Differentiating with respect to $x$ raises the order at infinity by at least one. Consequently,
\[
\ord_\infty\left(\frac{dz}{dx}+G^\infty z\right)\geq a+2.
\]
Comparing coefficients in $\omega=df+\boldsymbol{b}^\infty u'\,dx/r$ gives
\[
u'=\frac{W^{-1}w}{r^l}-r\left(\frac{dz}{dx}+G^\infty z\right).
\]
The first term has order at least $a+(l-1)D+1\geq a+1$, and the second at least $a+2-D$. Since $D\geq1$, it follows that
\[
\ord_\infty(u')\geq a-D+2.
\]
\end{proof}
\begin{remark}
This proposition implies the loss of precision of $\omega$ is $\lfloor\log_p(le_0)\rfloor v_K(p)$, i.e. if $w$ is correct to precision $N$, then 
		$u,v$ are correct to precision $N-\lfloor\log_p(le_0)\rfloor v_K(p)$.
\end{remark}

\begin{proposition}\label[pprop]{prop:valuationofinfinitereduction}
	
	Let $\omega = \sum_{i=0}^{d_x-1}w_i(x,x^{-1})b_i^\infty\frac{dx}{r}\in \Omega^
	1_{\mathcal{U}/K^\circ}(\mathcal{U})$, where $w\in K^\circ[x, x^{-1}]^{\oplus d_x}$ satisfies $\ord_\infty(w)\leq \ord_0(W)-\deg(r)+1$. We set $\mu=-\ord_\infty(w)-\deg(r)+1$ and
	\[e_\infty\coloneq\max\{e_P\mid P\in {X}\setminus {U}, x(P)=\infty\}.\]
	By \cref{lemma:reductionprocedureforcohomologyclass}~\ref{reduction at infty}, there are $u\in K[x,x^{-1}]^{\oplus d_x}$ and $f=\sum_{j=-\ord_0(W)}^\mu\left(\sum_{i=0}^{d_x-1}(v_j)_ib_i^\infty\right)x^j$ with $\ord_\infty(u)>\ord_0(W)-\deg(r)+1$ and $v_j\in K^{\oplus d_x}$ for all $-\ord_0(W)\leq j\leq\mu$, such that
	\[\omega=df+\left(\sum_{i=0}^{d_x-1}u_i(x,x^{-1})b_i^\infty\right)\frac{dx}{r}.\]
	Then
	\[p^{\lfloor\log_p(\mu e_\infty)\rfloor}u\in K^\circ[x,x^{-1}]^{\oplus d_x}, \ \ p^{\lfloor\log_p(\mu e_\infty)\rfloor}v_j\in (K^\circ)^{\oplus d_x}.\]
\end{proposition}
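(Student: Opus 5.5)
The plan is to copy the argument of \cref{prop:valuationoffinitereduction}~\ref{prop:valuationoffinitereduction1}, with the residue discs above $x=\infty$ in place of those above the roots of $r$. Put $n_0\coloneq p^{\lfloor\log_p(\mu e_\infty)\rfloor}$. I will first show that $n_0v_j\in(K^\circ)^{\oplus d_x}$ for every $j$. The bound for $u$ then follows from
\[u=w-r\Bigl(G^\infty v+\frac{dv}{dx}\Bigr),\]
where $v=\sum_j v_jx^j$. Indeed, $r$ is integral with unit leading coefficient, $G^\infty$ is integral by \cref{prop:matrixofdifferential}, and differentiation preserves integrality. So once $n_0v$ is integral, $n_0u$ is integral as well.

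For the claim about $v$, suppose it fails. Choose $c\geq1$ with $\min_j v_K(\pi^cn_0v_j)=0$, and let $j_0$ be the largest index attaining this minimum. Let $h$ be the reduction of $\pi^cn_0f$ modulo $\pi$; this is an element of $\widetilde K(x,y)$ expanded in the reduced basis $\widetilde b_i^\infty$. Take $P\in X\setminus U$ with $x(P)=\infty$. Then $K(P)/K$ is unramified by \cref{prop:smoothmodelassociatedtotriple}, and I pick a uniformizer $t_P$ on $\mathcal X$ that reduces to a uniformizer at $\Sp(P)$. The order of $f$ at $P$ is at least $-\mu e_P$. Indeed, $x^\mu$ has order $-\mu e_P$, and the $b_i^\infty$ are regular at $P$, so the highest Laurent power controls the pole.

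The remainder $(\sum u_ib_i^\infty)\,dx/r$ has order at least $-1$ at $P$. This is the content of $\ord_\infty(u)>\ord_0(W)-\deg(r)+1$ together with the computation at the end of the proof of \cref{thm: computing de rham via E B}. It may instead follow from $\ord_\infty(u)>\ord_\infty(w)$ after iterating; I would check which inequality is needed. Hence in $df=\sum_k a_kt_P^k\,dt_P$ the coefficients $a_k$ with $-\mu e_P-1\leq k\leq-2$ coincide with those of $\omega$. These are integral because $\omega\in\Omega^1_{\mathcal U/K^\circ}$ and $t_P$ is an integral uniformizer. Integrating gives polar coefficients $a_k/(k+1)$, and $v_p(k+1)\leq\lfloor\log_p(\mu e_\infty)\rfloor$. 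Therefore $\pi^cn_0f$ has polar part divisible by $\pi$ at each such $P$, and $h$ is regular at every point of $\mathcal X_s$ above $\infty$.

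For the conclusion, $h=\sum_j\widetilde{(\pi^cn_0v_j)}\,x^j\,\widetilde{\boldsymbol b}{}^\infty$. By \cref{def:triplehasgoodreduction}~\ref{good reduction: basis of reduction}, $\widetilde{\boldsymbol b}{}^\infty$ is an integral basis over $\widetilde K[x^{-1}]$. So regularity of $h$ above $\infty$ means exactly that its coefficient vector in this basis lies in $\widetilde K[x^{-1}]^{\oplus d_x}$. In particular the coefficient of $x^{j_0}$ vanishes whenever $j_0\geq1$, which contradicts the choice of $j_0$. The main obstacle is the range of $j$: the sum runs down to $-\ord_0(W)$, and for $j_0\leq0$ this argument gives no contradiction. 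I would handle those indices separately. By \cref{lemma:reductionprocedureforcohomologyclass}~\ref{reduction at infty}, each step produces only $v=\overline v x^{\mu}$ with $\mu\geq1$, so the coefficients $v_j$ with $j\leq0$ are zero. If that fails for the grouped $f$ in the statement, I would instead use the other chart: $f\in B_0$ gives regularity above $x=0$, and the reduced $\widetilde{\boldsymbol b}{}^0$-integral basis excludes the low-degree terms in the same way.
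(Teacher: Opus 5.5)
Your overall strategy (reduce $\pi^cn_0f$ modulo $\pi$, take the largest index $j_0$ with a unit coefficient, derive a contradiction) is the paper's. However, the step it rests on is false: the remainder $\bigl(\sum_iu_ib_i^\infty\bigr)\frac{dx}{r}$ need not have order $\geq-1$ at the points $P$ above $x=\infty$. From $\ord_\infty(u)\geq\ord_0(W)-\deg(r)+2$ and $\ord_P(dx/r)=e_P(\deg(r)-1)-1$ you only get order $\geq e_P(\ord_0(W)+1)-1$. This is also exactly what the computation at the end of the proof of the theorem describing $H^1_{\mathrm{dR}}(U)$ via $E_0\cap E_\infty$ gives. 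Since $\ord_0(W)<0$, this bound is $\leq-1$, and strictly less as soon as $\ord_0(W)\leq-2$. Such poles really occur among the possible remainders: for $y^2=f(x)$ with $\deg f=5$ one has $\ord_0(W)=-3$, and $x^3\,dx/y\in E_0\cap E_\infty$ has a pole of order $4$ at infinity. Consequently the coefficients $a_k$ of $df$ coincide with those of $\omega$ only for $k\leq e_P(\ord_0(W)+1)-2$. For $e_P(\ord_0(W)+1)-1\leq k\leq-2$ they also involve the remainder, whose integrality is part of what you are proving. So $h$ need not be regular above $\infty$, and your argument as written only covers the case $\ord_0(W)=-1$.

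The missing idea is that the lower bound $j\geq-\ord_0(W)$ on the exponents in $f$ exactly compensates for this pole. The valid range of $k$ gives $\ord_{\Sp(P)}(h)\geq e_P(\ord_0(W)+1)$. Put $t=1/x$; since $j_0\geq-\ord_0(W)$, you get $\ord_{\Sp(P)}(t^{j_0}h)\geq e_P$ at every point $\Sp(P)$ above $\infty$. Hence $t^{j_0}h$ vanishes in $\OO_{\mathcal X_s}(\mathcal X_s-x^{-1}(0))/(t)=\bigoplus_i\widetilde K\,\widetilde b_i^\infty$. The $\pi^cn_0v_j$ with $j>j_0$ reduce to $0$, so $t^{j_0}h=\sum_{j\leq j_0}t^{j_0-j}\,\widetilde{\boldsymbol b}{}^\infty\,\widetilde{\pi^cn_0v_j}$. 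Its image there is $\widetilde{\boldsymbol b}{}^\infty\,\widetilde{\pi^cn_0v_{j_0}}\neq0$, a contradiction. In your formulation: the coefficient vector of $h$ lies in $x^{-\ord_0(W)-1}\widetilde K[x^{-1}]^{\oplus d_x}$, which kills the $x^{j_0}$-coefficient. The obstacle you anticipated is not the real one. Because $\ord_0(W)<0$, every index already satisfies $j\geq-\ord_0(W)\geq1$, so $j_0\leq0$ never occurs; what is needed is $j_0\geq-\ord_0(W)$, not just $j_0\geq1$. Your derivation of the bound on $u$ from the bound on the $v_j$ via $u=w-r(G^\infty v+dv/dx)$ is fine.
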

\begin{proof}
	This proof is similar to the one of \cref{prop:valuationoffinitereduction}~\ref{prop:valuationoffinitereduction1}. We set $n_0\coloneq p^{\lfloor\log_p(\mu e_\infty)\rfloor}$, and claim that $n_0v_j\in (K^\circ)^{\oplus d_x}$ for any $-\ord_0(W)\leq j\leq\mu$. Otherwise, there is $c\in \N^+$ such that $\min_{-\ord_0(W)\leq j\leq\mu}\{v_K(\pi^cn_0v_j)\}=0$. Let $-\ord_0(W)\leq j_0\leq\mu$ be the maximal one such that $v_K(\pi^cn_0v_{j_0})=0$. Let $h$ be the reduction of $\pi^cn_0f =\sum_{j=-\ord_0(W)}^\mu\left(\sum_{i=0}^{d_x-1}\pi^cn_0(v_j)_ib_i^\infty\right)x^j$, and $h'$ the reduction of $\sum_{i=0}^{d_x-1}\pi^cn_0(v_{j_0})_ib_i^\infty$. Let $t\coloneq1/x$.
	
	For any $P\in {X}\setminus {U}$ with $x(P)=\infty$, the extension $K(P)/K$ is unramified as in the proof of \cref{prop:valuationoffinitereduction} \ref{prop:valuationoffinitereduction1}. Let $t_P$ be a uniformizer of $\overline{\{P\}}\subset \mathcal{X}$. Then around such $P$, we have $\ord_P(dx/r)=e_P(\deg(r)-1)-1$, so $\ord_P(\omega), \ord_P(df)\geq-\mu e_P-1$ and
	\begin{align*}
		\ord_P\left(\left(\sum_{i=0}^{d_x-1}u_ib_i^\infty\right)\frac{dx}{r}\right)&\geq e_P(\ord_0(W)-\deg(r)+2)+e_P(\deg(r)-1)-1\\
		&=e_P(\ord_0(W)+1)-1.
	\end{align*}
	Hence we can write
	\[df=\sum_{k\geq-\mu e_P-1}a_kt_P^kdt_P, \ \ \ \ f=\lambda_P+\sum_{\substack{k\geq-\mu e_P-1\\k\ne-1}}\frac{a_k}{k+1}t_P^{k+1},\]
	where $a_k,\lambda_P\in K(P)$.
	Since $\omega\in \Omega^1_{\mathcal{U}/K^\circ}(\mathcal{U})$, for any $k\leq e_P(\ord_0(W)+1)-2$, we have $a_k\in K(P)^\circ$, so $n_0a_k/(k+1)\in K(P)^\circ$ and $v_K(\pi^cn_0a_k/(k+1))\geq1$. Hence $\ord_{\Sp(P)}(h)\geq e_P(\ord_0(W)+1)$ and
	$$\ord_{\Sp(P)}(t^{j_0}h)\geq e_P(j_0+\ord_0(W)+1)\geq e_P.$$
	Notice that
	\[\OO_{\mathcal{X}}(\mathcal{X}-x^{-1}(0))/(t)\simeq \bigoplus_{P\in X\setminus U, x(P)=\infty}\OO_{\mathcal{X},P}/(t_P^{e_P}),\]
	so $t^{j_0}h=0$ in $\OO_{\mathcal{X}_s}(\mathcal{X}_s-x^{-1}(0))/(t)=\bigoplus_{i=0}^{d_x-1}\widetilde{K}b_i^\infty$ which implies that $h'=0$. Hence the reduction of $\pi^cn_0v_{j_0}$ is $0$ in $\widetilde{K}^{\oplus d_x}$. This is impossible by our choice of $j_0$. So our claim holds. This immediately implies that $n_0u\in K^\circ[x,x^{-1}]^{\oplus d_x}$.
\end{proof}

\begin{art}[Change of basis for the final reduction]
	\label{notation:finalreductionmatrix}
	Use the finite-dimensional space $E_0$ and its monomial basis
	\[
	\mathcal E=
	\left(x^jb_k^0\frac{dx}{r}\right)_{
		0\leq k\leq d_x-1,\ 0\leq j\leq D},
	\qquad
	D=\deg(r)-\ord_0(W)-\ord_\infty(W)-2,
	\]
	from \cref{algthm:cohomologybasis}~\ref{algthm:cohomologybasis1},
	ordered first by $k$ and then by $j$, and put $s=\dim_K E_0$.
	Fix the integral basis $\omega_1,\dots, \omega_n$ of $E_0\cap E_\infty$ by \cref{algthm:cohomologybasis}, with $\omega_i=dg_i$ for $m<i\leq n$.
	Extend them to a basis
	\[
	\mathcal B=
	(\omega_1,\dots,\omega_m,
	\xi_1,\dots,\xi_{s-n},dg_{m+1},\dots,dg_n)
	\]
	of $E_0$, choosing the additional forms $\xi_i$ from
	$\mathcal E$. Define the change-of-basis matrix
	$C_{end}\in\GL_s(K)$ by
	\[
	\mathcal B=\mathcal E C_{end},
	\qquad
	c_{\mathrm{end}}
	\coloneq -v_K(C_{end}^{-1}).
	\]
	Thus $C_{end}^{-1}$ converts monomial coordinates
	into coordinates with respect to $\mathcal B$, and
	$\pi^{c_{end}}C_{end}^{-1}$ is integral. Notice that $c_{end}\in\N$ since $\mathcal B$ is an integral basis.
\end{art}

\begin{proposition}
	\label[pprop]{prop:valuationoffinalreduction}
	Let 
	\(\omega=
	\left(\sum_{k=0}^{d_x-1}w_kb_k^0\right)\frac{dx}{r}
	=\left(\sum_{k=0}^{d_x-1}w'_kb_k^\infty\right)\frac{dx}{r}
	\in E_0\cap E_\infty\cap
	\Omega^1_{\mathcal U/K^\circ}(\mathcal U)
	\), where $w\in K^\circ[x]^{\oplus d_x}$.
	With the basis fixed in \cref{notation:finalreductionmatrix}, the final step of
	\cref{algthm:reductionprocedure} gives a unique decomposition
	\begin{align*}
		\omega=df_{end}+\sum_{i=1}^{m}a_i\omega_i,
		\qquad &f_{end}=\sum_{i=m+1}^{n}a_i g_i,\\
	\end{align*}
	Write 
	\[		f_{end}=\sum_{k=0}^{d_x-1}u_kb_k^0
	=\sum_{k=0}^{d_x-1}u'_kb_k^\infty
	\in B_0\cap B_\infty\]
	with $u\in K[x]^{\oplus d_x}, u'\in K[x,x^{-1}]^{\oplus d_x}$.
	Then
	\begin{gather*}
		\pi^{c_{end}}u\in K^\circ[x]^{\oplus d_x},
		\qquad
		\pi^{c_{end}}u'\in
		K^\circ[x,x^{-1}]^{\oplus d_x}, \qquad
		\pi^{c_{end}}a_i\in K^\circ
		\quad(1\leq i\leq n).
	\end{gather*}
\end{proposition}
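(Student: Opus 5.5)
The argument is linear algebra in the fixed basis $\mathcal B=\mathcal E C_{end}$ of \cref{notation:finalreductionmatrix}. First, since $\omega\in E_0$ and $w\in K^\circ[x]^{\oplus d_x}$, the monomial coordinate vector $\mathbf w\in K^{s}$ of $\omega$ with respect to $\mathcal E$ is integral. Here we use the shrinking of $E_0$ in \cref{remark:E0Einf}, so that $\deg(w)\leq D$ and $\omega$ actually lies in the span of $\mathcal E$. The coordinates of $\omega$ with respect to $\mathcal B$ are then $C_{end}^{-1}\mathbf w$, and $v_K(C_{end}^{-1}\mathbf w)\geq v_K(C_{end}^{-1})=-c_{end}$. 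Hence $\pi^{c_{end}}$ times every $\mathcal B$-coordinate is integral. Because $\omega\in E_0\cap E_\infty$, which is spanned by $\omega_1,\dots,\omega_n$, the coefficients of the extra forms $\xi_i$ vanish. The remaining coordinates are exactly the $a_1,\dots,a_n$ of the final reduction step, so $\pi^{c_{end}}a_i\in K^\circ$. Uniqueness follows from the linear independence of $\mathcal B$.

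Second, I transfer the bound to $f_{end}=\sum_{i>m}a_ig_i$. By \cref{algthm:cohomologybasis}, each $g_i$ ($m<i\leq n$) has integral coefficient vectors in both $\boldsymbol b^0$ and $\boldsymbol b^\infty$. Write $g_i=\boldsymbol b^0u^{(i)}=\boldsymbol b^\infty u'^{(i)}$ with $u^{(i)}\in K^\circ[x]^{\oplus d_x}$ and $u'^{(i)}\in K^\circ[x,x^{-1}]^{\oplus d_x}$. Then $u=\sum a_iu^{(i)}$ and $u'=\sum a_iu'^{(i)}$. Multiplying by $\pi^{c_{end}}$ and using the first step together with the ultrametric inequality gives integrality of $\pi^{c_{end}}u$ and $\pi^{c_{end}}u'$.

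The only delicate point is the first step: $\omega$ must genuinely lie in the finite monomial span $\mathcal E$ with integral coordinates, that is, its $\boldsymbol b^0$-coefficient degrees must be bounded by $D$. I would justify this from $\omega\in E_0\cap E_\infty$ via the degree bound in \cref{remark:E0Einf}, namely $\deg(u)\leq -\ord_\infty(W)-\ord_0(W)+\deg(r)-2=D$. I would also need to check that the basis of $E_0\cap E_\infty$ fixed in \cref{notation:finalreductionmatrix} is the one used in the final reduction of \cref{algthm:reductionprocedure}, so that the $a_i$ agree.
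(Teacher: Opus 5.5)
Your proposal is correct and follows essentially the same route as the paper. The paper's proof likewise takes the integral monomial coordinate vector of $\omega$ with respect to $\mathcal E$, applies $C_{end}^{-1}$ to obtain the coordinates $(a_1,\dots,a_m,0_{s-n},a_{m+1},\dots,a_n)$, which gives $\pi^{c_{end}}a_i\in K^\circ$, and then transfers this to $u,u'$ through the integral coefficient vectors of the $g_i$. Your explicit check that $\deg(w)\leq D$ via \cref{remark:E0Einf}, so that $\omega$ actually lies in the span of $\mathcal E$, makes precise a point the paper's proof takes for granted.
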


\begin{proof}
	We keep the notation in  \cref{notation:finalreductionmatrix}.
Notice that the monomial coordinate vector
	$\mathbf e(\omega)$ (with respect to $\mathcal E$) lies in $(K^\circ)^s$.
	The change-of-basis identity gives
	\[
	C_{end}^{-1}\mathbf e(\omega)
	=
	\begin{bmatrix}
		a_1\\ \vdots\\ a_m\\
		0_{s-n}\\
		a_{m+1}\\ \vdots\\ a_n
	\end{bmatrix}.
	\]
	The definition of $c_{end}$ therefore implies
	$\pi^{c_{end}}a_i\in K^\circ$ for every $i$.
	Since the $g_i$ have integral coefficient vectors in both integral bases,
	$\pi^{c_{end}}f_{end}$ is integral on
	$\mathcal U$. Hence $\pi^{c_{end}}u\in K^\circ[x]^{\oplus d_x},
	\pi^{c_{end}}u'\in
	K^\circ[x,x^{-1}]^{\oplus d_x}.$
\end{proof}

\section{Computing Coleman integrals}
\label{computing Coleman integrals}

To calculate Coleman integrals, we use the $K$-linear lift
$F_q^*=(F_\tau^*)^{n_\tau}$. The coefficient automorphism $\tau$ of
the single step may be nontrivial. The preparation and tiny-integral
algorithms below do not involve a Frobenius lift.

\subsection{Preparation}

Let us first recall the basic properties of Coleman integrals, see \cite[Theorem~3.1]{balakrishnan2020explicit}.

\begin{theorem}[Coleman, Coleman-de Shalit]
	Let $\eta, \xi$ be $1$-forms on a wide open $V$ of $X^\an$ and $P,R,S\in V(K)$. Let $a,b\in K$. The Coleman integral has the following properties:
	\begin{enumerate1}
		\item Linearity: $\int_{P}^R(a\eta+b\xi)=a\int_P^R\eta+b\int_P^R\xi$.
		\item Additivity in endpoints: $\int_{P}^R(\xi)=\int_P^S\xi+\int_S^R\xi$.
		\item Change of variables: If $V'\subset X'$ is a wide open subspace of a Berkovich space $X'$ and $\phi\colon V\rightarrow V'$ is an analytic map, then $\int_P^R\phi^*\xi=\int_{\phi(P)}^{\phi(R)}\xi$.
		\item Fundamental theorem of calculus: $\int_P^Rdf=f(R)-f(P)$ for an analytic function $f$ on $V$.
		\item Galois equivariance: The integral is compatible with the action of $\Gal(\C_p/\Q_p)$.
	\end{enumerate1}
\end{theorem}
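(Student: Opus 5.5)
This result is classical and the paper cites it from \cite[Theorem~3.1]{balakrishnan2020explicit}, so I would not reprove it from scratch. Instead I would reduce all five properties to the \emph{uniqueness principle} that characterizes the Coleman integral. First recall the construction. On a wide open $V$ (a strict neighbourhood of an affinoid with good reduction), the Coleman integral of a $1$-form $\xi$ is a locally analytic function $F_\xi$ on $V$ with $dF_\xi=\xi$. It is determined up to one global additive constant by two conditions. On each residue disc $F_\xi$ agrees with a termwise antiderivative of a local power-series expansion of $\xi$. Moreover, for a Frobenius lift $\phi$ (for instance the $K$-linear $F_q^*$ of \cref{proposition:expressionoffrobeniuslift}), $P(\phi^*)F_\xi$ differs from a rigid analytic function by a constant, where $P$ is a polynomial with no roots of unity among its roots that annihilates the class of $\xi$ in $H^1_{\mathrm{rig}}$. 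Such a $P$ exists by the Weil bounds on Frobenius eigenvalues. This \emph{Dwork principle}, together with the fact that a locally analytic function killed by $d$ and satisfying it must be constant, gives existence and uniqueness. It also shows that the result is independent of the choice of $\phi$ and of $P$.

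With this in hand the properties follow in order. For \textit{(1)}, $aF_\eta+bF_\xi$ satisfies the defining conditions for $a\eta+b\xi$: take $P$ to be the product of the annihilating polynomials for $\eta$ and $\xi$. Uniqueness then gives linearity. For \textit{(2)}, define $\int_P^R\xi=F_\xi(R)-F_\xi(P)$; additivity in the endpoints is then immediate, since the constant cancels. For \textit{(4)}, an analytic $f$ on $V$ already satisfies the conditions with $P(T)=1$, so $F_{df}=f+\mathrm{const}$.

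For \textit{(3)}, I would check that $\phi^*F_\xi$ satisfies the defining conditions for $\phi^*\xi$ on $V$. Locally this is clear: the pullback of a disc antiderivative is a disc antiderivative. For the Frobenius condition, I would use that $\phi$ commutes with suitable Frobenius lifts up to rigid-analytic error, and that the Dwork condition does not depend on the chosen lift. Uniqueness then finishes the argument. For \textit{(5)}, I would apply the same uniqueness argument to $F_\xi^\gamma$ for $\gamma\in\Gal(\C_p/\Q_p)$, with the conjugated lift $\phi^\gamma$. The conditions are stable under conjugation, so $F_{\xi^\gamma}=F_\xi^\gamma$ up to a constant, which cancels in endpoint differences.

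The main obstacle is \textit{(3)}, specifically the independence of the Frobenius lift together with the compatibility of $\phi$ with the Frobenius structures on source and target. I would handle it as in \cref{lemma:changeoffrobeniusstructures}: two lifts differ by a homotopy expressed as a Taylor series in the connection, so they induce the same action on rigid cohomology and give functions differing by a rigid-analytic function. For the full details of the construction I would refer to Coleman and Coleman--de Shalit, as cited in \cite{balakrishnan2020explicit}.
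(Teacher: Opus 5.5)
This matches the paper, which gives no proof of its own and simply recalls the result from \cite[Theorem~3.1]{balakrishnan2020explicit}, i.e.\ from Coleman and Coleman--de~Shalit; your outline via Dwork's uniqueness principle and the Taylor-series homotopy between Frobenius lifts is the standard argument behind that citation. Be aware of two limits of your sketch, both supplied by the cited sources rather than by your outline: your argument for (3) only covers analytic maps compatible with the reductions (so that residue discs go into residue discs and the map nearly commutes with the Frobenius lifts), and for forms with residues along the annular ends of $V$, as well as for (5), one must fix a $\Gal(\C_p/\Q_p)$-stable branch of the logarithm.
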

\begin{remark}
		The integral is independent of the choice of fields since they can be viewed as an integral on a wide open of $X_{\C_p}^\an$, where $\C_p$ is the completion of the algebraic closure $\overline{\Q_p}$. 
\end{remark}

As said in \cite{balakrishnan2020explicit}, to specify a point $P$ in $X(K)$, it is not sufficient to give the $(x,y)$-coordinates since we don't require $Q$ to be smooth. However, since $(Q, W^0, W^\infty)$ has good reduction, $\boldsymbol{b}^0$ (resp. $\boldsymbol{b}^\infty$) form a basis of integral closure of $K[x]$ (resp. $K[x^{-1}]$) in $K(X)$, a point $P$ is uniquely determined by the value of $x$ (resp. $x^{-1}$ if $P$ is infinite) and the values of $\boldsymbol{b}^0$ (resp. $\boldsymbol{b}^\infty$ if $P$ is infinite).

\begin{proposition}\label[pprop]{prop:uniformizeratapoint}
	Let $P\in X(K)$ and put $\widetilde P=\Sp(P)$.
	Suppose that $t_P\in\OO_{\mathcal X,\widetilde P}\subset K(X)$
	satisfies $t_P(P)=0$ and that the maximal ideal of
	$\OO_{\mathcal X,\widetilde P}$ is generated by $\pi$ and $t_P$ (we call such a $t_P$  an \emph{integral local parameter at $P$}). 
 Then
	\begin{enumeratea}
		\item \label{local equation of closure} $t_P$ is a local equation of the section $\overline{\{P\}}\subset \mathcal X$ at $\widetilde P$;
		\item \label{reduction of uniformizer} the restriction of $t_P$ to the generic fiber is a uniformizer at $P$ on $X$;
		the reduction $\widetilde t_P$ of $t_P$ is a uniformizer at $\widetilde P$ on $\mathcal X_s$;
		\item \label{isomorphism with disc}
		$t_P$ induces an isomorphism between the residue disc of $P$ and $D(0,1)$, sending $P$ to $0$, where $D(0,1)$ is the open unit disc.
	\end{enumeratea}
	In practice, we can take
	\[t_P=\begin{cases}
		x-x(P) & \text{ if $e_{\widetilde{P}}=1$ (or $t=1/x-1/x(P)$ if $P$ is infinite),}\\
		b_i^0-b_i^0(P) \text{ for some $i$} & \text{ if $e_{\widetilde{P}}\geq 2$ (with $\boldsymbol{b}^0$ replaced by $\boldsymbol{b}^\infty$ if $P$ is infinite).}
	\end{cases}\]
	where $e_{\widetilde P}$ is the ramification index of the reduced map $\widetilde x\colon\mathcal X_s\rightarrow\P^1_{\widetilde{K}}$ at $\widetilde P$.
\end{proposition}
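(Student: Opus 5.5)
The argument is local on $\mathcal X$ at $\widetilde P$. By \cref{prop:smoothmodelassociatedtotriple}, $\mathcal X$ is smooth of relative dimension one over $K^\circ$, so $A\coloneq\OO_{\mathcal X,\widetilde P}$ is a regular local ring of dimension $2$. Since $P\in X(K)$, the valuative criterion gives a section $s_P\colon\Spec K^\circ\to\mathcal X$ with $s_P(\Spec K^\circ)=\overline{\{P\}}$. Its ideal $I\subset A$ is the kernel of the evaluation map $A\to K^\circ$, $h\mapsto h(P)$.

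\textbf{Part (a).} Evaluation is surjective onto $K^\circ$ with kernel $I$, so $A/I\cong K^\circ$. Because $t_P(P)=0$, we have $t_P\in I$. Consider $A/(t_P)$. Since $\mathfrak m_A=(\pi,t_P)$, the ring $A/(t_P)$ is a local ring of dimension one whose maximal ideal is generated by $\pi$. In a regular local ring, $t_P$ is part of a regular system of parameters, so $A/(t_P)$ is regular, hence a DVR with uniformizer $\pi$. The surjection $A/(t_P)\to A/I\cong K^\circ$ goes between two one-dimensional domains. Its kernel is therefore a prime ideal of $A/(t_P)$ that cannot be the maximal ideal, so it is zero. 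This gives $I=(t_P)$.

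\textbf{Part (b).} Since $\pi,t_P$ form a regular system of parameters, $\widetilde t_P$ generates the maximal ideal of $A/\pi A=\OO_{\mathcal X_s,\widetilde P}$, which is a DVR by smoothness of the special fiber. So $\widetilde t_P$ is a uniformizer at $\widetilde P$. For the generic fiber, localize $A$ at the height-one prime $I=(t_P)$. This localization is $\OO_{X,P}$, and it is a DVR whose maximal ideal is generated by $t_P$.

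\textbf{Part (c).} Use the completion $\widehat A\cong K^\circ[[t_P]]$, which follows from the Cohen structure theorem since $K^\circ\to A$ is formally smooth and $(\pi,t_P)$ is a regular system of parameters. The residue disc $\Sp^{-1}(\widetilde P)$ is the generic fiber of $\operatorname{Spf}\widehat A$, so it is identified with $D(0,1)$ via $t_P$, and $P$ corresponds to $t_P=0$. This is the step I expect to need the most care: identifying the tube with the Berkovich generic fiber of the formal completion, following Berthelot/Berkovich. I would cite this standard fact rather than reprove it.

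\textbf{The explicit choices.} Each candidate must be checked to lie in $A$ and to satisfy $\mathfrak m_A=(\pi,t_P)$.
\begin{itemize}
\item If $e_{\widetilde P}=1$, then $\widetilde x$ is unramified at $\widetilde P$. Hence $\widetilde x-\widetilde x(\widetilde P)$ is a uniformizer of the special fiber, and $x-x(P)$ lifts it (use $1/x$ in the infinite case).
\item If $e_{\widetilde P}\geq2$, the functions $\widetilde b_i^0$ generate $\OO_{\mathcal X_s}$ over $\widetilde K[x]$ by \cref{def:triplehasgoodreduction}\ref{good reduction: basis of reduction}. Then $\mathfrak m_{\widetilde P}$ is generated by $\widetilde x-\widetilde x(\widetilde P)$ and the $\widetilde b_i^0-\widetilde b_i^0(\widetilde P)$. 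Some element of this set has order $1$, and since $\widetilde x-\widetilde x(\widetilde P)$ has order $e_{\widetilde P}\ge2$, it must be one of the $\widetilde b_i^0-\widetilde b_i^0(\widetilde P)$. That $i$ works.
\end{itemize}
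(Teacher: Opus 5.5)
Your proof is correct and follows essentially the same route as the paper: part (b), part (c) (citing the standard identification of the residue disc with the generic fiber of the formal completion), and the justification of the explicit choices via $\dim\mathfrak m/\mathfrak m^2=1$ all match the paper's argument. The only variation is in (a). The paper writes the ideal of the section as $I=(s)$ and shows that $t_P=as$ with $a$ a unit by a cotangent-space dimension count. You instead show that $A/(t_P)$ is a DVR, so the non-maximal prime $I/(t_P)$ must vanish; this is equally valid and avoids having to know in advance that $I$ is principal.
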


\begin{proof}
	\ref{local equation of closure} Let $I\subset \OO_{\mathcal X,\widetilde P}$ be the ideal of the section $\overline{\{P\}}$ at $\widetilde P$, and set $\mathfrak m=(\pi,t_P)$.
	Since $\mathcal X$ is smooth over $K^\circ$ of relative dimension one, $\OO_{\mathcal X,\widetilde P}$ is a regular local ring of dimension two, and $I=(s)$ for a local equation $s$ of the section $\overline{\{P\}}$.
	
	The condition $t_P(P)=0$ implies that $t_P\in I$, so $t_P=as$ for some $a\in \OO_{\mathcal X,\widetilde P}$.
	If $a$ were not a unit, then $t_P\in\mathfrak m^2$.
	Since $\mathfrak m=(\pi,t_P)$, this would imply
	\[\dim_{\widetilde K}\mathfrak m/\mathfrak m^2\leq1,\]
	contrary to the regularity and dimension of $\OO_{\mathcal X,\widetilde P}$.
	Thus $a$ is a unit, and $I=(t_P)$.
	
	\ref{reduction of uniformizer}
	Restricting the ideal $I$ of the section to the generic fiber at $P$ gives
	\[
	I\OO_{X,P}=(t_P)\OO_{X,P},
	\]
	which is the maximal ideal of $\OO_{X,P}$.
	Hence $t_P$ is a uniformizer at $P$ on $X$.
	
	Moreover, $\OO_{\mathcal X_s,\widetilde P}=\OO_{\mathcal X,\widetilde P}/(\pi)$ whose maximal ideal is generated by $\widetilde t_P$, because $\mathfrak m=(\pi,t_P)$. Since $\mathcal X_s$ is smooth at $\widetilde P$, this local ring is a discrete valuation ring.
	Thus $\widetilde t_P$ is a uniformizer at $\widetilde P$ on $\mathcal X_s$.

	\ref{isomorphism with disc} This is a classical result, see the proof of \cite[Proposition~2.2 (ii)]{bosch1985stable}.
	
	We now justify the practical choices.
	Write $u=x$ and $b_i=b_i^0$ if $P$ is finite,
	and $u=1/x$ and $b_i=b_i^\infty$ otherwise.
	By definition,
	\[
	e_{\widetilde P}
	=
	\ord_{\widetilde P}
	\bigl(\widetilde u-\widetilde{u(P)}\bigr).
	\]
	If $e_{\widetilde P}=1$, the reduction of $u-u(P)$
	is a uniformizer at $\widetilde P$, so we can take
	$t_P=u-u(P)$.
	
	Assume $e_{\widetilde P}\geq2$. Since the reductions of the $b_i$ form an integral basis over the corresponding affine chart of $\P^1_{\widetilde K}$, the maximal ideal $\mathfrak m$ of $\OO_{\mathcal X_s,\widetilde P}$ is generated by the reductions
	\[\widetilde u-\widetilde{u(P)} \quad\text{and}\quad \widetilde b_i-\widetilde{b_i(P)}, \qquad 0\leq i\leq d_x-1.\]
	The first generator belongs to $\mathfrak m^2$,
	because its order at $\widetilde P$ is
	$e_{\widetilde P}\geq2$.
	Since $\dim_\kappa\mathfrak m/\mathfrak m^2=1$,
	at least one of the remaining generators has order
	one at $\widetilde P$.
	Choose such an $i$ and take $t_P=b_i-b_i(P)$.
	
	In both cases, $t_P$ is integral at $\widetilde P$,
	vanishes at $P$, and its reduction is a uniformizer.
	Therefore $\pi$ and $t_P$ generate the maximal ideal
	of $\OO_{\mathcal X,\widetilde P}$, as required.
\end{proof}

\begin{remark}\label{rem:ramification-at-good-very-bad}
	In our code, we take $t_P$ only when $P$ is good or very bad. In this case
	\[
	e_{\widetilde P}=e_P.
	\]
	Indeed, if $P$ is good, both the generic and reduced
	maps are unramified at the corresponding points,
	so $e_{\widetilde P}=e_P=1$.
	
	If $P$ is very bad, as noted in
	\cref{rem:unique-very-bad-point}, $P$ is the only
	very bad point in its residue disc.
	Write $u=x$ if $P$ is finite and $u=1/x$ otherwise.
	Every zero of $u-u(P)$ is very bad, since $x(P)$
	is a root of $r$ in the finite case and $u(P)=0$
	in the infinite case.
	Thus $\overline{\{P\}}$ is the only horizontal
	zero component of $u-u(P)$ passing through
	$\widetilde P$.
	There is also no vertical zero component, because
	$\widetilde u-\widetilde{u(P)}$ is a nonzero function
	on $\mathcal X_s$.
	
	By \cref{prop:uniformizeratapoint}, $t_P$ is a local
	equation of $\overline{\{P\}}$ at $\widetilde P$.
	Since $\ord_P(u-u(P))=e_P$, and
	$\OO_{\mathcal X,\widetilde P}$ is a regular local ring,
	the preceding description of the zero components gives
	\[
	u-u(P)=a\,t_P^{e_P},
	\qquad
	a\in\OO_{\mathcal X,\widetilde P}^{\times}.
	\]
	Reducing this equality, and using that
	$\widetilde t_P$ is a uniformizer and
	$\widetilde a$ is a unit at $\widetilde P$, yields
	\[
	e_{\widetilde P}
	=
	\ord_{\widetilde P}
	\bigl(\widetilde u-\widetilde{u(P)}\bigr)
	=e_P,\]
	which proves the equality.
\end{remark}

To compute a local expansion in a uniformizer $t$, we first compute a minimal polynomial of the function over $K(t)$.

\begin{algorithm}[Minimal polynomial of a function in a function field]
	\label[palgm]{algthm:minimalpolynomialofafunction}
	\
	
	\textbf{Input}: A function field
	$K'(C)=K'(x)[y]/(Q)$ of a curve $C$ over a field
	extension $K'/K$, and two elements $f_1,f_2\in K'(C)$,
	with $f_1$ nonconstant. Let $z$ be an indeterminate.
	
	\textbf{Output}: The minimal polynomial
	$P_{f_2}(z)\in K'(f_1)[z]$ of $f_2$ over $K'(f_1)$.
	
	\begin{enumerate1}
		\item \label{algthm:minimalpolynomialofafunction1}
		Take $d\in\N_{\geq1}$ large enough and calculate
		\[
		f_1^if_2^j=\sum_{k=0}^{d_{ij}}\alpha_{ijk}y^k
		\]
		for $0\leq i,j\leq d$, with $\alpha_{ijk}\in K'(x)$.
		
		\item \label{algthm:minimalpolynomialofafunction2}
		Find a nonzero element
		$c=(c_{ij})_{0\leq i,j\leq d}\in(K')^{\oplus(d+1)^2}$
		such that $\sum_{i,j=0}^{d}c_{ij}f_1^if_2^j=0$.
		Equivalently, find
		\[
		B(z)=\sum_{i,j=0}^{d}c_{ij}f_1^iz^j\in K'[f_1][z],
		\qquad B(f_2)=0.
		\]
		\begin{enumerate}[leftmargin=*,label=\it(2.\arabic*),ref=\it{(2.\arabic*)}]
			\item \label{algthm:minimalpolynomialofafunction21}
			Choose a nonzero common denominator $D(x)\in K'[x]$
			for the $\alpha_{ijk}$. Choose $\widetilde d\in\N$
			bounding the degrees of all $D(x)\alpha_{ijk}$, and write
			\[
			D(x)\alpha_{ijk}
			=\sum_{l=0}^{\widetilde d}a_{ijkl}x^l\in K'[x],
			\]
			allowing $a_{ijkl}=0$. Then
			$D(x)f_1^if_2^j\in K'[x,y]/(Q)$ for all $i,j$.
			
			\item \label{algthm:minimalpolynomialofafunction22}
Find $c=(c_{ij})_{0\leq i,j\leq d}\in {K'}^{\oplus(d+1)^2}$ such that $\sum_{l=0}^{\widetilde{d}}\sum_{k=0}^{\max_{i,j}\{d_{ij}\}}\left(\sum_{i,j=0}^{d}c_{ij}a_{ijkl}\right)x^ly^k=0$, i.e. solve the linear equation
\[
\sum_{i,j=0}^{d}c_{ij}a_{ijkl}=0
\]
for $0\leq l\leq\widetilde d$ and
$0\leq k\leq\max_{i,j}\{d_{ij}\}$.
			
			\item If $c=0$ is the only solution, increase $d$ and repeat Steps~\ref{algthm:minimalpolynomialofafunction1}~\ref{algthm:minimalpolynomialofafunction2}.
		\end{enumerate}
		
		\item Factor $B(z)$ in $K'(f_1)[z]$ and return the monic
		irreducible factor $P_{f_2}(z)$ satisfying $P_{f_2}(f_2)=0$.
	\end{enumerate1}
\end{algorithm}

\begin{algorithm}[Expansion of a function in an integral local parameter]
	\label[palgm]{algthm:expansionofafunctionatagoodpoint}
	
	\
	
	\textbf{Input}: An integral local parameter $t$ in the sense of \cref{prop:uniformizeratapoint} at a point $P\in X(K)$, and a function $h\in K(X)$ integral at $\Sp(P)\in\mathcal X$.
	
	\textbf{Output}: The expansion $h_P(t)=\sum_{i=0}^\infty a_it^i\in K^\circ[[t]]$ of $h$ around $P$, under the Hensel conditions below.
	
	\begin{enumerate1}
		\item Use \cref{algthm:minimalpolynomialofafunction},
		with $f_1=t$ and $f_2=h$, to calculate the minimal
		polynomial of $h$ over $K(t)$.
		After clearing denominators and scaling, obtain a
		polynomial relation $P_h(t,z)\in K^\circ[t,z]$
		with nonzero reduction
		$\widetilde P_h(t,z)\in\widetilde K[t,z]$.
		
		\item Compute the expansion
		$\widetilde h(t)\in\widetilde K[[t]]$ of the reduction
		of $h$ at $\Sp(P)$.
		Using this expansion and $h(P)$, construct an
		approximate root $h_0(t)\in K^\circ[[t]]$
		on the intended branch.
		Put $P_h'=\partial P_h/\partial z$, and let $v_t$
		denote the $t$-adic valuation.
		Once
		\[
		s=v_t(P_h'(t,h_0))<\infty,
		\qquad
		v_t(P_h(t,h_0))>2s,
		\]
		and the leading $t$-coefficient of $P_h'(t,h_0)$
		is a unit of $K^\circ$, apply Newton iteration:
		\[
		h_{n+1}
		=h_n-\frac{P_h(t,h_n)}{P_h'(t,h_n)}.
		\]
		This yields $h_P(t)$.
	\end{enumerate1}
\end{algorithm}

\begin{remark}
	We compute only a finite truncation of $\widetilde h(t)$ and increase
	its $t$-precision as needed to select the intended branch of
	$P_h(z)=0$. If the available coefficients do not distinguish that
	branch, the algorithm does not certify a root at that precision.
\end{remark}

\subsection{Tiny integrals}

If two endpoints lie in different residue discs and one of them is bad but not suitable bad, we replace it by a suitable bad point $S\in X(\overline K)$ in the same residue disc, see \cref{prop:convergentdomainoffinitereduction,rem:iterate-convergence}. If $S$ is finitely bad, the condition $v_K(r(x(S)))\in(0,1/q)$ implies that $S\notin X(K)$. By \cref{lem:suitable-bad-point-criterion}, such a point exists over $K(\pi^{1/e})$ for sufficiently large $e$. Thus, in the rest of this section, we allow endpoints in $X(K(\pi^{1/e}))$ for sufficiently large $e$.

\begin{definition}
	Let $P,R\in X(K(\pi^{1/e}))$, and $\omega\in \Omega_{X/K}^1(U)$. We say that a Coleman integral $\int_P^R\omega$ is a \emph{tiny integral} if $P,R$ lie in the same residue disc with respect to the model $\mathcal{X}$. 
\end{definition}
\begin{remark}
	By saying that a point $P\in X(K')$ over a finite extension $K'/K$ is in a residue disc $D$, we mean the following statement: the rational point $P: \Spec(K')\rightarrow X$ determines a rigid point $P\in X^\an$, then $P\in D\subset X^\an$. For a general finite extension $K'/K$, this disc need not be
	$K$-rational. In the auxiliary totally ramified extension
	$K(\pi^{1/e})/K$, however, the residue field is unchanged, so every
	point reduces to a $\widetilde K$-rational point.
\end{remark}	

\begin{algorithm}[Tiny integrals]
	\label[palgm]{algthm:tinyintegral}
	
	\
	
	\textbf{Input}: Points $P,R\in X(K(\pi^{1/e}))$ in a $K$-rational residue disc and a differential $\omega\in\Omega_{X/K}^1(U)$ of the second kind, regular at $P$ and $R$.
	
	\textbf{Output}: The value $\int_P^R\omega$.
	
	\begin{enumerate1}
		\item If the residue disc is bad, take $P'\in X(K)$ to be its unique very bad point, which is $K$-rational by \cref{rem:unique-very-bad-point}. Otherwise choose $P'\in X(K)$ in the same residue disc.
		
		\item Apply \cref{prop:uniformizeratapoint} and \cref{algthm:expansionofafunctionatagoodpoint} to determine an integral local parameter at $P'$ and the power series expansion $x(t), b_i^0(t)$ with respect to $t$. 
		\item Integrate using $t$ as coordinate:
		\[\int_P^R\omega=\int_{t(P)}^{t(R)}\omega(t).\]
	\end{enumerate1}
\end{algorithm}

\begin{remark}
	We explain why our algorithm always works.
	 
The integral local parameter $t$ gives
	\[\widehat{\OO}_{\mathcal X,\widetilde{P'}}\simeq K^\circ[[t]].\]
	Hence $x(t),b_i^0(t)$ (resp.\ $1/x(t),b_i^\infty(t)$ in an infinite disc) have integral coefficients and converge for $|t|<1$. 
	
	The only possible pole of $\omega$ in the disc is at $P'$. After multiplication by a scalar and a
	power of $t$, it is integral at $\widetilde{P'}$.
	Thus its Laurent expansion has finitely many negative powers and coefficients with valuations
	bounded below.
	Since $\omega$ is of the second kind, the $t^{-1}dt$ coefficient vanishes.
	Division by the integers arising in termwise integration causes only logarithmic valuation loss, so the power series part of the primitive still converges for $|t|<1$.
	Regularity at $P,R$ therefore permits evaluation at both endpoints.
\end{remark}
\subsection{Integrals in general}

We now explain the computation of Coleman integrals between different residue discs. Tiny integrals reduce the computation at arbitrary endpoints to good or suitable bad endpoints, as in \cref{algthm:integralonbasis}.

\begin{art}\label{idea of computing integral in general}
	Let $P,R\in X(K(\pi^{1/e}))$ be good or suitable bad points in the sense of \cref{classification of points def}, $\omega_1,\dots, \omega_{2g}$ a basis of $H_{\mathrm{dR}}^1(X)$ given in \cref{algthm:cohomologybasis}.
	We explain how the Frobenius iterate gives the integrals $\int_{P}^R\omega_i$, $ 1\leq i\leq 2g$. Put
	\[
	\boldsymbol\omega=
	\begin{bmatrix}
		\omega_1\\
		\vdots\\
		\omega_{2g}
	\end{bmatrix}.
	\]
	Reducing the pullbacks (see \cref{algthm:matrixoffrobeniusonfunctions} and \cref{algthm:reductionprocedure}) gives
	$F_\tau^*\boldsymbol\omega=A_\tau\boldsymbol\omega+ d\boldsymbol f_\tau$. Notice that \cref{remark: exact sequence for residue map} ensures that no additional basis
	classes of $H^1_{\mathrm{dR}}(U)$ occur.
	The action on constants is $\tau$, so applying $F_\tau^*$ once more gives
	\begin{align*}
	(F_\tau^*)^2\boldsymbol\omega
=& F_\tau^*\bigl(A_\tau\boldsymbol\omega+ d \boldsymbol f_\tau\bigr)\\
=& \tau(A_\tau)F^*_\tau\boldsymbol\omega+ d\bigl(F_\tau^*\boldsymbol f_\tau\bigr)\\
=& \tau(A_\tau)A_\tau\boldsymbol\omega+d\bigl(F_\tau^*\boldsymbol f_\tau+\tau(A_\tau)\boldsymbol f_\tau\bigr).
	\end{align*}
	Iterating $n_\tau$ times, we obtain the $K$-linear $q$-power Frobenius lift and its matrix: $q=q_\tau^{n_\tau}$, $F_q^*=(F_\tau^*)^{n_\tau}$,
	\[
	F_q^*\boldsymbol\omega=A_q\boldsymbol\omega+d\boldsymbol g_q\]
with\[
	A_q=\tau^{n_\tau-1}(A_\tau)\cdots\tau(A_\tau)A_\tau, \qquad
	\boldsymbol g_q
	= \sum_{j=0}^{n_\tau-1}\bigl(\tau^{n_\tau-1}(A_\tau)\cdots\tau^{j+1}(A_\tau)\bigr)(F_\tau^*)^j\boldsymbol f_\tau,
	\]
where the parenthesized product is $I_{2g}$ when $j=n_\tau-1$.

For $0\leq j\leq n_\tau$, put $X_j\coloneq X^{\tau^j}$, given by the equation
$Q_j\coloneq Q^{\tau^j}=0$.
If $\beta=F_\tau^*(y)$ (see \cref{proposition:expressionoffrobeniuslift}), its coefficient twist $\beta_j=\beta^{\tau^j}$ defines a $K$-morphism, on the relevant strict neighborhoods,
\[G_j\colon X_j\longrightarrow X_{j+1},\qquad (x,y)\longmapsto(x^{q_\tau},\beta_j(x,y)).\]
Since $X_{n_\tau}=X$, the composite is $F_q$ on relevant strict neighborhoods, $X\to X$. By \cref{rem:evaluation-of-beta-at-suitable-bad-points}, $F_q(P), F_q(R)$ are well-defined since they are either good or suitable bad.

Write
\[\mathbf I=\int_P^R\boldsymbol\omega,\qquad \mathbf t_P=\int_P^{F_q(P)}\boldsymbol\omega,\qquad \mathbf t_R=\int_{F_q(R)}^R\boldsymbol\omega.\]
The map $F_q$ acts as the $q$-power Frobenius on the special fiber,
so $P$ and $F_q(P)$ lie in the same residue disc, as do $R$
and $F_q(R)$. Thus $\mathbf t_P$ and $\mathbf t_R$ are vectors of
tiny integrals. Functoriality and the displayed formula for
$F_q^*\boldsymbol\omega$ give
\[
\mathbf I-\mathbf t_P-\mathbf t_R
=\int_{F_q(P)}^{F_q(R)}\boldsymbol\omega
=\boldsymbol g_q(R)-\boldsymbol g_q(P)+A_q\mathbf I,
\]
and therefore
\[(A_q-I_{2g})\mathbf I=\boldsymbol g_q(P)-\boldsymbol g_q(R)-\mathbf t_P-\mathbf t_R.\]
The eigenvalues of $A_q$ have complex absolute value $q^{1/2}>1$,
so $A_q-I_{2g}$ is invertible and the system determines $\mathbf I$.

Notice that, for $S=P$ or $R$, the values $\boldsymbol g_q(S)$ can be obtained without forming the iterated functions. Let $S_0=S$ and $S_{j+1}=G_j(S_j)\in X_{j+1}(\overline K)$. For any function $h\in \mathcal R^\dagger$, the construction gives $((F_\tau^*)^jh)(S)=h^{\tau^j}(S_j)$ whenever these evaluations converge. Set $V_0(S)=0$ and compute
\[V_{j+1}(S)=\boldsymbol f_\tau^{\tau^j}(S_j)+\tau^j(A_\tau)V_j(S), \qquad 0\leq j<n_\tau.\]
Then $S_{n_\tau}=F_q(S)$ and $V_{n_\tau}(S)=\boldsymbol g_q(S)$.
For example, $V_2(S)=\boldsymbol f_\tau^\tau(S_1) +\tau(A_\tau)\boldsymbol f_\tau(S)$.
At a suitable bad point $S$, each one-step correction function converges along this orbit, by \cref{prop:convergentdomainoffinitereduction}.
\end{art}

The construction above requires good or suitable bad endpoints. We therefore give an algorithm to find a suitable bad point in the residue disc of a given bad point.

\begin{lemma}\label{lem:suitable-bad-point-criterion}
	Let $P\in X(K)$ be very bad, put $\widetilde P=\Sp(P)$, and
	let $e_{\widetilde P}$ be the ramification index of the reduction map
	$\widetilde x\colon\mathcal X_s\to\P^1_{\widetilde K}$ at $\widetilde P$. Choose the local parameter $t$
	of \cref{prop:uniformizeratapoint}. For an integer $e\geq1$,
	let $S_e$ be the point obtained by evaluating its local
	expansions at $t=\pi^{1/e}$.
	\begin{enumeratea}
		\item\label{lem:suitable-bad-point-criterion finite} If $P$ is finite, then
		\[
		S_e\text{ is suitable bad}
		\quad\Longleftrightarrow\quad
		e>qe_{\widetilde P},
		\]
		and in this case $v_K(r(x(S_e)))=e_{\widetilde P}/e$.
		\item\label{lem:suitable-bad-point-criterion infinite} If $P$ is infinite, then $S_e$ is suitable bad for every $e\geq1$, and $v_K(1/x(S_e))=e_{\widetilde P}/e$. In particular, $S_1\in X(K)$ is suitable bad.
	\end{enumeratea}
\end{lemma}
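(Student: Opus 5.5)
The plan is to compute the relevant valuation through the local structure at $\widetilde P$ given by \cref{prop:uniformizeratapoint} and \cref{rem:ramification-at-good-very-bad}, and then compare it with the threshold in \cref{classification of points def}.

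\textbf{Finite case.} Write $x_0=x(P)$, a root of $r$. By \cref{rem:ramification-at-good-very-bad}, $x-x_0=a\,t^{e_P}$ with $a\in\OO_{\mathcal X,\widetilde P}^\times$, and $e_P=e_{\widetilde P}$. Evaluating at $t=\pi^{1/e}$ (convergent since $|\pi^{1/e}|<1$, using part \ref{isomorphism with disc} of \cref{prop:uniformizeratapoint}) gives $v_K(x(S_e)-x_0)=e_{\widetilde P}/e$, because $a(S_e)$ is a unit: its reduction equals $\widetilde a(\widetilde P)\neq0$.

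Next write $r=(x-x_0)\,r_1$ over $K(x_0)$. Since $\widetilde r$ has distinct roots and unit leading coefficient (\cref{def:triplehasgoodreduction}~\ref{good reduction: roots of r}), $r_1(x_0)=r'(x_0)$ is a unit. Also $x(S_e)\equiv x_0$ modulo the maximal ideal, so $r_1(x(S_e))$ is a unit. Hence
\[
v_K(r(x(S_e)))=\frac{e_{\widetilde P}}{e}.
\]
The point $S_e$ lies in the bad disc of $P$, so it is finitely bad. Moreover $v_K(r(x(S_e)))>0$ automatically, so suitability reduces to $e_{\widetilde P}/e<1/q$, that is, $e>qe_{\widetilde P}$. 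One subtlety needs checking: $x_0$ lies in $K$ (by \cref{rem:unique-very-bad-point}), so the factorization takes place over $K$ and all valuations are taken in $K(\pi^{1/e})$ with the normalization $v_K(\pi)=1$.

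\textbf{Infinite case.} Argue the same way with $u=1/x$. Here $u(P)=0$ and $u=a\,t^{e_P}$ with $a$ a unit, so $v_K(1/x(S_e))=e_{\widetilde P}/e$. This value is finite and positive, so $x(S_e)\neq\infty$. Therefore $S_e$ is infinite but not very infinite, which is suitable bad by definition. Taking $e=1$ gives $S_1\in X(K)$, since $t=\pi\in K$ and the expansions have coefficients in $K^\circ$.

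The main obstacle is justifying $u-u(P)=a\,t^{e_P}$ with $a$ a genuine unit in $\OO_{\mathcal X,\widetilde P}$, together with $e_P=e_{\widetilde P}$. Both are already established in \cref{rem:ramification-at-good-very-bad}, so I would cite that remark. The only remaining work is to check that evaluating a unit of $K^\circ[[t]]$ at $\pi^{1/e}$ yields a unit, which holds because its constant term is a unit and all other terms have positive valuation.
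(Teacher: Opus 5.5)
Your proposal is correct and follows essentially the paper's argument: compute $v_K(u(\pi^{1/e}))$ for $u=x-x(P)$ or $u=1/x$, then use the factorization $r=(x-x(P))h$ with $h(x(S_e))$ a unit to read off $v_K(r(x(S_e)))$ and compare with the threshold $1/q$.

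The only difference is in the finite case. There you use the unit factorization $u=a\,t^{e_{\widetilde P}}$ from \cref{rem:ramification-at-good-very-bad}, which the paper invokes only in the infinite case. The paper instead argues from the coefficients of $u(t)$: they are divisible by $\pi$ below index $e_{\widetilde P}$ and a unit at that index. Your route gives $v_K(r(x(S_e)))=e_{\widetilde P}/e$ for every $e$ at once.
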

\begin{proof}
	Put $u(t)=x(t)-x(P)$ in the finite case and $u(t)=1/x(t)$ in the infinite case. Write $u(t)=\sum_{i\geq1}a_it^i$.
	The reduction of $u(t)$ has $t$-order $e_{\widetilde P}$.
	Hence the $a_i$ are integral, $a_i$ is divisible by $\pi$ for $i<e_{\widetilde P}$, and $a_{e_{\widetilde P}}$ is a unit.

	\ref{lem:suitable-bad-point-criterion finite} If $e>qe_{\widetilde P}$, the term $a_{e_{\widetilde P}}\pi^{e_{\widetilde P}/e}$ has valuation $e_{\widetilde P}/e<1$. All preceding terms have larger valuation, as do all subsequent terms. Thus $v_K(u(\pi^{1/e}))=e_{\widetilde P}/e$.
	The distinct reductions of the roots of $r$ give $r(x)=(x-x(P))h(x)$ with $h(x(S_e))$ a unit.
	This proves the stated valuation formula and the strict bound $<1/q$.
	If $e\leq qe_{\widetilde P}$, every term of $u(\pi^{1/e})$ has valuation at least $1/q$, and the same unit factor shows that the strict suitability bound fails.

	\ref{lem:suitable-bad-point-criterion infinite} By \cref{rem:ramification-at-good-very-bad}, $u(t)=t^{e_{\widetilde P}}A(t)$ with $A(t)\in K^\circ[[t]]^\times$. Thus $A(\pi^{1/e})$ is a unit, and
	\[
	v_K(1/x(S_e))=\frac{e_{\widetilde P}}{e}>0
	\]
	for every $e\geq1$. Hence $S_e$ is infinite but not very infinite, so it is suitable bad.
\end{proof}

\begin{algorithm}[Find a suitable bad point close to the boundary]
	\label[palgm]{algthm:suitablebadpoint}	
	\
	
	\textbf{Input}: A very bad point $P\in X(K)$ given by $x(P),\boldsymbol{b}^0(P)$ (resp. $1/x(P), \boldsymbol{b}^\infty(P)$ if $P$ is infinite), and optionally an integer $e$.
	
	\textbf{Output}: A suitable bad point $S\in X(K(\pi^{1/e}))$, given by $x(S),\boldsymbol{b}^0(S)$ (resp. $1/x(S),\boldsymbol{b}^\infty(S)$ if $P$ is infinite). 
	
	\begin{enumerate1}
		\item \label{algthm:suitablebadpoint 1} Compute $e_{\widetilde P}$ as the ramification index of $\widetilde x\colon\widetilde X\to\P^1_{\widetilde K}$ at $\widetilde P\coloneq\Sp(P)$, and determine a uniformizer $t$ for $P$ by \cref{prop:uniformizeratapoint}.
		\item \label{algthm:suitablebadpoint 2} Check that $e>qe_{\widetilde P}$; if $e$ is not given or fails this check, put $e\coloneq 2qe_{\widetilde P}+1$.
		\item \label{algthm:suitablebadpoint 3} Apply \cref{algthm:expansionofafunctionatagoodpoint} to compute $x(t), b_i^0(t)$ (resp. $1/x(t), b_i^\infty(t)$) to the required $t$-precision. Set $t_0=\pi^{1/e}$, and let $S$ be given by $x(t_0), b_i^0(t_0)$ (resp. $1/x(t_0), b_i^\infty(t_0)$).
	\end{enumerate1}
	
\end{algorithm}

\begin{remark}\label{rmk:e}
	In our code, for integrals with two endpoints $P_1, P_2$ whose reductions are $\widetilde{P_1}, \widetilde{P_2}$, respectively, if $e$ is not given or does not pass  \ref{algthm:suitablebadpoint 2}, we will take $e=2q\max\{e_{\widetilde{P_1}},e_{\widetilde{P_2}}\}+1$ by default. Notice that if $e$ is large, we might have less precision loss by \cref{prop:accurateforfi}~\ref{prop:accurateforfi1}.
\end{remark}

\begin{remark}
	If $P$ is bad but not very bad, it is not hard to find the corresponding very bad point $P'$, then we can apply the algorithm to $P'$. 
\end{remark}

With the idea demonstrated in \cref{idea of computing integral in general}, we first compute the Frobenius action on the cohomology basis and the corresponding correction functions.

\begin{algorithm}[Matrices of $F_\tau^*$ and $F_q^*$ on cohomology]
	\label[palgm]{algthm:semilinear-and-linear-frobenius-matrices}
	\
	
	\textbf{Input}: 
	The basis vector $\boldsymbol\omega$ defined above from \cref{algthm:cohomologybasis}.
	
	\textbf{Output}: $A_\tau$, $A_q\in M_{2g}(K)$ and, when needed,
	correction functions $\boldsymbol f_\tau$, $\boldsymbol g_q$ satisfying
	\begin{align*}
		F_\tau^*\boldsymbol\omega&=d\boldsymbol f_\tau+A_\tau\boldsymbol\omega,\\
		F_q^*\boldsymbol\omega&=d\boldsymbol g_q+A_q\boldsymbol\omega,
		\qquad F_q^*=(F_\tau^*)^{n_\tau}.
	\end{align*}

	\begin{enumerate1}
		\item Compute the function matrix $M$ by
		\cref{algthm:matrixoffrobeniusonfunctions}, and compute each
		$F_\tau^*\omega_i$ using \eqref{frobeniusofadifferentialform}. In
		particular, every coefficient $w_i(x)$ is replaced by
		$w_i^\tau(x^{q_\tau})$, and $dx$ by
		$q_\tau x^{q_\tau-1}dx$.
		\item Apply \cref{algthm:reductionprocedure} to these forms. Set
		$f_{\tau,i}=f_{i,0}+f_{i,\infty}+f_{i,end}$ and record
		\[
		F_\tau^*\omega_i=df_{\tau,i}
		+\sum_{j=1}^{2g}(A_\tau)_{ij}\omega_j.
		\]

		\item Set $B_0=I_{2g}$ and $\boldsymbol g_0=0$. For
		$r=0,\dots,n_\tau-1$, compute
		\begin{equation*}\label{eq:frobenius-iteration}
			B_{r+1}=\tau^r(A_\tau)B_r,\qquad
			\boldsymbol g_{r+1}=(F_\tau^*)^r\boldsymbol f_\tau
			+\tau^r(A_\tau)\boldsymbol g_r.
		\end{equation*}
		Omit the function recurrence if only the matrix is required. Return
		\begin{equation}\label{eq:linear-frobenius-matrix}
			A_q=B_{n_\tau}
			=\tau^{n_\tau-1}(A_\tau)\cdots\tau(A_\tau)A_\tau,
			\qquad \boldsymbol g_q=\boldsymbol g_{n_\tau}.
		\end{equation}
	\end{enumerate1}
\end{algorithm}

\begin{algorithm}[$q$-power Frobenius of a point and correction values]
	\label[palgm]{algthm:frobeniusofapoint}
	\
	
	\textbf{Input}: A good or suitable bad point $P\in X(K(\pi^{1/e}))$, and the small-lift data, $A_\tau$,
	$\boldsymbol f_\tau$ from
	\cref{algthm:semilinear-and-linear-frobenius-matrices}.
	
	\textbf{Output}: $F_q(P)$ and $\boldsymbol g_q(P)$.
	
	\begin{enumerate1}
		\item Set $P_0=P$ and $V_0=0$.
		\item For $j=0,\dots,n_\tau-1$, evaluate
		\begin{equation}\label{eq:frobenius-point-evaluation}
			P_{j+1}=G_j(P_j),\qquad
			V_{j+1}=\boldsymbol f_\tau^{\tau^j}(P_j)+\tau^j(A_\tau)V_j.
		\end{equation}
		If only $F_q(P)$ is needed, omit the vector recurrence.
		At a good point this means taking $x(P_{j+1})=x(P_j)^{q_\tau}$ and the Hensel root of $Q_{j+1}(x(P_{j+1}),Y)$ with residue $y(P_j)^{q_\tau}$. Reconstruct the integral coordinates using $(W^0)^{\tau^{j+1}}$.
		At finite bad or infinite points, evaluate the twisted lift on the integral coordinates $\boldsymbol{b}^0$ or $\boldsymbol{b}^\infty$, respectively; at infinity use $1/x(P_{j+1})=(1/x(P_j))^{q_\tau}$. Use the branch determined by the overconvergent lift. A residue condition alone may not distinguish roots in a bad fiber.
		\item We have $F_q(P)=P_{n_\tau}$ and $\boldsymbol g_q(P)=V_{n_\tau}$.
	\end{enumerate1}
\end{algorithm}

\begin{remark}\label{rem:iterate-convergence}
	At a finitely bad endpoint write $r_j=r^{\tau^j}$ and $\varepsilon=v_K(r(x(P)))$. Since $r_{j+1}(x^{q_\tau})\equiv r_j(x)^{q_\tau}\pmod\pi$, the condition $0<\varepsilon<1/q$ implies
	\[v_K(r_j(x(P_j)))=q_\tau^j\varepsilon<1/q_\tau\qquad(0\leq j<n_\tau).\]
	If $P$ is infinite and not very infinite, then $v_K(1/x(P_j))=q_\tau^jv_K(1/x(P))$ is positive and finite, so every $P_j$ is again infinite and not very infinite, and no bound is needed.
	Consequently the conditions of \cref{classification of points def} ensure convergence at every intermediate step, by \cref{prop:convergentdomainoffinitereduction,rem:evaluation-of-beta-at-suitable-bad-points}.
\end{remark}

\begin{algorithm}[Non-tiny integrals on basis]
	\label[palgm]{algthm:integralonbasis}
	\
	
	\textbf{Input}: Points $P, R\in X(K)$ in different residue discs, and the basis $\omega_1,\dots, \omega_{2g}$ of $H^1_{\mathrm{dR}}(X)$ given by \cref{algthm:cohomologybasis}, with every $\omega_i$ regular at both $P$ and $R$.
	
	\textbf{Output}: The values $\int_P^R\omega_i$ for $i=1,\dots,2g$.
	
	\begin{enumerate1}
		\item Find very bad points: If $P$ (resp. $R$) is good, set $P'=P$ (resp. $R'= R$); If $P$ (resp. $R$) is bad, set $P'\in X(K)$ (resp. $R'\in X(K)$) the corresponding very bad point.

 \item Find suitable points: If $P'$ (resp. $R'$) is good, set $P''=P'$ (resp. $R''=R'$); If $P'$ (resp. $R'$) is bad (hence very bad), apply \cref{algthm:suitablebadpoint} to find a suitable bad point $P''$, (resp. $R''$).	

		\item Apply \cref{algthm:tinyintegral} to compute the tiny integrals $\int_P^{P''}\omega$ and $\int_{R''}^R\omega$ for $\omega=\omega_i$, $i=1,\dots,2g$.
		\item Compute $\int_{P''}^{R''}\omega$ for all $\omega=\omega_1,\dots,\omega_{2g}$ in the following steps. 
		\begin{enumerate}[leftmargin=*,label=\it(4.\arabic*),ref=\it{(4.\arabic*)}] 
			\item Apply \cref{algthm:semilinear-and-linear-frobenius-matrices} to obtain the small-step data and $A_q$ satisfying
			\[F_q^*\omega_i=dg_{q,i}+\sum_{j=1}^{2g}(A_q)_{ij}\omega_j\]
			for $i=1,\dots, 2g$.
			\item Apply \cref{algthm:frobeniusofapoint} to calculate $F_{q}(P'')$ and $F_{q}(R'')$, and apply \cref{algthm:tinyintegral} to compute tiny integrals $\int_{P''}^{F_q(P'')}\omega_i$ and $\int_{F_q(R'')}^{R''}\omega_i$ for $i =1,\cdots, 2g$. 
			\item Use \cref{algthm:frobeniusofapoint} to compute $g_{q,i}(P''), g_{q,i}(R'')$ for $i=1,\dots, 2g$ and use the system of equations
			\begin{align}
				\sum_{j=1}^{2g}(A_q-I)_{ij}\left(\int_{P''}^{R''}\omega_j\right)=g_{q,i}(P'')-g_{q,i}(R'') - \int_{P''}^{F_{q}(P'')}\omega_i-\int_{F_{q}(R'')}^{R''}\omega_i
				\label{equation:linearequationfromfrobenius}
			\end{align}
			to solve all $\int_{P''}^{R''}\omega_i$.
		\end{enumerate}
		\item We have $\int_P^R\omega=\int_P^{P''}\omega+\int_{P''}^{R''}\omega+\int_{R''}^R\omega$ for $\omega=\omega_i$, $i=1,\dots,2g$.
	\end{enumerate1}
\end{algorithm}

\begin{remark}\label{rem:basis-regular-at-endpoints}
	Throughout the integration algorithms, we assume that all basis differentials $\omega_1,\dots,\omega_{2g}$ are regular at both endpoints.
	For the basis of \cref{algthm:cohomologybasis}, this holds whenever neither endpoint is very infinite: the forms are regular on $U$, and at each finite point of $X\setminus U$ they have at most a simple pole, since they belong to $E_0$; their residues vanish because they are of the second kind.
	The condition defining $E_\infty$ bounds the pole orders above infinity but does not exclude higher-order poles there. Thus regularity of every basis form at a very infinite endpoint remains an additional assumption.
\end{remark}

\begin{remark}
	Our choices of $P'',R''$ ensure that $g_{q,i}(P'')$ and $g_{q,i}(R'')$ converge, by \cref{rem:iterate-convergence}. 
\end{remark}

\begin{algorithm}[Integrals in general]
	\
	
	\textbf{Input}: Points $P, R\in X(K)$, and a differential form $\omega = \left(\sum_{i=0}^{d_x-1}w_ib_i^0\right)\frac{dx}{r}\in \Omega_{X/K}^1(U)$ of the second kind, regular at $P$ and $R$.
	
	\textbf{Output}: The value $\int_P^R\omega$.
	
	\begin{enumerate1}

		\item (Integral of basis) Apply \cref{algthm:tinyintegral} or \cref{algthm:integralonbasis} to calculate $\int_{P}^{R}\omega_i$, $i=1,\dots, 2g$.
		
		\item (Reduction) Apply \cref{algthm:reductionprocedure} to calculate $f\coloneq f_0+f_\infty+f_{end}\in \sum_{i=0}^{d_x-1}K[x,1/r]b_i^0$ and $\omega'=\sum_{i=1}^{2g}a_i\omega_i$ such that 
		$\omega=df+\omega'.$
		\item Since $\omega$ and all $\omega_i$ are regular at the endpoints, so is $df=\omega-\sum_i a_i\omega_i$, and hence $f$ has no pole there. Compute $f(R)-f(P)$ and
		\[\int_P^R\omega=f(R)-f(P)+\sum_{i=1}^{2g}a_i\int_{P}^{R}\omega_i.\]
	\end{enumerate1}
\end{algorithm}

\begin{remark}
	In our code for \cref{algthm:reductionprocedure}, we have to require $\omega$ to be integral to control the precision. 
\end{remark}

\section{Precision bounds}

\label{precision bounds}

In 	{\sffamily Magma}, an element $x\in K$ is stored as $x=\pi^vu+O(\pi^{v+N_0})$, where $v$ is the valuation of $x$, and $u$ is a unit known to precision $N_0$ in $K^\circ$, i.e. the image of $u$ in $K^\circ/\pi^{N_0}$. We call $N_0$ the \emph{relative precision} and $v+N_0$ the \emph{absolute precision}. By precision, we will always mean absolute precision. 

Given a positive integer $N\in \N_{\geq 1}$, we will store points in $X(K(\pi^{1/e}))$ accurate to (absolute) precision $N$. We want to compute the precision of the integrals $\int_P^R\omega_i$ from \cref{algthm:integralonbasis}, i.e. the precision of tiny integral $\int_P^{P''}\omega_i, \int_R^{R''}\omega_i$ and the precision of the solution of \eqref{equation:linearequationfromfrobenius} in \cref{algthm:integralonbasis}. The former is given by \cref{prop:precisionfortinyintegral} and the latter is given by \cref{prop:precisionofsolutionoflinearsystem}. 

We show some basic properties of precision. Recall that, for a matrix $A=(A_{ij})$ over $K$, we put $v_K(A)=\min_{i,j}v_K(A_{ij})$ and measure absolute precision entrywise. The estimates below also hold over any finite extension $K'/K$, with
$v_K$ extended to $K'$.

\begin{lemma}\label[plemma]{lemma:precisionofbasicoperation}
	\begin{enumerate1}
		\item \label{lemma:precision basic 1} Let $A$, $B$, $A_1, \dots, A_s$ be matrices over $K$, accurate to absolute precisions $N$, $N'$, $N_1, \dots, N_s$, respectively, with compatible sizes for the indicated operations.
		\begin{enumerate}[leftmargin=*,label=\it(1.\arabic*),ref=\it{(1.\arabic*)}]
			\item \label{lemma:precision basic 1.i}
			The matrices $A+B$ and $A-B$ are correct to precision $\min\{N,N'\}$.
			
			\item \label{lemma:precision basic 1.ii}
			If $A\in\GL_n(K)$ and $N+(n-1)v_K(A)>v_K(\det A)$, then the inverse of the approximation is defined and $A^{-1}$ is correct to precision
			\[N-2v_K(\det A)+2(n-1)v_K(A).\]
			In particular, for $n=1$, i.e. $A=a\in K^\times$, the inverse $a^{-1}$ is correct to precision $N-2v_K(a)$, provided $N>v_K(a)$.
			
			\item \label{lemma:precision basic 1.iii}
		The product $A_1\cdots A_s$  is correct to precision
			\[\min_{\emptyset\neq J\subseteq\{1,\ldots,s\}}\left\{\sum_{i\in J}N_i+\sum_{h\notin J}v_K(A_h)\right\}.\]
			In particular, if $N_i\geq v_K(A_i)$ for every $i$, their product is correct to precision
			\[\min_{1\leq i\leq s}\left\{N_i+\sum_{h\neq i}v_K(A_h)\right\}.\]
		\end{enumerate}
		
		\item \label{lemma:precision basic 2} Let $A(x)\in M_{m\times n}(K[x])$ and $z\in K^\circ$ both be accurate to precision $N\geq0$. Then $A(z)$ is correct to precision $\min\{N,N+v_K(A(x))\}$, where $v_K(A(x))$ is the minimum valuation of the coefficients of all entries of $A(x)$.
	\end{enumerate1}
\end{lemma}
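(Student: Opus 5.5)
Each assertion is a statement about errors, so I will write every approximate quantity as exact value plus error: $\widetilde A=A+\pi^{N}E$ with $v_K(E)\geq0$, and similarly for the others. Then I expand and bound the valuation of the resulting error term using only the ultrametric inequality and multiplicativity of $v_K$. The tools needed are elementary; no earlier result of the paper enters except the conventions $v_K(\pi)=1$ and $v_K(A)=\min$ over entries.

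For \ref{lemma:precision basic 1.i} the claim is immediate: $(A+\pi^NE)\pm(B+\pi^{N'}E')$ differs from $A\pm B$ by a matrix of valuation at least $\min\{N,N'\}$.

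For \ref{lemma:precision basic 1.iii} I write $\widetilde A_i=A_i+\pi^{N_i}E_i$ and expand the product $\prod_i(A_i+\pi^{N_i}E_i)$ into $2^s$ terms indexed by subsets $J$ (the set of factors where the error is chosen). The term with $J=\emptyset$ is the exact product. The term for a nonempty $J$ has valuation at least $\sum_{i\in J}N_i+\sum_{h\notin J}v_K(A_h)$, since $v_K(XY)\geq v_K(X)+v_K(Y)$ for matrices. Taking the minimum over nonempty $J$ gives the bound. When $N_i\geq v_K(A_i)$ for all $i$, replacing $v_K(A_h)$ by $N_h$ can only enlarge a sum, so the minimum is attained at singletons $J=\{i\}$.

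For \ref{lemma:precision basic 1.ii} I use the adjugate formula $A^{-1}=\operatorname{adj}(A)/\det A$.
\begin{itemize}
\item The determinant is a sum of products of $n$ entries, so by \ref{lemma:precision basic 1.iii} it is correct to precision $N+(n-1)v_K(A)$.
\item The hypothesis $N+(n-1)v_K(A)>v_K(\det A)$ ensures that the approximate determinant has the same valuation as $\det A$, so it is nonzero and the approximate inverse is defined.
\item Each adjugate entry is a sum of products of $n-1$ entries, hence correct to precision $N+(n-2)v_K(A)$ and of valuation at least $(n-1)v_K(A)$.
\item For the scalar inverse, write $\widetilde a=a(1+\epsilon)$ with $v_K(\epsilon)\geq N'-v_K(a)>0$, where $N'$ is the precision of $a$. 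Then $\widetilde a^{-1}-a^{-1}=a^{-1}\bigl((1+\epsilon)^{-1}-1\bigr)$ has valuation at least $N'-2v_K(a)$. This gives the case $n=1$.
\end{itemize}
Combining the adjugate and the inverse determinant via \ref{lemma:precision basic 1.iii}, the error has valuation at least the minimum of
\[N+(n-2)v_K(A)-v_K(\det A)\quad\text{and}\quad(n-1)v_K(A)+N+(n-1)v_K(A)-2v_K(\det A).\]
Since $v_K(\det A)\geq n\,v_K(A)$, the second term is the smaller one. This yields $N-2v_K(\det A)+2(n-1)v_K(A)$.

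For \ref{lemma:precision basic 2}, the approximation $\widetilde A(x)=A(x)+\pi^NE(x)$ has integral $E$, and $\widetilde z=z+\pi^N\delta$ has integral $\delta$. I split
\[\widetilde A(\widetilde z)-A(z)=\pi^NE(\widetilde z)+\bigl(A(\widetilde z)-A(z)\bigr).\]
The first term has valuation at least $N$, since $\widetilde z$ is integral. For the second, each monomial contributes $a_k(\widetilde z^k-z^k)$, which is divisible by $\widetilde z-z$ with integral cofactor, so it has valuation at least $v_K(a_k)+N\geq v_K(A(x))+N$. The minimum of the two gives the claim.

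The only step requiring care is the bookkeeping in \ref{lemma:precision basic 1.ii}. There one must check that the determinant perturbation does not change its valuation, and identify which of the two error terms dominates.
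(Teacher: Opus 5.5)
Your proof follows the paper's essentially step for step: the subset expansion for products, the adjugate formula combined with the scalar inverse rule for \ref{lemma:precision basic 1.ii}, and the same splitting $\pi^NE(\widetilde z)+\bigl(A(\widetilde z)-A(z)\bigr)$ for \ref{lemma:precision basic 2}. One deduction should be made explicit: $N>v_K(A)$ is needed whenever you invoke \ref{lemma:precision basic 1.iii} for $\det A$, for the adjugate, and for the final product $(\det A)^{-1}\operatorname{adj}(A)$, where you silently drop the term in which both factors carry errors, of precision $2N+(2n-3)v_K(A)-2v_K(\det A)$; derive it, as the paper does, from the hypothesis together with $v_K(\det A)\geq n\,v_K(A)$.
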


\begin{proof}
	\ref{lemma:precision basic 1.i}
	In each entry, the error of $A\pm B$ is a sum or difference of errors with valuations at least $N$ and $N'$. Its valuation is therefore at least $\min\{N,N'\}$.
	
	\ref{lemma:precision basic 1.iii}
	Write the approximations of $A_i$ as $A_i+E_i$, where $v_K(E_i)\geq N_i$. For each $J\subseteq\{1,\ldots,s\}$, put
	\[C_{i,J}=
	\begin{cases}
		E_i,& i\in J,\\
		A_i,& i\notin J.
	\end{cases}\]
	Then
	\[(A_1+E_1)\cdots(A_s+E_s)-A_1\cdots A_s=\sum_{\emptyset\neq J\subseteq\{1,\ldots,s\}}C_{1,J}\cdots C_{s,J}.\] 
	Since
	$v_K(CD)\geq v_K(C)+v_K(D)$, this term has valuation
	at least
	\[\sum_{i\in J}N_i+\sum_{h\notin J}v_K(A_h).\]
	Taking the minimum over $J$ proves the general bound.
	
	If $N_i\geq v_K(A_i)$ for every $i$, then for any
	$j\in J\not=\emptyset$,
	\[\sum_{i\in J}N_i+\sum_{h\notin J}v_K(A_h)\geq N_j+\sum_{h\neq j}v_K(A_h).\]
	The stated particular case follows.
	
	\ref{lemma:precision basic 1.ii} First suppose $n=1$. If	$v_K(\varepsilon)\geq N>v_K(a)$, then $v_K(a+\varepsilon)=v_K(a)$, and
	\[	(a+\varepsilon)^{-1}-a^{-1}	=-\frac{\varepsilon}{a(a+\varepsilon)}	\]
	has valuation at least $N-2v_K(a)$. This proves the scalar inverse rule, which we will use for the determinant.
	
	Suppose $n\geq 2$, and let $A^*$ denote the adjugate of $A$. The Leibniz formula for determinant gives
	\begin{align}\label{eq:leibniz formula}
\det A=
\sum_{\sigma\in S_n}\operatorname{sgn}(\sigma)
\prod_{i=1}^n A_{i,\sigma(i)}.
	\end{align}
	Each product has valuation at least $nv_K(A)$.
	Consequently $v_K(\det A)\geq nv_K(A)$, and the hypothesis
	$N+(n-1)v_K(A)>v_K(\det A)$ implies $N>v_K(A)$.
	
Every entry of $A$ is accurate to precision $N$ and
has valuation at least $v_K(A)$. Applying the general
product estimate in \ref{lemma:precision basic 1.iii}
to each product in \eqref{eq:leibniz formula} gives precision
at least
\begin{align*}
	\min_{\emptyset\neq J\subseteq\{1,\ldots,n\}}
	\bigl\{|J|N+(n-|J|)v_K(A)\bigr\} =
	N+(n-1)v_K(A).
\end{align*}
Thus $\det A$ is correct to precision $N+(n-1)v_K(A)$.
	
 Each entry of $A^*$ is a signed sum of	products of $n-1$ entries of $A$. The same argument shows	that $A^*$ is correct to precision $N+(n-2)v_K(A)$ and that	$v_K(A^*)\geq(n-1)v_K(A)$.
	
Because the precision of $\det A$ is at least $N+(n-1)v_K(A)>v_K(\det A)$, the scalar inverse rule (i.e. $n=1$ case) therefore shows that $(\det A)^{-1}$ is correct to precision $N+(n-1)v_K(A)-2v_K(\det A)$ and has valuation $-v_K(\det A)$.
	
Finally, $A^{-1}=(\det A)^{-1}A^*$. Applying \ref{lemma:precision basic 1.iii} with two factors
gives precision at least
\begin{align*}
	\min\{&
	N+(n-2)v_K(A)-v_K(\det A),\\
	&
	N+2(n-1)v_K(A)-2v_K(\det A),\\
	&
	2N+(2n-3)v_K(A)-2v_K(\det A)\}\\
	={}&N+2(n-1)v_K(A)-2v_K(\det A).
\end{align*}
Indeed, the first term minus the claimed bound is $v_K(\det A)-nv_K(A)\geq0$, and the third minus it is $N-v_K(A)>0$. In particular, the approximation to $A$ is invertible, and its inverse has the asserted precision.
	
	\ref{lemma:precision basic 2}
	Write $A(x)=\sum_i C_ix^i$. Let $C_i+E_i$ and $z+\varepsilon$ be approximations satisfying $v_K(E_i),v_K(\varepsilon)\geq N$. Since $z\in K^\circ$ and $N\geq0$, the element $z+\varepsilon$ is integral. The evaluation error is
	\[\sum_i E_i(z+\varepsilon)^i+\sum_{i\geq1}C_i\bigl((z+\varepsilon)^i-z^i\bigr).\]
	The first sum has valuation at least $N$. For $i\geq1$,
	\[(z+\varepsilon)^i-z^i = \varepsilon\sum_{j=0}^{i-1} (z+\varepsilon)^{i-1-j}z^j\]
	has valuation at least $N$. Thus the second sum has valuation at least $N+v_K(A(x))$. The evaluation error therefore has valuation at least $\min\{N,N+v_K(A(x))\}$.
\end{proof}
\begin{remark}
	In the finite-precision local fields of {\sffamily Magma}, the valuation returned by \texttt{Valuation} is at most
	the absolute precision.
	Thus, when using the reported valuations and precisions, the condition $N_i\geq v_K(A_i)$ in \ref{lemma:precision basic 1.iii} holds automatically and need not be checked in the code.
	The precision condition for inversion in \ref{lemma:precision basic 1.ii} must still be checked.
\end{remark}

For the tiny integral, we have the result as \cite[Proposition~4.1]{balakrishnan2020explicit}.

\begin{proposition}\label[pprop]{prop:precisionfortinyintegral}
	Let $P,R\in X(K(\pi^{1/e}))$ be two points in the same
	residue disc accurate to precision $N\geq1$. Let $t$ be a
	uniformizer at the point $P'$ from \cref{algthm:tinyintegral} ($N$ is the absolute precision of $t(P),t(R)$). Put
	\[
	v_{\min}\coloneq
	\min\{v_K(t(P)),v_K(t(R)),N\},
	\]
	with $v_K(0)=+\infty$.
	
	Let $\omega=g(t)dt$ be a nonzero differential of the second
	kind with
	\[
	g(t)=a_{-k}t^{-k}+a_{-k+1}t^{-k+1}+\cdots
	\in K^\circ[[t]][t^{-1}]
	\]
	for some integer $k$, with $a_{-k}\neq0$ (notice that $k\not=1$ since $\omega$
	is of the second kind).
	If $k\geq2$, assume that $t(P)t(R)\neq0$ and
	\[
	N>\max\{v_K(t(P)),v_K(t(R))\}.
	\]
	If $g$ is accurate to $\pi$-adic precision $N$ and truncated
	modulo $t^l$ for $l\geq1$, then the tiny integral
	$\int_P^R\omega$ computed in \cref{algthm:tinyintegral}
	is correct to precision $\min\{\nu_1,\nu_2,\nu_3\}$, where
	\begin{align*}
		\nu_1
		&=v_{\min}+\min_{i\geq l}
		\{iv_{\min}-e_K\lfloor\log_p(i+1)\rfloor\},\\
		\nu_2
		&=N+\min_{0\leq i\leq l-1}
		\{iv_{\min}-e_K\lfloor\log_p(i+1)\rfloor\},\\
		\nu_3
		&=
		\begin{cases}
			N-k\max\{v_K(t(P)),v_K(t(R))\}
			-e_K\lfloor\log_p(k-1)\rfloor,
			&k\geq2,\\
			+\infty,
			&k\leq0.
		\end{cases}
	\end{align*}
\end{proposition}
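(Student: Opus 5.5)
The plan is to write the computed tiny integral as a sum of three explicit pieces and to bound the error of each piece separately. Write $g=g_-+g_+$, where $g_-=\sum_{j=2}^{k}a_{-j}t^{-j}$ is the polar part ($a_{-1}=0$ because $\omega$ is of the second kind) and $g_+=\sum_{i\geq0}a_it^i$. Then
\[
\int_P^R\omega=\sum_{j=2}^{k}\frac{a_{-j}}{1-j}\bigl(t(R)^{1-j}-t(P)^{1-j}\bigr)+\sum_{i\geq0}\frac{a_i}{i+1}\bigl(t(R)^{i+1}-t(P)^{i+1}\bigr).
\]
The algorithm computes the polar sum together with the terms $0\leq i\leq l-1$ of the second sum. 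Its error therefore splits into three sources:
\begin{itemize}
\item the omitted tail $i\geq l$, which will give $\nu_1$;
\item the error in the retained regular terms coming from precision $N$ in $a_i$, $t(P)$ and $t(R)$, which will give $\nu_2$;
\item the error in the polar terms, which will give $\nu_3$.
\end{itemize}
By \cref{lemma:precisionofbasicoperation}~\ref{lemma:precision basic 1.i}, the total precision is the minimum of the three.

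\emph{Tail.} For $i\geq l$ we have $v_K(a_i)\geq0$ and $v_K(t(S)^{i+1})\geq(i+1)v_{\min}$ for $S\in\{P,R\}$. Also $v_K(i+1)=e_Kv_p(i+1)\leq e_K\lfloor\log_p(i+1)\rfloor$. So each omitted term has valuation at least $v_{\min}+iv_{\min}-e_K\lfloor\log_p(i+1)\rfloor$. Taking the infimum over $i\geq l$ gives $\nu_1$. This infimum is attained because the linear term dominates the logarithm.

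\emph{Retained regular terms.} For $0\leq i\leq l-1$, consider the product $a_i\,t(S)^{i+1}$. The coefficient $a_i$ is accurate to precision $N$ and has valuation $\geq0$. Each factor $t(S)$ is accurate to precision $N$ and has valuation $\geq v_{\min}$; note that $v_{\min}\leq N$, and this is exactly why $N$ is included in the definition of $v_{\min}$. Apply \cref{lemma:precisionofbasicoperation}~\ref{lemma:precision basic 1.iii}: replace one factor by its error of valuation $\geq N$ and bound the remaining $i$ factors of $t(S)$ by $v_{\min}$ each. This gives precision $N+iv_{\min}$. The other choices of $J$ are no worse, since $N\geq v_{\min}$ and $v_K(a_i)\geq0$. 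Dividing by the integer $i+1$ costs at most $e_K\lfloor\log_p(i+1)\rfloor$. Taking the minimum over $0\leq i\leq l-1$ gives $\nu_2$.

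\emph{Polar terms ($k\geq2$).} This is the main obstacle. Here the hypothesis $N>\max\{v_K(t(P)),v_K(t(R))\}$ guarantees that the approximations of $t(S)$ are nonzero with the exact valuation $v_K(t(S))$. Put $u=\max\{v_K(t(P)),v_K(t(R))\}$. By the scalar case of \cref{lemma:precisionofbasicoperation}~\ref{lemma:precision basic 1.ii}, $t(S)^{-1}$ is accurate to precision $N-2v_K(t(S))$ and has valuation $-v_K(t(S))$.
\begin{itemize}
\item A naive application of the product rule to $t(S)^{1-j}$ gives $N-2v_K(t(S))-(j-2)v_K(t(S))=N-jv_K(t(S))$.
\item A cleaner route is the direct identity $(t+\varepsilon)^{-m}-t^{-m}=t^{-m}\bigl((1+\varepsilon/t)^{-m}-1\bigr)$ with $m=j-1$. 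Since $v_K(\varepsilon/t)\geq N-v_K(t)>0$, this error has valuation $\geq N-(m+1)v_K(t)=N-jv_K(t)$.
\item Multiplying by $a_{-j}$, which is integral and accurate to precision $N$, keeps the precision at least $N-jv_K(t(S))$. The error of $a_{-j}$ contributes $N-(j-1)v_K(t(S))$, which is larger.
\item Dividing by $1-j$ costs $e_Kv_p(j-1)\leq e_K\lfloor\log_p(k-1)\rfloor$.
\end{itemize}
Since $j\leq k$ and $v_K(t(S))\leq u$, the minimum over $j$ and over $S\in\{P,R\}$ is at least $N-ku-e_K\lfloor\log_p(k-1)\rfloor=\nu_3$. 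For $k\leq0$ there are no polar terms, so $\nu_3=+\infty$.

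The delicate point is checking that the sign of $v_K(t(S))$ causes no trouble. Because $t$ is an integral local parameter and $S$ lies in the same residue disc, we have $v_K(t(S))>0$, so enlarging $j$ up to $k$ and $v_K(t(S))$ up to $u$ only weakens the bound, as claimed.
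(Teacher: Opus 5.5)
Your proposal is correct and follows essentially the same route as the paper. Both split the termwise primitive into three pieces: the omitted tail $i\geq l$ (giving $\nu_1$), the retained regular terms bounded with the product rule of \cref{lemma:precisionofbasicoperation} (giving $\nu_2$), and the polar terms (giving $\nu_3$). The only variation is that for $t(S)^{1-j}$ you also give a direct binomial-identity estimate. It yields the same bound $N-jv_K(t(S))$ that the paper gets by combining the scalar inversion rule with the product rule.
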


\begin{proof}
	Notice that $a_{-1}=0$ since $\omega$ is of the second kind.
	Thus
	\begin{equation}\label{eq:integralfortinyintegral}
		\int_P^R\omega
		=\int_{t(P)}^{t(R)}g(t)\,dt
		=\sum_{\substack{i\geq-k\\i\neq-1}}
		\frac{a_i}{i+1}
		\bigl(t(R)^{i+1}-t(P)^{i+1}\bigr).
	\end{equation}
	Since $P,R$ both lie in the residue disc $|t|<1$,
	we have $v_{\min}\geq1/e$.
	
	Firstly, we bound the error coming from omitting the terms
	with $i\geq l$. Since the coefficients $a_i$ are integral,
	\[
	v_K(t(P)^{i+1}),v_K(t(R)^{i+1})
	\geq(i+1)v_{\min},
	\]
	and
	\[
	v_K(i+1)=e_K\ord_p(i+1)
	\leq e_K\lfloor\log_p(i+1)\rfloor.
	\]
	Therefore, the valuation of this error is at least $\nu_1$.
	
	Next, we bound the error coming from terms with $0\leq i\leq l-1$.
	Since $N\geq v_{\min}$, by \cref{lemma:precisionofbasicoperation}~\ref{lemma:precision basic 1.iii}, the powers $t(P)^{i+1},t(R)^{i+1}$ are correct to precision
	$N+iv_{\min}$, so is $t(R)^{i+1}-t(P)^{i+1}$.
	Moreover, $t(R)^{i+1}-t(P)^{i+1}$ has valuation at least
	$(i+1)v_{\min}$. By \cref{lemma:precisionofbasicoperation}~\ref{lemma:precision basic 1.iii} again, $a_i(t(R)^{i+1}-t(P)^{i+1})$ is correct to precision $N+iv_{\min}$. Dividing by $i+1$ and taking the minimum over these indices gives $\nu_2$.
	
	Finally, suppose that $k\geq2$. We bound the error coming
	from terms with $-k\leq i\leq-2$.
By \cref{lemma:precisionofbasicoperation}~\ref{lemma:precision basic 1.ii}~\ref{lemma:precision basic 1.iii}, the loss of precision  of an inversion is $2$ times the valuation, so $t(P)^{i+1}$ (resp. $t(R)^{i+1}$) is accurate to precision $$N+(-i-2)v_K(t(P))+2(i+1)v_K(t(P)) = N+iv_K(t(P)) \text{ (resp. $N+iv_K(t(R))$)},$$ 
then $\frac{a_i}{i+1}(t(P)^{i+1}-t(R)^{i+1})$ is accurate to precision 
\begin{align*}
	&\min\{v_K(a_i)+N+\min\{iv_K(t(P)),iv_K(t(R))\},N+v_K(t(P)^{i+1}-t(R)^{i+1})\} -v_K(i+1)\\
	\geq& N-(-i)\max\{v_K(t(P)),v_K(t(R))\}-e_K\lfloor\log_p(|i+1|)\rfloor.
\end{align*}
Hence $\sum_{i=-k}^{-2}\frac{a_i}{i+1}(t(R)^{i+1}-t(P)^{i+1})$ is accurate to precision 
$\nu_3$. 

	If $k\leq0$, there are no negative-power terms, so
	$\nu_3=+\infty$.
	
	Combining the three bounds completes the proof.
\end{proof}

\begin{remark}
	Once $P,R,\omega$ and $N$ are fixed, $\nu_3$ can be calculated when $k\geq2$; for $k\leq1$ there are no negative-power terms. Notice that, for fixed $v_{\min}$, $\nu_1$ depends only on $l$.
	In our implementation, $l$ is chosen by \texttt{tadicprec}, and the bounds above are applied to the truncation actually used.
	
	In our implementation, when $v_{\min}$ is not given, we take $v_{\min} = 1/e$. We also take $P$ to be good or very bad, so $t$ is a uniformizer of $P$ and $v_K(t(P))=+\infty$.
\end{remark}

In the rest of this section, we consider integrals that are not
tiny. Recall the spaces $E_0,E_\infty,B_0,B_\infty$ defined in
\cref{thm: computing de rham via E B}. Let
\[
\omega_i=\sum_{j=0}^{d_x-1}u_j(x)b_j^0\frac{dx}{r},
\qquad i=1,\dots,2g,
\]
be the integral representatives in $E_0\cap E_\infty$ of the
basis of $H^1_{\mathrm{dR}}(X)$ obtained in
\cref{algthm:cohomologybasis}. We first estimate the precision
of evaluating the one-step correction function
\[
f_{\tau,i}\coloneq f_{i,0}+f_{i,\infty}+f_{i,end}
\]
at a point where it converges. Here
$f_{i,0},f_{i,\infty},f_{i,end}\in\mathcal R_K^\dagger$
are obtained by applying \cref{algthm:reductionprocedure}
to $F_\tau^*\omega_i$.

The following bounds apply to the correction functions for
$F_\tau^*$ and their coefficient twists. The precision of
$\boldsymbol g_q$ is obtained by tracking the evaluations and
matrix products in \eqref{eq:frobenius-point-evaluation}.

\begin{lemma}\label[plemma]{lemma:frobenius of a form}
	Let $\omega=\sum_{i=0}^{d_x-1}w_ib_i^0\frac{dx}{r}\in E_0\cap E_\infty$ be an integral $1$-form. Then there are unique $w_{jk}\in K^\circ[x]$, $j\in\Z, 0\leq k\leq d_x-1$ with $\deg(w_{jk})< \deg(r)$ such that
	\begin{align}\label{expresionofFw1}
		F_\tau^*\omega = q_\tau\sum_{j\in\Z}\sum_{k=0}^{d_x-1}\frac{w_{jk}(x)}{r^j}b_k^0\frac{dx}{r}.
	\end{align}
	Moreover, for any $N\in \N$, if $j\geq q_\tau N$, then $v_K(w_{jk})\geq N$. 
\end{lemma}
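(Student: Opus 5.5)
The plan is to start from the explicit formula \eqref{frobeniusofadifferentialform}. Since $\omega=\bigl(\sum_i w_ib_i^0\bigr)\frac{dx}{r}$ with $w\in K^\circ[x]^{\oplus d_x}$, it gives
\[
F_\tau^*\omega=\begin{bmatrix} b_0^0/r&\cdots&b_{d_x-1}^0/r\end{bmatrix}\,\Psi\, q_\tau\, x^{q_\tau-1}w^\tau(x^{q_\tau})\,dx .
\]
Here $\Psi$ is the exact matrix of $F_\tau^*$ on the basis $\boldsymbol b^0/r$. It has entries in $\mathcal S^\dagger$, because $W^0$ and $(W^0)^{-1}$ are integral over $K^\circ[x,1/r]$ and $F_\tau^*(y),F_\tau^*(1/r)\in\mathcal R^\dagger$. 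The factor $q_\tau$ is visibly pulled out, so it suffices to expand the $\mathcal S^\dagger$-vector $c\coloneq\Psi\, x^{q_\tau-1}w^\tau(x^{q_\tau})$. I would show that its entries have unique expansions $\sum_{j\in\Z} w_{jk}/r^j$ with $w_{jk}\in K^\circ[x]$ and $\deg w_{jk}<\deg r$.

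\textbf{Step 1 (existence, uniqueness and integrality of $r$-adic expansions).} Every $h\in\mathcal S^\dagger$ has a unique representation $h=\sum_{j\in\Z}h_j/r^j$ with $h_j\in K^\circ[x]$, $\deg h_j<\deg r$, $h_j=0$ for $j\ll0$, and $v_K(h_j)\to\infty$ as $j\to\infty$. The nonpositive indices $j$ encode the polynomial part via its $r$-adic digits.
\begin{itemize}
\item Existence: since $\operatorname{lc}(r)$ is a unit by \cref{def:triplehasgoodreduction}~\ref{good reduction: roots of r}, Euclidean division by $r$ keeps coefficients in $K^\circ$. Each element $a/r^n$ of $K^\circ[x,1/r]$ therefore has such an expansion with integral digits, and one passes to the weak completion by $\pi$-adic limits.
\item Uniqueness: reduce modulo $\pi$. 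A nonzero expansion whose minimal digit valuation is $0$ gives a nonzero $\widetilde r$-adic expansion in $\widetilde K[x,1/\widetilde r]$, which is nonzero because $\widetilde r$-adic digits are unique there.
\end{itemize}
The same reduction argument shows that $h\in\pi^N\mathcal S^\dagger$ if and only if all digits $h_j$ have valuation $\ge N$. Applying this coordinatewise to $c$ yields the unique $w_{jk}$ in \eqref{expresionofFw1}.

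\textbf{Step 2 (the valuation bound).} Fix $N$. By \cref{prop:approximationofFrobenius}~\ref{frobenius of b}, $\Psi\equiv\sum_{k=0}^{q_\tau N-1}\psi_k/r^k\pmod{\pi^N}$ with $\psi_k\in M_{d_x}(K^\circ[x])$. Multiplying this truncation by the polynomial vector $x^{q_\tau-1}w^\tau(x^{q_\tau})$ gives terms (polynomial)$/r^k$ with $k\le q_\tau N-1$. Their $r$-adic digits therefore occur only in indices $j\le q_\tau N-1$, since multiplying by a polynomial can only lower the index. The difference between $c$ and this product lies in $\pi^N\mathcal S^{\dagger\,\oplus d_x}$, so by Step~1 all its digits have valuation $\ge N$. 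Because digits are additive under the unique expansion, for $j\ge q_\tau N$ the digit $w_{jk}$ comes only from the error term, and hence $v_K(w_{jk})\ge N$.

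I expect the main obstacle to be Step~1. One must make rigorous that the digit map is well defined and $\pi$-adically continuous on the weak completion, not merely on $K^\circ[x,1/r]$. One must also check that "$h\in\pi^N\mathcal S^\dagger$ iff all digits are divisible by $\pi^N$" holds there. I would first try to prove the mod-$\pi$ injectivity for finite sums and then pass to limits, using that $\mathcal S^\dagger$ is $\pi$-adically separated.
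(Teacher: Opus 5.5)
Your Step~1 overclaims: it is false that every $h\in\mathcal S^\dagger$ has $h_j=0$ for $j\ll0$. Take $h=\sum_{n\geq0}\pi^nx^n$. It converges on $|x|<|\pi|^{-1}$, so $h\in K^\circ\langle x\rangle^\dagger\subset\mathcal S^\dagger$. Put $d=\deg(r)$. The digit of $h$ at index $-m$ gets the contribution $\pi^{md}\operatorname{lc}(r)^{-m}$ from the term $\pi^{md}x^{md}$. It gets nothing from terms with $n<md$, and only contributions of valuation $>md$ from terms with $n>md$. Hence this digit is nonzero for every $m\geq1$.

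What your $\pi$-adic limit argument actually gives is an expansion with $v_K(h_j)\to\infty$ as $|j|\to\infty$. Your mod-$\pi$ uniqueness argument works verbatim for such two-sided expansions: after normalizing the minimal digit valuation to $0$, only finitely many digits survive reduction. Step~2 never uses finiteness on the negative side. With this correction, your argument proves the lemma as stated.

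Your route then differs from the paper's. The paper first turns \cref{prop:approximationofFrobenius}~\ref{frobenius of b} into one exact expansion $F_\tau^*(b_i^0/r)=\sum_k\sum_{j\geq0}\psi_{i,k,j}r^{-j}\,b_k^0/r$. All coefficients, including the polynomial parts $\psi_{i,k,0}$, have degree bounded independently of $N$; the bound on $\psi_{i,k,0}$ comes from the Frobenius-structure estimate at $\infty$. It then Euclidean-divides $x^{q_\tau-1}\sum_iw_i^\tau(x^{q_\tau})\psi_{i,k,j}$ by powers of $r$ and reads off each $w_{jk}=\sum_{s\geq\max\{0,j\}}a_{sk(s-j)}$ as a finite sum. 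The valuation bound is then inherited from $\psi_{i,k,j}\equiv0\pmod{\pi^N}$ for $j\geq q_\tau N$.

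You instead work at each precision $N$ separately. So you never have to glue the approximations for varying $N$ into one exact expansion, a step that tacitly uses the digit uniqueness you prove. You also never need the degree bound at infinity, and you make explicit the digit theory that the paper dispatches in one sentence.

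What the paper's route buys is the extra fact that $w_{jk}=0$ for $j\ll0$. Equivalently, the $j\leq0$ part of $F_\tau^*\omega$ is a polynomial combination of the $b_k^0\,dx/r$. This is used right afterwards in \cref{lemma:estimateonfunctions}, where $u_k=q_\tau\sum_{j\leq0}w_{jk}/r^j$ is fed into the infinite reduction. By the example above, this cannot follow from general digit theory. On your route you would also have to invoke the degree bound on the polynomial part in \cref{prop:approximationofFrobenius}~\ref{frobenius of b}.
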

\begin{proof}
	For the uniqueness, by \cref{rmk:differential form of Rdagger}, $\Omega_{\mathcal{R}^\dagger/K^\circ}^1 = \mathcal{R}^\dagger dx$  is free over $\mathcal{R}^\dagger$ of rank $1$, and $\mathcal{R}^\dagger=\oplus_{k=0}^{d_x-1}\mathcal{S}^\dagger b_k^0$ is free over $\mathcal{S}^\dagger$. So $F_\tau^*\omega$ can be expressed uniquely as a linear combination of $b_0^0,\dots, b_{d_x-1}^0$ over $\mathcal{S}^\dagger$. Then by the uniqueness of $r$-adic expansion in $\mathcal{S}^\dagger$, $w_{jk}$ are unique.
	
	By \cref{prop:approximationofFrobenius}~\ref{frobenius of b}, there are $\psi_{i,k,j}\in K^\circ[x]$ such that $$\deg(\psi_{i,k,j})\leq\max\{-\ord_\infty(W)-q_\tau\ord_\infty(W^{-1})-(q_\tau-1)\deg(r), \deg(r)-1\},$$
	\[F_\tau^*\left(\frac{b_i^0}{r}\right)= \sum_{k=0}^{d_x-1}\sum_{j=0}^{\infty}\frac{\psi_{i,k,j}}{r^j}\cdot\frac{b_k^0}{r},\]
	and for any $N\in\N_{\geq 1}$ and $j\geq q_\tau N$, $\psi_{i,k,j}\equiv 0 \ \  (\mod \pi^N\mathcal{R}^\dagger)$. Notice that $r\in K^\circ[x]$ with leading coefficient in $(K^\circ)^\times$ (see \cref{def:triplehasgoodreduction}), 
	then for any $j\geq0$ and $0\leq k\leq d_x-1$, there are $a_{jkl}\in K^\circ[x]$ (by Euclidean algorithm in $K^\circ[x]$), $l\in \N$ such that $\deg(a_{jkl})<\deg(r)$ and
	\[\left(x^{q_\tau-1}\sum_{i=0}^{d_x-1}w_i^\tau(x^{q_\tau})\psi_{i,k,j}(x)\right) = \sum\limits_{l=0}^{\infty}a_{jkl}r^l.\]
	Notice that $a_{jkl} = 0$ when
	$$l\deg(r)> q_\tau-1+q_\tau\max\{\deg(w_i)\}+\max\{-\ord_\infty(W)-q_\tau\ord_\infty(W^{-1})-(q_\tau-1)\deg(r), \deg(r)-1\}.$$
	So $a_{jkl}=0$ for $l$ large enough (independent of $j,k$).
	For $j\in\Z, 0\leq k\leq d_x-1$, we set $w_{jk} = \sum_{s\geq\max\{0,j\}}a_{sk(s-j)}$ which is a finite sum and $\deg(w_{jk})<\deg(r)$. Then
	\begin{align*}
		F_\tau^*\omega &= \sum_{i=0}^{d_x-1}q_\tau x^{q_\tau-1}w_i^\tau(x^{q_\tau})F_\tau^*\left(\frac{b_i^0}{r}\right)dx \nonumber\\
		&=q_\tau x^{q_\tau-1}\sum_{i=0}^{d_x-1}w_i^\tau(x^{q_\tau})\left(\sum_{k=0}^{d_x-1}\sum_{j=0}^{\infty}\frac{\psi_{i,k,j}(x)}{r^j}\frac{b_k^0}{r}\right)dx\nonumber\\
		&=q_\tau\sum_{k=0}^{d_x-1}\sum_{j=0}^\infty\frac{\sum_{l=0}^\infty a_{jkl}r^l}{r^j}\frac{b_k^0}{r} dx\\
		&=q_\tau\sum_{j\in\Z}\sum_{k=0}^{d_x-1}\frac{w_{jk}}{r^j}b_k^0\frac{dx}{r}.
	\end{align*}

	For any $N\in\N$, if $j\geq q_\tau N$, then $v_K\left(x^{q_\tau-1}\sum_{i=0}^{d_x-1}w_i^\tau(x^{q_\tau})\psi_{i,k,j}(x)\right)\geq N$, $0\leq k\leq d_x-1$ since $\omega\in \Omega_{\mathcal{U}/K^\circ}^1(\mathcal{U})$. Since the leading coefficient of $r$ is a unit in $K^\circ$, the division algorithm in $K^\circ[x]$ applies to $\pi^{-N}x^{q_\tau-1}\sum_{i=0}^{d_x-1}w_i^\tau(x^{q_\tau})\psi_{i,k,j}(x)$, giving an $r$-adic expansion with coefficients in $K^\circ[x]$, i.e. $\pi^{-N}a_{jkl}\in K^\circ[x]$. Then $v_K(a_{jkl})\geq N$ for any $l\in\N$. So $v_K(w_{jk})\geq N$. 
\end{proof}

\begin{lemma}
	\label[plemma]{lemma:estimateonfunctions}
	Let $\omega=\sum_{i=0}^{d_x-1}w_ib_i^0\frac{dx}{r}\in E_0\cap E_\infty$ be an integral $1$-form. We write
	\[F_\tau^*\omega=df_0+df_{\infty}+df_{end}+\omega'\]
	with $\omega'\in E_0\cap E_\infty$ after the reduction process, \cref{algthm:reductionprocedure}. Then the following statements hold.
	\begin{enumerate1}
		\item \label{lemma:estimateonfunctions f0} The function $f_{0}$ is of the form
		\begin{align}\label{expresionoff0}
			\sum_{k=0}^{d_x-1}\sum_{j=1}^\infty\frac{c_{jk}}{r^j}b_{k}^0,
		\end{align}
		where $c_{jk}\in K[x]$ are of degree $<\deg(r)$ and
		\begin{equation}\label{eq:finite-reduction-coefficient-bound}
			v_K(c_{jk})\geq\rho(j)\coloneq
			\min_{a\in\Z_{\geq j}}
			\left\{\left\lfloor\frac{a}{q_\tau}\right\rfloor
			+v_K(q_\tau)-e_K\lfloor\log_p(ae_0)\rfloor\right\}.
		\end{equation}
		In particular, $\rho(j)\geq j/q_\tau-O(\log j)$ as $j\to\infty$.
		\item \label{lemma:estimateonfunctions finfty} The function $f_{\infty}$ is of the form
		\begin{align}			\label{expresionoffinfty}
			\sum_{k=0}^{d_x-1}\sum_{j=0}^{\kappa_1}c_{jk}x^jb_k^0=\sum_{k=0}^{d_x-1}\sum_{j=\kappa_2}^{\kappa_3}d_{jk}x^jb_k^\infty,
		\end{align}
		where $c_{jk}, d_{jk}\in K$ and 
		\begin{align*}
			\kappa_1&\leq -\ord_\infty(W)-\min\{q_\tau(\ord_0(W)+1), (\ord_\infty(W^{-1})+1)\}, \\
			\kappa_2&\geq -\ord_0(W),\\
			\kappa_3&\leq -\min\{q_\tau(\ord_0(W)+1), \ord_\infty(W^{-1})+1\}.
		\end{align*}
		\item \label{lemma:estimateonfunctions fend} The function $f_{end}$ is of the form
		\begin{align}				\label{expresionoffend}
			\sum_{k=0}^{d_x-1}\sum_{j=0}^{\lambda_1}c_{jk}x^jb_k^0=\sum_{k=0}^{d_x-1}\sum_{j=\lambda_2}^{\lambda_3}d_{jk}x^jb_k^\infty,
		\end{align}
		where $c_{jk},d_{jk}\in K$ and 
		\begin{align*}
			\lambda_1&\leq -\ord_\infty(W)-\ord_0(W)-1,\\
			\lambda_2&\geq \ord_0(W^{-1}),\\
			\lambda_3&\leq -(\ord_0(W)+1).
		\end{align*}
	\end{enumerate1}
	
\end{lemma}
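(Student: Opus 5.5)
We handle the three pieces separately, following the three stages of \cref{algthm:reductionprocedure} applied to $F_\tau^*\omega$, whose shape is given by \cref{lemma:frobenius of a form}.

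\emph{Finite reduction.} Write $F_\tau^*\omega=q_\tau\sum_{j}\sum_k \frac{w_{jk}}{r^j}b_k^0\frac{dx}{r}$ with $v_K(w_{jk})\geq\lfloor j/q_\tau\rfloor$; this is the content of \cref{lemma:frobenius of a form}, with $N=\lfloor j/q_\tau\rfloor$. The finite reduction runs downward from large $l$ to $l=1$, and each step is linear. So $f_0$ decomposes as a sum, over $a\geq1$, of the functions $f^{(a)}$ obtained by finitely reducing the single term $q_\tau\frac{w_{a\cdot}}{r^a}\boldsymbol b^0\frac{dx}{r}$. By \cref{lemma:reductionprocedureforcohomologyclass}~\ref{reduction at finite points}, $f^{(a)}$ only involves $r^{-j}$ with $1\leq j\leq a$. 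By \cref{prop:valuationoffinitereduction}~\ref{prop:valuationoffinitereduction1}, applied after dividing by $q_\tau\pi^{\lfloor a/q_\tau\rfloor}$, its coefficients have valuation at least $\lfloor a/q_\tau\rfloor+v_K(q_\tau)-e_K\lfloor\log_p(ae_0)\rfloor$. The coefficient $c_{jk}$ of $r^{-j}b_k^0$ collects the contributions from all $a\geq j$, which gives $\rho(j)$. Degrees stay below $\deg r$ by construction. The only care needed is that the remainder terms $u$ produced at level $a$ are pushed into level $a-1$. This does not spoil the bound, because \cref{prop:valuationoffinitereduction} bounds the whole $f$ of a single input term at once. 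The asymptotic $\rho(j)\geq j/q_\tau-O(\log j)$ is then immediate, since the expression inside the minimum is increasing up to logarithmic terms.

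\emph{Infinite reduction.} After the finite reduction we have a form in $E_0$ of the shape $\boldsymbol b^\infty\widetilde w\,dx/r$. We need a lower bound on $\ord_\infty(\widetilde w)$. It has two sources: the polynomial part coming from \cref{prop:approximationofFrobenius}~\ref{frobenius of b}, whose order at infinity involves $q_\tau(\ord_0(W)+1)$ through the Frobenius twist of $x^{\ord_0}$, and the finite-reduction remainders, controlled by \cref{prop:valuationoffinitereduction}~\ref{prop:valuationoffinitereduction2} through $\ord_\infty(W^{-1})$. Taking the minimum gives the largest possible $\mu=-\ord_\infty(\widetilde w)-\deg r+1$. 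Each step of \cref{lemma:reductionprocedureforcohomologyclass}~\ref{reduction at infty} adds a term $\overline v x^{\mu'}$ with $\mu'\leq\mu$, and the steps stop once $\mu'<-\ord_0(W)$. This yields $\kappa_2\geq-\ord_0(W)$ and $\kappa_3\leq\mu$. Converting via $\boldsymbol b^\infty=\boldsymbol b^0W$ raises the $x$-degree by at most $-\ord_\infty(W)$, which gives $\kappa_1$. That $f_\infty\in B_0$ is guaranteed by the last assertion of that proposition.

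\emph{Final reduction.} $f_{end}\in B_0\cap B_\infty$, and the degree bounds $\lambda_1,\lambda_3$ and $\lambda_2\geq\ord_0(W^{-1})$ follow directly from \cref{remark:E0Einf}.

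The main obstacle is the bookkeeping for $\kappa_1,\kappa_3$. One must check carefully that the Frobenius twist produces exactly $q_\tau(\ord_0(W)+1)$ and that the leading $-\deg r$ shifts cancel. I would verify this by tracking $\ord_\infty$ through the factor $x^{q_\tau-1}w^\tau(x^{q_\tau})$ and the change of basis with $W$.
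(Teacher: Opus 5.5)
Your arguments for $f_0$ and $f_{end}$ are the paper's. The $\mu$-mechanics for $f_\infty$ are also fine: steps stop once $\mu'<-\ord_0(W)$, and converting by $W$ gives $\kappa_1$. The gap is in the lower bound on $\ord_\infty$ of the $\boldsymbol b^\infty$-coefficients $u'_k$ of the polynomial part ($j\leq0$ in \eqref{expresionofFw1}) of $F_\tau^*\omega$.

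You attribute the term $q_\tau(\ord_0(W)+1)$ to \cref{prop:approximationofFrobenius}~\ref{frobenius of b}, and you plan to track $\ord_\infty$ through $x^{q_\tau-1}w^\tau(x^{q_\tau})$ and the change of basis by $W$. That cannot give the stated bound. The proposition is in the basis $\boldsymbol b^0/r$, and its bound $-\ord_\infty(W)-q_\tau\ord_\infty(W^{-1})-(q_\tau-1)\deg(r)$ on $\deg(\psi_{i,j,0})$ was itself obtained by pushing a bound at $\infty$ through $W$. Combine it with $\deg(w)\leq-\ord_\infty(W)-\ord_0(W)+\deg(r)-2$ from \cref{remark:E0Einf} and convert back by $W^{-1}$. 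Factor by factor, this certifies only $\ord_\infty(u'_k)\geq q_\tau(\ord_0(W)+1)-\deg(r)+1+(q_\tau+1)\bigl(\ord_\infty(W)+\ord_\infty(W^{-1})\bigr)$. That is before counting the polynomial parts produced by the $r^{-k}$-terms, $k\geq1$, after Euclidean division. Since $\ord_\infty(W)+\ord_\infty(W^{-1})\leq0$, and it already equals $-3$ in \cref{example:Siksek-8-1} and \cref{example:Xb3s5}, your bookkeeping yields a strictly weaker $\kappa_3$, and hence a weaker $\kappa_1$, than claimed.

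The missing idea is to stay in the basis $\boldsymbol b^\infty$ at $\infty$, where the Frobenius matrix itself has no pole. By \cref{prop:matrixofdifferential}~\ref{exponents of G}, the exponents of $\boldsymbol b^\infty$ at $\infty$ lie in $[0,1)$. So \cref{thm:degreeoffrobeniusstructuresatapoint} at $z=\infty$, where $\alpha_2=0$ and $\alpha_1\leq0$, gives approximations of order $\geq0$ there. This is the intermediate statement in the proof of \cref{prop:approximationofFrobenius}, before the lossy change of basis.

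Now write $\omega=\sum_k\frac{xw'_k}{r}\,b_k^\infty\frac{dx}{x}$. The condition $\omega\in E_\infty$ says exactly that $\ord_\infty(xw'_k/r)\geq\ord_0(W)+1$. Since $F_\tau^*(dx/x)=q_\tau\,dx/x$ and $\ord_\infty(\sigma_\tau(h))=q_\tau\ord_\infty(h)$, every $b_k^\infty\frac{dx}{x}$-coefficient of $F_\tau^*\omega$ has order at least $q_\tau(\ord_0(W)+1)$ at $\infty$. In the frame $b_k^\infty\frac{dx}{r}$ this becomes $q_\tau(\ord_0(W)+1)-\deg(r)+1$.

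To isolate the $j\leq0$ part, subtract the $j\geq1$ part. Because $\deg(w_{jk})<\deg(r)$, its $\boldsymbol b^\infty$-coefficients have $\ord_\infty\geq\ord_\infty(W^{-1})+1$. This gives $\ord_\infty(u'_k)\geq\min\{q_\tau(\ord_0(W)+1)-\deg(r)+1,\ \ord_\infty(W^{-1})+1\}$. Together with \cref{prop:valuationoffinitereduction}~\ref{prop:valuationoffinitereduction2} for the remainders of the $j\geq1$ part and $\deg(r)\geq1$, your $\mu$-argument then yields the stated $\kappa_2$ and $\kappa_3$, and $\kappa_1$ follows.
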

\begin{proof}
\ref{lemma:estimateonfunctions f0} By
\cref{lemma:frobenius of a form}, the coefficients of the
pole-order $a$ part in \eqref{expresionofFw1} (we take $N=\lfloor a/q_\tau\rfloor$) have valuations
at least $\lfloor a/q_\tau\rfloor+v_K(q_\tau)$.
Applying finite reduction to this part alone loses at most
$e_K\lfloor\log_p(ae_0)\rfloor$ digits and produces a
primitive with pole indices $1\leq j\leq a$, by
\cref{prop:valuationoffinitereduction}~
\ref{prop:valuationoffinitereduction1}.
Thus every coefficient of this primitive has valuation at least
\[
\left\lfloor\frac{a}{q_\tau}\right\rfloor
+v_K(q_\tau)-e_K\lfloor\log_p(ae_0)\rfloor.
\]
Finite reduction is linear, and only input parts with
pole orders $a\geq j$ can contribute to $c_{jk}$. Therefore \eqref{eq:finite-reduction-coefficient-bound} holds.
Moreover, the displayed lower bound tends to infinity with $a$,
so these contributions form a convergent sum.

	\ref{lemma:estimateonfunctions finfty} By separating $j>0$ and $j\leq0$ in \eqref{expresionofFw1}, finite reduction, base change and infinite reduction, we can write
	\begin{align}
		F_\tau^*\omega
		&=\left(df_{0}+\sum_{j>0}\sum_{i=0}^{d_x-1}\left(u_{jk}b_i^0\right)\frac{dx}{r}\right)+\sum_{k=0}^{d_x-1}u_kb_k^0\dfrac{dx}{r} \nonumber\\
		&=\left(df_{0}+\sum_{j>0}\sum_{i=0}^{d_x-1}\left(u'_{jk}b_i^\infty\right)\frac{dx}{r}\right)+\sum_{k=0}^{d_x-1}u'_kb_k^\infty\dfrac{dx}{r} \nonumber\\
		&=\left(df_0+df_{+\infty}+\sum_{j>0}\sum_{i=0}^{d_x-1}\left(v_{jk}b_k^\infty\right)\frac{dx}{r}\right)+\left(df_{-\infty}+\sum_{k=0}^{d_x-1}v_kb_k^\infty\dfrac{dx}{r}\right),
		\label{expresionofFw2}
	\end{align}
	where $u_k = q_\tau\sum_{j\leq 0} \frac{w_{jk}}{r^j}$. 
	
	Firstly, we consider the term with $j>0$. By \cref{prop:valuationoffinitereduction}~\ref{prop:valuationoffinitereduction2}, we know that $\ord_\infty(u'_{jk})\geq \ord_\infty(W^{-1})-\deg(r)+2$. Hence, by \cref{lemma:reductionprocedureforcohomologyclass}~\ref{reduction at infty}, the terms $d_{jk}x^j$ in \eqref{expresionoffinfty} from $f_{+\infty}$ have degree bounded from above by $-(\ord_\infty(W^{-1})+1)$. 
	
	Next, we consider the term with $j\leq 0$. We write
\begin{align*}
	F_\tau^*\omega=&q_\tau\sum_{k=0}^{d_x-1}\left(\sum_{j\leq 0}\frac{w_{jk}}{r^{j}} +\sum\limits_{i=1}^\infty\frac{w_{ik}}{r^i}\right)b_k^0\frac{dx}{r}\\
	=&\sum_{k=0}^{d_x-1}\left( u'_{k}+v'_k\right)b_k^\infty\frac{dx}{r}
\end{align*}
We view $\omega = \sum_{i=0}^{d_x-1}w'_ib_i^\infty\frac{dx}{r}$ as a rational section of $x_*\Omega_{X/K}$ around $\infty$.
By the definition of $E_0\cap E_\infty$, we have $\ord_\infty(w'_i)\geq \ord_0(W)-\deg(r)+2$. By a proof similar to that of \cref{prop:approximationofFrobenius}, we can show that the Frobenius structure on $R^0x_*\OO_{U}$ does not have a pole at $\infty$ with respect to the basis $b_0^\infty, \dots, b_{d_x-1}^\infty$. Moreover, $F_\tau^*(\frac{dx}{x})=q_\tau\frac{dx}{x}$. So the coefficients of $F_\tau^*\omega$ with respect to the basis $b_0^\infty\frac{dx}{x}, \dots, b_{d_x-1}^\infty\frac{dx}{x}$ have order at $\infty$ bounded from below by $q_\tau(\ord_0(W)+1)$. Hence the coefficients (i.e. $u'_0+v'_0, \dots, u'_{d_x-1}+v'_{d_x-1}$) of $F_\tau^*\omega$ with respect to the basis $b_0^\infty\frac{dx}{r}, \dots, b_{d_x-1}^\infty\frac{dx}{r}$ have order at $\infty$ bounded from below by $q_\tau(\ord_0(W)+1)-\deg(r)+1$. On the other hand, since $\deg(w_{ik})<\deg(r)$ for any $i\geq 1$, we have that $\ord_\infty(v'_k)\geq 1+\ord_\infty(W^{-1})$. From our discussion above, we have 
\begin{align}\label{eq:ord at infinity of u'}
	\ord_\infty(u'_k)\geq \min\{q_\tau(\ord_0(W)+1)-\deg(r)+1, \ord_\infty(W^{-1})+1\}
\end{align}
for any $0\leq k\leq d_x-1$. By \cref{lemma:reductionprocedureforcohomologyclass}~\ref{reduction at infty} the terms $d_{jk}x^j$ in \eqref{expresionoffinfty} from $f_{-\infty}$ have degrees bounded from above by $-\min\{q_\tau(\ord_0(W)+1),\ord_\infty(W^{-1})+\deg(r)\}$. Notice that $\deg(r)\geq 1$ (since $g(X)\geq 1$), with the discussion for $f_{+\infty}$, we have
\begin{align*}
	\kappa_3\leq -\min\{q_\tau(\ord_0(W)+1), \ord_\infty(W^{-1})+1\}.
\end{align*} Since the infinite reduction stops when $\ord_\infty(\sum_{j>0}v_{jk}+v_k)>\ord_0(W)-\deg(r)+1$ for each $0\leq k\leq d_x-1$, by \cref{lemma:reductionprocedureforcohomologyclass}~\ref{reduction at infty}, we have $\kappa_2\geq -\ord_0(W)$. From the change in basis from $b_0^0, \dots, b_{d_x-1}^0$ to $b_0^\infty, \dots, b_{d_x-1}^\infty$ by $W$, we have $\kappa_1\leq -\ord_\infty(W)+\kappa_3$, which gives an upper bound for $\kappa_1$.
	
	\ref{lemma:estimateonfunctions fend} We know that $F_\tau^*\omega-df_0-df_\infty-df_{end}\in E_0\cap E_\infty$, $df_{end}$ is the part contained in $d(B_0\cap B_\infty)$, $f_{end}\in B_0\cap B_\infty$. By the definition of $B_0$, we have $\lambda_2\geq \ord_0(W^{-1})$. By the definition of $B_\infty$, we have $\lambda_3\leq-(\ord_0(W)+1)$. The bound of $\lambda_1$ is given by base change, see \cref{remark:E0Einf}.
\end{proof}

\begin{proposition}
	\label[pprop]{prop:convergentdomainoffinitereduction}
	Keep the notation in
	\cref{prop:precisionofreductionprocess},
	and let $P\in X(\overline K)$.
	\begin{enumerate1}
		\item \label{convergence-f0-finite}
		If $P$ is a finite bad point, then the series
		defining $f_0(P)$ converges whenever
		$v_K(r(P))<1/q_\tau$.
		
		\item \label{convergence-f0-infinite}
		If $P$ is an infinite point with
		$x(P)\neq\infty$, then the series defining
		$f_0(P)$ converges.
	\end{enumerate1}
\end{proposition}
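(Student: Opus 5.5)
We use the explicit shape of $f_0$ from \cref{lemma:estimateonfunctions}~\ref{lemma:estimateonfunctions f0}:
\[
f_0=\sum_{k=0}^{d_x-1}\sum_{j\geq1}\frac{c_{jk}(x)}{r^j}\,b_k^0,\qquad \deg(c_{jk})<\deg(r),\qquad v_K(c_{jk})\geq\rho(j),
\]
with $\rho(j)\geq j/q_\tau-O(\log j)$. Convergence of $f_0(P)$ then reduces to showing that the individual terms $c_{jk}(x(P))\,r(x(P))^{-j}\,b_k^0(P)$ have valuations tending to $+\infty$ as $j\to\infty$, uniformly in $k$. Since the $b_k^0$ are finitely many fixed functions with no pole at $P$ (a point not above $x=\infty$), the factors $b_k^0(P)$ contribute only a fixed constant to the valuations. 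The whole problem is therefore a comparison of the coefficient decay $\rho(j)$ against the growth of $r(x(P))^{-j}$.

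\textbf{Case \ref{convergence-f0-finite}.} Here $P$ is finitely bad, so $x(P)$ reduces to a root of $\widetilde r$; in particular $x(P)$ is integral. Since $c_{jk}\in K[x]$ is evaluated at an integral point, we get $v_K(c_{jk}(x(P)))\geq v_K(c_{jk})\geq\rho(j)$. Writing $\varepsilon=v_K(r(x(P)))<1/q_\tau$, the $j$-th term has valuation at least
\[
\rho(j)-j\varepsilon+C\;\geq\; j\Bigl(\frac1{q_\tau}-\varepsilon\Bigr)-O(\log j)+C,
\]
where $C$ is a constant. This tends to $+\infty$, so the series converges. The one point needing care is the constant coming from $b_k^0(P)$. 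It is finite because $b_k^0\in K^\circ[x,1/r,y]$ is integral over $K^\circ[x]$, so $b_k^0(P)$ is integral, and that suffices.

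\textbf{Case \ref{convergence-f0-infinite}.} Here $v_K(x(P))<0$. Because $\operatorname{lc}(r)$ is a unit, $v_K(r(x(P)))=\deg(r)\,v_K(x(P))$, exactly as in \cref{rem:evaluation-of-beta-at-suitable-bad-points}. Since $\deg c_{jk}\leq\deg(r)-1$, we have
\[
v_K(c_{jk}(x(P)))\geq\rho(j)+(\deg(r)-1)\,v_K(x(P)),
\]
and therefore the quotient satisfies
\[
v_K\!\left(\frac{c_{jk}(x(P))}{r(x(P))^j}\right)\geq\rho(j)+\bigl(\deg(r)-1-j\deg(r)\bigr)v_K(x(P)).
\]
Both $\rho(j)$ and $-j\deg(r)v_K(x(P))$ tend to $+\infty$. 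What remains is to bound $v_K(b_k^0(P))$ below by a constant independent of $j$. This is immediate because $b_k^0(P)$ is a fixed finite value: $P$ lies over $x(P)\neq\infty$, and the $b_k^0$ are regular on $X^0$.

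\textbf{Main obstacle.} I expect the delicate point to be justifying that $f_0$, which is produced by an infinite reduction process, really equals the series above when evaluated at $P$. Concretely, one must check that the coefficientwise bound $\rho(j)$ from \cref{lemma:estimateonfunctions} holds after summing contributions from all input pole orders $a\geq j$. My plan is to invoke the convergence remark already made in that proof, and then to note that pointwise evaluation commutes with this sum under the same valuation estimates.
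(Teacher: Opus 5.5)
Your proposal is correct and is essentially the paper's proof: it uses the coefficient bound $v_K(c_{jk})\geq\rho(j)$ from \cref{lemma:estimateonfunctions}. In the finitely bad case it gets $\rho(j)-j\,v_K(r(P))\to+\infty$ from the integrality of $x(P)$ and $b_k^0(P)$, and at infinite points it combines $v_K(r(P))=\deg(r)\,v_K(x(P))$ with $\deg(c_{jk})<\deg(r)$. The obstacle you flag needs no extra work, since the statement only concerns convergence of the termwise-evaluated series, and the convergence of the sums defining the $c_{jk}$ is already part of that lemma.
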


\begin{proof}
	\ref{convergence-f0-finite}
	Put $\varepsilon=v_K(r(P))<1/q_\tau$.
	By \eqref{eq:finite-reduction-coefficient-bound}
	and the integrality of $x(P)$ and $b_k^0(P)$,
	\[
	v_K\left(
	\frac{c_{jk}(P)}{r(P)^j}b_k^0(P)
	\right)
	\geq\rho(j)-j\varepsilon
	\geq
	j(1/q_\tau-\varepsilon)-O(\log j)
	\longrightarrow+\infty.
	\]
	This proves convergence.
	
	\ref{convergence-f0-infinite}
	Put $a=v_K(x(P))<0$. Since $r$ has integral coefficients and unit leading coefficient, we have
	\[v_K(r(P))=\deg(r)\cdot a.\]
	Moreover, $\deg(c_{jk})<\deg(r)$ implies
	\[v_K(c_{jk}(P))\geq\rho(j)+(\deg(r)-1)a.\]
	Consequently, for each $k$,
	\[
	v_K\left(
	\frac{c_{jk}(P)}{r(P)^j}b_k^0(P)
	\right)
	\geq
	\rho(j)+(\deg(r)-1)a-j\deg(r)\cdot a+v_K(b_k^0(P))
	\longrightarrow+\infty,
	\]
	because $\rho(j)\geq j/q_\tau-O(\log j)$
	and $a<0$. This proves convergence.
\end{proof}

We cannot calculate $F_\tau^*\omega$ with $\omega\in E_0\cap E_\infty$ exactly in the implementation; we can only calculate it modulo $\pi^{N_{\mathrm{max}}}$ for some $N_{\mathrm{max}}\in \N_{> v_K(q_\tau)}$ by \cref{prop:approximationofFrobenius}~\ref{frobenius of b}, here we use the notation $N_{\mathrm{max}}$ consistent with the one in the code. During the proof above, we notice that if we fix $N_{\mathrm{max}}$, then $F_\tau^*\omega$ is of the form $q_\tau\sum_{j=-\infty}^{q_\tau(N_{\mathrm{max}}-v_K(q_\tau))-1}\sum_{k=0}^{d_x-1}\frac{w_{jk}}{r^j}b_k^0\frac{dx}{r}+O(\pi^{N_{\mathrm{max}}})$. Next, we will choose a suitable $N_\mathrm{max}$ such that $f_0, f_\infty, f_{end} \ \ (\mod \pi^N)$ can be determined.

\begin{proposition}\label[pprop]{prop:precisionofreductionprocess}
	For integers $m>v_K(q_\tau)$, define $h_0(m)$ and let
	$h_1,h_2\in\N$:
	\begin{align*}
		h_0(m)&\coloneq e_K\left\lfloor\log_p
		\bigl((q_\tau(m-v_K(q_\tau)+1)-1)e_0\bigr)\right\rfloor,\\
		h_1&\coloneq e_K\left\lfloor\log_p
		\max\{1,-(\ord_\infty(W^{-1})+1)e_\infty\}\right\rfloor,\\
		h_2&\coloneq e_K\left\lfloor\log_p\max\left\{1,
		-\min\{q_\tau(\ord_0(W)+1),\ord_\infty(W^{-1})+1\}e_\infty
		\right\}\right\rfloor.
	\end{align*}
	Let $\omega\in E_0\cap E_\infty$ be an integral $1$-form,
	and $N\in\N_{\geq1}$. Fix the basis and primitives of
	\cref{notation:finalreductionmatrix}, and let $c_{end}$ be
	the constant defined there. We choose
	$N_{\max}\in\Z_{>v_K(q_\tau)+1}$ such that
	\[
	m-\max\{h_0(m)+h_1,h_2\}\geq N+c_{end}
	\qquad\text{for every integer }m\geq N_{\max}.
	\]
	Let
	\[
	\widehat{\omega}
	=q_\tau\sum_{j=-\infty}^{q_\tau(N_{\max}-v_K(q_\tau))-1}
	\sum_{k=0}^{d_x-1}
	\frac{\widehat{w}_{jk}}{r^j}b_k^0\frac{dx}{r}
	\]
	be a differential form satisfying the following conditions:
	\begin{itemize}
		\item $\deg(\widehat{w}_{jk})<\deg(r)$ for every
		$j>0$ and $0\leq k\leq d_x-1$;
		\item
		$\ord_\infty(\widehat{u'_k})\geq
		\min\{q_\tau(\ord_0(W)+1),\ord_\infty(W^{-1})+1\}
		-\deg(r)+1$
		for every $0\leq k\leq d_x-1$, where
		$\sum_{k=0}^{d_x-1}\widehat{u'_k}b_k^\infty\frac{dx}{r}
		=q_\tau\sum_{j\leq0}\sum_{k=0}^{d_x-1}
		\frac{\widehat{w}_{jk}}{r^j}b_k^0\frac{dx}{r};$
		\item
		\[
		\widehat{\omega}
		=q_\tau\sum_{j=-\infty}^{q_\tau(N_{\max}-v_K(q_\tau))-1}
		\sum_{k=0}^{d_x-1}
		\frac{w_{jk}}{r^j}b_k^0\frac{dx}{r}
		+O(\pi^{N_{\max}}),
		\]
		where $w_{jk}$ is given in \eqref{expresionofFw1}.
	\end{itemize}
	Then
	\[
	f_0=\widehat f_0+O(\pi^N),\qquad
	f_\infty=\widehat f_\infty+O(\pi^N),\qquad
	f_{end}=\widehat f_{end}+O(\pi^N),
	\]
	where $f_0,f_\infty,f_{end}$ (resp.\
	$\widehat f_0,\widehat f_\infty,\widehat f_{end}$)
	are given by \cref{algthm:reductionprocedure} for
	$F_\tau^*\omega$ (resp.\ $\widehat\omega$).
\end{proposition}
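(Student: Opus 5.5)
The plan is to use linearity of \cref{algthm:reductionprocedure}. The difference $\delta\coloneq F_\tau^*\omega-\widehat\omega$ splits into two pieces: the truncated tail with pole indices $j\geq q_\tau(N_{\max}-v_K(q_\tau))$, and an error term of valuation $\geq N_{\max}$ in the remaining indices. The output $(f_0,f_\infty,f_{end})$ of the reduction depends linearly on its input. So it suffices to show that each piece contributes only $O(\pi^N)$ to each of the three functions. For this I would track a lower bound on valuations through the three stages (finite, infinite, final), with each stage losing at most the amounts $h_0,h_1,h_2,c_{end}$.

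\textbf{Finite stage.} First the tail. By \cref{lemma:frobenius of a form}, the coefficient $q_\tau w_{jk}$ with $j\geq q_\tau(m-v_K(q_\tau))$ has valuation $\geq m$ once we take $m\geq N_{\max}$. Applying \cref{prop:valuationoffinitereduction}~\ref{prop:valuationoffinitereduction1} to the pole-order-$a$ part, with $a$ in the block of indices giving valuation $\geq m$, loses at most $e_K\lfloor\log_p(ae_0)\rfloor\leq h_0(m)$, because $a\leq q_\tau(m-v_K(q_\tau)+1)-1$ in that block. The resulting contributions to $f_0$ and to the residual form in $E_0$ therefore have valuation $\geq m-h_0(m)$. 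Next the $O(\pi^{N_{\max}})$ error in the non-tail indices. It has pole order $\leq q_\tau(N_{\max}-v_K(q_\tau))-1$, so the same proposition with $m=N_{\max}$ bounds its loss by $h_0(N_{\max})$. The hypothesis that $\deg\widehat w_{jk}<\deg r$ for $j>0$ is exactly what lets us apply \cref{prop:valuationoffinitereduction}.

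\textbf{Infinite stage.} The residual forms coming from $j>0$ satisfy $\ord_\infty(u')\geq\ord_\infty(W^{-1})-\deg(r)+2$ by \cref{prop:valuationoffinitereduction}~\ref{prop:valuationoffinitereduction2}. Hence the parameter $\mu$ in \cref{prop:valuationofinfinitereduction} is at most $-(\ord_\infty(W^{-1})+1)$, and the loss is bounded by $h_1$; the total loss along this path is $h_0(m)+h_1$. The $j\leq0$ part does not pass through the finite stage, and its order at $\infty$ is bounded by the second bullet together with \eqref{eq:ord at infinity of u'}. That gives $\mu\leq-\min\{q_\tau(\ord_0(W)+1),\ord_\infty(W^{-1})+1\}$, so the loss is bounded by $h_2$. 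One point needs care here: \cref{prop:valuationofinfinitereduction} is stated for a single step. I would argue, as in its proof, that the whole chain of steps starting from a given $\mu$ loses only $e_K\lfloor\log_p(\mu e_\infty)\rfloor$ in total, since $\mu$ only decreases along the chain. I expect this uniformity across the iteration to be the main obstacle. The fallback is to rerun the contradiction argument of that proposition directly on the aggregated $f_\infty$, whose exponent range is controlled by \cref{lemma:estimateonfunctions}~\ref{lemma:estimateonfunctions finfty}.

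\textbf{Final stage and assembly.} By \cref{prop:valuationoffinalreduction}, the final reduction loses at most $c_{end}$, both in $f_{end}$ and in the coefficients $a_i$. Combining the stages, each contribution to $f_0,f_\infty,f_{end}$ from a block at level $m$ has valuation at least $m-\max\{h_0(m)+h_1,h_2\}-c_{end}\geq N$ for every $m\geq N_{\max}$, by the choice of $N_{\max}$. Summing over $m$ gives a convergent series with all terms $O(\pi^N)$, which yields the three congruences. At the end I would confirm that integrality of $\omega$ is used only through \cref{lemma:frobenius of a form}, and that the $\widehat{\ }$ quantities satisfy the same structural bounds, so the same estimates apply to $\delta$.
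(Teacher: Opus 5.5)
Your proposal is correct and follows essentially the same route as the paper's proof. You split $F_\tau^*\omega-\widehat\omega$ into the truncated tail (organized by valuation level $m\geq N_{\max}$) and the $O(\pi^{N_{\max}})$ error, charge $h_0(m)$ to finite reduction, $h_1$ (for the $j>0$ residuals) or $h_2$ (for the $j\leq0$ part) to infinite reduction, and $c_{end}$ to the final reduction of the difference in $E_0\cap E_\infty$. The obstacle you anticipate does not arise: \cref{prop:valuationofinfinitereduction} is already stated for the whole chain of infinite-reduction steps, since its $f$ collects all $v_j$ with $-\ord_0(W)\leq j\leq\mu$ and its $u$ is the terminal remainder, so it applies directly with $\mu\leq-(\ord_\infty(W^{-1})+1)$, respectively $\mu\leq-\min\{q_\tau(\ord_0(W)+1),\ord_\infty(W^{-1})+1\}$.
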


\begin{proof}
	We keep the notation in \eqref{expresionofFw1} and
	\eqref{expresionofFw2}, and put $N_*=N+c_{end}$.
	By our choice of $N_{\max}$,
	\[
	m-h_0(m)\geq N_*+h_1
	\quad(m\geq N_{\max}),\qquad
	N_{\max}-h_2\geq N_*.
	\]
	
	Firstly, we consider the reduction process for $j>0$.
	If $j\geq q_\tau(N_{\max}-v_K(q_\tau))$ and
	$\min_{0\leq k\leq d_x-1}v_K(q_\tau w_{jk})=m<\infty$,
	then \cref{lemma:frobenius of a form} gives
	\[
	m\geq N_{\max},\qquad
	j\leq q_\tau(m-v_K(q_\tau)+1)-1.
	\]
	By \cref{prop:valuationoffinitereduction}~\ref{prop:valuationoffinitereduction1},
	the corresponding function and remainder form after finite
	reduction have coefficient valuations at least
	\[
	m-e_K\lfloor\log_p(je_0)\rfloor
	\geq m-h_0(m)\geq N_*+h_1.
	\]
	The subsequent infinite reduction loses at most $h_1$ digits,
	by \cref{prop:valuationoffinitereduction}~\ref{prop:valuationoffinitereduction2}
	and \cref{prop:valuationofinfinitereduction}.
	Then the resulting function and form vanish modulo $\pi^{N_*}$.
	Similarly, for
	$1\leq j\leq q_\tau(N_{\max}-v_K(q_\tau))-1$, we have
	\[
	q_\tau\sum_{k=0}^{d_x-1}\frac{w_{jk}}{r^j}b_k^0\frac{dx}{r}
	\equiv
	q_\tau\sum_{k=0}^{d_x-1}
	\frac{\widehat w_{jk}}{r^j}b_k^0\frac{dx}{r}
	\pmod{\pi^{N_{\max}}}.
	\]
	Finite reduction loses at most $h_0(N_{\max})$ digits,
	and the subsequent infinite reduction loses at most $h_1$
	digits. Thus the corresponding functions and forms agree
	modulo $\pi^{N_*}$, since
	\[
	N_{\max}-h_0(N_{\max})-h_1\geq N_*.
	\]
	Here the change of basis does not decrease coefficient
	valuations, since the matrix
	$W^{-1}\in\GL_{d_x}(K^\circ[x,x^{-1},1/r])$.
	This proves the estimate for $f_0$, since $N_*\geq N$.
	
	Next, we consider the reduction process for $j\leq0$. We have
	\[
	q_\tau\sum_{j\leq 0}\sum_{k=0}^{d_x-1}
	\frac{w_{jk}}{r^j}b_k^0\frac{dx}{r}
	\equiv
	q_\tau\sum_{j\leq 0}\sum_{k=0}^{d_x-1}
	\frac{\widehat w_{jk}}{r^j}b_k^0\frac{dx}{r}
	\pmod{\pi^{N_{\max}}}.
	\]
	After changing to the basis
	$b_0^\infty,\dots,b_{d_x-1}^\infty$, we have
	\[
	\sum_{k=0}^{d_x-1}u'_kb_k^\infty\frac{dx}{r}
	\equiv
	\sum_{k=0}^{d_x-1}\widehat{u'_k}b_k^\infty\frac{dx}{r}
	\pmod{\pi^{N_{\max}}}.
	\]
	By \eqref{eq:ord at infinity of u'} and our assumption,
	\[
	\ord_\infty(u'_k),\ \ord_\infty(\widehat{u'_k})
	\geq
	\min\{q_\tau(\ord_0(W)+1),\ord_\infty(W^{-1})+1\}
	-\deg(r)+1.
	\]
	Thus the corresponding functions and forms agree modulo
	$\pi^{N_{\max}-h_2}$ by
	\cref{prop:valuationofinfinitereduction}, and hence modulo
	$\pi^{N_*}$. Together with the preceding estimates, this
	proves the estimate for $f_\infty$, since $N_*\geq N$.
	
	Finally, put
	\[
	\eta\coloneq
	F_\tau^*\omega-df_0-df_\infty
	-\bigl(\widehat\omega-d\widehat f_0-d\widehat f_\infty\bigr).
	\]
	By the preceding estimates, $\eta\in E_0\cap E_\infty$
	and its coefficient valuations are at least $N_*$. By \cref{prop:valuationoffinalreduction}, with respect to the given basis, the loss of precision of $\eta$ after the final reduction procedure is $c_{end}$. This proves the estimate for $f_{end}$, since $N_*-c_{end}=N$.
\end{proof}

\begin{remark}
	Put $N_*=N+c_{end}$. To choose $N_{\max}$, start with
	\[
	N_{\max}=\max\{v_K(q_\tau)+2,N_*+h_2\}
	\]
	and increase it until
	\[
	N_{\max}-h_0(N_{\max})\geq N_*+h_1.
	\]
	Such a candidate exists because $m-h_0(m)\to\infty$.
	
	We then check whether this candidate satisfies the required
	condition at every higher valuation level. Choose an integer
	\[
	M\geq\max\{N_{\max},N_*+h_1\}
	\]
	large enough that
	\begin{gather*}
		M-v_K(q_\tau)+1-\frac1{q_\tau}\geq\frac{e_K}{\ln p},\\
		\bigl((q_\tau(M-v_K(q_\tau)+1)-1)e_0\bigr)^{e_K}
		\leq p^{M-N_*-h_1}.
	\end{gather*}
	Such an $M$ exists because the first inequality holds for
	all sufficiently large $M$, while the left-hand side of
	the second grows polynomially and its right-hand side
	exponentially. Then check
	\[
	m-h_0(m)\geq N_*+h_1
	\qquad(N_{\max}\leq m<M).
	\]
	If any test fails, replace $N_{\max}$ by one plus the
	largest integer $m$ for which it fails. The resulting
	$N_{\max}$ satisfies
	\[
	m-\max\{h_0(m)+h_1,h_2\}\geq N+c_{end}
	\qquad\text{for every integer }m\geq N_{\max}.
	\]
	
	Indeed, the first condition on $M$ ensures that the function
	\[
	x-e_K\log_p\bigl((q_\tau(x-v_K(q_\tau)+1)-1)e_0\bigr)
	\]
	is increasing for $x\geq M$, and the second ensures that
	its value at $M$ is at least $N_*+h_1$.
	Replacing the logarithm by its floor only increases this
	function, so $m-h_0(m)\geq N_*+h_1$ for every integer
	$m\geq M$. The remaining valuation levels are covered by
	the finite tests, and $N_{\max}-h_2\geq N_*$ holds by
	construction.
\end{remark}

\begin{remark}
	Rigorously speaking, to compare $f_0$ (resp. $f_\infty$, resp. $f_{end}$) and $\widehat{f}_0$ (resp. $\widehat{f}_\infty$, resp. $\widehat{f}_{end}$), we should fix a "basis" of $\mathcal{R}^\dagger\widehat{\otimes}_{K^\circ}K$ (or a large space) over $K$. We have two choices of "basis", one is $\left\{\frac{x^i}{r^j}b_{k}^0\mid  0\leq i< \deg(r), 0\leq k\leq d_x-1, j\in\Z\right\}$, the other one is  $\left\{\frac{x^i}{r^j}b_{k}^\infty\mid  0\leq i< \deg(r), 0\leq k\leq d_x-1, j\in\Z\right\}$. In fact, in our proof, for "$f_0, f_{end}$", we choose the first "basis", for $f_\infty$, we choose the second one. 
	Since $W\in \GL_{d_x}(K^\circ[x,1/x, 1/r])$, the proposition holds no matter which basis we choose.
\end{remark}

\begin{remark}
	We record two implementation details.
	\begin{itemize}
		\item  In our code, $\widehat\omega$ is obtained from $F_\tau^*\omega$ by reducing the coefficients of $q_\tau w_{jk}$ modulo $\pi^{N_{\max}}$. In particular, any coefficient divisible by $\pi^{N_{\max}}$ is replaced by $0$. 

		\item If we write 
		\[q_\tau\sum_{j=1}^{q_\tau(N_{\mathrm{max}}-v_K(q_\tau))-1}\sum_{k=0}^{d_x-1}\frac{\widehat{w}_{jk}}{r^j}b_k^0\frac{dx}{r} = d\widehat f_0+ \sum_{k=0}^{d_x-1}\widehat{u_k}b_k^\infty\frac{dx}{r},\]
		then by \cref{prop:valuationoffinitereduction}, $\ord_\infty(\widehat{u_k}) \geq \ord_\infty(W^{-1})-\deg(r)+2$.
		Hence the lower bound for $\ord_\infty(\widehat{u_k'})$ in \cref{prop:precisionofreductionprocess} is equivalent to $$\ord_{\infty}(\widehat{u_{k}}+\widehat{u_{k}'})\geq \min\{q_\tau(\ord_0(W)+1), \ord_\infty(W^{-1})+1\}-\deg(r)+1.$$
	\end{itemize}
	
\end{remark}

\begin{remark}
	If $\omega$ runs through $\omega_1,\dots,\omega_{2g}$, reduction of the
	approximations to $F_\tau^*\omega$ produces $\widehat A_\tau$ with
	$A_\tau=\widehat A_\tau+O(\pi^N)$ by the proof of \cref{prop:precisionofreductionprocess} and \cref{prop:valuationoffinalreduction}  (see \cref{algthm:semilinear-and-linear-frobenius-matrices} for the definition of $A_\tau$). 
\end{remark}

\begin{proposition}
	\label[pprop]{prop:accurateforfi}
	Keep the notation in \cref{prop:precisionofreductionprocess}.
	Suppose that $F_\tau^*\omega$ is accurate to precision $N_{\max}$,
	and that $f_0,f_\infty,f_{end}$ are accurate to precision $N$.
	Let $P\in X(K(\pi^{1/e}))$ be given to precision $N$ in the coordinates $(x(P),\boldsymbol{b}^0(P))$, or
	$(1/x(P),\boldsymbol{b}^\infty(P))$ if $P$ is infinite.
	Assume that $P$ is good, finitely bad with $v_K(r(P))<1/q_\tau$, or infinite with $x(P)\ne\infty$ and $N>v_K(1/x(P))$.
	\begin{enumerate1}
\item\label{prop:accurateforfi1}
Set $j_0=q_\tau(N_{\max}-v_K(q_\tau))$ and
\[
v_0\coloneq
\min_{\substack{0\leq k\leq d_x-1\\1\leq j<j_0}}
\{v_K(c_{jk})\},
\]
where $c_{jk}$ is given in \eqref{expresionoff0}.
For a good or finitely bad point, put
\[
T_0(P)\coloneq
\min_{a\in\Z_{\geq j_0}}
\left\{
\left\lfloor\frac{a}{q_\tau}\right\rfloor+v_K(q_\tau)
-e_K\lfloor\log_p(ae_0)\rfloor-av_K(r(P))
\right\}.
\]
Then the value $f_0(P)$ is correct to precision
\[
\begin{cases}
	\begin{aligned}
		\min\bigl\{&T_0(P),\,N-(j_0-1)v_K(r(P))\\
		&\qquad+\min\{v_0-v_K(r(P)),0\}\bigr\},
	\end{aligned}
	&\begin{gathered}\text{if $P$ is good or finitely bad}\end{gathered},\\[2ex]
	\begin{aligned}
		&N+(\ord_\infty(W^{-1})+1)v_K(1/x(P))\\
		&\qquad+\min\{v_0-v_K(1/x(P)),0\},
	\end{aligned}
	&\text{if $P$ is infinite}.
\end{cases}
\]
(At a good point $T_0(P)\geq N$, so the term $T_0(P)$ introduces no additional loss).
		
		\item\label{prop:accurateforfi2}
		Set
			\[v_\infty\coloneq\begin{cases}
			\min_{\substack{0\leq k\leq d_x-1\\ 0\leq j\leq \kappa_1}}\{v_K(c_{jk})\}& \splitcond{ if $P$ is good or is finitely bad;}\\
			\min_{\substack{0\leq k\leq d_x-1\\ \kappa_2\leq j\leq \kappa_3}}\{v_K(d_{jk})\}& \splitcond{ if $P$ is infinite,}
		\end{cases}\]
		where $c_{jk},d_{jk}$ are given in \eqref{expresionoffinfty}.
		Then the value $f_\infty(P)$ is correct to precision
		\[
		\begin{cases}
			N+\min\{v_\infty,0\},&\begin{gathered}\text{if $P$ is good or finitely bad}\end{gathered},\\[1ex]
			\begin{aligned}
				&N+\min\{q_\tau(\ord_0(W)+1),\ord_\infty(W^{-1})+1\}v_K(1/x(P))\\
				&\qquad+\min\{v_\infty-v_K(1/x(P)),0\},
			\end{aligned}&\text{if $P$ is infinite}.
		\end{cases}
		\]
	
		\item\label{prop:accurateforfi3}
		Set 
		\[v_{end}\coloneq\begin{cases}
			\min_{\substack{0\leq k\leq d_x-1\\ 0\leq j\leq \lambda_1}}\{v_K(c_{jk})\}& \text{ if $P$ is good or is finitely bad;}\\
			\min_{\substack{0\leq k\leq d_x-1\\ \lambda_2\leq j\leq \lambda_3}}\{v_K(d_{jk})\}& \text{ if $P$ is infinite,}
		\end{cases}\]
		where $c_{jk}, d_{jk}$ are given in \eqref{expresionoffend}.
		Then the value $f_{end}(P)$ is correct to precision
		\[
		\begin{cases}
			N+\min\{v_{end},0\},&\begin{gathered}\text{if $P$ is good or finitely bad}\end{gathered},\\[1ex]
			\begin{aligned}
				&N+(\ord_0(W)+1)v_K(1/x(P))\\
				&\qquad+\min\{v_{end}-v_K(1/x(P)),0\},
			\end{aligned}
			&\text{if $P$ is infinite}.
		\end{cases}
		\]
	\end{enumerate1}
\end{proposition}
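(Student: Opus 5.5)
The plan is to handle all three parts in the same way. I will write each correction function as a finite or convergent sum of monomial terms, evaluate every term at the approximate point, and apply \cref{lemma:precisionofbasicoperation} term by term. Write the stored point as $P+\varepsilon$, with errors of valuation at least $N$ in $x(P)$ (resp. $1/x(P)$) and in $\boldsymbol b^0(P)$ (resp. $\boldsymbol b^\infty(P)$). Each term then contributes two kinds of error: the error in its coefficient, which is bounded by the accuracy of $f_0,f_\infty,f_{end}$, and the propagation of $\varepsilon$ through the monomial, which is bounded by the product and inverse rules. The precision of the value is the minimum over all terms.

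For \ref{prop:accurateforfi1} at a good or finitely bad point, put $\varepsilon_r=v_K(r(P))<1/q_\tau$. I will split the sum \eqref{expresionoff0} at $j_0$.
\begin{itemize}
\item For the tail $j\geq j_0$: the computed $\widehat f_0$ has no such terms, because $\widehat\omega$ only contains pole orders $<j_0$. So the whole tail is error. By \eqref{eq:finite-reduction-coefficient-bound}, and arguing exactly as in \cref{prop:convergentdomainoffinitereduction}, each tail term has valuation at least $\lfloor a/q_\tau\rfloor+v_K(q_\tau)-e_K\lfloor\log_p(ae_0)\rfloor-a\varepsilon_r$. Taking the minimum over $a\geq j_0$ gives $T_0(P)$.
\item For $1\leq j<j_0$: $r(P)$ is correct to precision $N$ by \cref{lemma:precisionofbasicoperation}~\ref{lemma:precision basic 2}, since $r$ is integral. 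The inverse rule then gives $r(P)^{-1}$ to precision $N-2\varepsilon_r$, and the product rule gives $r(P)^{-j}$ to precision $N-(j+1)\varepsilon_r$. Multiplying by $c_{jk}(P)$ (valuation $\geq v_0$) and $b_k^0(P)$ (integral), the product rule yields precision at least
\[\min\bigl\{N-j\varepsilon_r,\;N+v_0-(j+1)\varepsilon_r\bigr\}.\]
This is worst at $j=j_0-1$ and equals $N-(j_0-1)\varepsilon_r+\min\{v_0-\varepsilon_r,0\}$.
\end{itemize}
At a good point $\varepsilon_r=0$, and the tail bound is at least $N$ by the choice of $N_{\max}$ in \cref{prop:precisionofreductionprocess}.

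At an infinite point I will work instead with $s=1/x(P)$, which has $v_K(s)>0$ and is known to precision $N>v_K(s)$. I rewrite $c_{jk}/r^j$ as a Laurent expression in $s$. Using $\deg(c_{jk})<\deg(r)$ and the fact that $\operatorname{lc}(r)$ is a unit, $r(P)^{-1}=s^{\deg r}\cdot(\text{unit in } s)$. I then pass to the basis $\boldsymbol b^\infty$ via $W^{-1}$. The worst negative power of $s$ that can appear is governed by $\ord_\infty(W^{-1})+1$, as in \cref{prop:valuationoffinitereduction}~\ref{prop:valuationoffinitereduction2}. Inverting $s$ costs $2v_K(s)$, and a power $s^{-n}$ is correct to precision $N-(n+1)v_K(s)$. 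Combining this with $v_0$ through the product rule gives the stated bound.

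Parts \ref{prop:accurateforfi2} and \ref{prop:accurateforfi3} are simpler because the sums are finite Laurent polynomials, with exponent ranges given by \cref{lemma:estimateonfunctions}~\ref{lemma:estimateonfunctions finfty} and~\ref{lemma:estimateonfunctions fend}.
\begin{itemize}
\item At a good or finitely bad point, $x(P)$ is integral and the terms $c_{jk}x^jb_k^0$ use only nonnegative powers of $x$. So \cref{lemma:precisionofbasicoperation}~\ref{lemma:precision basic 2} gives precision $N+\min\{v_\infty,0\}$ (resp. $v_{end}$).
\item At an infinite point I use the $\boldsymbol b^\infty$ form, in which $x^j=s^{-j}$. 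For $f_\infty$ the largest negative power of $s$ is $\kappa_3\leq-\min\{q_\tau(\ord_0(W)+1),\ord_\infty(W^{-1})+1\}$. For $f_{end}$ it is $\lambda_3\leq-(\ord_0(W)+1)$. The same power and inverse estimate as before then yields the displayed bounds.
\end{itemize}

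The main obstacle is the tail in \ref{prop:accurateforfi1}. It requires identifying correctly which terms are absent from the computed $\widehat f_0$ and ensuring that the truncation error is measured against the true $f_0$. I will resolve this by pointing to the construction of $\widehat\omega$ in \cref{prop:precisionofreductionprocess} and the coefficient bound of \cref{lemma:estimateonfunctions}~\ref{lemma:estimateonfunctions f0}. A further point needing care is that the bookkeeping of the inverse rule is exactly what produces the extra $\min\{v_0-v_K(r(P)),0\}$ correction.
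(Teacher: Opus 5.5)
Your proposal is correct and follows essentially the same route as the paper: term-by-term use of \cref{lemma:precisionofbasicoperation}, splitting \eqref{expresionoff0} at $j_0$ with the omitted tail bounded by $\rho(j)-jv_K(r(P))\geq T_0(P)$, and at infinite points working in $1/x(P)$ with $\boldsymbol b^0(P)=\boldsymbol b^\infty(P)W(P)^{-1}$; the paper evaluates $c_{jk}(P)$, $r(P)^{-j}$ and $W(P)^{-1}$ factor by factor rather than through a single Laurent expansion in $1/x$, and reaches the same bound. The one step you leave implicit is the tail $j\geq j_0$ at an infinite point, which the paper handles by checking from the choice of $N_{\max}$ that $\rho(j)\geq N$ for $j\geq j_0$, so that these terms have valuation at least $N+(\ord_\infty(W^{-1})+1)v_K(1/x(P))$.
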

\begin{proof}
	Since the coordinates of $P$ are integral in the relevant chart, we have $v_K(b_k^0(P))\geq0$ in the finite case and $v_K(b_k^\infty(P))\geq0$ in the infinite case, by \cref{def:triplehasgoodreduction}~\ref{good reduction: b are basis of model}.

\ref{prop:accurateforfi1} We first show that $v_K(c_{jk})\geq N$ for $j\geq j_0$, $0\leq k\leq d_x-1$. For any $a\geq j\geq j_0$, put
\[
m=\left\lfloor\frac{a}{q_\tau}\right\rfloor+v_K(q_\tau).
\]
By the definition of $j_0$, we have $m\geq N_{\max}$ and
\[
a\leq q_\tau(m-v_K(q_\tau)+1)-1.
\]
Hence, by the definition of $h_0$ and the condition on $N_{\max}$ in
\cref{prop:precisionofreductionprocess}, 
\[h_0(m)= e_K\left\lfloor\log_p\bigl((q_\tau(m-v_K(q_\tau)+1)-1)e_0\bigr)\right\rfloor\geq e_K\left\lfloor\log_p\bigl(ae_0\bigr)\right\rfloor,\]
\begin{align*}
	\left\lfloor\frac{a}{q_\tau}\right\rfloor+v_K(q_\tau)
	-e_K\lfloor\log_p(ae_0)\rfloor\geq m-h_0(m)\geq N+h_1\geq N.
\end{align*}
Taking the minimum over $a\geq j$ gives $\rho(j)\geq N+h_1\geq N$ for every $j\geq j_0$.
Thus $v_K(c_{jk})\geq N$ for any $j\geq j_0$, $0\leq k\leq d_x-1$.

To give the precision of $f_0(P)$, we consider \eqref{expresionoff0}. If $P$ is good or finitely bad, then $x(P)$ is integral. Since $r$ has integral coefficients, $r(P)$ is accurate to absolute precision $N$. As $v_K(r(P))<1/q_\tau$ and $N\geq1$, we have $N>v_K(r(P))$.
By \cref{lemma:precisionofbasicoperation}~\ref{lemma:precision basic 1.iii}, $r(P)^{-j}$ is accurate to absolute precision $N-(j+1)v_K(r(P))$, and $c_{jk}(P)$ is accurate to precision $N+\min\{v_0,0\}$ with $v_K(c_{jk}(P))\geq v_0$. By  \cref{lemma:precisionofbasicoperation} again, $\frac{c_{jk}(P)}{r(P)^j}b_k^0(P)$ is correct to precision
\begin{align*}
	&\min\{N-(j+1)v_K(r(P))+v_K(b_k^0(P))+v_0, N+v_K(b_k^0(P))-jv_K(r(P))\}\\
	\geq& N-jv_K(r(P))+\min\{v_0-v_K(r(P)),0\}\\
	\geq& N-(j_0-1)v_K(r(P))+\min\{v_0-v_K(r(P)),0\}
\end{align*}
for $1\leq j<j_0$. 
This proves the stated bound for the retained terms.

	For the omitted terms $j\geq j_0$, \eqref{eq:finite-reduction-coefficient-bound} gives 
	\[v_K\left(\frac{c_{jk}(P)}{r(P)^j}b_k^0(P)\right)\geq \rho(j)-jv_K(r(P)).\]
	Since $v_K(r(P))\geq0$, the definition of $\rho$ gives
	\begin{align*}
		\min_{j\geq j_0}\{\rho(j)-jv_K(r(P))\}&=\min_{j\geq j_0}\min_{a\geq j}
		\left\{
		\left\lfloor\frac{a}{q_\tau}\right\rfloor+v_K(q_\tau)
		-e_K\lfloor\log_p(ae_0)\rfloor-jv_K(r(P))
		\right\}\\
		&=\min_{a\geq j_0}\min_{j_0\leq j\leq a}
		\left\{
		\left\lfloor\frac{a}{q_\tau}\right\rfloor+v_K(q_\tau)
		-e_K\lfloor\log_p(ae_0)\rfloor-jv_K(r(P))
		\right\}\\
		&=T_0(P).
	\end{align*}
	Indeed, for each fixed $a$, the inner minimum is attained at $j=a$.
	At a good point, $v_K(r(P))=0$, and the uniform condition
	on $N_{\max}$ in \cref{prop:precisionofreductionprocess}
	gives $\rho(j)\geq N$ for $j\geq j_0$.
	Thus the omitted terms introduce no additional loss.
	
	If $P$ is infinite, the point is given by $(1/x(P), \boldsymbol{b}^\infty(P))$, we put $t=1/x$. Then by \cref{lemma:precisionofbasicoperation}~\ref{lemma:precision basic 1.ii} \ref{lemma:precision basic 1.iii}, for any $j\in\Z, x(P)^j=t(P)^{-j}$ is correct to precision $N-(j+1)v_K(t(P))$ and $v_K(x(P)^j) = -jv_K(t(P))$. 
	Writing $c_{jk}(x)=x^{\deg(c_{jk})}f_{jk}(t)$ with the reversed polynomial $f_{jk}(t)=t^{\deg(c_{jk})}c_{jk}(1/t)\in K[t]$. By \cref{lemma:precisionofbasicoperation}~\ref{lemma:precision basic 2}, $f_{jk}(P)$ is correct to precision $\min\{N,N+v_0\}$ and $v_K(f_{jk}(P))\geq v_0$. By \cref{lemma:precisionofbasicoperation}~\ref{lemma:precision basic 1.iii}, 
	$c_{jk}(P)=x(P)^{\deg(c_{jk})}f_{jk}(P)$ is correct to precision
	\begin{align*}
&\min\{N-(\deg(c_{jk})+1)v_K(t(P))+v_0, \min\{N,N+v_0\}-\deg(c_{jk})v_K(t(P))\}\\
=&
N-\deg(c_{jk})v_K(1/x(P))
+\min\{v_0-v_K(1/x(P)),0\},
	\end{align*}
	with
	\[
	v_K(c_{jk}(P))\geq v_0-\deg(c_{jk})v_K(1/x(P)).
	\]
	Since $r$ has integral coefficients and unit leading coefficient,
	the reversed polynomial $t^{\deg(r)}r(1/t)$ is a unit at $t(P)$, and $t(P)^{\deg(r)}r(P)$ is correct to precision $N$ by \cref{lemma:precisionofbasicoperation}~\ref{lemma:precision basic 2}.
	Consequently, $r(P)^{-j} = x(P)^{-j\deg(r)}(t(P)^{\deg(r)}r(P))^{-j}$ has valuation $j\deg(r)v_K(1/x(P))$, and is accurate to precision $$\min\{N+(j\deg(r)-1)v_K(1/x(P)),N+j\deg(r)v_K(1/x(P))\}=N+(j\deg(r)-1)v_K(1/x(P))$$ by \cref{lemma:precisionofbasicoperation}~\ref{lemma:precision basic 1.iii}.
	We next consider $\boldsymbol{b}^0(P)=\boldsymbol{b}^\infty(P)W(P)^{-1}$.
	Since $W^{-1}=(W^\infty)^{-1}W^0\in\GL_{d_x}(K^\circ[x,x^{-1},1/r])$,
	we have $v_K(W(P)^{-1})\geq\ord_\infty(W^{-1})v_K(1/x(P))$,
	and $W(P)^{-1}$ is correct to precision $N+(\ord_\infty(W^{-1})-1)v_K(1/x(P))$ by \cref{lemma:precisionofbasicoperation}~\ref{lemma:precision basic 1.i} \ref{lemma:precision basic 1.ii} \ref{lemma:precision basic 1.iii}.
	The integrality of $b_k^\infty(P)$ gives
	\[
	v_K(b_k^0(P))\geq\ord_\infty(W^{-1})v_K(1/x(P)),
	\]
	and $b_k^0(P)$ is correct to precision
	\[
	N+(\ord_\infty(W^{-1})-1)v_K(1/x(P)).
	\]
Applying
	\cref{lemma:precisionofbasicoperation}, we find that the minimum of
	\begin{itemize}
		\item $N-\deg(c_{jk})v_K(1/x(P))+\min\{v_0-v_K(1/x(P)),0\}+j\deg(r)v_K(1/x(P))$ \\ $+\ord_\infty(W^{-1})v_K(1/x(P))$,
		\item $N+(j\deg(r)-1)v_K(1/x(P))+v_0-\deg(c_{jk})v_K(1/x(P))+\ord_\infty(W^{-1})v_K(1/x(P))$,
		\item $N+(\ord_\infty(W^{-1})-1)v_K(1/x(P))+v_0-\deg(c_{jk})v_K(1/x(P))+j\deg(r)v_K(1/x(P))$,
	\end{itemize}
is the precision of $\frac{c_{jk}(P)}{r(P)^j}b_k^0(P)$, that is
\begin{align*}
	&N+(j\deg(r)-\deg(c_{jk})+\ord_\infty(W^{-1}))v_K(1/x(P))+\min\{v_0-v_K(1/x(P)),0\}\\
		\geq &N+(\ord_\infty(W^{-1})+1)v_K(1/x(P))+\min\{v_0-v_K(1/x(P)),0\}.
\end{align*}
The last inequality follows from $j\geq1$ and $\deg(c_{jk})<\deg(r)$.
For $j\geq j_0$, the preceding argument gives $v_K(c_{jk})\geq N$. Since $\deg(c_{jk})<\deg(r)$, the omitted terms have valuation at least $N+(\ord_\infty(W^{-1})+1)v_K(1/x(P))$. Thus the tail satisfies the stated bound as well.
	This proves the infinite-point bound.
	
	\ref{prop:accurateforfi2}
	If $P$ is good or finitely bad, the expression \eqref{expresionoffinfty} is polynomial in the integral coordinates $x(P),\boldsymbol{b}^0(P)$. Hence \cref{lemma:precisionofbasicoperation}~\ref{lemma:precision basic 2}
	gives precision $N+\min\{v_\infty,0\}$.
	If $P$ is infinite, each term $d_{jk}x(P)^jb_k^\infty(P)$
	is accurate to precision at least
	\[
	N-jv_K(1/x(P))+\min\{v_\infty-v_K(1/x(P)),0\}.
	\]
	Since
	\[
	j\leq-\min\{q_\tau(\ord_0(W)+1),\ord_\infty(W^{-1})+1\},
	\]
	the asserted bound follows.
	
	\ref{prop:accurateforfi3} The proof is the same as for \ref{prop:accurateforfi2}, using the degree bound $\lambda_3\leq-(\ord_0(W)+1)$ in \eqref{expresionoffend}.
\end{proof}

\begin{remark}
	The quantity $T_0(P)$ can be computed by finitely many tests.
	Set
	\[
	C=\left\lfloor\frac{j_0}{q_\tau}\right\rfloor+v_K(q_\tau)
	-e_K\lfloor\log_p(j_0e_0)\rfloor-j_0v_K(r(P)).
	\]
	Choose an integer $M>j_0$ large enough that
	\[
	M\left(\frac{1}{q_\tau}-v_K(r(P))\right)>2e_K
	\]
	and
	\[
	M\left(\frac{1}{q_\tau}-v_K(r(P))\right)+v_K(q_\tau)-1
	-e_K\bigl(\lfloor\log_p(Me_0)\rfloor+1\bigr)\geq C.
	\]
	Such an $M$ exists because $v_K(r(P))<1/q_\tau$ and
	the positive linear term dominates the logarithmic term.
	Then
	\[
	T_0(P)=
	\min_{j_0\leq a<M}
	\left\{
	\left\lfloor\frac{a}{q_\tau}\right\rfloor+v_K(q_\tau)
	-e_K\lfloor\log_p(ae_0)\rfloor-av_K(r(P))
	\right\}.
	\]
	
	Indeed, the function
	\[
	x\left(\frac{1}{q_\tau}-v_K(r(P))\right)+v_K(q_\tau)-1
	-e_K\log_p(xe_0)
	\]
	is increasing for $x\geq M$ by the first condition,
	and its value at $M$ is at least $C$ by the second. Thus every $a\geq M$ contributes
	a value at least $C$. Since the finite range contains
	$a=j_0$, whose value is $C$, its minimum is the full minimum.

\end{remark}
\begin{remark}
	From this proposition, when $P$ is finitely bad, the precision bound for $f_0(P)$ may be negative.
	To ensure positive precision, we require
	\[T_0(P)>0, \qquad N-(j_0-1)v_K(r(P))+\min\{v_0-v_K(r(P)),0\}>0, \]
	where $j_0-1=q_\tau(N_{\max}-v_K(q_\tau))-1$.
\end{remark}

Recall the matrix $A_\tau$ (resp. $A_q$) for the $q_\tau$-power Frobenius (resp. $q$-power Frobenius) lift $F_\tau^*$ (resp. $F^*_q$) given by \cref{algthm:semilinear-and-linear-frobenius-matrices} and the correction values $\boldsymbol{f}_\tau^{\tau^j}(P_j)$, $\boldsymbol{g}_q(P)$ given by \cref{algthm:frobeniusofapoint}.

\begin{lemma}\label[plemma]{lemma:precision-semilinear-composition}
	Let $\widehat A_\tau\in M_{2g}(K)$ approximate $A_\tau$ with
	\[
	v_K(\widehat A_\tau-A_\tau)\geq N_A,
	\qquad N_A\geq v_K(A_\tau).
	\]
	Assume that all arithmetic preserves the stated precision bounds.
	\begin{enumerate1}
		\item
		Let $\widehat A_q$ be the ordered product formed from
		$\widehat A_\tau$ as in \cref{algthm:semilinear-and-linear-frobenius-matrices} \eqref{eq:linear-frobenius-matrix}. Then
		\[
		v_K(\widehat A_q-A_q)
		\geq N_A+(n_\tau-1)v_K(A_\tau).
		\]
		Thus
		\[
		N_A\geq N-(n_\tau-1)v_K(A_\tau)
		\]
		suffices for precision $N$ in $A_q$.
		
		\item
		For $0\leq j<n_\tau$, let $\widehat u_j\in K(\pi^{1/e})^{\oplus 2g}$ approximate
		\[u_j\coloneq\boldsymbol f_\tau^{\tau^j}(P_j), \qquad v_K(\widehat u_j-u_j)\geq N_u.\]
		Let $\widehat{\boldsymbol g}_q(P)$ be computed from $\widehat A_\tau$ and the $\widehat u_j$ using \cref{algthm:frobeniusofapoint} \eqref{eq:frobenius-point-evaluation}.
		With the convention $\min\emptyset=+\infty$, we have
		\[
		\begin{split}
			v_K\bigl(
			\widehat{\boldsymbol g}_q(P)-\boldsymbol g_q(P)
			\bigr)
			\geq\min\Bigl\{&
			N_u+\min\{0,(n_\tau-1)v_K(A_\tau)\},\\
			&
			N_A+\min_{0\leq j\leq n_\tau-2}
			\bigl\{
			v_K(\boldsymbol f_\tau^{\tau^j}(P_j))
			+(n_\tau-2-j)v_K(A_\tau)
			\bigr\}
			\Bigr\}.
		\end{split}
		\]
		Consequently, precision $N$ in $\boldsymbol g_q(P)$
		is ensured by
		\[
		N_u\geq N-\min\{0,(n_\tau-1)v_K(A_\tau)\},
		\]
		and
		\[
		N_A\geq
		N-\min_{0\leq j\leq n_\tau-2}
		\bigl\{
		v_K(\boldsymbol f_\tau^{\tau^j}(P_j))
		+(n_\tau-2-j)v_K(A_\tau)
		\bigr\}.
		\]
	\end{enumerate1}
\end{lemma}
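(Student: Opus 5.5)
Both parts come from writing the approximations as perturbations and expanding the products, as in \cref{lemma:precisionofbasicoperation}~\ref{lemma:precision basic 1.iii}. Since $\tau$ is an automorphism of $K$ over $\Q_p$, it preserves $v_K$; hence $v_K(\tau^j(\widehat A_\tau)-\tau^j(A_\tau))\geq N_A$ and $v_K(\tau^j(A_\tau))=v_K(A_\tau)$ for every $j$. Write $E=\widehat A_\tau-A_\tau$, so that $\tau^j(\widehat A_\tau)=\tau^j(A_\tau)+\tau^j(E)$.

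For (1), apply \cref{lemma:precisionofbasicoperation}~\ref{lemma:precision basic 1.iii} to the ordered product $A_q=\tau^{n_\tau-1}(A_\tau)\cdots A_\tau$ of $s=n_\tau$ factors. All factors have precision $N_i=N_A\geq v_K(A_\tau)$, so the particular case of that lemma gives precision $N_A+(n_\tau-1)v_K(A_\tau)$. The sufficient condition is then just a rearrangement.

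For (2), unroll the recurrence \eqref{eq:frobenius-point-evaluation}. With $V_0=0$ we get
\[
\boldsymbol g_q(P)=\sum_{j=0}^{n_\tau-1}\Pi_j\,u_j,\qquad \Pi_j\coloneq\tau^{n_\tau-1}(A_\tau)\cdots\tau^{j+1}(A_\tau),
\]
where $\Pi_{n_\tau-1}=I$. The computed value is the same expression with hats. The difference of the $j$-th terms splits into three pieces.
\begin{itemize}
\item $\Pi_j(\widehat u_j-u_j)$ has valuation at least $N_u+(n_\tau-1-j)v_K(A_\tau)$. The worst case over $0\leq j\leq n_\tau-1$ is $N_u+\min\{0,(n_\tau-1)v_K(A_\tau)\}$, using $j=n_\tau-1$ if $v_K(A_\tau)\geq0$ and $j=0$ otherwise.
\item $(\widehat\Pi_j-\Pi_j)u_j$ is bounded using part (1) applied to the shorter product of $n_\tau-1-j$ factors. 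This gives valuation at least $N_A+(n_\tau-2-j)v_K(A_\tau)+v_K(u_j)$. The piece vanishes for $j=n_\tau-1$, which is why the minimum runs only over $0\leq j\leq n_\tau-2$ and why $\min\emptyset=+\infty$ is the right convention.
\item The cross term $(\widehat\Pi_j-\Pi_j)(\widehat u_j-u_j)$ is dominated by one of the first two pieces.
\end{itemize}
Taking the minimum over $j$ yields the displayed bound, and the stated conditions on $N_u$ and $N_A$ follow by requiring each of the two terms to be at least $N$.

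The main obstacle is the cross term in the last step. To control it, note that $v_K(\widehat u_j-u_j)\geq N_u$ while $v_K(u_j)$ may be smaller than $N_u$, so one cannot simply trade one bound for the other. I would first try to use the standing hypothesis ``all arithmetic preserves the stated precision bounds'', in the sense of the Magma remark after \cref{lemma:precisionofbasicoperation}: there $N_u\geq v_K(u_j)$ automatically, so $v_K(\widehat u_j)\geq v_K(u_j)$ and the cross term is absorbed into the second piece. Failing that, I would note that $N_A\geq v_K(A_\tau)$ and $N_u\geq v_K(u_j)$ reduce the cross term to the minimum of the other two by the same subset argument as in \cref{lemma:precisionofbasicoperation}~\ref{lemma:precision basic 1.iii}.
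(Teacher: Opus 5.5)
Your proof is correct and follows the paper's route. You unroll the recurrence to $\boldsymbol g_q(P)=\sum_j C_ju_j$, apply the product estimate to each $C_j$, take the minimum over $j$, and treat part (1) exactly as the paper does.

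The one point to correct is your ``main obstacle''. The cross term needs no assumption $N_u\geq v_K(u_j)$, which is not a hypothesis of the lemma. Put $\ell_j=n_\tau-1-j\geq1$. The cross term $(\widehat C_j-C_j)(\widehat u_j-u_j)$ then has valuation at least $N_A+(\ell_j-1)v_K(A_\tau)+N_u$. By the stated hypothesis $N_A\geq v_K(A_\tau)$, this is at least $N_u+\ell_jv_K(A_\tau)$. So the cross term is absorbed into your \emph{first} piece, not the second, whatever the relation between $N_u$ and $v_K(u_j)$.

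The paper never produces a cross term. It groups $\widehat C_j\widehat u_j-C_ju_j=\widehat C_j(\widehat u_j-u_j)+(\widehat C_j-C_j)u_j$ and notes that $v_K(\widehat C_j)\geq\ell_jv_K(A_\tau)$. This holds because $N_A\geq v_K(A_\tau)$ forces $v_K(\widehat A_\tau)\geq v_K(A_\tau)$.
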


\begin{proof}
	The automorphism $\tau$ preserves $v_K$ and hence every
	fractional ideal $\pi^mK^\circ$, even when $\tau(\pi)\neq\pi$.
	Moreover, $N_A\geq v_K(A_\tau)$ implies
	$v_K(\widehat A_\tau)\geq v_K(A_\tau)$.
	
	\begin{enumerate1}
		\item
		Each conjugate $\tau^j(A_\tau)$ has valuation
		$v_K(A_\tau)$ and is approximated to precision $N_A$.
		Applying the product estimate in
		\cref{lemma:precisionofbasicoperation},
		\ref{lemma:precision basic 1.iii}, gives
		\[
		v_K(\widehat A_q-A_q)
		\geq N_A+(n_\tau-1)v_K(A_\tau).
		\]
		
		\item
		Write
		\[
		C_j\coloneq
		\tau^{n_\tau-1}(A_\tau)\cdots
		\tau^{j+1}(A_\tau),
		\qquad
		\boldsymbol g_q(P)=
		\sum_{j=0}^{n_\tau-1}C_ju_j,
		\]
		where an empty product is $I$.
		Put $\widehat C_j\coloneq\tau^{n_\tau-1}(\widehat A_\tau)\cdots\tau^{j+1}(\widehat A_\tau)$, so that unwinding the recursion \eqref{eq:frobenius-point-evaluation} gives $\widehat{\boldsymbol g}_q(P)=\sum_{j=0}^{n_\tau-1}\widehat C_j\widehat u_j$, and put
		$\ell_j=n_\tau-1-j$.
		Then
		\[
		v_K(\widehat C_j)\geq\ell_jv_K(A_\tau).
		\]
		For $\ell_j\geq1$, \cref{lemma:precisionofbasicoperation},
		\ref{lemma:precision basic 1.iii} gives
		\[
		v_K(\widehat C_j-C_j)
		\geq N_A+(\ell_j-1)v_K(A_\tau),
		\]
		\[
		v_K(\widehat C_j\widehat u_j-C_ju_j)\geq \min\{N_u+\ell_jv_K(A_\tau), N_A+(\ell_j-1)v_K(A_\tau)+v_K(u_j)\}.
		\]
		When $\ell_j=0$, we have $j=n_\tau-1$ and $C_{n_\tau-1}=I$, so	
		\[v_K(\widehat C_j\widehat u_j-C_ju_j) = v_K(\widehat u_{n_\tau-1}-u_{n_\tau-1})\geq N_u.\]
		
		Taking the minimum over $j$ and using
		\[\min_{0\leq j<n_\tau}\ell_jv_K(A_\tau)=\min\{0,(n_\tau-1)v_K(A_\tau)\}\]
		proves the stated bound.
		When $n_\tau=1$, there are no second terms, consistent with the convention $\min\emptyset=+\infty$.
	\end{enumerate1}
\end{proof}

\begin{remark}\label{rem:precision-composed-values}
	Evaluate $u_j=\boldsymbol f_\tau^{\tau^j}(P_j)$ using
	\cref{prop:accurateforfi}, taking into account the actual precision of $P_j$.
	If $u_j$ is computed to absolute precision $N_{u,j}$,
	the proof of \cref{lemma:precision-semilinear-composition} gives the bound
	\[
	\min\left\{
	\min_{0\leq j<n_\tau}
	\bigl\{N_{u,j}+(n_\tau-1-j)v_K(A_\tau)\bigr\},
	\;
	N_A+\min_{0\leq j<n_\tau-1}
	\bigl\{v_K(u_j)+(n_\tau-2-j)v_K(A_\tau)\bigr\}
	\right\}
	\]
	for the absolute precision of $\boldsymbol g_q(P)$, with the convention $\min\emptyset=+\infty$.
	Certified lower bounds for the valuations may be used in this estimate.
	
	Combine the bounds for $\boldsymbol g_q(P'')$ and $\boldsymbol g_q(R'')$ with the tiny-integral precisions to determine the right-hand-side precision $N'$ in \cref{prop:precisionofsolutionoflinearsystem}.
	Twisting coefficients by $\tau$ causes no precision loss, since $\tau$ preserves $v_K$.
\end{remark}
	
	It remains to consider the precision of the solution of \cref{algthm:integralonbasis} \eqref{equation:linearequationfromfrobenius}.

	\begin{proposition}
		\label[pprop]{prop:precisionofsolutionoflinearsystem}
		Keep the notation as in \cref{algthm:integralonbasis}~\eqref{equation:linearequationfromfrobenius}. Suppose that the matrix $A_q\in M_{2g}(K)$ is accurate to precision $N$, and the right hand side of \eqref{equation:linearequationfromfrobenius} is of valuation $n_0$ and accurate to precision $N'\leq N$ according to \cref{prop:precisionfortinyintegral,rem:precision-composed-values}.
		If the right-hand side of \eqref{equation:linearequationfromfrobenius} is nonzero, assume $N'>n_0$ so that its valuation is determined.
		Assume also
		\[
		N+(2g-1)v_K(A_q-I)>v_K(\det(A_q-I));
		\]
		otherwise increase the working precision. This sufficient condition
		ensures that the matrix approximation is invertible and that the
		inverse-precision bound applies. Then the integrals $\int_{P''}^{R''}\omega_i$ computed in \cref{algthm:integralonbasis} are accurate to precision
		\[\min\{N'+v_K((A_q-I)^{-1}),N-2v_K(\det(A_q-I))+2(2g-1)v_K(A_q-I)+n_0\}.\]
	\end{proposition}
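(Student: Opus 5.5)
We treat the system \eqref{equation:linearequationfromfrobenius} as $B\mathbf I=\mathbf c$ with $B=A_q-I$ and $\mathbf c$ the right-hand side. The computed solution is $\widehat{\mathbf I}=\widehat B^{-1}\widehat{\mathbf c}$, where $\widehat B=\widehat A_q-I$ and $\widehat{\mathbf c}$ are the stored approximations. Subtracting $I$ is exact, so $\widehat B$ is accurate to precision $N$ by \cref{lemma:precisionofbasicoperation}~\ref{lemma:precision basic 1.i}. The hypothesis $N+(2g-1)v_K(B)>v_K(\det B)$ is precisely the condition of \cref{lemma:precisionofbasicoperation}~\ref{lemma:precision basic 1.ii} with $n=2g$. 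That lemma therefore gives two things: $\widehat B$ is invertible, and $\widehat B^{-1}$ approximates $B^{-1}$ to precision
\[M\coloneq N-2v_K(\det B)+2(2g-1)v_K(B).\]

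Next we decompose the error as
\[
\widehat B^{-1}\widehat{\mathbf c}-B^{-1}\mathbf c
=\widehat B^{-1}(\widehat{\mathbf c}-\mathbf c)+(\widehat B^{-1}-B^{-1})\mathbf c .
\]
The second term has valuation at least $M+v_K(\mathbf c)=M+n_0$. This uses the assumption $N'>n_0$, which ensures that $n_0$ is the true valuation of $\mathbf c$ and not merely that of its approximation. If $\mathbf c=0$, this term vanishes.

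For the first term we need $v_K(\widehat B^{-1})\geq v_K(B^{-1})$. This holds if $M>v_K(B^{-1})$, because then $\widehat B^{-1}=B^{-1}+E$ with $v_K(E)\geq M$. To check the inequality we use the adjugate bound from the proof of \cref{lemma:precisionofbasicoperation}~\ref{lemma:precision basic 1.ii}:
\[v_K(B^{-1})\geq (2g-1)v_K(B)-v_K(\det B).\]
Then $M$ minus this lower bound equals $N+(2g-1)v_K(B)-v_K(\det B)>0$ by hypothesis. This shows $M$ exceeds the lower bound; I expect the main subtlety to be upgrading this to $M>v_K(B^{-1})$ itself. If that fails, I would instead use the bound
\[\min\{N'+v_K(B^{-1}),\,N'+M\}\]
for the first term and note that $N'+M$ is absorbed by $M+n_0$ when $N'>n_0$. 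Alternatively, one could apply \cref{lemma:precisionofbasicoperation}~\ref{lemma:precision basic 1.iii} with two factors, taking precisions $M$ for $\widehat B^{-1}$ and $N'$ for $\widehat{\mathbf c}$. In either approach the cross term $(\widehat B^{-1}-B^{-1})(\widehat{\mathbf c}-\mathbf c)$ has valuation at least $M+N'\geq M+n_0$.

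Combining the two estimates gives accuracy $\min\{N'+v_K((A_q-I)^{-1}),\,M+n_0\}$, which is the claimed bound. I will also justify the sign of $\mathbf c$ in \eqref{equation:linearequationfromfrobenius}: $\mathbf c$ is assembled from the correction values $g_{q,i}$ and the tiny integrals, and its precision $N'$ is supplied by \cref{prop:precisionfortinyintegral} and \cref{rem:precision-composed-values}. Both of these are taken as given in the statement.
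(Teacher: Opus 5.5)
Your error decomposition $\widehat B^{-1}\widehat{\mathbf c}-B^{-1}\mathbf c=\widehat B^{-1}(\widehat{\mathbf c}-\mathbf c)+(\widehat B^{-1}-B^{-1})\mathbf c$ is the same one the paper uses. However, the step you flagged is a real gap, and your fallback does not close it.

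The inequality $M>v_K(B^{-1})$ does not follow from the hypotheses. Take $2g=4$, $B=\mathrm{diag}(1,\pi,\pi,\pi)$ and $N=4$. Then $N+3v_K(B)=4>3=v_K(\det B)$, yet $M=4-6=-2<-1=v_K(B^{-1})$.

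For $\mathbf c\neq0$ your fallback is a valid argument. The first term splits as $B^{-1}(\widehat{\mathbf c}-\mathbf c)+(\widehat B^{-1}-B^{-1})(\widehat{\mathbf c}-\mathbf c)$, and the piece of valuation at least $N'+M$ is absorbed into $M+n_0$ because $N'>n_0$. That absorption is unavailable when $\mathbf c=0$, which happens exactly when $\int_{P''}^{R''}\omega_i=0$ for all $i$. In that case the statement, with $v_K(0)=+\infty$, claims precision $N'+v_K(B^{-1})$. Your argument only yields $N'+\min\{v_K(B^{-1}),M\}$, which is $N'-2$ rather than $N'-1$ in the example above.

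The missing idea is to compare $\widehat B^{-1}$ with $B^{-1}$ directly instead of through $M$. The adjugate bound you already wrote down gives
\[
N+v_K(B^{-1})\geq N+(2g-1)v_K(B)-v_K(\det B)>0.
\]
Combine this with the identity $B^{-1}-\widehat B^{-1}=B^{-1}(\widehat B-B)\widehat B^{-1}$. It gives $v_K(B^{-1}-\widehat B^{-1})\geq v_K(B^{-1})+N+v_K(\widehat B^{-1})>v_K(\widehat B^{-1})$, so $v_K(\widehat B^{-1})=v_K(B^{-1})$ by the ultrametric inequality.

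With this equality, $\widehat B^{-1}(\widehat{\mathbf c}-\mathbf c)$ has valuation at least $N'+v_K(B^{-1})$ unconditionally, and no cross term needs separate treatment. The claimed bound then follows for $\mathbf c\neq0$ and $\mathbf c=0$ alike, which is how the paper argues.
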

\begin{proof}
	It suffices to consider the linear system $Mu=w$, where $M\in\GL_n(K)$, $w\in K(\pi^{1/e})^{\oplus n}$. Note that the estimates of \cref{lemma:precisionofbasicoperation} hold over $K(\pi^{1/e})$ as well.
	Let $\widehat M$ and $\widehat w$ be approximations
	satisfying
	\[v_K(\widehat M-M)\geq N, \qquad v_K(\widehat w-w)\geq N'.\]
	By \cref{lemma:precisionofbasicoperation} \ref{lemma:precision basic 1.ii}, the hypothesis $N+(n-1)v_K(M)>v_K(\det M)$ ensures that $\widehat M$ is invertible and
	\[
	v_K(\widehat M^{-1}-M^{-1})
	\geq N-2v_K(\det M)+2(n-1)v_K(M).
	\]
	The adjugate formula also gives
	\[
	N+v_K(M^{-1})
	\geq N+(n-1)v_K(M)-v_K(\det M)>0.
	\]
Therefore, the identity
\[
M^{-1}-\widehat M^{-1}
=M^{-1}(\widehat M-M)\widehat M^{-1}
\]
shows that the difference has valuation strictly greater
than $v_K(\widehat M^{-1})$. Hence
$v_K(\widehat M^{-1})=v_K(M^{-1})$.
	
	Using
	\[
	\widehat M^{-1}\widehat w-M^{-1}w
	=
	\widehat M^{-1}(\widehat w-w)
	+(\widehat M^{-1}-M^{-1})w,
	\]
	we obtain the precision bound
	\[
	\min\left\{
	N'+v_K(M^{-1}),
	N-2v_K(\det M)+2(n-1)v_K(M)+v_K(w)
	\right\}.
	\]
	Here $v_K(0)=+\infty$. Taking $M=A_q-I$, $n=2g$,
	and $w$ to be the right-hand side of
	\eqref{equation:linearequationfromfrobenius}
	proves the proposition.
\end{proof}

\section{Implementation}
\label{section: implementation}
In this section, we give examples of integration on curves over number fields. For a smooth projective curve $X/F$ with a plane model $Q(x,y)=0$ over $\OO_F$, we use the notation of \cref{subsection:plane models}, such as $W^0,W^\infty,\boldsymbol{b}^0,\boldsymbol{b}^\infty,r$, with $F$ in place of $K$. We then pass to a completion $F_{\mathfrak P}$ for the integration computations.

All computations of this section and of \cref{appendix:a sc and mw} can be reproduced with the {\sffamily Magma} scripts that accompany the code.

\subsection{Computation}

\begin{example}[\cite{siksek2013explicit}~Section~6]\label{example:Siksek-8-1}
	Let $F\coloneq\Q(\theta)$ with $\theta=\sqrt[3]{2}$, and $X$ the hyperelliptic curve defined by
	\[Q(x,y)= y^2-3(4\epsilon^{-1}x^5-\epsilon^{2})\]
	where $\epsilon=1-\theta$. 

    We have that $W^0 = I$, which means that  $F[x,y]/(Q)$ is normal, and $	\boldsymbol{b}^0=[1,y]$ is an integral basis for the function field $F(X)$ over $F[x]$. We also have that
\begin{align*}
	W^\infty=\begin{bmatrix}
		1&0\\
		0&1/x^3\\
	\end{bmatrix},
\end{align*}
so $\boldsymbol{b}^\infty=[1,y/x^3]$ is an integral basis for $F(X)$ over $F[1/x]$. The auxiliary polynomial obtained from the discriminant is
\[r(x)=(-48\theta^2 - 48\theta - 48)x^5 - 12\theta^2 + 24\theta - 12\]
which is irreducible over $F$.

We consider the $F$-rational points on $X$ listed below: 
\begin{center}
	\begin{tabular}{|l|c|c|}
		\hline	\text{Point} & $r(P)$ & Type of points\\ 
		\hline	$P_1=(-\theta^2-\theta-1, 40\theta^2+53\theta+67)$&$32676\theta^2 + 41208\theta + 51876$& good\\
		$P_1'=(-\theta^2-\theta-1, -(40\theta^2+53\theta+67))$&$32676\theta^2 + 41208\theta + 51876$& good\\
		$P_2=(-1, 3\theta+3)$&$36\theta^2 + 72\theta + 36$
		& good\\
		$P_2'=(-1, -(3\theta+3))$&$36\theta^2 + 72\theta + 36$
		& good\\
		$1/x(P_3)=0, \boldsymbol{b}^\infty=(1,0)$&$\infty$& very infinite\\
		\hline
	\end{tabular}
\end{center}

The curve $X$ has genus~$2$. Let $\omega_1,\omega_2$ be the basis of $H^0(X,\Omega^1_{X/F})$ given by \texttt{cohobasis.m}.
We take the prime $\mathfrak{P}\mid 109$ of residue degree~$1$ and $N=20$, then
\begin{align*}
\left(\int_{P_1}^{P_1'}\omega_1,\int_{P_1}^{P_1'}\omega_2\right)&=(25451282+O(109^4),\,-56789000+O(109^4)),\\
\left(\int_{P_1}^{P_2}\omega_1,\int_{P_1}^{P_2}\omega_2\right)&=(-45162733+O(109^4),\,1262438+O(109^4)),\\
\left(\int_{P_1}^{P_2'}\omega_1,\int_{P_1}^{P_2'}\omega_2\right)&=(-70544146+O(109^4),\,-58051438+O(109^4)),\\
\left(\int_{P_1}^{P_3}\omega_1,\int_{P_1}^{P_3}\omega_2\right)&=(12725641+O(109^4),\,-28394500+O(109^4)).
\end{align*}
The rational prime $109$ splits completely in $F$, so $F_{\mathfrak P}=\Q_{109}$ and that computation never leaves $\Q_p$. As a check, $P_1'$ is the image of $P_1$ under the hyperelliptic involution and $P_3$ is a Weierstrass point, so $\int_{P_1}^{P_1'}\omega_i=2\int_{P_1}^{P_3}\omega_i$ for $i=1,2$

Taking instead $\mathfrak P\mid7$ --- and $7$, like $13,19,37$, is \emph{inert} in $\Z[\theta]$ --- makes it a genuine $f_K=3$ computation over $K=\Q_{7^3}$, with residue field $\mathbb F_{343}$. All four affine points remain good. With $N=12$, the script \texttt{example-8.1-inert-7.m} computes the integrals from $P_1$ to $P_1'$, $P_2$ and $P_2'$. The code lifts the $7$-power Frobenius ($q_\tau=7$, $n_\tau=3$) and takes about $0.5$\,s, against about $45$\,s with the $343$-power lift, and both give the same integrals.
 For example, writing $\alpha$ for the chosen generator of $K$ over $\Q_7$, we obtain
\[\int_{P_1}^{P_2}\omega_1= 7\bigl(-685629715\,\alpha^2-642258943\,\alpha-910533089\bigr)+O(7^{12}).\]
The other five values are recorded in \texttt{example-8.1-inert-7.log}.
Each of the six values has non-zero $\alpha$- and $\alpha^2$-components, so they lie in $K\setminus\Q_7$.

\end{example}

\begin{example}\label{example:Xb3s5}
		The modular curve $X(b3,s5)$ has a plane model, see \cite[Lemma~5.2]{freitas2015elliptic}:
	\[Q(x,y)=y^4+(-3x+7)y^3+2(-3x^2-19x-7)y^2+4(-x-3)y+8(3x^3+5x^2+3x).\]
	We consider the curve $X\coloneq X(b3,s5)$ over the number field $F\coloneq\Q(\sqrt{5})$.
    We have that $W^0 = I$, which means that  $F[x,y]/(Q)$ is normal, and $\boldsymbol{b}^0=[1,y,y^2,y^3]$ is an integral basis for the function field $F(X)$ over $F[x]$. We also have that
    \begin{align*}
    	W^\infty=\begin{bmatrix}
    		1&0&0&0\\
    		0&1/x&0&0\\
      		0&0&1/x^2&0\\
            0&0&0&1/x^3\\
    	\end{bmatrix},
    \end{align*}
so $\boldsymbol{b}^\infty=[1,y/x,y^2/x^2,y^3/x^3]$ is an integral basis for $F(X)$ over $F[1/x]$. The auxiliary polynomial obtained from the discriminant is
\begin{align*}
	r(x)=5184(132x^{11} + 1549x^{10} + 20048x^9 + 116742x^8 + 352238x^7 + 629657x^6\\ +
	718900x^5 + 550141x^4 + 288894x^3 + 102346x^2 + 20436x + 1509)
\end{align*}
which is irreducible over $F$.

	We consider the following $\Q(\sqrt{5})$-rational points on $X(b3,s5)$ listed below:
			\begin{center}
				\begin{tabular}{|l|c|c|}
				\hline	\text{Point} & $r(P)$ & Type of points\\ 
				\hline	$P_1=(0,0)$&$7822656$ & good\\
				        $P_2=(1/2,-1)$&$11890992375/16$
				        & good\\
			            $P_3=(-2,-1+\sqrt{5})$&$26568000$& good\\
			            $P_4=(\frac{1-\sqrt{5}}{2},-6+2\sqrt{5})$&$-48708864000\sqrt{5} + 108918432000$
			            & good\\
			            $P_5=(\frac{1+\sqrt{5}}{2},4+2\sqrt{5})$&$48708864000\sqrt{5} + 108918432000$
			            & good\\
			            $1/x(P_6)=0, \boldsymbol{b}^\infty=[1,0,0,0]$&$\infty$& very infinite\\
			 	\hline
			        \end{tabular}
			\end{center}
		The curve $X$ has genus $3$. 
		Notice that $Q$ is reducible modulo $3$, so we take
		$\mathfrak{P}=(7)$ (notice that $7$ is unramified in $\Z[\sqrt{5}]$), $N=20$, then
		\[\left(\int_{P_1}^{P_2}\omega_1,\int_{P_1}^{P_2}\omega_2,\int_{P_1}^{P_2}\omega_3\right)= (O(7^{18}),O(7^{19}),O(7^{19})),\]
		\[\left(\int_{P_1}^{P_3}\omega_1,\int_{P_1}^{P_3}\omega_2,\int_{P_1}^{P_3}\omega_3\right)= (O(7^{17}),O(7^{18}),O(7^{18})),\]
		\[\left(\int_{P_1}^{P_4}\omega_1,\int_{P_1}^{P_4}\omega_2,\int_{P_1}^{P_4}\omega_3\right)= (O(7^{17}),O(7^{18}),O(7^{18})),\]
		\[\left(\int_{P_1}^{P_5}\omega_1,\int_{P_1}^{P_5}\omega_2,\int_{P_1}^{P_5}\omega_3\right)= (O(7^{17}),O(7^{18}),O(7^{18})),\]
		\[\left(\int_{P_1}^{P_6}\omega_1,\int_{P_1}^{P_6}\omega_2,\int_{P_1}^{P_6}\omega_3\right)= (O(7^{17}),O(7^{18}),O(7^{18})).\]
		The displayed integrals all vanish to working precision because the holomorphic integrals are the $p$-adic Abel--Jacobi logarithm of the classes $[P_i-P_1]$, which are torsion in the Jacobian. Indeed, the Jacobian $X(b3,s5)$ is $\Q$-isogenous to a product of elliptic curves in the Cremona isogeny classes $15a$, $75b$ and $75c$. Each of these curves and its quadratic twist by $5$ has rank $0$ over $\Q$, so $J(X)(F)$ is finite, see
		\cite[proof of Lemma~13.1]{freitas2015elliptic}. These outputs are reproduced by \texttt{ColemanIntegration/example-8.2.m}).
\end{example}

We next consider another non-hyperelliptic curve over a number field. 

\begin{example}\label{example:superelliptic-cbrt2}
	Let $X$ be a superelliptic curve of genus $3$ over $F\coloneq\Q(\sqrt[3]{2})$ defined by a plane model:
	\[Q(x,y)=y^4-x(x-1)(x-\sqrt[3]{2}).\]
	We have that $W^0 = I$, which means that  $F[x,y]/(Q)$ is normal, and $\boldsymbol{b}^0=[1,y,y^2,y^3]$ is an integral basis for the function field $F(X)$ over $F[x]$. We also have that
	\begin{align*}
		W^\infty=\begin{bmatrix}
			1&0&0&0\\
			0&1/x&0&0\\
			0&0&1/x^2&0\\
			0&0&0&1/x^3
		\end{bmatrix},
	\end{align*}
	so $\boldsymbol{b}^\infty=[1,y/x,y^2/x^2,y^3/x^3]$ is an integral basis for $F(X)$ over $F[1/x]$. The auxiliary polynomial obtained from the discriminant is
	\[r(x)=-256x(x-1)(x-\sqrt[3]{2}).\]
	
	We consider the following $\Q(\sqrt[3]{2})$-rational points on $X$ listed below:
\begin{center}
	\begin{tabular}{|l|c|c|}
		\hline	\text{Point} & $r(P)$ & Type of points\\ 
		\hline	$x(P_1)=0, \boldsymbol{b}^0=[1,0,0,0]$&$0$ & very bad\\
		$x(P_2)=1, \boldsymbol{b}^0=[1,0,0,0]$&$0$
		& very bad\\
		$x(P_3)=\sqrt[3]{2}, \boldsymbol{b}^0=[1,0,0,0]$&$0$& very bad\\
		$1/x(P_4)=0, \boldsymbol{b}^\infty=[1,0,0,0]$&$\infty$& very infinite\\
		\hline
	\end{tabular}
\end{center}

The curve $X$ has genus~$3$. Let $\omega_1,\omega_2, \omega_3$ be the basis of $H^0(X,\Omega^1_{X/F})$ given by \texttt{cohobasis.m}.
The maximal order in $F$ is $\Z[\sqrt[3]{2}]$. Let $\mathfrak{P}=(3,1+\sqrt[3]{2})\subset \Z[\sqrt[3]{2}]$. We have that
\[(3)=\mathfrak{P}^3,\]
so the completion $F_{\mathfrak{P}}$ of $F$ with respect to $\mathfrak{P}$ is totally ramified over $\Q_3$.
Writing $\pi$ for a uniformizer of $F_{\mathfrak{P}}$ with $N=20$ the holomorphic integrals are
\begin{align*}
\left(\int_{P_1}^{P_2}\omega_1, \int_{P_1}^{P_2}\omega_2, \int_{P_1}^{P_2}\omega_3\right)=(O(\pi^2),O(\pi^2),O(\pi^2)),\\
\left(\int_{P_1}^{P_3}\omega_1, \int_{P_1}^{P_3}\omega_2, \int_{P_1}^{P_3}\omega_3 \right)=(O(\pi^2),O(\pi^2),O(\pi^2)),\\
\left(\int_{P_1}^{P_4}\omega_1, \int_{P_1}^{P_4}\omega_2, \int_{P_1}^{P_4}\omega_3\right)=(O(\pi^2),O(\pi^2),O(\pi^2)),
\end{align*}
all vanishing to the certified precision returned by the algorithm, as they must: for $i=1,2,3$ we have $\mathrm{div}(x-x(P_i))=4P_i-4P_4$, so all classes $[P_i-P_j]$ are $4$-torsion. The very bad endpoints are replaced by suitable bad points over $F_{\mathfrak P}(\pi^{1/e})$, where the code takes $e=2qe_{\widetilde P}+1=25$ (here $q=3$ and $e_{\widetilde P}=4$, see \cref{algthm:suitablebadpoint} \ref{algthm:suitablebadpoint 2} and \ref{rmk:e}). Evaluating at these points near the boundary of their residue discs loses most of the working precision; the code suggests $e=452$ to recover about $19$ digits.
\end{example}

\subsection{Diophantine applications}\label{sec:applications}

In this subsection, we will give an example of searching rational divisors on a curve using our code. 

We fix a number field $F$ (we take $F=\Q(\sqrt5)$ in \cref{ex:app-singleprime}). By a \emph{quadratic point} of a curve $C/F$, we mean a closed point $P\in C$ with $[F(P):F]=2$. Quadratic points correspond to the irreducible effective $F$-rational divisors of degree $2$. The set $C^{(2)}(F)$ also contains sums of two $F$-rational points, with repetitions allowed.

\begin{example}[Recovering sporadic divisors at a single prime]
	\label{ex:app-singleprime}
	Put $F\coloneq\Q(s),\quad s^2=5.$
	A smooth plane quartic model of \(C=X(s3,ns5)\) over \(\Q\) is
	\begin{align}\label{eq:quartic model of Xs3ns5}
		f(x,y)\coloneq y^4-2xy^3-x^2y^2-8y^3-2xy^2+2x^2y+x^3
		+3y^2+3xy-3x^2-2y+3x-1=0,
	\end{align}
	see \cite[Proposition~7.4.3]{caraiani2025on},  where the roles of $x$ and $y$ are interchanged.
	The curve \(C\) has genus \(3\).
	We denote by $\mathcal C$ the projective model over $\Z$ defined by the homogenization of \eqref{eq:quartic model of Xs3ns5}.
	For any prime $p\neq3,5$, its reduction $\mathcal C_p$ is smooth over $\mathbb F_p$.
	
	In this example, we show how our implementation of Coleman integration over arbitrary local fields allows us to recover all $35$ affine sporadic divisors in $C^{(2)}(F)$ using only the prime $19$.
	
	The basic fact supporting the computation is that, for a fixed
	$D_0\in C^{(2)}(F)$ and every differential $\omega$ annihilating
	$J(C)(F)$, we have
	\[
	\int_{D_0}^{D}\omega=0
	\qquad\text{for every }D\in C^{(2)}(F),
	\]
	see \cref{vanishing differentials}.
	In our example, the annihilating space has dimension two.
The argument in \eqref{eq:dim of annihilating dif}, given there over $F_v\cong\Q_p$, remains valid after extending scalars to the unramified quadratic field used here.

	In residue classes where the Jacobian matrix of the two	Coleman-integral functions is invertible modulo $p$, we compute their common zero by $p$-adic Newton iteration.
	We then recognize the symmetric functions of its coordinates as elements of $F$ and verify the resulting divisor exactly on $C$.
	
	The multiple-primes sweep in \texttt{multiprime-Xs3ns5.m} computes integrals over $F_v\cong\Q_p$ at eight places above the split primes
	\[
	p\in\{19,29,31,41,59,61,71,79\}.
	\]
	At each place, it searches pairs of distinct $\mathbb F_p$-rational residue discs satisfying the rank-two condition. Combining the results recovers $35$ distinct affine sporadic divisors.
	
	More precisely, the log \texttt{example-8.4-multiprime.log} records the minimal polynomials
\[f_i(X)\in F[X], \qquad 1\leq i\leq35,\]
of the $x$-coordinates of the recovered points. They are irreducible and quadratic; they are listed in \cref{tab:sporadic-divisors}, and we use the numbering there.
	For each $i$, let $\alpha_i$ be a root of $f_i(X)$, put $F_i\coloneq F(\alpha_i)$, and let $P_i\in C(F_i)$ be the recovered affine point with $x(P_i)=\alpha_i$.
	Writing $\tau_i$ for the nontrivial element of $\Gal(F_i/F)$, we denote the corresponding divisor by
	\[
	\Delta_i\coloneq P_i+P_i^{\tau_i}\in C^{(2)}(F).
	\]
	
	Our implementation allows us to replace this sweep by a	computation at a single place.
	Since $19$ splits in $F$, choose $v_{19}\mid19$, so that $F_{v_{19}}\cong\Q_{19}$, and let $K=\Q_{19^2}$ be its unramified quadratic extension.
	The reduction of an $F$-rational effective divisor of degree $2$ has one of the following forms:
	\[
	\widetilde D=\widetilde R_1+\widetilde R_2,
	\qquad
	\widetilde R_1,\widetilde R_2\in \mathcal C_{19}(\mathbb F_{19}),
	\]
	or
	\[
	\widetilde D=\widetilde R+\widetilde R^{(19)},
	\qquad
	\widetilde R\in
	\mathcal C_{19}(\mathbb F_{19^2})
	\setminus\mathcal C_{19}(\mathbb F_{19}).
	\]
	We call these residue classes \emph{split} and \emph{Frobenius-conjugate}, respectively.
	Computing integrals over $K$ allows us to search pairs of distinct residue discs of both types at the same prime.
	
	For example, consider $\Delta_4=P_4+P_4^{\tau_4}$, for which the minimal polynomial of $x(P_4)$ is
	\[
	f_4(X)=X^2-\frac{3}{2}X+\frac{63}{2}.
	\]
	Its discriminant is
	\[
	-\frac{495}{4}=\left(\frac{3}{2}\right)^2(-55),
	\]
	so $F_4=F(\sqrt{-55})$.
	Since $-55\equiv2\pmod{19}$ and $2$ is not a square in $\mathbb F_{19}$, $\Delta_4$ reduces to a Frobenius-conjugate class. Its constituent points are therefore accessible over $K$, although they are not $\Q_{19}$-rational.
	
	The single-prime sweep over $K$ recovers the same $35$ affine sporadic divisors $\Delta_1,\ldots,\Delta_{35}$: $12$ lie in split residue classes and $23$ in Frobenius-conjugate residue classes.
	Each recovered divisor is recognized over $F$ and verified exactly on $C$.
	Thus, computing Coleman integrals over an unramified quadratic extension allows us to recover all $35$ affine sporadic divisors using only one prime instead of eight.
	The proof that these exhaust the affine sporadic divisors uses the Mordell--Weil sieve together with symmetric Chabauty; see \cref{prop:quadratic points over Q(sqrt 5)}.
\end{example}

\appendix

\section{Quadratic points on $X(s3,ns5)$ over $\Q(\sqrt{5})$}
\label{appendix:a sc and mw}

In this appendix, we will give the complete classification of $X(s3,ns5)^{(2)}(\Q(\sqrt 5))$. The main tool to show the completeness is the Mordell--Weil sieve together with Siksek's symmetric Chabauty \cite{siksek2009chabauty}.

In this section, we fix a number field $F$.

\begin{art}[Annihilating differentials]\label{vanishing differentials}
	Let $C$ be a smooth projective curve over $F$ with $g\coloneq g(C)$, and let $J(C)$ be the Jacobian of $C$. Let $v$ be a place of $F$ above a rational prime $p$ where $C$ has good reduction, and let $F_v$ denote the completion. The \emph{space of annihilating differentials} $V\subset H^0(C_{F_v},\Omega_{C_{F_v}/F_v}^1)$ is the $F_v$-subspace of holomorphic differentials that annihilate (the $p$-adic closure) $\overline{J(C)(F)}\subset J(C)(F_v)$ under the Coleman pairing; concretely,
\begin{align}\label{eq:vanishing differential}
	V\coloneq\left\{\omega\in H^0\bigl(C_{F_v},\Omega^1_{C_{F_v}/F_v}\bigr) \;\middle|\; \int_0^D\omega=0\text{ for all }D\in J(C)(F)\right\}.
\end{align}
When the Chabauty condition $\rk J(C)(F) < g$ holds, we have $\dim_{F_v} V \geq g-\rk J(C)(F) \ge 1$, and $V$ is non-zero. 
\end{art}
\begin{art}[Trace-zero annihilating differentials]\label{trace zero vanishing differentials}
	Let $\varphi\colon C\to C'$ be a finite morphism of smooth curves over $F$ of degree $d$.
	\begin{enumerate}
		\item The \emph{trace map of function fields}
		\[
		\operatorname{Tr}_{F(C)/F(C')} \colon F(C) \to F(C')
		\]
is the $F(C')$-linear map that sends $z\in F(C)$ to the trace of multiplication by $z$ on the finite-dimensional $F(C')$-vector space $F(C)$. Equivalently, in characteristic zero it is the sum of the images of $z$ under the $F(C')$-embeddings into a normal closure.

		\item The \emph{trace map for differential 1-forms}
\[
\operatorname{Tr}_\varphi \colon \varphi_*\Omega^1_{C/F} \to \Omega^1_{C'/F}
\]
is defined via the function-field trace, as in \cite[\S 2.6, before Corollary 2.14]{box2023cubic}: every rational differential can be written as $\eta=z\cdot\varphi^*\alpha\in \Omega^1_{F(C)/F} = F(C)\otimes_{F(C')} \varphi^*\Omega^1_{F(C')/F}$ with $z\in F(C), \alpha\in  \Omega^1_{F(C')/F}$, and we put
\[
\operatorname{Tr}_\varphi\big(z \cdot \varphi^*\alpha\big)
\coloneq \operatorname{Tr}_{F(C)/F(C')}(z)\cdot \alpha.
\]
This is well defined, and $\Tr_\varphi$ maps regular differentials to regular differentials. Restricting to global sections gives a linear map on holomorphic differentials:
\[
\operatorname{Tr}_\varphi \colon H^0(C,\Omega^1_{C/F}) \to H^0(C',\Omega^1_{C'/F}).
\]
		\item Let $v$ be a place of $F$ above a rational prime $p$ where $C, C'$ have good reduction. The space of \emph{trace-zero annihilating differentials} $V_\varphi\subset H^0(C_{F_v},\Omega_{C_{F_v}/F_v}^1)$ is the intersection of the space of vanishing differentials $V$ of $C$ (see \eqref{eq:vanishing differential}) and the kernel of the push-forward map 
		\[\mathrm{Tr}_{\varphi,v}\colon H^0(C_{F_v},\Omega^1_{C_{F_v}/F_v})\to H^0(C'_{F_v},\Omega^1_{C_{F_v}'/F_v}).\]
	\end{enumerate}
	Notice that for any $\omega\in H^0(C',\Omega^1_{C'/F})$, we have $\Tr_\varphi(\varphi^*\omega)=d\cdot \omega$, so $\Tr_\varphi$ is surjective, and $$H^0(C,\Omega^1_{C/F}) = \varphi^*H^0(C',\Omega^1_{C'/F}) \oplus \Ker(\Tr_\varphi).$$
	If $\varphi$ is of degree $d=2$, then we have an involution $\sigma\colon C\to C$ (i.e. $\sigma^2=\id_C$) over $C'$ induced by the nontrivial element in $\Gal(F(C)/F(C'))$. Then $\varphi^*\Tr_\varphi = \id_C^*+\sigma^*$ on $H^0(C,\Omega_{C/F}^1)$. By linear algebra, one has
	\begin{align}\label{eq:description of ker tr}
		\Ker(\Tr_\varphi) = H^0(C,\Omega_{C/F}^1)^{-}\coloneq \left\{\omega\in H^0(C,\Omega_{C/F}^1) \;\middle|\; \sigma^*\omega=-\omega\right\}.
	\end{align}
\end{art}

\begin{art}
Let $\widetilde C$ be a smooth curve over a field $k$, let $R\in\widetilde C(k)$, and let $\omega\in H^0(\widetilde C,\Omega^1_{\widetilde C/k})$. For a uniformizer $t_R$ at $R$, write $\omega=\sum_{j\geq0}a_jt_R^j\,dt_R$. For an integer $h\geq1$, define
\[a(\omega,t_R,h)\coloneq(-a_0,a_1,\dots,(-1)^ha_{h-1}).\]

\end{art}

We are ready to state Siksek's symmetric Chabauty.

\begin{art}[Siksek's symmetric-Chabauty criterion]
	
	Let $C$ be a smooth projective curve over $F$ satisfying the Chabauty condition $\rk J(C)(F)<g(C)$, $m\in \N_{\geq 1}$, $v$ a place of $F$ above a rational prime $p$, and $\varphi\colon C\to C'$ be a finite morphism of projective curves over $F$ of degree $d\geq 1$. Let $\ell\in\N$ such that $e\coloneq m-d\ell\geq 0$. Let $D_{\mathrm{res}}\in C^{(e)}(F)$, $D_{\mathrm{pull}}\in \varphi^*C'^{(\ell)}(F)$ and set
	\[
	D\coloneq D_{\mathrm{res}}+D_{\mathrm{pull}}\in C^{(m)}(F).
	\]
	After collecting multiplicities, write $D=n_1R_1'+\cdots+n_kR_k'$ with $R_1',\dots, R_k'\in C(\overline{F})$ distinct geometric points and $n_i\geq 1$. Assume the following statements hold.
	\begin{enumerate}
		\item\label{MW i} $C$ and $C'$ have good reductions at $v$.
		\item\label{MW ii} No point in the support of $D_{\mathrm{pull}}$ has ramification degree under $\varphi$ divisible by $p$.
		\item\label{MW iii} Let $N$ be the maximum of the ramification indices of $p$ in the fields $F(R_i',R_j')$, for $1\leq i,j\leq k$, at places above $v$. Assume that $p\geq N+2$. Here $F(R_i',R_j')$ is obtained by adjoining the coordinates of the two support points.
		
		\item Let $\omega_1,\dots, \omega_r\in \mathcal V_\varphi\coloneq H^0(\mathcal{C},\Omega^1_{\mathcal C/F_v^\circ})\cap V_\varphi$ be an $F_v^\circ$-basis of $\mathcal V_\varphi$, where $\mathcal C$ is a smooth $F_v^\circ$-model of $C$, and $V_\varphi$ is the space of trace-zero annihilating differentials. Assume the matrix
		\begin{align}\label{eq:symmetric chabauty matrix}
			\widetilde{\mathscr A}_D\coloneq 
			\begin{bmatrix}
				a(\widetilde{\omega_1}, t_{\widetilde{R_1'}}, n_1)&a(\widetilde{\omega_1}, t_{\widetilde{R_2'}}, n_2)&\cdots & a(\widetilde{\omega_1}, t_{\widetilde{R_k'}}, n_k)\\
				a(\widetilde{\omega_2}, t_{\widetilde{R_1'}}, n_1)&a(\widetilde{\omega_2}, t_{\widetilde{R_2'}}, n_2)&\cdots & a(\widetilde{\omega_2}, t_{\widetilde{R_k'}}, n_k)\\
				\vdots& \vdots& \ddots&\vdots\\
				a(\widetilde{\omega_r}, t_{\widetilde{R_1'}}, n_1)&a(\widetilde{\omega_r}, t_{\widetilde{R_2'}}, n_2)&\cdots & a(\widetilde{\omega_r}, t_{\widetilde{R_k'}}, n_k)
			\end{bmatrix} 
		\end{align}
		has rank $m-\ell$, where $\widetilde{\omega_i}$ is the reduction of $\omega_i$ modulo $v$, $\widetilde{R_j'}$ is the reduction of $R_j'$ modulo a fixed place of $F(R_1',\dots,R_k')$ above $v$, and $t_{\widetilde{R_j'}}$ is a uniformizer at the reduction $\widetilde{R_j'}$ for each $j$.
	\end{enumerate}
	Then (see \cite[Theorem 2.6]{box2023cubic}) every $P\in C^{(m)}(F)$ in the mod $F_v^{\circ\circ}$ residue disc of $D$ is contained in 
	\[D_{\mathrm{res}}+\varphi^*C'^{(\ell)}(F).\]

In our case, i.e. the proof of
\cref{prop:quadratic points over Q(sqrt 5)},
the conditions \ref{MW i}, \ref{MW ii}, \ref{MW iii}
are easy to check. Moreover, we will show that
$V=V_\varphi=\Ker\Tr_{\varphi,v}$.

After fixing an integral basis $\omega_1,\dots,\omega_r$,
we define a similar matrix directly from the reduced divisor.
Over $\overline{\widetilde{F_v}}$, write
\[
\widetilde D=\sum_{j=1}^s m_j\widetilde R_j
\]
with distinct geometric points $\widetilde R_j$ and
$m_j\geq1$. Choose a uniformizer $t_{\widetilde R_j}$
at each $\widetilde R_j$, and put
\[
\widetilde{\mathscr A}(\widetilde D)\coloneq
\begin{bmatrix}
	a(\widetilde\omega_1,t_{\widetilde R_1},m_1)
	&\cdots&
	a(\widetilde\omega_1,t_{\widetilde R_s},m_s)\\
	\vdots&\ddots&\vdots\\
	a(\widetilde\omega_r,t_{\widetilde R_1},m_1)
	&\cdots&
	a(\widetilde\omega_r,t_{\widetilde R_s},m_s)
\end{bmatrix}.
\]
We call this the \emph{symmetric Chabauty matrix} of
$\widetilde D$ with respect to the chosen differentials
and uniformizers.

If distinct points in the support of $D$ have distinct
reductions, then $\widetilde{\mathscr A}_D$ agrees with
$\widetilde{\mathscr A}(\widetilde D)$ up to a permutation
of columns, using the same uniformizers.
Otherwise, the two matrices may differ: the rank condition
in the criterion above concerns $\widetilde{\mathscr A}_D$.
\end{art}

\begin{proposition}\label[pprop]{prop:quadratic points over Q(sqrt 5)}
	Let $F=\Q(\sqrt5)$ and $C=X(s3,ns5)$, with the model given in
	\eqref{eq:quartic model of Xs3ns5}.
	Let $\varphi\colon C\to E$ be the degree-$2$ quotient map
	associated with the involution
	\[
	\sigma([X:Y:Z])
	=
	[X+8Y-8Z:X+3Y+2Z:-2X+4Y+Z],
	\]
	where $E=X(ns3,ns5)$ has model $y^2+y=x^3+1$.
	Then $C(F)=\{Q_1,\ldots,Q_8\}$, where, with $s=\sqrt5$,
	\begin{gather*}
		Q_1=(1,0),\quad Q_2=(7,-3),\quad Q_3=\bigl(-\tfrac12,-\tfrac12\bigr),\quad Q_4=(1:0:0),\\
		Q_5=\bigl(\tfrac{s-1}{2},2-s\bigr),\quad Q_6=\bigl(\tfrac{3s+9}{2},-\tfrac{s+1}{2}\bigr),\quad Q_7=\bigl(\tfrac{9-3s}{2},\tfrac{s-1}{2}\bigr),\quad Q_8=\bigl(-\tfrac{s+1}{2},s+2\bigr),
	\end{gather*}
	and $\sigma(Q_{2i-1})=Q_{2i}$ for $1\leq i\leq4$. Moreover,
	\[
	C^{(2)}(F)
	=
	\mathscr K_2
	\cup
	\varphi^*\bigl(E(F)\bigr),
	\qquad
	\mathscr K_2
	\coloneq
	\left\{
	Q_i+Q_j\;\middle|\;1\leq i\leq j\leq 8
	\right\}
	\cup
	\{\Delta_1,\ldots,\Delta_{35},D_\infty\},
	\]
	where $\Delta_1,\ldots,\Delta_{35}$ are the affine sporadic
	divisors recovered in \cref{ex:app-singleprime} and listed in \cref{tab:sporadic-divisors}, and
	\[
	D_\infty
	=
	(1:1+\sqrt2:0)+(1:1-\sqrt2:0).
	\]
\end{proposition}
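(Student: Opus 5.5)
We would follow the standard symmetric Chabauty plus Mordell--Weil sieve strategy of Box--Gajovi\'c--Goodman. First, determine the Mordell--Weil groups: $E(F)$ for $E\colon y^2+y=x^3+1$, and $J(C)(F)$. Since $C\to E$ has degree $2$, $J(C)$ is isogenous to $E\times A$ with $A$ the Prym (an abelian surface). We expect $\rk J(C)(F)=\rk E(F)$ and $A(F)$ finite, so that $\rk J(C)(F)<3$. This would come from $L$-function or $2$-descent computations over $\Q$ and for the twist by $5$, using $A_F\sim$ the restriction to $F$ of its $\Q$-model. If so, every annihilating differential is killed by the pullback of $E$'s differential. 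One then checks that $V=V_\varphi=\Ker\Tr_{\varphi,v}=H^0(C,\Omega^1)^-$, which is $2$-dimensional, via \eqref{eq:description of ker tr}. We would also compute explicit generators of a finite-index subgroup $G\subseteq J(C)(F)$ from differences of the known points $Q_i$, $\Delta_j$, $D_\infty$ and pullbacks of $E(F)$, and saturate it at small primes.

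Second, determine $C(F)$. With $V$ of dimension $2$, classical Chabauty at one or two primes $v$ (split, e.g.\ above $19$ or $11$) bounds the points in each residue disc. The eight $Q_i$ are listed by search, and the involution relation $\sigma(Q_{2i-1})=Q_{2i}$ is checked directly. Residue discs containing no known point are excluded by a Mordell--Weil sieve on the Abel--Jacobi image $C\to J$, $P\mapsto[P-Q_1]$, combined over several primes.

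Third, handle $C^{(2)}(F)$. The set $\varphi^*E(F)$ is infinite, since $\rk E(F)\ge1$, and forms an exceptional family. For every known divisor $D\in\mathscr K_2$ not lying in $\varphi^*E(F)$, we apply the symmetric Chabauty criterion at a suitable prime with $\ell=0$: we need $\widetilde{\mathscr A}_D$ of rank $2$, using the basis of $\mathcal V_\varphi$ reduced mod $v$. For divisors in $\varphi^*E(F)$ (and possibly $Q_{2i-1}+Q_{2i}$, which equals $\varphi^*$ of a point), we apply the criterion with $\ell=1$, requiring rank $1$; here hypothesis \ref{MW ii} on the ramification of $\varphi$ must be checked. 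Conditions \ref{MW i} and \ref{MW iii} reduce to choosing $p\ge5$ of good reduction. Residue classes passing these tests contain only the known divisors or members of the pulled-back family.

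Finally, the remaining residue classes, those of $C^{(2)}(\mathbb F_v)$ not containing a known divisor, are eliminated by a Mordell--Weil sieve. We use the map $C^{(2)}(F)\to J(F)/\ell J$ composed with reduction at several primes, with a base divisor such as $2Q_1$. For each prime, we compute the images of $G$ and of unverified classes, and intersect across primes to show no unknown class survives. We expect this sieve to be the main obstacle: it requires a saturated generator set of $J(C)(F)$, requires choosing primes and an auxiliary integer so that the group orders share enough smooth structure, and must avoid classes where symmetric Chabauty fails (rank-deficient $\widetilde{\mathscr A}_D$, e.g.\ at $D_\infty$ with irrational support points over $\Q(\sqrt2)$). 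We would first try primes $7,11,19,29,31,41$ and the sieve modulus $\ell$ equal to a product of small primes dividing many $\#J(\mathbb F_v)$.
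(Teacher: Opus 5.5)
Your overall architecture is the paper's: rank $1$ coming entirely from $E$, the annihilating space $V=\Ker\Tr_\varphi=H^0(C,\Omega^1)^-$ of dimension $2$, rank-$2$ symmetric Chabauty at known non-pullback divisors, and a Mordell--Weil sieve for the rest. It differs from the paper in three places.

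\emph{Order of steps.} The paper never treats $C(F)$ separately. Once $C^{(2)}(F)$ is known, any $P\in C(F)$ gives $P+Q_1\in\mathscr K_2\cup\varphi^*(E(F))$, and in the pullback case $P+Q_1=\varphi^*(O_E)=Q_1+Q_2$. Your classical Chabauty plus degree-one sieve is valid but redundant.

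\emph{The pullback family.} Your $\ell=1$ criterion needs an $F$-rational base divisor $\varphi^*R$ in each residue class. It therefore covers only the classes $\varphi_p^*\widetilde R$ with $\widetilde R\in\red_p(E(F))$, and leaves the other classes of $\varphi_p^*\mathcal E_p(\mathbb F_p)$ to the sieve. The paper disposes of all of $\varphi_p^*\mathcal E_p(\mathbb F_p)$ at once, without base points. For trace-zero $\omega$ and $D=P+P'$, one has $\int_{D_0}^{\varphi^*\varphi(P)}\omega=\int_{O_E}^{\varphi(P)}\Tr_\varphi\omega=0$, even though $\varphi(P)$ is only defined over $F(P)$. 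Then $\sigma^*\omega=-\omega$ gives $\int_P^{\sigma(P')}\omega=0$ inside a single residue disc. Since $\widetilde p_0=(-4:1:-2)\notin\mathcal C_p$, some such $\omega$ has unit constant term there, which forces $P'=\sigma(P)$.

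\emph{Index bound.} ``Saturate at small primes'' is not rigorous without an a priori bound on the index, and you do not supply one. The paper proves $[J(C)(F):\Gamma]\mid 2$ for $\Gamma=\langle D_{\mathrm{spor}}\rangle\oplus J(C)(F)_{\mathrm{tors}}$, using $\varphi_*\varphi^*=[2]$ and $\varphi_*D_{\mathrm{fam}}=2P_E$. It then sieves with $\jmath=2\iota_2$ against $\mathcal G_p^{\mathrm{rob}}$, avoiding height computations entirely. You could argue similarly: $2\gamma=\varphi^*\varphi_*\gamma+(\gamma-\sigma_*\gamma)$, and the second term is torsion because $\ker\varphi_*$ has rank $0$ over $F$.

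Finally, the paper shows that $\rk\widetilde{\mathscr A}(\widetilde D)=1$ exactly when $\widetilde p_0,\widetilde R_1,\widetilde R_2$ are collinear, which makes the rank tests transparent. For instance, the support of $D_\infty$ spans the line $Z=0$, which misses $\widetilde p_0$. So $D_\infty$ has rank $2$ at every odd good prime and is not the problem case you anticipated.
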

\begingroup
\small
\renewcommand{\arraystretch}{1.25}
\begin{longtable}{r|>{$}l<{$}|>{$}l<{$}}
	\caption{The $35$ affine sporadic divisors
		$\Delta_i=P_i+P_i^{\tau_i}$ of \cref{ex:app-singleprime},
		numbered as in the list \texttt{mps} of
		\texttt{appendix-A-common.m}. Here $f_i$ is the minimal
		polynomial of $x=x(P_i)$ over $F=\Q(s)$, $s^2=5$, and
		$y=y(P_i)$, the only root of $f(x(P_i),Y)$ in $F_i$,
		is given in terms of $x$.}
	\label{tab:sporadic-divisors}\\
	$i$ & f_i(X) & y \\ \hline
	\endfirsthead
	
	$i$ & f_i(X) & y \\ \hline
	\endhead
	
	1&X^2-11X-47&-3\\
	2&X^2-X-2s-2&-\tfrac{s+1}{2}\\
	3&X^2-X+2s-2&\tfrac{s-1}{2}\\
	4&X^2-\tfrac{3}{2}X+\tfrac{63}{2}&1-\tfrac{1}{3}x\\
	5&X^2-\tfrac{59}{7}X+\tfrac{25}{7}&-\tfrac{1}{3}+\tfrac{1}{3}x\\
	6&X^2-\tfrac{27}{28}X+\tfrac{1}{4}&1-2x\\
	7&X^2-\tfrac{23}{7}X+\tfrac{47}{28}&-\tfrac{1}{4}+\tfrac{1}{2}x\\
	8&X^2+(s+6)X+8s+4&\tfrac{3s+7}{2}+\tfrac{s+1}{2}x\\
	9&X^2+(6-s)X-8s+4&\tfrac{7-3s}{2}+\tfrac{1-s}{2}x\\
	10&X^2+(6s-4)X-12s+19&-s+1+\tfrac{s+1}{2}x\\
	11&X^2-(6s+4)X+12s+19&s+1+\tfrac{1-s}{2}x\\
	12&X^2+\tfrac{11s-47}{8}X-4s+9&\tfrac{s-3}{2}+(s+3)x\\
	13&X^2+\tfrac{14s-27}{3}X-6s+14&-\tfrac{s+3}{2}-(s+2)x\\
	14&X^2-\tfrac{11s+47}{8}X+4s+9&-\tfrac{s+3}{2}+(3-s)x\\
	15&X^2-\tfrac{14s+27}{3}X+6s+14&\tfrac{s-3}{2}+(s-2)x\\
	16&X^2+\tfrac{7s+17}{2}X-62s-146&-\tfrac{7s+13}{2}+\tfrac{s+1}{2}x\\
	17&X^2+\tfrac{33s-68}{2}X-33s+69&-\tfrac{13s+3}{22}+\tfrac{2s+3}{11}x\\
	18&X^2+\tfrac{17-7s}{2}X+62s-146&\tfrac{7s-13}{2}+\tfrac{1-s}{2}x\\
	19&X^2-\tfrac{33s+68}{2}X+33s+69&\tfrac{13s-3}{22}+\tfrac{3-2s}{11}x\\
	20&X^2-\tfrac{2s+19}{2}X+\tfrac{s+25}{4}&s-2+(2-s)x\\
	21&X^2+\tfrac{2s-19}{2}X+\tfrac{25-s}{4}&-s-2+(s+2)x\\
	22&X^2+\tfrac{3s-7}{6}X-\tfrac{6s+10}{3}&\tfrac{1-s}{2}+\tfrac{s-1}{2}x\\
	23&X^2-\tfrac{3s+7}{6}X+\tfrac{6s-10}{3}&\tfrac{s+1}{2}-\tfrac{s+1}{2}x\\
	24&X^2+\tfrac{5s-17}{2}X+\tfrac{67s-149}{2}&-s+2\\
	25&X^2-\tfrac{5s+17}{2}X-\tfrac{67s+149}{2}&s+2\\
	26&X^2+\tfrac{7s-176}{79}X+\tfrac{76s+256}{79}&\tfrac{7-s}{22}+\tfrac{3s+1}{22}x\\
	27&X^2+\tfrac{97-51s}{62}X+\tfrac{147s-327}{62}&\tfrac{s-3}{2}+(s+3)x\\
	28&X^2+\tfrac{51s+97}{62}X-\tfrac{147s+327}{62}&-\tfrac{s+3}{2}+(3-s)x\\
	29&X^2-\tfrac{7s+176}{79}X+\tfrac{256-76s}{79}&\tfrac{s+7}{22}+\tfrac{1-3s}{22}x\\
	30&X^2-\tfrac{137s+223}{82}X+\tfrac{59s+603}{82}&-\tfrac{13s+3}{22}+\tfrac{2s+3}{11}x\\
	31&X^2+\tfrac{137s-223}{82}X+\tfrac{603-59s}{82}&\tfrac{13s-3}{22}+\tfrac{3-2s}{11}x\\
	32&X^2+\tfrac{479-283s}{24}X+\tfrac{181s-179}{24}&\tfrac{7s+16}{11}-\tfrac{s+7}{11}x\\
	33&X^2+\tfrac{283s+479}{24}X-\tfrac{181s+179}{24}&\tfrac{16-7s}{11}+\tfrac{s-7}{11}x\\
	34&X^2-\tfrac{793s+1663}{82}X+\tfrac{991s+2123}{82}&-\tfrac{s+4}{11}+\tfrac{3-2s}{11}x\\
	35&X^2+\tfrac{793s-1663}{82}X+\tfrac{2123-991s}{82}&\tfrac{s-4}{11}+\tfrac{2s+3}{11}x\\
\end{longtable}
\endgroup
\begin{proof}
	Let $\mathscr T_0\subseteq C^{(2)}(F)$ be the union listed in
	the proposition. The divisors in $\mathscr T_0$ have been
	verified on $C$, so it remains to prove
	$C^{(2)}(F)\subseteq\mathscr T_0$.
	We use symmetric Chabauty and the Mordell--Weil sieve;
	no numerical Coleman integration is needed for this
	completeness argument.
	
	Let $\mathcal C$ and $\mathcal E$ be the projective models
	over $\Z$ defined by the homogenizations of
	\eqref{eq:quartic model of Xs3ns5} and $y^2+y=x^3+1$,
	respectively. Both have smooth reduction away from $3$ and $5$.
	The involution $\sigma$ is induced by
	\begin{equation}\label{eq:M}
		M=
		\begin{pmatrix}
			1&8&-8\\
			1&3&2\\
			-2&4&1
		\end{pmatrix},
	\end{equation}
	and its quotient map $\varphi\colon C\to E$ reduces to
	$\varphi_p\colon\mathcal C_p\to\mathcal E_p$ at these primes.
	
	We use the primes
	\[
	\mathcal P=\{19,29,31,41,59,61,71,79,89\}.
	\]
	Each splits in $F$. For every $p\in\mathcal P$, fix a place
	$v_p\mid p$, so that $F_{v_p}\cong\Q_p$, and write
	$\mathcal J_p\coloneq J(\mathcal C_p)$, the Jacobian of $\mathcal C_p$.
	Denote the reduction maps by
	\[
	\red_p^{(2)}\colon C^{(2)}(F)\to
	\mathcal C_p^{(2)}(\mathbb F_p),
	\qquad
	\red_p\colon J(C)(F)\to\mathcal J_p(\mathbb F_p).
	\]

By \cite[Theorem~1.1]{bastianelli2014gonality}, the \emph{$F$-gonality} of $C$
\[
\operatorname{gon}_F(C)\geq \operatorname{gon}_\C(C) = 4-1=3>2,
\]
so the degree-$2$ Abel--Jacobi map
\[
C^{(2)}(F)\longrightarrow\Pic^2(C)(F)
\]
is injective. Similarly, for a prime $p\not=3,5$, by \cite[Corollary~2.4]{pellikaan1996on} (by taking $j=1, i=1$ in the corollary), \(
\operatorname{gon}_{\mathbb F_p}(\mathcal{C}_p)\geq \operatorname{gon}_{\overline{\mathbb F_p}}(\mathcal{C}_p) >2,
\) and 	\(
\mathcal C_p^{(2)}(\mathbb F_p)\longrightarrow\Pic^2(\mathcal C_p)(\mathbb F_p)
\)
is injective.

	\begin{enumerate}
		\item \textbf{The Mordell--Weil groups.}
		
		The computation \texttt{verify-rank-E.m} gives
		\begin{equation}\label{eq:E(F)}
			E(F)=\Z P_E\oplus\langle T_E\rangle,
			\qquad \ord(T_E)=3,
		\end{equation}
		where
		\[
		P_E=\left(\frac{-3s+5}{2},\,3s-8\right),
		\qquad
		T_E=\left(0,\frac{-s-1}{2}\right).
		\]
		By the proof of
		\cite[Proposition~7.4.4]{caraiani2025on},
		\begin{equation}\label{eq:isogeny}
			J(C)\sim_\Q E\times E_{75b}\times E_{75c}.
		\end{equation}
		where $E_{75b}$ (resp. $E_{75c}$) is an elliptic curve in the isogeny class with Cremona label $75b$ (
resp. $75c$). By the twist decomposition, see \cite[Exercise 10.16]{silverman2009arithmetic}, and according to \cite{lmfdb}, we have
\[\rk E_{75b}(F) = \rk E_{75b}(\Q)+\rk E^{(5)}_{75b}(\Q) =  \rk E_{75b}(\Q)+\rk E_{75a}(\Q)=0.\]
Similarly, $\rk E_{75c}(F) = 0$.
Hence
\[
\operatorname{rk}J(C)(F)=\rk E(F)+\rk E_{75b}(F)+\rk E_{75c}(F)=1.
\]
		
Let $O_E$ denote the identity element of $E$. We fix the translation and sign of the quotient map
$\varphi\colon C\to E$ by requiring
\begin{align}\label{eq:phi normalisation}
	\begin{split}
		&\varphi\bigl((1,0)\bigr)=O_E,\\
		&\varphi^*(P_E)=\Bigl\{x^2-(6s+34)x-18s-26=0,\ y=\tfrac{(s-1)x-2s-2}{8}\Bigr\}.
	\end{split}
\end{align}
Explicitly, away from the line $X+2Y-Z=0$, we have $\varphi(X:Y:Z)=(x_\varphi,y_\varphi)$ with
\begin{align*}
	x_\varphi&=-\frac{X^2+3XY+Y^2-4XZ-2YZ+2Z^2}{(X+2Y-Z)^2},\\
	y_\varphi&=-\bigl(X^3+9X^2Y+21XY^2+12Y^3-3X^2Z\\
	&\qquad-18XYZ-12Y^2Z+6YZ^2+Z^3\bigr)\big/(X+2Y-Z)^3,
\end{align*}
see \texttt{appendix-A-covering.m}. Since $\sigma((1,0))=(7,-3)$, we may put
\begin{align}\label{eq:D0}
	D_0\coloneq(1,0)+(7,-3)=\varphi^*(O_E)\in\varphi^*\bigl(E(F)\bigr)\subset C^{(2)}(F),
\end{align}
and define
\begin{align}\label{eq:Dspor}
	D_{\mathrm{spor}}\coloneq [\Delta_{20} - D_0], \quad D_{\mathrm{fam}}\coloneq [\varphi^*P_E-D_0]=\varphi^*\bigl([P_E-O_E]\bigr) \in J(C)(F),
\end{align}
where $\Delta_{20}=\bigl\{x^2-\frac{2s+19}{2}x+\frac{s+25}{4}=0,\ y=(s-2)(1-x)\bigr\}$ is the divisor with $i=20$ in \cref{tab:sporadic-divisors}.
Also put
\begin{align}\label{eq:t10}
	t_{10}\coloneq \left[(7,-3) - (1,0)\right], \quad D_z\coloneq D_{\mathrm{spor}}+ D_{\mathrm{fam}}.
\end{align}

The computation \texttt{appendix-A-generators.m} verifies that
$t_{10}$ and $D_z$ have orders $10$ and $120$.
Reduction at suitable good primes gives
\[
\#J(C)(F)_{\mathrm{tors}}\leq240,
\]
while the reductions of $t_{10}$ and $D_z$ generate a
group of order $240$. Moreover, the reduction of $t_{10}$
has order $2$ modulo the subgroup generated by $D_z$.
It follows that
\[
T\coloneq J(C)(F)_{\mathrm{tors}}
=\langle5t_{10},D_z\rangle
\cong\Z/2\Z\oplus\Z/120\Z.
\]

In particular, from the discussion above, the symmetric Chabauty condition for degree-$2$ points (see \cite[\S 2.3]{box2023cubic}) holds:
\[g(C)-\operatorname{rk}J(C)(F) = 2 >1.\]

		\item \textbf{Trace-zero annihilating differentials.}

		Let $p\in\mathcal P$,
		\[
		V_p
		=
		\left\{
		\omega\in H^0(C_{\Q_p},\Omega^1_{C_{\Q_p}/\Q_p})
		\;\middle|\;
		\int_\gamma\omega=0
		\text{ for every }\gamma\in J(C)(F)
		\right\}
		\]
		be the full space of annihilating differentials.
		Let
		\[
		\Tr_{\varphi,p}\colon
		H^0(C_{\Q_p},\Omega^1_{C_{\Q_p}/\Q_p})
		\longrightarrow
		H^0(E_{\Q_p},\Omega^1_{E_{\Q_p}/\Q_p})
		\]
		be the base change to \(\Q_p\) of the trace map associated with the
		degree-\(2\) morphism
		\(
		\varphi\colon C\longrightarrow E.
		\)
		The space of trace-zero annihilating differentials is, by definition,
		\[
		V_{p,\varphi}\coloneq
		V_p\cap\Ker(\Tr_{\varphi,p}).
		\]
		
		\begin{itemize}
			\item In the present example, every trace-zero differential is
			annihilating. Consequently,
			\[
			\boxed{
				V_p=V_{p,\varphi}
				=
				\Ker(\Tr_{\varphi,p})
				=
				\Ker(\Tr_\varphi)\otimes_{\Q}\Q_p.
			}
			\]
			
			Indeed, the Mordell--Weil computation gives
			\[
			\rk J(C)(F)=\rk E(F)=1,
			\]
			then the isogeny \eqref{eq:isogeny} implies that
			\[
			J(C)(F)/\varphi^*E(F)
			\]
			is finite. Choose \(N\geq1\) annihilating this quotient. For every
			\(\gamma\in J(C)(F)\), there is then a point
			\(D\in E(F)=J(E)(F)\) such that
			\(
			N\gamma=\varphi^*D.
			\)
			Let \(\omega\in\Ker(\Tr_{\varphi,p})\). Compatibility of Coleman
			integration with pullback and trace (see \cite[Proposition~2.5~(ii)]{box2023cubic}) gives
			\[
			\int_{\varphi^*D}\omega
			=
			\int_D\Tr_{\varphi,p}(\omega).
			\]
			Hence
			\[
			N\int_\gamma\omega
			=
			\int_{N\gamma}\omega
			=
			\int_{\varphi^*D}\omega
			=
			\int_D\Tr_{\varphi,p}(\omega)
			=
			0.
			\]
			Since \(N\neq0\) in \(\Q_p\), it follows that
			\[
			\int_\gamma\omega=0.
			\]
			This holds for every \(\gamma\in J(C)(F)\), and therefore
			\[
			\Ker(\Tr_{\varphi,p})\subseteq V_p.
			\]
	To check the dimension, since $P_E\in E(F)$ is non-torsion, its $p$-adic logarithm is nonzero, so there exists $\eta\in H^0(E_{\Q_p},\Omega^1_{E_{\Q_p}/\Q_p})$ such that $\int_{P_E}\eta\neq0$. Put $D=\varphi^*P_E\in J(C)(F)$.
	By functoriality of integration and $\varphi_*\varphi^*=[2]$, we have
	\[\int_D\varphi^*\eta
	=\int_{\varphi_*D}\eta
	=2\int_{P_E}\eta\neq0.
	\]
	Thus $\omega\mapsto\int_D\omega$ is a nonzero
	$\Q_p$-linear functional on
	$H^0(C_{\Q_p},\Omega^1_{C_{\Q_p}/\Q_p})$. Then
	\[\dim_{\Q_p} V_p\leq g-1=2.\]
Combining this with $\dim_{\Q_p}V_p\geq g(C)-r=2$, we obtain
	\begin{align}\label{eq:dim of annihilating dif}
		\dim_{\Q_p}V_p=2.
	\end{align}
	We also have
	\begin{align*}
				\dim_{\Q_p}\Ker(\Tr_{\varphi,p}) &= \dim_{\Q_p} H^0(C_{\Q_p},\Omega^1_{C_{\Q_p}/\Q_p})-
				\dim_{\Q_p}H^0(E_{\Q_p},\Omega^1_{E_{\Q_p}/\Q_p})\\&=3-1=2
			\end{align*}
			Hence, in this particular example,
			\[
			V_p=\Ker(\Tr_{\varphi,p}).
			\]
			
			\item We now give a geometric description of \(V_p\), or more precisely, of $\Ker(\Tr_{\varphi})$. 
			Since \(C\subset\P_\Q^2\) is a smooth plane quartic, the adjunction formula
			gives
			\[
			\Omega_{C/\Q}^1
			\cong
			\left(
			\Omega^2_{\P_\Q^2/\Q}\otimes\OO_{\P_\Q^2}(C)
			\right)\big|_C
			\cong
			\OO_C(-3+4)
			\cong
			\OO_C(1).
			\]
			The restriction map
			\[
			H^0(\P^2_\Q,\OO_{\P^2_\Q}(1))
			\longrightarrow
			H^0(C,\OO_C(1))
			\]
			is injective, since \(C\) is not contained in a line. Moreover, both
			spaces have dimension \(3\):
			\[
			\dim_\Q H^0(\P_\Q^2,\OO_{\P_\Q^2}(1))=3
			\]
			and
			\[
			\dim_\Q H^0(C,\OO_C(1))
			=
			\dim_\Q H^0(C,\Omega_{C/\Q}^1)
			=
			g(C)
			=
			3.
			\]
			The restriction map is therefore an isomorphism. Combining it with the adjunction
			isomorphism gives
			\[
			\boxed{H^0(C,\Omega_{C/\Q}^1)
				\cong
				H^0(\P_\Q^2,\OO_{\P_\Q^2}(1))
				=
				\left\{
				aX+bY+cZ
				\;\middle|\;
				a,b,c\in\Q
				\right\}.}
			\]
			This isomorphism can be made explicit: 
			\begin{align}
				H^0(\P_\Q^2,\OO_{\P_\Q^2}(1))
				&\xrightarrow{\;\sim\;}
				H^0(C,\Omega_{C/\Q}^1),\notag\\
				aX+bY+cZ
				&\longmapsto
				\omega_{aX+bY+cZ}
				=
				\frac{ax+by+c}{f_y(x,y)}\,dx
				=
				-\frac{ax+by+c}{f_x(x,y)}\,dy,
				\label{eq:iso of differentials}
			\end{align}
			
			where 
			\(
			f_x\coloneq\frac{\partial f}{\partial x},
			f_y\coloneq\frac{\partial f}{\partial y}.
			\) 
			Let 	\(
			\ell=aX+bY+cZ
			\in 	H^0(\P_\Q^2,\OO_{\P_\Q^2}(1)).\)
			The zero divisor of \(\omega_\ell\) is the hyperplane section cut out
			by the line
			\[
			L_\ell=\{\ell=0\};
			\]
			namely,
			\[
			\operatorname{div}(\omega_\ell)
			=
			C\cdot L_\ell.
			\]
			Consequently, for any closed point $R\in C$,
			\[
			\operatorname{ord}_R(\omega_\ell)
			=
			I_R(C,L_\ell).
			\]
			where \(I_R(C,L_\ell)\) is the intersection multiplicity at $R$.
			In particular,
			\[
			\omega_\ell(R)=0
			\iff
			R\in L_\ell,
			\]
			and \(\omega_\ell\) vanishes to order at least \(2\) at \(R\) if and only if \(L_\ell\) is tangent to \(C\) at \(R\).

			We now describe $\Ker(\Tr_\varphi)$ in terms of hyperplanes.
			Recall that the involution $\sigma\colon C\longrightarrow C$ is induced on the canonical plane by the matrix $M$, see \eqref{eq:M}.
			We regard the coefficients of a linear form
			\[
			\ell=aX+bY+cZ
			\]
			as the row vector \((a,b,c)\). Then
			\[
			\ell M
			=
			(a+b-2c)X
			+
			(8a+3b+4c)Y
			+
			(-8a+2b+c)Z.
			\]
			
			Since
			\[f\bigl(M(X,Y,Z)^{\mathsf t}\bigr)=625f(X,Y,Z) \qquad\text{and}\qquad \det(M)=-125,\]
			by \eqref{eq:iso of differentials}, the action of \(\sigma^*\) on regular differentials is
			\[\sigma^*\omega_\ell=\frac{\det(M)}{625}\omega_{\ell M}=-\frac{1}{5}\omega_{\ell M}.\]
			
			On the other hand, by \eqref{eq:description of ker tr},
			\[
			\Ker(\Tr_\varphi)
			=
			H^0(C,\Omega_{C/\Q}^1)^{-}\coloneq \left\{\omega\in H^0(C,\Omega_{C/\Q}^1) \;\middle|\; \sigma^*\omega=-\omega\right\}.
			\]
			
			Let \(\omega=\omega_\ell\) with
			\(
			\ell=aX+bY+cZ.
			\)
			Using the formula for \(\sigma^*\), we obtain
			\begin{align*}
				\omega_\ell\in\Ker(\Tr_\varphi)
				&\iff
				-\frac{1}{5}\omega_{\ell M}=-\omega_\ell\\
				&\iff
				\omega_{\ell M}=5\omega_\ell\\
				&\iff
				\ell M=5\ell.
			\end{align*}
			In terms of the coefficients \(a,b,c\), the last condition is
			\[
			\begin{cases}
				a+b-2c=5a,\\
				8a+3b+4c=5b,\\
				-8a+2b+c=5c.
			\end{cases}
			\]
			These three equations are equivalent to the single relation
			\[
			b=4a+2c.
			\]
			Consequently,
			\begin{align*}
				\Ker(\Tr_\varphi)
				&=
				\left\{
				\omega_{aX+bY+cZ}
				\;\middle|\;
				b=4a+2c
				\right\}\\
				&=
				\left\{
				\omega_{aX+(4a+2c)Y+cZ}
				\;\middle|\;
				a,c\in\Q
				\right\}\\
				&=
				\mathrm{Span}_{\Q}
				\left\{
				\omega_{X+4Y},
				\omega_{2Y+Z}
				\right\}.
			\end{align*}
			
			The two linear forms
			\begin{align}\label{eq:ell1 ell2}
				\ell_1=X+4Y,
				\qquad
				\ell_2=2Y+Z
			\end{align}
			have the unique common zero
			\[
			p_0\coloneq (-4:1:-2)\in\P^2_\Q(\Q)\setminus C.
			\]
			Moreover,
			\[
			(aX+bY+cZ)(p_0)
			=
			-4a+b-2c.
			\]
			Thus
			\[
			b=4a+2c
			\iff
			\ell(p_0)=0.
			\]
			We therefore obtain the geometric description
			\[
			\boxed{
				\Ker(\Tr_\varphi)
				=
				\left\{
				\omega_\ell
				\;\middle|\;
				L_\ell\text{ passes through }p_0
				\right\}.
			}
			\]
			In other words, the trace-zero differentials correspond exactly to
			the pencil of lines through \(p_0\). For the primes considered here,
			these trace-zero differentials are also annihilating, and hence
			\[
			\boxed{V_p
				=
				\Ker(\Tr_\varphi)\otimes_{\Q}\Q_p
				=
				\mathrm{Span}_{\Q_p}
				\left\{
				\omega_{X+4Y},
				\omega_{2Y+Z}
				\right\}.}
			\]
			
			Define the integral
			trace-zero annihilating lattice by
			\[
			\mathcal V_p
			\coloneq
			V_p
			\cap
			H^0\left(
			\mathcal C_{\Z_p},
			\Omega_{\mathcal C_{\Z_p}/\Z_p}^1
			\right).
			\]
			where $\mathcal C_{\Z_p}=\mathcal C\otimes_\Z\Z_p$.
			Then
			\[
			\omega_{\ell_1},\omega_{\ell_2}
			\]
			extend to regular relative differentials on \(\mathcal C_{\Z_p}\) and
			form a \(\Z_p\)-basis of \(\mathcal V_p\). Indeed, under the
			integral adjunction identification, they correspond to the integral
			linear forms
			\[
			\ell_1=X+4Y,
			\qquad
			\ell_2=2Y+Z,
			\]
			whose reductions remain linearly independent over \(\mathbb F_p\).
			
			We write
			\[
			\widetilde V_p
			\coloneq
			\mathcal V_p\otimes_{\Z_p}\mathbb F_p.
			\]
			Then
			\[
			\widetilde V_p
			=
			\Span_{\mathbb F_p}
			\left\{
			\widetilde\omega_{\ell_1},
			\widetilde\omega_{\ell_2}
			\right\},
			\]	
			
			\item We now give a uniform geometric interpretation of the possible ranks
			of the symmetric Chabauty matrix in
			\eqref{eq:symmetric chabauty matrix}. Since the homogenization of $f$ in \eqref{eq:quartic model of Xs3ns5} takes the value $-75$ at $p_0=(-4:1:-2)$, the reduction $\widetilde p_0$ of $p_0$ satisfies
			\[\widetilde p_0\in\P^2_{\mathbb F_p}(\mathbb F_p)\setminus\mathcal C_{p}(\mathbb F_p)\]
			for every prime $p\neq3,5$, in particular for every $p\in\mathcal P$.
			Let
			\(
			\widetilde D\in \mathcal C_{p}^{(2)}(\mathbb F_p)
			\)
			be a degree-\(2\) residue divisor, and let
			\(\widetilde{\mathscr A}(\widetilde D)\) be the symmetric Chabauty matrix (see \eqref{eq:symmetric chabauty matrix}) of $\widetilde D$ with
			respect to
			$\widetilde \omega_{\ell_1},\widetilde \omega_{\ell_2}$, where \(\ell_1,\ell_2\) are given in
			\eqref{eq:ell1 ell2}.  Then
			\[
			\boxed{\rk\widetilde{\mathscr A}(\widetilde D)\in\{1,2\}.}
			\]
			After base change to \(\overline{\mathbb F_p}\), write
			\[
			\widetilde D_{\overline{\mathbb F_p}}
			=
			\widetilde R_1+\widetilde R_2,
			\]
			where we allow
			\(\widetilde R_1=\widetilde R_2\). Then
			\[
			\boxed{
				\rk\widetilde{\mathscr A}(\widetilde D)=1
				\iff
				\widetilde p_0,\widetilde R_1,\widetilde R_2
				\text{ are collinear}.
			}
			\]
			When \(\widetilde R_1=\widetilde R_2\), the collinearity condition
			means that the line joining \(\widetilde p_0\) and
			\(\widetilde R_1\) is tangent to
			\(\mathcal C_{\overline{\mathbb F_p}} \coloneq \mathcal C_p\otimes_{\mathbb F_p}\overline{\mathbb F_p}\) at
			\(\widetilde R_1\). Consequently,
			\(
			\rk\widetilde{\mathscr A}(\widetilde D)=2
			\)
			if and only if no such line contains
			\(\widetilde D_{\overline{\mathbb F_p}}\), with its multiplicity.
			
			Indeed, put
			\[
			k=\mathbb F_p,
			\qquad
			\widetilde C=\mathcal C_{p},
			\]
			and let \(\overline{k}=\overline{\mathbb F_p}\). Since $\rk\widetilde{\mathscr A}(\widetilde D)$ is unchanged by extending the ground field, we may compute
			\(
			\rk\widetilde{\mathscr A}(\widetilde D)
			\)
			after base change from \(k\) to \(\overline{k}\).
			
			Regard
			\(
			\widetilde D_{\overline{k}}
			\)
			as a length-\(2\) closed subscheme of
			\(\widetilde C_{\overline{k}}\). Consider the restriction map
			\[
			\operatorname{ev}_{\widetilde D}\colon
			\widetilde V_p\otimes_k\overline{k}
			\longrightarrow
			H^0\left(
			\widetilde D_{\overline{k}},
			\left.
			\Omega_{\widetilde C_{\overline{k}}/\overline{k}}^1
			\right|_{\widetilde D_{\overline{k}}}
			\right),
			\qquad
			\widetilde\omega
			\longmapsto
			\left.\widetilde\omega\right|_{\widetilde D_{\overline{k}}}.
			\]
			Since \(\widetilde D\) has degree \(2\), the target is a
			two-dimensional \(\overline{k}\)-vector space.
			
			To see the relation with the symmetric Chabauty matrix, write
			\[
			\widetilde D_{\overline{k}}
			=
			\sum_R m_RR
			\]
			and choose a local parameter \(t_R\) at each point in its support.
			For \(i=1,2\), expand
			\[
			\widetilde\omega_{\ell_i}
			=
			\left(
			\sum_{n\geq0}a_{i,R,n}t_R^n
			\right)dt_R.
			\]
			With respect to the local basis
			\[
			\left\{
			t_R^n\,dt_R
			\;\middle|\;
			R\in\Supp(\widetilde D_{\overline{k}}),
			\ 0\leq n<m_R
			\right\},
			\]
			the restriction map is represented, up to transpose, by the
			coefficients
			\[
			a_{i,R,n},
			\qquad
			0\leq n<m_R.
			\]
			These agree, up to alternating signs in the columns, with the entries of the symmetric Chabauty matrix in
			\eqref{eq:symmetric chabauty matrix}. Therefore
			\[
			\rk\widetilde{\mathscr A}(\widetilde D)
			=
			\rk\operatorname{ev}_{\widetilde D}.
			\]
			
			It follows that
			\begin{align*}
				\rk\widetilde{\mathscr A}(\widetilde D)<2
				&\iff
				\Ker(\operatorname{ev}_{\widetilde D})\neq0\\
				&\iff
				\text{there exists }
				0\neq\widetilde\omega_\ell
				\in\widetilde V_p\otimes_k\overline{k}
				\text{ such that }
				\left.\widetilde\omega_\ell\right|_
				{\widetilde D_{\overline{k}}}=0.
			\end{align*}
			The last condition is equivalent to
			\[
			\widetilde D_{\overline{k}}
			\leq
			\operatorname{div}(\widetilde\omega_\ell).
			\]
			
			By the geometric description of the trace-zero annihilating space,
			\[
			\widetilde V_p\otimes_k\overline{k}
			=
			\left\{
			\widetilde\omega_\ell
			\;\middle|\;
			\ell(\widetilde p_0)=0
			\right\}.
			\]
			Moreover,
			\[
			\operatorname{div}(\widetilde\omega_\ell)
			=
			\widetilde C_{\overline{k}}\cdot L_\ell,
			\qquad
			L_\ell=\{\ell=0\}.
			\]
			Consequently,
			\[
			\rk\widetilde{\mathscr A}(\widetilde D)<2
			\]
			if and only if there is a line \(L\) through \(\widetilde p_0\) such
			that
			\[
			\widetilde D_{\overline{k}}
			\leq
			\widetilde C_{\overline{k}}\cdot L.
			\]
			After writing
			\[
			\widetilde D_{\overline{k}}
			=
			\widetilde R_1+\widetilde R_2,
			\]
			this is precisely the stated collinearity condition. When
			\(\widetilde R_1=\widetilde R_2\), the scheme-theoretic containment
			means that
			\[
			I_{\widetilde R_1}
			\bigl(\widetilde C_{\overline{k}},L\bigr)\geq2,
			\]
			which is equivalent to \(L\) being tangent to
			\(\widetilde C_{\overline{k}}\) at \(\widetilde R_1\).
			
			It remains to exclude rank \(0\). If
			\[
			\rk\widetilde{\mathscr A}(\widetilde D)=0,
			\]
			then both basis differentials vanish on
			\(\widetilde D_{\overline{k}}\):
			\[
			\left.\widetilde\omega_{\ell_1}\right|_
			{\widetilde D_{\overline{k}}}
			=
			\left.\widetilde\omega_{\ell_2}\right|_
			{\widetilde D_{\overline{k}}}
			=
			0.
			\]
			It would follow that
			\[
			\Supp(\widetilde D_{\overline{k}})
			\subseteq
			\widetilde C_{\overline{k}}
			\cap L_{\ell_1}\cap L_{\ell_2}.
			\]
			However,
			\[
			L_{\ell_1}\cap L_{\ell_2}
			=
			\{\widetilde p_0\},
			\]
			while, by assumption,
			\[
			\widetilde p_0\notin\widetilde C.
			\]
			Hence
			\[
			\widetilde C_{\overline{k}}
			\cap L_{\ell_1}\cap L_{\ell_2}
			=
			\emptyset,
			\]
			contradicting the fact that \(\widetilde D\) is an effective divisor
			of degree \(2\). Thus
			\[
			\rk\widetilde{\mathscr A}(\widetilde D)\neq0.
			\]
			
			Since \(\widetilde{\mathscr A}(\widetilde D)\) is a \(2\times2\) matrix, we conclude
			that
			\[
			\rk\widetilde{\mathscr A}(\widetilde D)\in\{1,2\},
			\]
			and therefore
			\[
			\rk\widetilde{\mathscr A}(\widetilde D)=1
			\iff
			\widetilde p_0,\widetilde R_1,\widetilde R_2
			\text{ are collinear},
			\]
			with the tangency convention when
			\(\widetilde R_1=\widetilde R_2\).
		\end{itemize}
		
		We classify the divisors in $\mathcal C_{p}^{(2)}(\mathbb F_p)$ as follows: $\widetilde{D}\in \mathcal C_{p}^{(2)}(\mathbb F_p)$ is of
		\begin{itemize}
			\item type \emph{(a)} if $\rk\widetilde{\mathscr{A}}(\widetilde{D})=2$;
			\item type \emph{(b)} if $\widetilde{D}\in \varphi_p^*\mathcal E_{p}(\mathbb F_p)$ (in particular, $\rk\widetilde{\mathscr{A}}(\widetilde{D})=1$);
			\item type \emph{(c)} if $\widetilde{D}\not\in \varphi_p^*\mathcal E_{p}(\mathbb F_p)$ and $\rk\widetilde{\mathscr{A}}(\widetilde{D})=1$.
		\end{itemize}

			\item \textbf{The subgroup \(\Gamma\) and the index multiplier.}	

Define
\[
\iota_2\colon
C^{(2)}(F)\longrightarrow J(C)(F),
\qquad
D\longmapsto[D-D_0],
\]
where $D_0$ is defined in \eqref{eq:D0}. We put 
\[
\Gamma
\coloneq
\langle D_{\mathrm{spor}}\rangle
\oplus
J(C)(F)_{\mathrm{tors}} = \langle D_{\mathrm{spor}}, 5t_{10}, D_{\mathrm{fam}}\rangle
\subseteq J(C)(F),
\]
where $D_{\mathrm{spor}}, t_{10}, D_{\mathrm{fam}}$ are defined in \eqref{eq:Dspor} and \eqref{eq:t10}.
We claim that
\[
\boxed{[J(C)(F):\Gamma]\mid 2.}
\]
Indeed, suppose that
$J(C)(F)$ is generated by a divisor $D_g$ and $J(C)(F)_{\mathrm{tors}}$.
There exist \(\alpha\in\Z\) and
\(D_{\mathrm{tor}}\in J(C)(F)_{\mathrm{tors}}\) such that
\[
D_{\mathrm{fam}}
=
\alpha D_g+D_{\mathrm{tor}}.
\]

Consider the pullback and pushforward homomorphisms
\[
\varphi^*\colon E(F)\longrightarrow J(C)(F),
\qquad
\varphi_*\colon J(C)(F)\longrightarrow E(F)
\]
induced by the degree-two quotient map
\(
\varphi\colon C\longrightarrow E.
\)
They satisfy
\[
\varphi_*\varphi^*=[2].
\]

Recall from \eqref{eq:E(F)} that
\[
E(F)
=
\mathbb Z P_E+E(F)_{\mathrm{tors}}.
\]
Therefore, there exist
\(\beta\in\mathbb Z\) and
\(P_{\mathrm{tor}}\in E(F)_{\mathrm{tors}}\) such that
\[
\varphi_*D_g
=
\beta P_E+P_{\mathrm{tor}}.
\]
It follows that
\begin{align}
	\varphi_*D_{\mathrm{fam}}
	&=
	\alpha\varphi_*D_g+\varphi_*D_{\mathrm{tor}}
	\nonumber\\
	&=
	\alpha\beta P_E
	+
	\bigl(
	\alpha P_{\mathrm{tor}}
	+\varphi_*D_{\mathrm{tor}}
	\bigr).
	\label{eq:varphi-Dfam-1}
\end{align}
The term in parentheses is torsion.

On the other hand, by \eqref{eq:Dspor},
\begin{equation}\label{eq:pullback-generator}
	D_{\mathrm{fam}}
	=
	\varphi^*\bigl([P_E-O_E]\bigr)
	\in\Gamma.
\end{equation}
Applying \(\varphi_*\) and using
\(\varphi_*\varphi^*=[2]\), we obtain
\begin{equation}
	\varphi_*D_{\mathrm{fam}}
	=
	2P_E.
	\label{eq:varphi-Dfam-2}
\end{equation}

Comparing
\eqref{eq:varphi-Dfam-1} and
\eqref{eq:varphi-Dfam-2},
we obtain
\[
\alpha\beta=2.
\]
In particular,
\[
\alpha\mid 2.
\]

Finally, since
\(
D_{\mathrm{fam}}
=
\alpha D_g+D_{\mathrm{tor}}\in\Gamma
\)
and
\(
J(C)(F)_{\mathrm{tors}}\subseteq\Gamma,
\)
we have
\[
\alpha D_g
=
D_{\mathrm{fam}}-D_{\mathrm{tor}}
\in\Gamma.
\]
Therefore
\[
\alpha J(C)(F)\subseteq\Gamma,
\]
\[
[J(C)(F):\Gamma]\mid |\alpha|.
\]
Since \(\alpha\mid2\), we conclude that
\[
[J(C)(F):\Gamma]\mid2.
\]

In the following, we denote
\[
\jmath\colon
C^{(2)}(F)\longrightarrow\Gamma,
\qquad
D\longmapsto2[D-D_0].
\]

			\item \textbf{The finite quotient \(\Gamma/H_{\mathcal P}\).}
			
			Define
			\[
			H_{\mathcal P}
			\coloneq
			\bigcap_{p\in\mathcal P}
			\Ker(\mathrm{red}_{p}|_\Gamma).
			\]
			Let
			\[
			q_H\colon\Gamma\longrightarrow \Gamma/H_{\mathcal P}
			\]
			be the quotient map. For any $p\in \mathcal P$, because \(H_{\mathcal P}\subseteq\Ker(\mathrm{red}_{p})\), reduction induces
			\[
			\overline{\mathrm{red}_{p}}\colon
			\Gamma/H_{\mathcal P}\longrightarrow J(\mathcal C_{p})(\mathbb F_p).
			\]
			We have the injective homomorphism	
\begin{align}\label{eq:red on quotient of Gamma:first-definition}
				\overline\red\colon \Gamma/H_{\mathcal P} \hookrightarrow \prod_{p\in\mathcal P}J(\mathcal C_p)(\mathbb F_p), \qquad \gamma \mapsto (\overline{\mathrm{red}_{p}}(\gamma))_{p\in\mathcal P}.
			\end{align}
			Via this injection, in our code, we identify $\Gamma/H_{\mathcal P}$ with a subgroup of $\prod_{p\in\mathcal P}J(\mathcal C_p)(\mathbb F_p)$.

			\item \textbf{The local divisor sets $\mathcal M_p$.}

Define
\[H_{\mathcal P}\coloneq\bigcap_{p\in\mathcal P}\Ker(\mathrm{red}_{p}|_\Gamma).\]
Let
\[
q_H\colon\Gamma\longrightarrow \Gamma/H_{\mathcal P}
\]
be the quotient map. For any $p\in \mathcal P$, because \(H_{\mathcal P}\subseteq\Ker(\mathrm{red}_{p})\), reduction induces
\[\overline{\mathrm{red}_{p}}\colon \Gamma/H_{\mathcal P}\longrightarrow J(\mathcal C_{p})(\mathbb F_p).\]
We have the injective homomorphism	
\begin{align}\label{eq:red on quotient of Gamma}
	\overline\red\colon \Gamma/H_{\mathcal P} \hookrightarrow \prod_{p\in\mathcal P}J(\mathcal C_p)(\mathbb F_p), \qquad \gamma \mapsto (\overline{\mathrm{red}_{p}}(\gamma))_{p\in\mathcal P}.
\end{align}
Via this injection, in our code, we identify $\Gamma/H_{\mathcal P}$ with a subgroup of $\prod_{p\in\mathcal P}J(\mathcal C_p)(\mathbb F_p)$.

\item \textbf{The local divisor sets \(\mathcal M_{p}\).}

For every \(p\in\mathcal P\), let
\[
\iota_{2,p}\colon
\mathcal C_{p}^{(2)}(\mathbb F_p)
\longrightarrow
J(\mathcal C_{p})(\mathbb F_p),
\qquad
\widetilde D
\longmapsto
[\widetilde D-\red_p^{(2)}(D_{0})]
\]
be the Abel--Jacobi map which is injective because of $\operatorname{gon}_{\mathbb F_p}(\mathcal{C}_p) >2$. We set $\jmath_{p}\coloneq 2\iota_{2,p}$. Write
\[
\mathcal G_{p}
\coloneq
\Im\left(
\red_p\colon \Gamma
\longrightarrow
\mathcal J_{p}(\mathbb F_p)
\right),
\]
\[\mathcal G_p^{\mathrm{rob}}\coloneq\left\{y\in \mathcal J_{p}(\mathbb F_p) \;\middle |\; 2y\in \mathcal G_p\right\}.\]
Since $[J(C)(F):\Gamma]\mid2$, we have $\mathrm{red}_p(J(C)(F))\subset \mathcal G_p^{\mathrm{rob}}$.
Define
\[
\mathcal R_{p}
\coloneq
\left(
\iota_{2,p}
\bigl(
\mathcal C_{p}^{(2)}(\mathbb F_p)
\bigr)
\cap
\mathcal G_{p}^{\mathrm{rob}}
\right)
\setminus
\iota_{2,p}\left(
\varphi_{p}^*
\bigl(\mathcal E_{p}(\mathbb F_p)\bigr)
\right).
\]
We then put
\[
\mathcal M_{p}
\coloneq
\left\{
\widetilde D
\in
\mathcal C_{p}^{(2)}(\mathbb F_p)
\;\middle|\;
\iota_{2,p}(\widetilde D)\in\mathcal R_{p}
\right\}.
\]

We claim that the containment required by the Mordell--Weil sieve holds:
\begin{equation}\label{eq:xs3ns5-local-mws-containment}
	\boxed{\operatorname{red}_{p}^{(2)}
		\bigl(
		C^{(2)}(F)\setminus\mathscr T_0
		\bigr)
		\subseteq
		\mathcal M_{p}.}
\end{equation}
Indeed, if there is $D = P+P'\in C^{(2)}(F)\setminus\mathscr T_0$ such that $\widetilde D\coloneq \operatorname{red}_{p}^{(2)}(D)\not\in \mathcal M_{p}$, i.e. $\iota_{2,p}(\widetilde D)\not\in\mathcal R_{p}$, equivalently, $\iota_{2,p}(\widetilde D)\in \iota_{2,p}\left(
\varphi_{p}^*\bigl(\mathcal E_{p}(\mathbb F_p)\bigr)
\right)$, so $\widetilde D \in \varphi_{p}^*\bigl(\mathcal E_{p}(\mathbb F_p)\bigr)$ by the injectivity of $\iota_{2,p}$.
In particular, $\rk\widetilde{\mathscr{A}}(\widetilde D)=1$. Let $F'$ be an extension of $F$ of degree at most $2$ such that $P\in C(F')$, and $w$ a place of $F'$ over $v_p$. Let $\mathrm{red}_w\colon C(F'_w)\to \mathcal C_p(\widetilde{F'_w})$ be the reduction map, and $\widetilde{P}\coloneq \mathrm{red}_w(P), \widetilde{P'}\coloneq \mathrm{red}_w(P')\in \mathcal C_p(\widetilde{F'_w})$. Because $\widetilde D \in \varphi_{p}^*\bigl(\mathcal E_{p}(\mathbb F_p)\bigr)$, 
\[\widetilde{P}=\sigma_p(\widetilde{P'}) = \mathrm{red}_w(\sigma(P')),\]
where $\sigma_p\colon \mathcal C_p\to \mathcal C_p$ is the reduction of the involution $\sigma\colon C\to C$  induced by $M$, see \eqref{eq:M}.

For any $\omega\in \Ker(\Tr_\varphi)\otimes_\Q\Q_p=V_p$, we have
\[
\int_{D_0}^{\varphi^*(\varphi(P))}\omega
=\int_{O_E}^{\varphi(P)}\Tr_{\varphi,p}(\omega)=0 ,
\]
by \cite[Proposition~2.5~(ii)]{box2023cubic}, because $D_0=\varphi^*(O_E)$.
Together with $\int_{D_0}^{D}\omega=0$ we have
\[\int_{\varphi^*(\varphi(P))}^D\omega = \int_{\varphi^*(\varphi(P))}^{D_0}\omega+\int_{D_0}^D\omega=0.\] 
Hence
\begin{align}\label{int P sigma P'}
	0=\int_{\sigma(P)}^{P'}\omega=\int_P^{\sigma(P')}\sigma^*\omega=-\int_P^{\sigma(P')}\omega.
\end{align}
The last equality follows from $\sigma^*\omega=-\omega$. 

Let $t$ be an integral local parameter centered at $P$, and $t_0=t(\sigma(P'))\in {F'_w}^{\circ\circ}$. For $\omega\in \mathcal{V}_p$, on this residue disc of $\widetilde P$, we write
\[
\omega
=
\left(
a_0+a_1t+a_2t^2+\cdots
\right)\,dt,
\]
with
\(
a_i\in {F'_w}^\circ.
\)
Since $\widetilde p_0\notin\mathcal C_p$, the lines $L_{\ell_1}$ and $L_{\ell_2}$ have no common point on $\mathcal C_p$, so $\widetilde\omega_{\ell_1}$ or $\widetilde\omega_{\ell_2}$ does not vanish at $\widetilde P$. Hence $a_0=a_0(\widetilde
P,\widetilde\omega)$ is a unit for $\omega=\omega_{\ell_1}$ or for $\omega=\omega_{\ell_2}$. We fix such an $\omega\in \mathcal V_p$.

Equation~\eqref{int P sigma P'} gives
\begin{align*}
	0
	=
	\int_{P}^{\sigma(P')}\omega
	=
	t_0
	\left(
	a_0+\frac{a_1}{2}t_0
	+\frac{a_2}{3}t_0^2+\cdots
	\right).
\end{align*}
For the primes under consideration, \(p\geq 19\), the  ramification index
\[
e_{F'_w}=e(F'_w/\Q_p)\leq 2<p-1.
\]
It follows that for every $i\geq 1$, $$v_{F'_w}\left(\frac{t_0^i}{i+1}\right) \geq i- e_{F'_w}\log_p(i+1)>0,$$
\[
\frac{a_i}{i+1}t_0^i\in {F'_w}^{\circ\circ}.
\]
Therefore
\[
a_0+\frac{a_1}{2}t_0
+\frac{a_2}{3}t_0^2+\cdots
\]
is a unit.
We conclude that \(t_0=0\), and hence
\[
P=\sigma(P').
\]
Equivalently,
\(
P'=\sigma(P).
\)
Thus
\[
D=P+\sigma(P)=\varphi^*\bigl(\varphi(P)\bigr).
\]

It remains to note that \(\varphi(P)\in E(F)\). Indeed, for every
\(\tau\in\operatorname{Gal}(\overline F/F)\), the \(F\)-rationality
of \(D\) gives
\[
\varphi^*\bigl(\tau(\varphi(P))\bigr)
=
\tau(D)
=
D
=
\varphi^*\bigl(\varphi(P)\bigr).
\]
Applying \(\varphi_*\) yields
\[
2[\tau(\varphi(P))]
=
2[\varphi(P)]
\]
as effective divisors on \(E\), so
\[
\tau(\varphi(P))=\varphi(P).
\]
Hence \(\varphi(P)\in E(F)\), and therefore
\[
D
\in
\varphi^*E(F)
\subseteq
\mathscr T_0,
\]
contrary to the choice of \(D\). This proves
\eqref{eq:xs3ns5-local-mws-containment}.

Define
\[
\mathcal Y_{p}
\coloneq
\jmath_{p}(\mathcal M_{p}) = 2\mathcal R_p.
\]
The second equality follows from the fact that $$\iota_{2,p}(\mathcal M_p) = \mathcal R_p\cap \iota_{2,p}(\mathcal C_p^{(2)}(\mathbb F_p))=\mathcal R_p.$$

We have the following diagram:
\[
\xymatrix@R=3em@C=3em{
	C^{(2)}(F)\setminus\mathscr T_0
	\ar@{^{(}->}[r]
	\ar[d]
	&
	C^{(2)}(F)
	\ar[r]^-{\jmath}
	\ar[d]^-{\operatorname{red}_{p}^{(2)}}
	&
	\Gamma
	\ar@{->>}[r]^-{q_H}
	\ar[d]^-{\red_p}
	&
	\Gamma/H_{\mathcal P}
	\ar[d]^-{\overline{\red_{p}}}
	\\
	\mathcal M_p
	\ar@{^{(}->}[r]
	&
	\mathcal C_p^{(2)}(\mathbb F_p)
	\ar[r]^-{\jmath_p}
	&
	\mathcal J_p(\mathbb F_p)
	\ar@{=}[r]
	&
	\mathcal J_p(\mathbb F_p).
}
\]
\item \textbf{The Mordell--Weil sieve.}

We define the set of classes surviving the Mordell--Weil sieve 
\begin{equation}\label{eq:xs3ns5-mws-survivors} 
	W_{\mathcal P} \coloneq \bigcap_{p\in\mathcal P} \overline{\red_p}^{-1} \bigl(\mathcal Y_{p}\bigr). 
\end{equation}
By the classical statement of the Mordell--Weil sieve, we have 
\begin{align}\label{D in WP}
	q_H\left(\jmath\left(C^{(2)}(F)\setminus \mathscr T_0\right)\right)\subset W_{\mathcal P}.
\end{align}

We compute $W_{\mathcal P}$ through the injection \eqref{eq:red on quotient of Gamma}, using the following congruence conditions.

For each \(p\in\mathcal P\), set
\[
c_p=\red_p(D_{\mathrm{spor}}),
\qquad
m_p=\operatorname{ord}(c_p),
\]
and let
\[
\mathcal Y_p
=
\jmath_p(\mathcal M_p)
=
\left\{
2c
\;\middle|\;
c\in\mathcal R_p
\right\}
\subseteq \mathcal J_p(\mathbb F_p).
\]
The local computation \texttt{appendix-A-sieve.m} gives
\[
\begin{array}{c|c|c|c}
	p
	&
	m_p
	&
	\#\mathcal R_p
	&
	\# \mathcal Y_{p}
	\\ \hline
	19 & 280 &  92 &  74\\
	29 &  40 & 344 & 289\\
	31 & 520 & 186 & 160\\
	41 & 840 & 372 & 295\\
	59 & 120 & 368 & 325\\
	61 &  40 & 108 &  86\\
	71 & 360 & 132 &  87\\
	79 & 840 & 198 & 161\\
	89 & 360 & 376 & 321
\end{array}
\]

The common modulus for the free part of \(\Gamma\) is
\[
m_{\mathcal P}
=
\operatorname{lcm}_{p\in\mathcal P}m_p
=
\operatorname{lcm}
(280,40,520,840,120,40,360,840,360)
=
32760.
\]
For a fixed torsion class \(\theta\in T\) and a prime
\(p\in\mathcal P\), define
\[
N_p(\theta)
\coloneq 
\left\{
n\in\mathbb Z/m_p\mathbb Z
\;\middle|\;
nc_p+\red_p(\theta)\in\mathcal Y_p
\right\}.
\]
Thus \(N_p(\theta)\) is the set of admissible congruence classes for the
coefficient of \(D_{\mathrm{spor}}\) at \(p\).

The sets \(N_p(\theta)\) are combined by the generalized Chinese remainder
theorem to produce
\[
N(\theta)
=
\left\{
n\in\mathbb Z/m_{\mathcal P}\mathbb Z
\;\middle|\;
n\bmod m_p\in N_p(\theta)
\text{ for every }p\in\mathcal P
\right\}.
\]
It follows that the set of surviving classes is
\[
W_{\mathcal P}
=
\left\{
\overline\red\bigl(q_H(nD_{\mathrm{spor}}+\theta)\bigr)=\left(nc_p+\red_p(\theta)\right)_{p\in\mathcal P}
\;\middle|\;
\theta\in T,\ n\in N(\theta)
\right\} \subset \prod_{p\in\mathcal P}J(\mathcal C_p)(\mathbb F_p)
\]
via the injection $\overline \red$ in \eqref{eq:red on quotient of Gamma}.
The number of distinct surviving classes is
\[
\boxed{\#W_{\mathcal P}=158.}
\]

\item \textbf{Identification of the surviving classes.}

By \texttt{appendix-A-rational-points.m}, a search finds the eight points $Q_1,\ldots,Q_8$ of \cref{prop:quadratic points over Q(sqrt 5)}. Recall the set $\mathscr K_2\subseteq\mathscr T_0$ of the $\#\mathscr K_2=36+1+35=72$ known divisors defined there.
For $p\in \mathcal P$, we put
\[
\mathscr K_{2,p}^{\mathrm{red}}
\coloneq
\left\{
\iota_{2,p}
\bigl(
\operatorname{red}_p^{(2)}(D)
\bigr)
\;\middle|\;
D\in\mathscr K_2
\right\} \subset \iota_{2,p}(\mathcal C_p^{(2)}(\mathbb F_p))\subset J(\mathcal C_p)(\mathbb F_p).
\]

For \(x\in W_{\mathcal P}\) and \(p\in\mathcal P\), define the
local fiber
\[
\mathscr F_p(x)
\coloneq
\left\{
\widetilde D\in\mathcal M_p
\;\middle|\;
\jmath_p(\widetilde D)
=
\overline{\red_{p}}(x)
\right\}.
\]


Now we are ready to define a notion that is used to complete the Mordell--Weil sieve for $C^{(2)}(F)$.

We say that the prime \(p\) \emph{settles} a class
\(x\in W_{\mathcal P}\) if every
\(\widetilde D\in\mathscr F_p(x)\) satisfies the following three
conditions:
\begin{enumerate}
	\item \(\widetilde D\) is of type \emph{(a)};
	\item
	\(
	\iota_{2,p}(\widetilde D)
	\in
	\mathscr K_{2,p}^{\mathrm{red}};
	\)
	\item if \(\widetilde D=2\widetilde R\) for some \(\widetilde R\), then the element of \(\mathscr K_2\) reducing to \(\widetilde D\) is of the form \(2Q\) with \(Q\in C(F)\).
\end{enumerate}
We claim that:
\begin{itemize}
	\item Let $x\in W_{\mathcal P}$. If there is a prime $p\in \mathcal P$ that settles $x$, then for any $D\in C^{(2)}(F)\setminus\mathscr T_0$ we have $q_{H}(\jmath(D))\not= x$. 
\end{itemize} 
In particular, if for every $x\in W_{\mathcal P}$ there is a prime $p\in \mathcal P$ that settles $x$, then for any $D\in C^{(2)}(F)\setminus\mathscr T_0$, we have $q_H(\jmath(D))\not\in W_{\mathcal P}$. Together with \eqref{D in WP}, we conclude that 
\(
C^{(2)}(F)\setminus\mathscr T_0=\emptyset.
\)
Hence,
\[C^{(2)}(F) = \mathscr T_0.\] 

To prove the claim, let $x\in W_{\mathcal P}$, and let $p\in\mathcal P$ be a prime that settles $x$. Suppose that
\(
D\in C^{(2)}(F)\setminus\mathscr T_0
\)
satisfies
\(
q_H\bigl(\jmath(D)\bigr)=x.
\)
By the definition above and the compatibility of
the Abel--Jacobi maps with reduction,
\[
\operatorname{red}_p^{(2)}(D)
\in
\mathscr F_p(x).
\]
Since \(p\) settles \(x\), there exists \(D_*\in\mathscr K_2\) such that
\[
\iota_{2,p}
\bigl(
\operatorname{red}_p^{(2)}(D)
\bigr)
=
\iota_{2,p}
\bigl(
\operatorname{red}_p^{(2)}(D_*)
\bigr).
\]
Since \(\iota_{2,p}\) is injective, it follows that
\[
\operatorname{red}_p^{(2)}(D)
=
\operatorname{red}_p^{(2)}(D_*).
\]
Hence \(D\) and \(D_*\) lie in the same residue disc. We apply the symmetric-Chabauty criterion with $m=2$, $\ell=0$ and the base point
$D_*$. The matrix $\widetilde{\mathscr A}_{D_*}$ defined by \eqref{eq:symmetric chabauty matrix},  equals $\widetilde{\mathscr A}(\widetilde D)$: if $\widetilde D=\widetilde R_1+\widetilde R_2$
with $\widetilde R_1\neq\widetilde R_2$, the two points of $D_*$ are
distinct and have these reductions; if $\widetilde D=2\widetilde R$, then
$D_*=2Q$ with $Q\in C(F)$ by condition (iii), and both matrices have the
columns $a(\widetilde\omega_i,t_{\widetilde R},2)$. Since $\widetilde D$ is
of type \emph{(a)}, this matrix has rank $2$, and the criterion gives
\[D=D_*.\]
But \(D_*\in\mathscr K_2\), contradicting \(D\notin \mathscr{K}_2\cup\varphi^*E(F)\). Therefore our claim holds.

			The computation \texttt{Xs3ns5-sieve.m} gives
			\[\begin{array}{c|c|c|c}
				p&\text{classes settled}&\max\#\mathscr F_p(x)&
				\text{fibers with an unknown element}
				\\ \hline
				19 & 158 & 2 & 0  \\
				29 & 156 & 2 & 0  \\
				31 & 138 & 2 & 16 \\
				41 & 130 & 3 & 20 \\
				59 & 132 & 2 & 8  \\
				61 & 158 & 2 & 0  \\
				71 & 106 & 3 & 28 \\
				79 & 158 & 2 & 0  \\
				89 & 150 & 3 & 8
			\end{array}
			\]
			In particular, each of the primes
			\[p=19,\ 61,\ 79\]
			individually settles every element of \(W_{\mathcal P}\).
			Therefore
			\[
			\boxed{
				C^{(2)}(F)=\mathscr T_0.
			}
			\]

Finally, let \(P\in C(F)\). By the classification just proved, \(P+Q_1\) belongs to \(\mathscr K_2\cup\varphi^*(E(F))\). If \(P+Q_1\in \mathscr K_2\), then \(P\) is one of \(Q_1,\ldots,Q_8\), since the remaining divisors in \(\mathscr K_2\) are irreducible quadratic divisors over \(F\). Otherwise, \(P+Q_1=\varphi^*(R)\) for some \(R\in E(F)\). Since \(Q_1\) lies in this fiber, \(R=\varphi(Q_1)=O_E\), and therefore
\[P+Q_1=\varphi^*(O_E)=Q_1+Q_2.\]
Thus \(P=Q_2\). Consequently, \(C(F)=\{Q_1,\ldots,Q_8\}\).			
		\end{enumerate}
	\end{proof}

	\section*{Acknowledgements}
We thank Veronika Ertl for her help with a question in rigid cohomology and Steffen M\"uller for helpful discussions. We also thank Alex Best and Enis Kaya for their contribution to a different branch of the code. Before them, also Nirvana Coppola, Nicolas M\"uller, and Simone Muselli were in the project started at the Arizona Winter School 2020 in the group of Jennifer Balakrishnan.

\section*{Declaration of AI use}

The authors began developing the Coleman-integration algorithm and its {\sffamily Magma} implementation in 2024.
In 2026, they used ChatGPT (OpenAI) and Claude (Anthropic) to assist with drafting and editing text, discussing mathematical arguments, and checking and revising code.

For \cref{appendix:a sc and mw}, Claude proposed the main approach and generated much of the computational code.
The authors subsequently reviewed and revised the mathematical arguments and code.

The authors take full responsibility for the mathematical results, {\sffamily Magma} code, and final content of the paper.

\providecommand{\bysame}{\leavevmode\hbox to3em{\hrulefill}\thinspace}
\providecommand{\MR}{\relax\ifhmode\unskip\space\fi MR }
\providecommand{\MRhref}[2]{%
	\href{http://www.ams.org/mathscinet-getitem?mr=#1}{#2}
}
\providecommand{\href}[2]{#2}

\end{document}